\newcommand\reallywidehat[1]{%
\savestack{\tmpbox}{\stretchto{%
  \scaleto{%
    \scalerel*[\widthof{\ensuremath{#1}}]{\kern-.6pt\bigwedge\kern-.6pt}%
    {\rule[-\textheight/2]{1ex}{\textheight}}%WIDTH-LIMITED BIG WEDGE
  }{\textheight}% 
}{0.5ex}}%
\stackon[1pt]{#1}{\tmpbox}%
}
\newcommand{\crefdefpart}[2]{%
  \hyperref[#2]{\namecref{#1}~\labelcref*{#1}~\ref*{#2}}%
}
\newcommand{\Crefdefpart}[2]{%
  \nameCref{#1}~\hyperref[#2]{\labelcref*{#1}~\ref*{#2}}%
}
\numberwithin{equation}{subsection}
\newcommand{\crly}[1]{\left\{#1\right\}}
\newcommand{\cur}[1]{\left(#1\right)}
\newcommand{\suml}[2]{\sum\limits_{#1}^{#2}}
\newcommand{\mf}[1]{\mathfrak{#1}}
\newcommand{\mc}[1]{\mathcal{#1}}
\newcommand{\del}{\partial}
\newcommand{\Ker}{\textup{Ker}}
\newcommand{\IM}{\textup{Im}}
\newcommand{\id}{\textup{id}}
\newcommand{\Hom}[2]{\textup{Hom}_{#1}\cur{#2}}
\newcommand{\ind}[2]{\textup{ind}_{#1}^{#2}}
\newcommand{\res}[2]{\textup{res}_{#1}^{#2}}
\newcommand{\Rind}[3]{\textup{R}^{#1}\textup{ind}_{#2}^{#3}}
\newcommand{\rRind}[4]{\textup{R}_{#1}^{#2}\textup{ind}_{#3}^{#4}}
\newcommand{\Ext}[2]{\textup{Ext}_{#1}^{#2}}
\newcommand{\Tot}{\textup{Tot}}
\newcommand{\Mod}[1]{\textup{Mod}\cur{#1}}
\newcommand*{\bigcdot}{\mathpalette\bigcdot@{0.75}}
\newcommand*\bigcdot@[2]{\mathbin{\vcenter{\hbox{\scalebox{#2}{$\m@th#1\bullet$}}}}}
\newtheorem{theorem}{Theorem}[subsection]
\newtheorem{lemma}[theorem]{Lemma}
\newtheorem{proposition}[theorem]{Proposition}
\newtheorem{corollary}[theorem]{Corollary}
\newtheorem{remark}[theorem]{Remark}
\newtheorem{observation}[theorem]{Observation}
\newcommand{\sref}[1]{\textup{sequence}~(\ref{#1})}
\begin{document}

\title{On the Relative Cohomology for Algebraic Groups}

\author{\sc Gabriel T. Loos}
\address
{Department of Mathematics\\ University of Georgia \\
Athens\\ GA~30602, USA}
\email{gabriel.loos@uga.edu}

\begin{abstract}
Let $G$ be an algebraic group over a field $k$, and $M$ and $N$ be $G$-modules. In 1961, Hochschild showed how one can define the cohomology groups $\Ext{G}{i}(M,N)$. Kimura, in 1965, showed that one can generalize this to get relative cohomology for algebraic groups. The original cohomology groups play an important role in understanding the representation theory of $G$, but the role of relative cohomology is still not well understood. 

In this paper the author expands upon the work of Kimura to prove foundational results about the relative cohomology. The author starts by giving the definitions of relative exact sequences and relative injective modules and proves a variety of basic properties for each that will be essential to define relative cohomology and obtain a relative Grothendieck spectral sequence. In particular, the induction functor will play an important role when studying the relative injective modules. Once the necessary ground work is laid, the definition of relative cohomology is given. Finally, it is stated when there is a relative Grothendieck spectral sequence, and many consequences and examples are provided.
\end{abstract}

\maketitle

\section{Introduction}

\subsection{}

Let $G$ be an algebraic group over a field $k$, and $M$ be a $G$-module. The cohomology groups, $\textup{H}^{i}\cur{G,M}$, play an important role in understanding the representation theory of $G$. For an extensive overview of the many applications of the $G$-cohomology, we refer the reader to \cite[Part II]{Jan03}. Namely, the $G$-cohomology is crucial when studying highest weight modules. In particular, one famous result, which can be found in \cite[II.4.13]{Jan03}, gives that
\begin{equation}\label{eoowaim}
    \Ext{G}{i}\cur{\Delta(\lambda),\nabla(\mu)}\cong
\begin{cases}
    k & \textup{ if $i=0$ and $\lambda=\mu$}\\
    0 & \textup{ otherwise}
\end{cases},
\end{equation}
where $\nabla(\mu)=\ind{B}{G}(\mu)$ and $\Delta(\lambda)=\nabla(-w_0\lambda)^*$.
\Cref{eoowaim} gives rise to cohomological conditions for when a $G$-module, $M$, has a good-filtration (respectively Weyl-filtration), i.e. when there exists a chain of submodules of $M$,
$$0=M_0\subsetneq M_1\subsetneq M_2\subsetneq\cdots,$$ such that $M=\bigcup\limits_{i\geq0}M_i$ and for $i\geq1$, $M_{i}/M_{i-1}\cong\nabla(\mu_i)$ (respectively $M_{i}/M_{i-1}\cong\Delta(\lambda_i)$) for some $\mu_i\in X_+(T)$ (respectively $\lambda_i\in X_+(T)$).

%
%\subsection{}
%

In Category $\mathcal{O}$, relative cohomology has been studied extensively and found to have many applications. Boe, Kujawa, and Nakano, in \cite{BKN10}, use relative cohomology to establish a connection between the support varieties of $\mf{g}$ and the detecting subalgebras (which they introduce), where $\mf{g}$ is a simple classical Lie superalgebra. Later, D. Grantcharov, N. Grantcharov, Nakano, and Wu, in \cite{GGNW21}, introduce a special class of parabolic subalgebras of $\mf{g}$, which they use along with relative cohomology to prove a conjecture from \cite{BKN10} showing that certain support varieties are isomorphic. Most recently, Lai, Nakano, and Wilbert, in \cite{LNW25}, develop a more general theory of Category $\mathcal{O}$ for a quasi-reductive Lie superalgebra $\mf{g}$, and use relative cohomology to explicitly realize the cohomology ring and to establish finite generation results for the parabolic Category $\mathcal{O}$.

Kimura, in \cite{Kim65}, showed how to define relative cohomology for an algebraic group, which we include the definition of in \Cref{relative cohomology} on page \pageref{relative cohomology}. In order to define relative cohomology, one must first define relative exact sequences, defined in \Cref{relative exact sequence} on page \pageref{relative exact sequence}, and  relative injective modules, defined in \Cref{relative injective module} on page \pageref{relative injective module}, along with showing that $\Mod{G}$, the category of rational $G$ modules, has enough relative injectives, all of which Kimura did in \cite{Kim65}. Kimura then proves some initial results about relative cohomology in the characteristic zero case.
In \cite{Kim66}, Kimura shows that there exist long exact sequences for relative $\Ext{}{}$, along with showing that relative $\Ext{}{n}$ is equivalent to the set of equivalence classes of ``$n$-fold relative extensions''. Later in the paper, he gives criterion for when a relative injective $G$-module will also be a relative injective $H$-module for some closed subgroup $H\leq G$. 

It should be noted that some of Kimura's results require characteristic zero, but through out this paper our methods will work for arbitrary characteristic. 

\subsection{}

In this paper we will study relative cohomology from a more functorial viewpoint by using the induction functor to our advantage. This perspective will allow us to reformulate the results of Kimura which are only for characteristic zero, as results for arbitrary characteristic, although we will sometimes need stronger conditions on the subgroups of $G$. We will also build up a more general theory of relative right derived functors from $\Mod{G}$ to $\Mod{G'}$, where $G$ and $G'$ are algebraic groups and $\Mod{G}$ (respectively $\Mod{G'}$) is the category of rational $G$-modules (respectively $G'$-modules). In particular, we will give sufficient conditions for when there exists a relative Grothendieck spectral sequence, and then we will apply the spectral sequence to get relative versions of generalized Frobenius reciprocity, Shapiro's lemma, and generalized tensor identity.

\subsection{}%Organization of Paper

The paper is organized as follows. 
%
%\subsection{}%Definitions and Basic Properties
%
In \Cref{Definitions and Basic Properties}, we give the basic definitions along with some preliminary results.
%
%\subsection{}%Induction
%
In the following section (\Cref{Induction}) we characterize all relative injective modules using the induction functor. Afterwards we give conditions for when induction is exact on relative exact sequences.
%
%\subsection{}%Relative Right Derived Functor

In \Cref{Relative Right Derived Functor}, we introduce relative right derived functors, and prove some preliminary results which will be analogous to the already well known statements for the normal right derived functor.
%
%\subsection{}%Relative Extensions
%
%In \Cref{Relative Extensions}, we prove that relative $\Ext{}{n}$ is equivalent to the set of equivalence classes of ``$n$-fold relative extensions''.
%
%\subsection{}%Cohomology
%
In the following section (\Cref{Cohomology}) we give sufficient conditions for when there is a relative Grothendieck spectral sequence. % Subsequently, we apply the spectral sequence to get relative versions of generalized Frobenius reciprocity, Shapiro's lemma, and generalized tensor identity.
%
%\subsection{}%Factor Groups
%
Afterwards we study the relative cohomology of factor groups in \Cref{Factor Groups}, where we state a isomorphism which is motivated by the Lyndon-Hochschild-Serre spectral sequence.
%
%\subsection{}%Examples

We end by computing several examples in \Cref{Examples}. In particular, we show 
\begin{enumerate}
    \item that in characteristic zero, relative cohomology is trivial for finite dimensional modules, 
    \item how relative cohomology will behave with parabolic subgroups, 
    \item that there is a relative Ext-transfer theorem, 
    \item how relative cohomology interacts with CPS coupled parabolic systems, and
    \item that relative cohomology is most interesting when looking relative to a Levi subgroup.
\end{enumerate}

\subsection{Acknowledgements}

The author would like to thank his Ph.D advisor, Daniel Nakano, for many helpful discussions and comments pertaining to the contents of this paper. He would also like to thank Christopher Bendel and the referees for useful comments and suggestions on an earlier version of this manuscript.
\section{Definitions and Basic Properties}\label{Definitions and Basic Properties}

\subsection{Relative Exact Sequences}\label{relative exact sequence}

The definition of a ``$(G,H)$-exact sequence'' was first given by Kimura in \cite[pp. 271-272]{Kim65}. For convenience, we will include the definition here as well.

Let $G$ be an algebraic group over a field $k$, and $H$ a closed subgroup of $G$. An exact sequence of $G$-modules,
\[
\adjustbox{scale=1}{%
\begin{tikzcd}
    \cdots\arrow[r] & M_{i-1}\arrow[r, "d_{i-1}"] & M_{i}\arrow[r, "d_{i}"] & M_{i+1}\arrow[r] & \cdots, 
\end{tikzcd}
}
\]
is $(G,H)$-exact if $\Ker\cur{d_{i}}$ is a direct $H$-module summand of $M_{i}$ for all $i$. Equivalently, the sequence is $(G,H)$-exact if for all $i$, there exists an $H$-module homomorphism $t_{i}:M_{i}\to M_{i-1}$ such that $d_{i-1}t_{i}+t_{i+1}d_{i}=\id_{M_{i}}$. Note that any exact sequence of $G$-modules will be $(G,\{1\})$-exact.

From the definition, it is easy to see that for $H\leq H'\leq G$, if a sequence is $(G,H')$-exact, then it is also $(G,H)$-exact.

\begin{lemma}\label{resd}
    Let $H$ and $H'$ be closed subgroups of $G$ such that $H\leq H'\leq G$, if 
    \[
    \adjustbox{scale=1}{%
    \begin{tikzcd}
        \cdots\arrow[r] & M_{i-1}\arrow[r, "d_{i-1}"] & M_{i}\arrow[r, "d_{i}"] & M_{i+1}\arrow[r] & \cdots 
    \end{tikzcd}
    }
    \]
    is $(G,H')$-exact, then it is also $(G,H)$-exact.
\end{lemma}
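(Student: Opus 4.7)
The plan is to observe that this is essentially immediate from the equivalent reformulation of $(G,H')$-exactness in terms of a contracting $H'$-homotopy, since any $H'$-module homomorphism is, by restriction of scalars along the inclusion $H \hookrightarrow H'$, also an $H$-module homomorphism.

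More concretely, I would start from the hypothesis that the given sequence is $(G,H')$-exact, and invoke the ``equivalently'' clause of the definition stated just above the lemma: there exist $H'$-module homomorphisms $t_i : M_i \to M_{i-1}$ satisfying
\[
d_{i-1} t_i + t_{i+1} d_i = \id_{M_i}
\]
for every $i$. Since $H$ is a closed subgroup of $H'$, every $H'$-module is in particular an $H$-module via restriction, and every $H'$-equivariant map is automatically $H$-equivariant. Thus the same collection $\{t_i\}$ serves as a family of $H$-module homomorphisms satisfying the same identity, witnessing $(G,H)$-exactness.

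Alternatively, one can argue directly from the original formulation: if $\Ker(d_i)$ is a direct $H'$-module summand of $M_i$, pick an $H'$-stable complement $N_i$ so that $M_i = \Ker(d_i) \oplus N_i$ as $H'$-modules; restricting the action to $H$ gives the same decomposition as $H$-modules, so $\Ker(d_i)$ is also a direct $H$-module summand. Either viewpoint works, and I would use the homotopy formulation because it is notationally cleaner and will likely be the form used in later arguments.

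There is no real obstacle here; the only thing to be careful about is to cite the correct half of the ``equivalently'' statement and to make clear that no choice of complement or homotopy needs to be modified, since restriction of structure is functorial. The statement can therefore be disposed of in a few lines.
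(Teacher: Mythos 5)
Your proposal is correct and its main argument (restricting the contracting $H'$-homotopy $\{t_i\}$ to $H$) is exactly the proof given in the paper. The alternative summand argument you sketch is also fine, but no comparison is needed since the primary route coincides with the paper's.
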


\begin{proof}
    For all $i$, there exists an $H'$-module homomorphism $t_{i}:M_{i}\to M_{i-1}$ such that $d_{i-1}t_{i}+t_{i+1}d_{i}=\id_{M_{i}}$. Clearly, since $H\leq H'$, each $t_{i}$ is also an $H$-module homomorphism, and hence the sequence is also $(G,H)$-exact.
\end{proof}
\subsection{}

The following two lemmas, while not being very difficult, will be important facts about relative exact sequences when proving later results, so we go ahead and prove them now. In general, we will be interested in how functors interact with relative exact sequences. Getting a handle on this interaction will typically be a challenge, but for the functors $-\otimes V$ and $\Hom{H}{E,-}$ from $\Mod{G}$ to $\Mod{G}$ (resp. $\Mod{k}$) we have a better understanding.

\begin{lemma}\label{tetretre}
    If $-\otimes V:\Mod{G}\to\Mod{G}$ is exact, then it takes $(G,H)$-exact sequences to $(G,H)$-exact sequences.
\end{lemma}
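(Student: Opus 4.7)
The plan is to use the equivalent characterization of $(G,H)$-exactness in terms of a chain contraction by $H$-module homomorphisms, rather than working directly with the direct summand formulation. Since verifying that a sequence is $(G,H)$-exact reduces to producing the splitting maps $t_i$, tensoring will intertwine nicely with this characterization.

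First I would fix a $(G,H)$-exact sequence
\[
\begin{tikzcd}
\cdots \arrow[r] & M_{i-1} \arrow[r,"d_{i-1}"] & M_i \arrow[r,"d_i"] & M_{i+1} \arrow[r] & \cdots
\end{tikzcd}
\]
and invoke the definition to obtain $H$-module homomorphisms $t_i \colon M_i \to M_{i-1}$ satisfying $d_{i-1}t_i + t_{i+1}d_i = \id_{M_i}$. I would then apply $-\otimes V$ termwise to obtain the candidate sequence of $G$-modules, whose exactness as a sequence is automatic from the hypothesis that $-\otimes V$ is exact. The remaining content is to produce the analogous $H$-module splittings for the tensored sequence.

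The natural candidates are the maps $t_i \otimes \id_V \colon M_i \otimes V \to M_{i-1} \otimes V$. Since $V$ is a $G$-module and therefore restricts to an $H$-module, and since tensoring an $H$-module homomorphism with the identity on an $H$-module yields an $H$-module homomorphism, each $t_i \otimes \id_V$ lives in the correct category. A direct computation then gives
\[
(d_{i-1}\otimes \id_V)(t_i \otimes \id_V) + (t_{i+1}\otimes \id_V)(d_i \otimes \id_V) = (d_{i-1}t_i + t_{i+1}d_i)\otimes \id_V = \id_{M_i \otimes V},
\]
which supplies the required homotopy and hence $(G,H)$-exactness.

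There is no real obstacle here: the only subtlety to flag is that one must view $V$ as an $H$-module by restriction in order to conclude that the maps $t_i \otimes \id_V$ are $H$-equivariant, and that exactness of $-\otimes V$ as a functor on $\Mod{G}$ is what guarantees exactness of the tensored sequence as a sequence of $G$-modules. Once these two routine points are in place, the verification via the homotopy identity is a one-line calculation.
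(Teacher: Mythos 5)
Your proof is correct, and it runs through the other of the two equivalent formulations of $(G,H)$-exactness that the paper records: you tensor the contracting homotopy $t_i$ with $\id_V$ and check the identity $(d_{i-1}\otimes\id_V)(t_i\otimes\id_V)+(t_{i+1}\otimes\id_V)(d_i\otimes\id_V)=\id_{M_i\otimes V}$, whereas the paper's proof uses the direct-summand formulation, writing $M_i\cong\Ker(d_i)\oplus\IM(d_i)$ as $H$-modules and distributing $-\otimes V$ over the sum. The two arguments are of the same length and difficulty, but they buy slightly different things: the paper's version is immediate from additivity of $-\otimes V$, though it implicitly uses exactness of the functor to identify $\Ker(d_i)\otimes V$ with $\Ker(d_i\otimes\id_V)$ inside $M_i\otimes V$ (otherwise one only knows some $H$-summand maps onto the image, not that it is the kernel of the new differential); your homotopy version makes that point moot, since a complex admitting a contracting homotopy by $H$-maps is automatically exact with $H$-split kernels, so exactness of $-\otimes V$ is needed only to know the tensored sequence is exact as $G$-modules (and even that follows from the homotopy once the tensored sequence is a complex). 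You also correctly flag the one genuine subtlety, namely that $V$ must be viewed as an $H$-module by restriction so that $t_i\otimes\id_V$ is $H$-equivariant for the diagonal action. No gaps.
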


\begin{proof}
    Consider a $(G,H)$-exact sequence
    \[
    \adjustbox{scale=1}{%
    \begin{tikzcd}
        \cdots\arrow[r] & M_{i}\arrow[r, "d_{i}"] & M_{i+1}\arrow[r] & \cdots.
    \end{tikzcd}
    }
    \]
    As an $H$-module, $M_{i}\cong\Ker\cur{d_{i}}\oplus\IM\cur{d_{i}}$. So (as an $H$-module) $M_{i}\otimes V\cong\cur{\Ker\cur{d_{i}}\oplus\IM\cur{d_{i}}}\otimes V\cong\cur{\Ker\cur{d_{i}}\otimes V}\oplus\cur{\IM\cur{d_{i}}\otimes V}$. Hence, $-\otimes V$ sends $(G,H)$-exact sequences to $(G,H)$-exact sequences.
\end{proof}

\begin{lemma}\label{hwsgoreie}
    Let $H$ and $H'$ be closed subgroups of $G$ such that $H\leq H'\leq G$, $E$ be an $H$-module, and
    \[
    \adjustbox{scale=1}{%
    \begin{tikzcd}
        0\arrow[r] & M_{1}\arrow[r, "f"] & M_{2}\arrow[r, "g"] & M_{3}\arrow[r] & 0 
    \end{tikzcd}
    }
    \]
    be $(G,H')$-exact. Then
    \[
    \adjustbox{scale=1}{%
    \begin{tikzcd}
        0\arrow[r] & \Hom{H}{E,M_{1}}\arrow[r, "f\circ-"] & \Hom{H}{E,M_{2}}\arrow[r, "g\circ-"] & \Hom{H}{E,M_{3}}\arrow[r] & 0 
    \end{tikzcd}
    }
    \]
    is exact.
\end{lemma}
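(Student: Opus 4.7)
The plan is to reduce the problem to one about split short exact sequences. Since $H\leq H'$, \Cref{resd} shows that the given sequence is also $(G,H)$-exact, so $\IM\cur{f}=\Ker\cur{g}$ is a direct $H$-module summand of $M_{2}$. Equivalently, there exist $H$-module maps $r:M_{2}\to M_{1}$ and $s:M_{3}\to M_{2}$ satisfying $r\circ f=\id_{M_{1}}$ and $g\circ s=\id_{M_{3}}$, so that the sequence splits as a sequence of $H$-modules with $M_{2}\cong M_{1}\oplus M_{3}$.

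Next I would apply the additive functor $\Hom{H}{E,-}$. Because any additive functor preserves split short exact sequences, this immediately yields the desired exactness. For a direct verification one checks three pieces: injectivity of $f\circ-$ follows from precomposing with $r$; exactness at $\Hom{H}{E,M_{2}}$ is the standard left-exactness of Hom applied to the underlying exact sequence of $H$-modules; and surjectivity of $g\circ-$ holds because any $\psi\in\Hom{H}{E,M_{3}}$ is the image of $s\circ\psi\in\Hom{H}{E,M_{2}}$ under $g\circ-$.

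There is no real obstacle here. The entire content of the lemma lies in invoking \Cref{resd} to trade the weaker hypothesis of $(G,H')$-exactness for an actual $H$-module splitting of the sequence; once that splitting is in hand, the conclusion follows from the fact that $\Hom{H}{E,-}$ is additive.
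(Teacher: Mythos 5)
Your proof is correct and follows essentially the same route as the paper: left exactness handles everything except surjectivity of $g\circ-$, which in both arguments is obtained by composing with a section of $g$ coming from the relative splitting (the paper uses the $H'$-section directly and notes the composite is still an $H$-map, while you first pass to an $H$-splitting via \Cref{resd}; the difference is cosmetic). Your packaging of the conclusion as ``additive functors preserve split short exact sequences'' is a fine, slightly more conceptual way to state the same computation.
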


\begin{proof}
    $\Hom{H}{E,-}$ is left exact, so for exactness we just need to show that $g\circ-$ is surjective. Let $\alpha\in\Hom{H}{E,M_3}$. Since
    \[
    \adjustbox{scale=1}{%
    \begin{tikzcd}
        0\arrow[r] & M_{1}\arrow[r, "f"] & M_{2}\arrow[r, "g"] & M_{3}\arrow[r] & 0 
    \end{tikzcd}
    }
    \]
    is $(G,H')$-exact, there is an $H'$-homomorphism $h:M_3\to M_2$ such that $g\circ h=\id_{M_3}$. Hence, $h\circ\alpha\in\Hom{H}{E,M_2}$ such that $g\circ h\circ\alpha=\alpha\in\Hom{H}{E,M_3}$. Therefore,
    \[
    \adjustbox{scale=1}{%
    \begin{tikzcd}
        0\arrow[r] & \Hom{H}{E,M_{1}}\arrow[r, "f\circ-"] & \Hom{H}{E,M_{2}}\arrow[r, "g\circ-"] & \Hom{H}{E,M_{3}}\arrow[r] & 0 
    \end{tikzcd}
    }
    \]
    is exact.
\end{proof}
\subsection{Relative Injective Modules}\label{relative injective module}

The definition of a ``$(G,H)$-injective module'' was first given by Kimura in \cite[p. 272]{Kim65}. For convenience, we will include the definition here as well.

A $G$-module $I$ is called $(G,H)$-injective if for all $(G,H)$-exact sequences 
\[
\adjustbox{scale=1}{%
\begin{tikzcd}
    0\arrow[r] & M_{1}\arrow[r, "d_{1}"] & M_{2}\arrow[r, "d_{2}"] & M_{3}\arrow[r] & 0 
\end{tikzcd}
}
\]
and any $G$-module homomorphism $f:M_{1}\to I$, there exists a $G$-module homomorphism $h:M_{2}\to I$ such that $f=hd_{1}$, i.e. making the following diagram commute:
\[
\adjustbox{scale=1}{%
\begin{tikzcd}
    0\arrow[r] & M_{1}\arrow[r, "d_{1}"]\arrow[d,"f", swap] & M_{2}\arrow[r, "d_{2}"]\arrow[dl, "h"] & M_{3}\arrow[r] & 0 \\
     & I &  &  & 
\end{tikzcd}
}.
\]
We point out that by choosing $H$ to be the identity subgroup, we simply recover the definition of injective modules, thus this gives a generalization. One can also clearly see that if $I$ is injective, then $I$ is $(G,H)$-injective. In fact, one has the following:

\begin{lemma}\label{rimu}
    Let $M$ be a $G$-module, and $H$ and $H'$ be closed subgroups of $G$ such that $H\leq H'\leq G$. If $M$ is $(G,H)$-injective, then $M$ is $(G,H')$-injective.
\end{lemma}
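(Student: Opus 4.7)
The plan is to reduce this lemma directly to \Cref{resd}. Suppose we are given an arbitrary $(G,H')$-exact sequence
\[
\adjustbox{scale=1}{
\begin{tikzcd}
    0\arrow[r] & M_{1}\arrow[r, "d_{1}"] & M_{2}\arrow[r, "d_{2}"] & M_{3}\arrow[r] & 0
\end{tikzcd}
}
\]
together with a $G$-module homomorphism $f\colon M_1\to M$. To verify the $(G,H')$-injectivity of $M$, we must produce a $G$-module homomorphism $h\colon M_2\to M$ with $f=hd_1$.

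First, I would invoke \Cref{resd} with the containment $H\leq H'$: since the displayed sequence is $(G,H')$-exact, it is automatically $(G,H)$-exact as well. Next, I would apply the hypothesis that $M$ is $(G,H)$-injective to this $(G,H)$-exact sequence and the map $f\colon M_1\to M$; this yields the required $G$-module homomorphism $h\colon M_2\to M$ with $f=hd_1$. Since the starting $(G,H')$-exact sequence was arbitrary, this establishes that $M$ is $(G,H')$-injective.

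The argument is essentially a one-line appeal to \Cref{resd}, so there is no serious obstacle. The only thing to watch is the direction of the implications: $H\leq H'$ gives $(G,H')$-exact sequences the structure of $(G,H)$-exact sequences (a \emph{weakening} of the splitting hypothesis), and consequently $(G,H)$-injectivity of $M$ is the \emph{stronger} property that implies $(G,H')$-injectivity of $M$, matching the statement of the lemma.
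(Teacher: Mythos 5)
Your argument is correct and is essentially identical to the paper's own proof: both reduce to \Cref{resd} to view the given $(G,H')$-exact sequence as a $(G,H)$-exact sequence and then apply the $(G,H)$-injectivity of $M$ to obtain the lifting $h$. Nothing further is needed.
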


\begin{proof}
    Consider a $(G,H')$-exact sequence
    \begin{equation}\label{eq:3}
        \adjustbox{scale=1}{%
        \begin{tikzcd}
            0\arrow[r] & M_{1}\arrow[r, "d_{1}"] & M_{2}\arrow[r, "d_{2}"] & M_{3}\arrow[r] & 0 
        \end{tikzcd}
        }
    \end{equation}
    and any $G$-module homomorphism $f:M_{1}\to M$. Now, from \Cref{resd}, \sref{eq:3} is also a $(G,H)$-exact sequence, and so there exists a $G$-module homomorphism $h:M_{2}\to M$ such that $f=hd_{1}$. Hence, $M$ is $(G,H')$-injective.
\end{proof}
\subsection{}

One easy example that can be worked out is when $H=G$. Let $M$ be a $G$-module, and consider a $(G,G)$-exact sequence, i.e. a split sequence
\begin{equation}\label{eq:4}
    \adjustbox{scale=1}{%
    \begin{tikzcd}
        0\arrow[r] & M_{1}\arrow[r, "d_{1}"] & M_{2}\arrow[r, "d_{2}"] & M_{3}\arrow[r]& 0 
    \end{tikzcd}
    }
\end{equation}
and any map $f:M_1\to M$. Since \sref{eq:4} is split, there is a $G$-module homomorphism $\pi:M_2\to M_1$ such that $\pi\circ d_1=\id_{M_1}$. Now consider $h=f\circ\pi:M_2\to M$, then $h\circ d_1=f\circ\pi\circ d_1=f$, and so $M$ is $(G,G)$-injective. Hence, every $G$-module $M$ is $(G,G)$-injective.
\subsection{}

The following two results are fairly basic but will be important facts for proving later results so we include them now.

\begin{lemma}\label{srses}\textup{\cite[p. 90]{Kim66}}
    Let
    \[
    \adjustbox{scale=1}{%
    \begin{tikzcd}
        0\arrow[r] & M_{1}\arrow[r, "d_{1}"] & M_{2}\arrow[r, "d_{2}"] & M_{3}\arrow[r] & 0 
    \end{tikzcd}
    }
    \]
    be a $(G,H)$-exact sequence. If $M_1$ is $(G,H)$-injective, then the sequence splits.
\end{lemma}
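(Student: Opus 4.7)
The plan is to exploit the $(G,H)$-injectivity of $M_1$ by plugging the identity map $\id_{M_1}$ into the defining lifting property. First I would apply \Cref{rimu}-style reasoning only insofar as to confirm the setup: the sequence is $(G,H)$-exact, and we have the particular $G$-module map $f = \id_{M_1} : M_1 \to M_1$. Since $M_1$ is assumed $(G,H)$-injective, the definition (applied to the given $(G,H)$-exact sequence together with $f$) produces a $G$-module homomorphism $h : M_2 \to M_1$ with $h \circ d_1 = \id_{M_1}$.

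The existence of this $h$ is exactly a $G$-equivariant splitting of the monomorphism $d_1$, so by the standard argument for split short exact sequences in any abelian category, $M_2 \cong M_1 \oplus M_3$ as $G$-modules and the original sequence splits. Concretely, one verifies that the map $(h, d_2) : M_2 \to M_1 \oplus M_3$ is a $G$-module isomorphism with inverse $d_1 \oplus s$, where $s : M_3 \to M_2$ is any set-theoretic section — but in fact we do not need to produce $s$ ourselves; splitting on the left already implies splitting on the right in any abelian category, so $M_3$ acquires the induced $G$-stable complement $\Ker(h)$ inside $M_2$.

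There is really no obstacle here, since the whole point of the definition of $(G,H)$-injectivity is precisely to allow lifts of $G$-module maps along $(G,H)$-exact inclusions. The only thing worth noting is that the splitting we obtain is automatically a splitting in $\Mod{G}$, not merely in $\Mod{H}$ — this is stronger than the $H$-splitting that the $(G,H)$-exactness of the sequence already guarantees, and it is the content of the lemma. Consequently the proof is just a one-line application of the definition followed by the standard observation that a retracted monomorphism gives a direct-sum decomposition.
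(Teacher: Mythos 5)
Your proof is correct and is essentially identical to the paper's: both apply the $(G,H)$-injectivity of $M_1$ to the map $\id_{M_1}$ along the given $(G,H)$-exact sequence to obtain a retraction $h:M_2\to M_1$ with $h\circ d_1=\id_{M_1}$, and then invoke the standard fact that a retracted monomorphism splits the short exact sequence. (The only nitpick is your parenthetical about the inverse being ``$d_1\oplus s$ for any set-theoretic section,'' which is imprecise but harmless since, as you note, the left splitting already suffices.)
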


\begin{proof}
    Consider the identity map $\id_{M_{1}}:M_{1}\to M_{1}$, then since $M_1$ is $(G,H)$-injective, there is a $G$-homomorphism $h:M_{2}\to M_{1}$ such that $\id_{M_{1}}=h\circ d_{2}$, and so the sequence splits.
\end{proof}

The next lemma will be important for proving that there is a relative cohomological criterion for a module to be relative injective.

\begin{lemma}\label{hire}
    Let
    \[
    \adjustbox{scale=1}{%
    \begin{tikzcd}
        0\arrow[r] & M_{1}\arrow[r, "d_{1}"] & M_{2}\arrow[r, "d_{2}"] & M_{3}\arrow[r] & 0 
    \end{tikzcd}
    }
    \]
    be a $(G,H)$-exact sequence and $I$ a $(G,H)$-injective module, then the sequence
    \[
    \adjustbox{scale=1}{%
    \begin{tikzcd}
        0\arrow[r] & \Hom{G}{M_{3},I}\arrow[r, "-\circ d_{2}"] & \Hom{G}{M_{2},I}\arrow[r, "-\circ d_{1}"] & \Hom{G}{M_{1},I}\arrow[r] & 0 
    \end{tikzcd}
    }
    \]
    is exact.
\end{lemma}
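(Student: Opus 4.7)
The plan is to treat the two halves of the statement separately: the left-exactness portion is formal from $\Hom{G}{-,I}$ being a contravariant left-exact functor, and the surjectivity on the right is essentially a direct application of the definition of $(G,H)$-injectivity.

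First I would observe that for \emph{any} $G$-module $I$ (injective or not), the contravariant functor $\Hom{G}{-,I}$ sends the right-exact sequence $M_1\to M_2\to M_3\to 0$ to a left-exact sequence
\[
0\to \Hom{G}{M_3,I}\xrightarrow{-\circ d_2}\Hom{G}{M_2,I}\xrightarrow{-\circ d_1}\Hom{G}{M_1,I},
\]
so exactness at the first two nonzero spots is immediate: injectivity of $-\circ d_2$ follows from surjectivity of $d_2$, and the kernel of $-\circ d_1$ agrees with the image of $-\circ d_2$ because any $G$-map $M_2\to I$ that kills $\IM(d_1)$ factors through $M_2/\IM(d_1)\cong M_3$. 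None of this uses either $(G,H)$-exactness of the original sequence or the $(G,H)$-injectivity of $I$.

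The only remaining content is surjectivity of the rightmost map $-\circ d_1$. Given $f\in\Hom{G}{M_1,I}$, I would apply the defining property of $(G,H)$-injectivity directly to the $(G,H)$-exact sequence $0\to M_1\xrightarrow{d_1}M_2\xrightarrow{d_2}M_3\to 0$ and the map $f:M_1\to I$: this yields a $G$-homomorphism $h:M_2\to I$ with $h\circ d_1=f$, which is exactly an element of $\Hom{G}{M_2,I}$ mapping to $f$ under $-\circ d_1$.

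There is no real obstacle here; the statement is essentially a reformulation of the defining lifting property of $(G,H)$-injective modules into the language of $\Hom$-exactness, in exact parallel with how ordinary injectivity is equivalent to $\Hom{G}{-,I}$ being exact on short exact sequences. The only subtlety worth flagging is making clear in the writeup that the left-exactness half needs no hypothesis on the sequence beyond ordinary exactness, so that the hypotheses ``$(G,H)$-exact'' and ``$(G,H)$-injective'' are used in precisely one place, namely to produce the lift of $f$.
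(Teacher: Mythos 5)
Your proposal is correct and follows essentially the same route as the paper: left-exactness of the contravariant functor $\Hom{G}{-,I}$ handles exactness at the first two spots, and surjectivity of $-\circ d_{1}$ is obtained by applying the defining lifting property of $(G,H)$-injectivity to $f:M_{1}\to I$. Your extra remark that the hypotheses are used only for the surjectivity step is a fair observation but adds nothing beyond the paper's argument.
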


\begin{proof}
    $\Hom{G}{-,I}$ is left exact, so we just need to show that $-\circ d_{1}$ is surjective. Consider a $G$-homomorphism $f:M_1\to I$, since $I$ is $(G,H)$-injective, there is a $G$-homomorphism $h:M_2\to I$ such that $f=h\circ d_1$. Hence, $-\circ d_{1}$ is surjective.
\end{proof}
\subsection{Direct Sums and Summands of Relative Injective Modules}

When studying the normal theory, it is known that the direct sum of two injective modules will also be injective, and that if a module is the direct summand of an injective module, then that module will also be injective. And in fact, the same will hold true when studying relative injective modules, with the generalization of the later fact being important for the overarching theory.

\begin{lemma}\label{dsri}
    Let $I$ and $J$ be $(G,H)$-injective modules, then $I\oplus J$ is $(G,H)$-injective.
\end{lemma}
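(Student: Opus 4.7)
The plan is to reduce the statement to the defining property of $(G,H)$-injectivity applied separately to $I$ and $J$, using the universal property of the direct sum. Given a $(G,H)$-exact sequence
\[
\adjustbox{scale=1}{
\begin{tikzcd}
    0\arrow[r] & M_{1}\arrow[r, "d_{1}"] & M_{2}\arrow[r, "d_{2}"] & M_{3}\arrow[r] & 0
\end{tikzcd}
}
\]
and a $G$-module homomorphism $f:M_{1}\to I\oplus J$, I would first compose $f$ with the two natural projections $\pi_{I}:I\oplus J\to I$ and $\pi_{J}:I\oplus J\to J$ to obtain $G$-homomorphisms $f_{I}=\pi_{I}\circ f:M_{1}\to I$ and $f_{J}=\pi_{J}\circ f:M_{1}\to J$.

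Next, because $I$ is $(G,H)$-injective, the defining property produces a $G$-homomorphism $h_{I}:M_{2}\to I$ with $f_{I}=h_{I}\circ d_{1}$. Applying the same property to $J$ yields $h_{J}:M_{2}\to J$ with $f_{J}=h_{J}\circ d_{1}$. I would then define $h:M_{2}\to I\oplus J$ by $h(m)=(h_{I}(m),h_{J}(m))$; this is a $G$-homomorphism as a pairing of $G$-homomorphisms, and a direct check gives $h\circ d_{1}=f$, which is exactly the lifting required to conclude that $I\oplus J$ is $(G,H)$-injective.

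There is essentially no obstacle here: the argument is purely formal and mirrors the classical proof that a direct sum of injectives is injective. The only thing worth verifying is that the same $(G,H)$-exact sequence is being used to produce both $h_{I}$ and $h_{J}$, which is immediate from the definition. No properties of $(G,H)$-exactness beyond those already encoded in the hypothesis on $I$ and $J$ are needed, so the lemmas proved earlier in the section are not invoked in this particular proof.
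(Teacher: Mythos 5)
Your proof is correct and is essentially the same as the paper's: both compose $f$ with the projections, lift each composite separately using the $(G,H)$-injectivity of $I$ and $J$, and assemble the two lifts into a map $M_{2}\to I\oplus J$ (your pairing $(h_{I},h_{J})$ is exactly the paper's $i_{I}\circ h_{I}+i_{J}\circ h_{J}$). No gap here.
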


\begin{proof}
    Consider a $(G,H)$-exact sequence
    \[
    \adjustbox{scale=1}{%
    \begin{tikzcd}
        0\arrow[r] & M_{1}\arrow[r, "d_{1}"] & M_{2}\arrow[r, "d_{2}"] & M_{3}\arrow[r] & 0 
    \end{tikzcd}
    }
    \]
    and any map $f:M_{1}\to I\oplus J$. Then one also has the maps $\pi_{I}\circ f:M_{1}\to I$ and $\pi_{J}\circ f:M_{1}\to J$. Since both $I$ and $J$ are relative $(G,H)$-injective, there exists maps $h_{I}:M_{2}\to I$ and $h_{J}:M_{2}\to J$ such that $\pi_{I}\circ f=h_{I}\circ d_{1}$ and $\pi_{J}\circ f=h_{J}\circ d_{1}$, i.e. there are commutative diagrams
    \[
    \adjustbox{scale=1}{%
    \begin{tikzcd}
        0\arrow[r] & M_{1}\arrow[r, "d_{1}"]\arrow[d,"f", swap] & M_{2}\arrow[r, "d_{2}"]\arrow[ddl, "h_{I}"] &  M_{3}\arrow[r] & 0 \\
         & I\oplus J\arrow[d, "\pi_{I}", swap] &  &  & \\
         & I &  &  & 
    \end{tikzcd}
    }
    \]
    and
    \[
    \adjustbox{scale=1}{%
    \begin{tikzcd}
        0\arrow[r] & M_{1}\arrow[r, "d_{1}"]\arrow[d,"f", swap] & M_{2}\arrow[r, "d_{2}"]\arrow[ddl, "h_{J}"] &  M_{3}\arrow[r] & 0 \\
         & I\oplus J\arrow[d, "\pi_{J}", swap] &  &  & \\
         & J &  &  & 
    \end{tikzcd}
    }.
    \]
    One can then see that $i_{I}\circ \pi_{I}\circ f=i_{I}\circ h_{I}\circ d_{1}$ and $i_{J}\circ \pi_{J}\circ f=i_{J}\circ h_{J}\circ d_{1}$, and so $\cur{i_{I}\circ h_{I}+i_{J}\circ h_{J}}\circ d_{1}=i_{I}\circ h_{I}\circ d_{1}+i_{J}\circ h_{J}\circ d_{1}=i_{I}\circ \pi_{I}\circ f+i_{J}\circ \pi_{J}\circ f=\cur{i_{I}\circ \pi_{I}+i_{J}\circ \pi_{J}}\circ f=f$. Therefore, $I\oplus J$ is relative $(G,H)$-injective.
\end{proof}

\begin{lemma}\label{dsim}
    Let $M$ be a $G$-module. If $M$ is a direct summand of a $(G,H)$-injective module $I$, then $M$ is $(G,H)$-injective.
\end{lemma}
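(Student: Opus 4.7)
The plan is to mimic the standard injective-summand argument, using the universal property of $(G,H)$-injectivity of $I$ together with the split injection/projection pair coming from the direct sum decomposition. Concretely, write $I \cong M \oplus N$ as $G$-modules, and let $\iota_M : M \to I$ and $\pi_M : I \to M$ denote the canonical $G$-module inclusion and projection, which satisfy $\pi_M \circ \iota_M = \id_M$.

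Given a $(G,H)$-exact sequence
\[
\adjustbox{scale=1}{
\begin{tikzcd}
    0\arrow[r] & M_{1}\arrow[r, "d_{1}"] & M_{2}\arrow[r, "d_{2}"] & M_{3}\arrow[r] & 0
\end{tikzcd}
}
\]
and any $G$-module homomorphism $f : M_1 \to M$, I would first compose with $\iota_M$ to obtain a $G$-module homomorphism $\iota_M \circ f : M_1 \to I$. Since $I$ is $(G,H)$-injective by hypothesis, this composition extends: there exists a $G$-module homomorphism $\tilde{h} : M_2 \to I$ such that $\iota_M \circ f = \tilde{h} \circ d_1$. I would then define $h := \pi_M \circ \tilde{h} : M_2 \to M$ and verify that $h \circ d_1 = \pi_M \circ \tilde{h} \circ d_1 = \pi_M \circ \iota_M \circ f = f$, which exhibits the required extension and shows $M$ is $(G,H)$-injective.

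I do not anticipate any genuine obstacle here; the only subtlety is that the direct summand must be a $G$-module summand so that $\iota_M$ and $\pi_M$ really are $G$-module homomorphisms (and not merely $H$-module or $k$-linear maps). The argument runs parallel to the classical case of ordinary injective modules, with the $(G,H)$-injectivity of $I$ being applied to a single $(G,H)$-exact sequence, so no additional hypothesis on the pair $(G,H)$ is needed beyond what is already assumed.
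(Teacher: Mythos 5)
Your proposal is correct and follows essentially the same route as the paper: compose $f$ with the inclusion $M\to I$, extend using the $(G,H)$-injectivity of $I$, and compose with the projection $I\to M$, using $\pi_M\circ\iota_M=\id_M$ to recover $f$. Your remark that the splitting must consist of $G$-module homomorphisms is exactly the (implicit) point the paper relies on as well.
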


\begin{proof}
    Consider a $(G,H)$-exact sequence
    \[
    \adjustbox{scale=1}{%
    \begin{tikzcd}
        0\arrow[r] & M_{1}\arrow[r, "d_{1}"] & M_{2}\arrow[r, "d_{2}"] & M_{3}\arrow[r] & 0,
    \end{tikzcd}
    }
    \]
    and any $G$-module homomorphism $f:M_1\to M$. We want to find a homomorphism $h:M_2\to M$ such that $h\circ d_1=f$.
    
    Since $M$ is a direct summand of $I$, there is an injective homomorphisms $i:M\to I$ and a surjective homomorphism $\pi:I\to M$ such that $\pi\circ i=\id_{M}$. Consider the homomorphism $i\circ f:M_1\to I$, since $I$ is $(G,H)$-injective, there exists a homomorphism $\tilde h:M_2\to I$ such that $\tilde h\circ d_1=i\circ f$, i.e. the following diagram commutes
    \[
    \adjustbox{scale=1}{%
    \begin{tikzcd}
        0\arrow[r] & M_{1}\arrow[r, "d_{1}"]\arrow[d, "f", swap] & M_{2}\arrow[r, "d_{2}"]\arrow[ddl, "\tilde h"] & M_{3}\arrow[r] & 0 \\
         & M\arrow[d, "i", swap, shift right=0.5ex] &  &  &  \\
         & I\arrow[u, "\pi", swap, shift right=0.5ex] &  &  &  
    \end{tikzcd}
    }.
    \]
    Hence, $\pi\circ\tilde h\circ d_1=\pi\circ i\circ f=f$. So letting $h=\pi\circ\tilde h:M_2\to M$ gives the desired homomorphism, thus $M$ is $(G,H)$-injective.
\end{proof}
\section{Induction}\label{Induction}

\subsection{}

Let $H$ be a closed subgroup of $G$, then there are functors $$\res{H}{G}:\Mod{G}\to\Mod{H}$$ and $$\ind{H}{G}:\Mod{H}\to\Mod{G}.$$ $\res{H}{G}$ is defined in the obvious way of restricting a $G$-module to an $H$-module, so is clearly exact. $\ind{H}{G}$ can be defined in one of the following equivalent ways:
\begin{align*}
    \ind{H}{G}(M)=&\cur{M\otimes k[G]}^H\\
    =&\Hom{H}{k,M\otimes k[G]}\\
    =&\{f:G\to M|f(gh)=h^{-1}f(g)\textup{ for all }g\in G\textup{ and all }h\in H\},
\end{align*}
where $k[G]$ is the algebra of regular functions on $G$.
From this it is clear that $\ind{H}{G}$ is left exact. For a detailed description of $\ind{H}{G}$ and its properties, the reader is referred to \cite[I.3]{Jan03}.

In this section, we will use the induction functor to study relative injective modules. Note that in Kimura's paper, \cite{Kim65}, he only uses $-\otimes k[G]^H:\Mod{G}\to\Mod{G}$ to study relative injectives. By using $\ind{H}{G}:\Mod{H}\to\Mod{G}$, we will be able to produce stronger results that will encapsulate those of Kimura.
\subsection{Induction of Relative Injective Modules}

We are now ready to prove what will be some very crucial results. The first being that induction will take relative injective modules to relative injective modules.

\begin{theorem}\label{iriri}
    Let $H$ and $K$ be closed subgroups of $G$, and $M$ be an $H$-module such that $M$ is $(H,H\cap K)$-injective, then $\ind{H}{G}(M)$ is $(G,K)$-injective.
\end{theorem}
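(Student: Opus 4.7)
The plan is to exploit Frobenius reciprocity, which gives a natural isomorphism
\[
\Hom{G}{N,\ind{H}{G}(M)} \cong \Hom{H}{\res{H}{G}(N),M}
\]
for every $G$-module $N$ (see \cite[I.3.4]{Jan03}). This will convert the lifting problem defining $(G,K)$-injectivity of $\ind{H}{G}(M)$ into the lifting problem defining $(H,H\cap K)$-injectivity of $M$, which is exactly our hypothesis.

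Concretely, I would start with a $(G,K)$-exact sequence
\[
\adjustbox{scale=1}{
\begin{tikzcd}
    0\arrow[r] & M_{1}\arrow[r, "d_{1}"] & M_{2}\arrow[r, "d_{2}"] & M_{3}\arrow[r] & 0
\end{tikzcd}
}
\]
and a $G$-homomorphism $f:M_{1}\to\ind{H}{G}(M)$. First I would observe that restricting the sequence along $\res{H}{G}$ produces an $(H,H\cap K)$-exact sequence: by definition of $(G,K)$-exactness there exist $K$-homomorphisms $t_i:M_i\to M_{i-1}$ with $d_{i-1}t_i+t_{i+1}d_i=\id_{M_i}$, and since $H\cap K\leq K$, each $t_i$ is in particular an $(H\cap K)$-homomorphism, which is the equivalent characterization of $(H,H\cap K)$-exactness.

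Next I would apply Frobenius reciprocity to $f$ to obtain an $H$-homomorphism $\tilde f:\res{H}{G}(M_{1})\to M$. Using that $M$ is $(H,H\cap K)$-injective against the restricted sequence, I get an $H$-homomorphism $\tilde h:\res{H}{G}(M_{2})\to M$ with $\tilde h\circ d_{1}=\tilde f$. Applying Frobenius reciprocity in the other direction yields a $G$-homomorphism $h:M_{2}\to\ind{H}{G}(M)$, and by naturality of the isomorphism in the first argument, $h\circ d_{1}$ corresponds under Frobenius reciprocity to $\tilde h\circ d_{1}=\tilde f$, which is the image of $f$; hence $h\circ d_{1}=f$ as desired.

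The bulk of the argument is thus formal, and the only real content is the observation in the second paragraph that restriction carries $(G,K)$-exact sequences to $(H,H\cap K)$-exact sequences. The place where one must be careful is the last step, namely that naturality of the Frobenius reciprocity isomorphism in the $G$-module argument actually pairs $h\circ d_{1}$ with $\tilde h\circ d_{1}$; this is standard but worth spelling out since it is precisely what couples the two lifting problems.
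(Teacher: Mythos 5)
Your proposal is correct and follows essentially the same route as the paper: both arguments restrict the $(G,K)$-exact sequence to an $(H,H\cap K)$-exact one, transport $f$ across the $(\res{H}{G},\ind{H}{G})$ adjunction, lift using the $(H,H\cap K)$-injectivity of $M$, and transport back. The paper merely spells out the naturality you invoke at the end via the unit maps $i_{M_j}$ and explicit commutative diagrams, which is the same content in adjunction-unit form.
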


\begin{proof}
    Consider a $(G,K)$-exact sequence
    \begin{equation}\label{eq:5}
        \adjustbox{scale=1}{%
        \begin{tikzcd}
            0\arrow[r] & M_{1}\arrow[r, "d_{1}"] & M_{2}\arrow[r, "d_{2}"] & M_{3}\arrow[r] & 0 
        \end{tikzcd}
        }
    \end{equation}
    and consider any $G$-module homomorphism $f:M_1\to \ind{H}{G}(M)$. Since $\ind{H}{G}$ is right adjoint to $\res{H}{G}$, there exists a unique $i_{M_{1}}:M_1\to\ind{H}{G}\cur{\res{H}{G}\cur{M_1}}$ and some $\tilde f:\res{H}{G}(M_1)\to M$, such that the following diagram commutes: 
    \[
    \adjustbox{scale=1}{%
    \begin{tikzcd}
        M_1\arrow[r, "i_{M_{1}}"]\arrow[dr, "f", swap] & \ind{H}{G}\cur{\res{H}{G}\cur{M_1}}\arrow[d, "\ind{H}{G}\cur{\tilde f}"] \\
         & \ind{H}{G}\cur{M}
    \end{tikzcd}
    }.
    \]
    Now, apply $\res{H}{G}$ to \sref{eq:5} to get
    \begin{equation}\label{eq:6}
        \adjustbox{scale=1}{%
        \begin{tikzcd}
            0\arrow[r] & \res{H}{G}\cur{M_{1}}\arrow[r, "\res{H}{G}\cur{d_{1}}"]\arrow[d, "\tilde f", swap] & \res{H}{G}\cur{M_{2}}\arrow[r, "\res{H}{G}\cur{d_{2}}"] & \res{H}{G}\cur{M_{3}}\arrow[r] & 0 \\
             & M &  &  & 
        \end{tikzcd}
        }.
    \end{equation}
    By our assumption, \sref{eq:5} is split as a $K$-module, and by \Cref{resd} is also split as an $H\cap K$-module, so \sref{eq:6} is $(H,H\cap K)$-exact. Therefore, by the assumption that $M$ is $(H,H\cap K)$-injective, there exists an $H$-module homomorphism $\tilde h:\res{H}{G}\cur{M_{2}}\to M$ such that $\tilde h\circ\res{H}{G}\cur{d_{1}}=\tilde f$. Now, apply $\ind{H}{G}$ to \sref{eq:6} to get
    \[
    \adjustbox{scale=1}{%
    \begin{tikzcd}
        0\arrow[r] & \ind{H}{G}\cur{\res{H}{G}\cur{M_{1}}}\arrow[rr, "\ind{H}{G}\cur{\res{H}{G}\cur{d_{1}}}"]\arrow[d, "\ind{H}{G}\cur{\tilde f}", swap] &  & \ind{H}{G}\cur{\res{H}{G}\cur{M_{2}}}\arrow[rr, "\ind{H}{G}\cur{\res{H}{G}\cur{d_{2}}}"]\arrow[dll, "\ind{H}{G}\cur{\tilde h}"] &  & \ind{H}{G}\cur{\res{H}{G}\cur{M_{3}}} \\
         & \ind{H}{G}\cur{M} &  &  &  & 
    \end{tikzcd}
    }.
    \]
    Next, since $\ind{H}{G}$ is right adjoint to $\res{H}{G}$, the following diagram commutes
    \[
    \adjustbox{scale=0.92}{%
    \begin{tikzcd}
        0\arrow[r] & M_{1}\arrow[rr, "d_{1}"]\arrow[d, "i_{M_{1}}"] &  & M_{2}\arrow[rr, "d_{2}"]\arrow[d, "i_{M_{2}}"] &  & M_{3}\arrow[r]\arrow[d, "i_{M_{3}}"] & 0 \\
        0\arrow[r] & \ind{H}{G}\cur{\res{H}{G}\cur{M_{1}}}\arrow[rr, "\ind{H}{G}\cur{\res{H}{G}\cur{d_{1}}}"]\arrow[d, "\ind{H}{G}\cur{\tilde f}", swap] &  & \ind{H}{G}\cur{\res{H}{G}\cur{M_{2}}}\arrow[rr, "\ind{H}{G}\cur{\res{H}{G}\cur{d_{2}}}"]\arrow[dll, "\ind{H}{G}\cur{\tilde h}"] &  & \ind{H}{G}\cur{\res{H}{G}\cur{M_{3}}} \\
         & \ind{H}{G}\cur{M} &  &  &  & 
    \end{tikzcd}
    }.
    \]
    So, $f=\ind{H}{G}\cur{\tilde f}\circ i_{M_1}=\ind{H}{G}\cur{\tilde h}\circ\ind{H}{G}\cur{\res{H}{G}\cur{d_{1}}}\circ i_{M_1}=\ind{H}{G}\cur{\tilde h}\circ i_{M_2}\circ d_1$. Let $h=\ind{H}{G}\cur{\tilde h}\circ i_{M_2}:M_2\to\ind{H}{G}(M)$, hence we have found a $G$-module homomorphism, $h$, such that $f=h\circ d_1$. Therefore, $\ind{H}{G}(M)$ is $(G,K)$-injective.
\end{proof}

Applying \Cref{iriri} to the case $K=H$ we get the following:

\begin{corollary}\label{iri}
    Let $M$ be an $H$-module, then $\ind{H}{G}(M)$ is $(G,H)$-injective.
\end{corollary}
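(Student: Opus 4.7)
The plan is to derive this corollary as a direct specialization of \Cref{iriri} with the choice $K = H$. Under this specialization, the intersection $H \cap K$ collapses to $H$, so the theorem's hypothesis reduces to the requirement that $M$ be $(H,H)$-injective as an $H$-module, and its conclusion becomes exactly that $\ind{H}{G}(M)$ is $(G,H)$-injective, which is what the corollary claims.

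The only point left to verify is that the hypothesis is automatic, i.e.\ that every $H$-module $M$ is $(H,H)$-injective. For this I would simply invoke the observation made earlier in the section for the analogous situation with $G$ in place of $H$: any $(H,H)$-exact sequence is by definition split as a sequence of $H$-modules, so given $f : M_1 \to M$ one can form $h = f \circ \pi$ where $\pi : M_2 \to M_1$ is an $H$-module splitting of $d_1$, and then $h \circ d_1 = f$. This argument is purely formal and does not use anything specific to $G$, so it carries over verbatim to $H$.

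Combining these two observations, the hypotheses of \Cref{iriri} with $K = H$ are trivially satisfied for every $H$-module $M$, and the corollary follows at once. There is no real obstacle here; the corollary is essentially a restatement of \Cref{iriri} once one notices that the relative injectivity condition with respect to the full group is vacuous.
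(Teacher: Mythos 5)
Your proposal is correct and matches the paper's own route exactly: the corollary is stated as the specialization $K=H$ of \Cref{iriri}, with the hypothesis discharged by the earlier observation that every module is relatively injective with respect to the full group. Nothing further is needed.
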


We remark that in \textup{\cite[Proposition 2.1]{Kim65}}, it is shown that if $M$ is a $G$-module, then $\ind{H}{G}(M)\cong M\otimes k[G]^H$ is $(G,H)$-injective.

\begin{corollary}\label{eim}\textup{\cite[Proposition 2.2]{Kim65}}
    Every $G$-module $M$ can be embedded into a $(G,H)$-injective module such that it is a direct summand as an $H$-module.
\end{corollary}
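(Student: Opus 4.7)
The plan is to produce the embedding using the unit of the induction--restriction adjunction. Given a $G$-module $M$, set $I=\ind{H}{G}\cur{\res{H}{G}(M)}$. By \Cref{iri} applied to the $H$-module $\res{H}{G}(M)$, the module $I$ is $(G,H)$-injective, so it is the natural candidate for the ambient module.

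Next I would take the unit of adjunction $i_M\colon M\to\ind{H}{G}\cur{\res{H}{G}(M)}$ coming from the adjunction $\ind{H}{G}\dashv\res{H}{G}$ (this is precisely the map denoted $i_{M_1}$ in the proof of \Cref{iriri}). Using the explicit description $\ind{H}{G}(V)=\{f:G\to V\mid f(gh)=h^{-1}f(g)\}$, the unit sends $m\in M$ to the function $g\mapsto g^{-1}\cdot m$, which is immediately seen to be injective (evaluating at the identity recovers $m$).

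To finish, I would exhibit an $H$-module retraction of $i_M$, which will show that the image of $M$ is a direct $H$-summand of $I$. The evaluation-at-$1$ map $\varepsilon\colon \ind{H}{G}\cur{\res{H}{G}(M)}\to \res{H}{G}(M)$, sending $f\mapsto f(1)$, is an $H$-module homomorphism (this is essentially the counit of the adjunction, viewed after restriction to $H$). The composition $\varepsilon\circ\res{H}{G}(i_M)$ sends $m\in M$ to $(g\mapsto g^{-1}m)$ evaluated at $1$, namely $m$, so it equals $\id_M$ as an $H$-module map. Thus $M$ is a direct summand of $I$ as an $H$-module, and $i_M$ is the desired embedding.

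The argument is essentially formal once one has \Cref{iri}; there is no real obstacle, only bookkeeping of the adjunction. The only point that requires a small amount of care is verifying that the evaluation map $\varepsilon$ is indeed $H$-equivariant and that $\varepsilon\circ\res{H}{G}(i_M)=\id$, but both follow directly from the defining condition $f(gh)=h^{-1}f(g)$ on sections in $\ind{H}{G}(-)$.
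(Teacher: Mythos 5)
Your proof is correct and follows essentially the same route as the paper: both use the unit $i_M\colon M\to\ind{H}{G}\cur{\res{H}{G}(M)}$ of the restriction--induction adjunction as the embedding, with the (restricted) counit of evaluation at $1$ providing the $H$-module retraction, and \Cref{iri} supplying relative injectivity of the target; your explicit check of injectivity and of $\varepsilon\circ\res{H}{G}(i_M)=\id_M$ just spells out what the paper leaves to the triangle identity. Only note the small notational slip: the adjunction is $\res{H}{G}\dashv\ind{H}{G}$ (induction is the \emph{right} adjoint), not $\ind{H}{G}\dashv\res{H}{G}$, though the maps you actually use are the correct ones.
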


\begin{proof}
    Let $M$ be a $G$-module and consider $\id_{\res{H}{G}(M)}:\res{H}{G}(M)\to \res{H}{G}(M)$. Since $\ind{H}{G}$ is right adjoint to $\res{H}{G}$, there exists a unique $\pi_{\res{H}{G}(M)}:\res{H}{G}\cur{\ind{H}{G}\cur{\res{H}{G}(M)}}\to\res{H}{G}(M)$ and some $i_{M}:M\to\ind{H}{G}\cur{\res{H}{G}(M)}$, such that the following diagram commutes: 
    \[
    \adjustbox{scale=1}{%
    \begin{tikzcd}
        \res{H}{G}\cur{\ind{H}{G}\cur{\res{H}{G}(M)}}\arrow[r, "\pi_{\res{H}{G}(M)}"] & \res{H}{G}(M) \\
         & \res{H}{G}(M)\arrow[ul, "\res{H}{G}(i_{M})"]\arrow[u, "\id_{\res{H}{G}(M)}", swap]
    \end{tikzcd}
    }.
    \]
    So it is clear that $M$ is a direct summand as an $H$-module of $\ind{H}{G}(M)$, which is $(G,H)$-injective by \Cref{iri}.
\end{proof}
\subsection{}

Using the above results, we can now characterize all relative injective modules in terms of induction.

\begin{lemma}\label{riiffds}
    A $G$-module $M$ is $(G,H)$-injective if and only if $M$ is a direct summand of some $\ind{H}{G}(N)$.
\end{lemma}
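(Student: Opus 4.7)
The plan is to prove both directions using the results already established in the paper, with the forward direction being essentially a packaging of \Cref{eim} and \Cref{srses}, and the backward direction following directly from \Cref{iri} and \Cref{dsim}.

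For the easier ($\Leftarrow$) direction, I would start by assuming $M$ is a direct summand of $\ind{H}{G}(N)$ for some $H$-module $N$. By \Cref{iri}, $\ind{H}{G}(N)$ is $(G,H)$-injective, and then \Cref{dsim} immediately gives that $M$ is $(G,H)$-injective. This is essentially a two-line argument.

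For the ($\Rightarrow$) direction, suppose $M$ is $(G,H)$-injective. The natural candidate for $N$ is $\res{H}{G}(M)$, since \Cref{eim} (applied to the $G$-module $M$) produces an embedding $i_M : M \to \ind{H}{G}(\res{H}{G}(M))$ which splits as an $H$-module homomorphism. Thus the short exact sequence
\[
\adjustbox{scale=1}{
\begin{tikzcd}
    0 \arrow[r] & M \arrow[r, "i_M"] & \ind{H}{G}\cur{\res{H}{G}(M)} \arrow[r] & \Coker(i_M) \arrow[r] & 0
\end{tikzcd}
}
\]
is $(G,H)$-exact, since $M$ sits inside $\ind{H}{G}(\res{H}{G}(M))$ as an $H$-direct summand. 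Because $M$ is $(G,H)$-injective by assumption, I can now apply \Cref{srses} to conclude that this $(G,H)$-exact sequence splits as a sequence of $G$-modules, exhibiting $M$ as a $G$-module direct summand of $\ind{H}{G}(\res{H}{G}(M))$, so taking $N = \res{H}{G}(M)$ completes the proof.

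There is no significant obstacle here: the lemma is a clean consequence of the preceding results. The only mild point to watch is to verify that the embedding in \Cref{eim} does in fact produce a \emph{short} $(G,H)$-exact sequence (i.e.\ that having an $H$-module splitting of the inclusion of $M$ is the same as the kernel of the quotient map being an $H$-direct summand), but this follows immediately from the definition of $(G,H)$-exactness recalled at the start of \Cref{Definitions and Basic Properties}.
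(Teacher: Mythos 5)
Your proposal is correct and matches the paper's own proof: the backward direction via \Cref{iri} and \Cref{dsim}, and the forward direction by embedding $M$ into $\ind{H}{G}\cur{\res{H}{G}(M)}$ via \Cref{eim} and splitting the resulting $(G,H)$-exact sequence with \Cref{srses}. No gaps to note.
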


\begin{proof}
    Assume $M$ is $(G,H)$-injective, by \Cref{eim} the sequence
    \[
    \adjustbox{scale=1}{%
    \begin{tikzcd}
        0\arrow[r] & M\arrow[r, "i"] & \ind{H}{G}(M)\arrow[r, "p"] & \ind{H}{G}(M)/\IM(i)\arrow[r] & 0 
    \end{tikzcd}
    }
    \]
    is $(G,H)$-exact. It then follows by \Cref{srses} that $M$ is a direct summand of $\ind{H}{G}(M)$.

    Now assume that $M$ is a direct summand of some $\ind{H}{G}(N)$. Then, by \Cref{iri} and \Cref{dsim}, $M$ is $(G,H)$-injective.
\end{proof}

\begin{lemma}\label{triri}\textup{\cite[Proposition 2.1]{Kim65}}
    Let $M$ and $I$ be $G$-modules such that $I$ is $(G,H)$-injective, then $I\otimes M$ is $(G,H)$-injective.
\end{lemma}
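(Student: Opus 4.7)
The plan is to combine the new characterization of relative injectives from \Cref{riiffds} with the tensor identity for induction. Since the statement concerns $(G,H)$-injectivity and we now know such modules are exactly direct summands of induced modules, it suffices to verify the result for $I = \ind{H}{G}(N)$ and then invoke \Cref{dsim} to pass to direct summands.

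First, I would apply \Cref{riiffds} to write $I$ as a direct summand of some $\ind{H}{G}(N)$, where $N$ is an $H$-module. Then $I \otimes M$ is a direct summand of $\ind{H}{G}(N) \otimes M$ as a $G$-module, since tensoring with $M$ (over $k$) preserves direct sum decompositions. Next, I would invoke the tensor identity \cite[I.3.6]{Jan03}, which gives a natural $G$-module isomorphism
\[
\ind{H}{G}(N) \otimes M \;\cong\; \ind{H}{G}\cur{N \otimes \res{H}{G}(M)}.
\]
By \Cref{iri}, the right-hand side is $(G,H)$-injective, so $\ind{H}{G}(N) \otimes M$ is $(G,H)$-injective. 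Finally, since $I \otimes M$ is a direct summand of a $(G,H)$-injective module, \Cref{dsim} implies that $I \otimes M$ is $(G,H)$-injective.

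There is no real obstacle here: the only nontrivial input is the tensor identity, which is a standard result from \cite[I.3]{Jan03} that the paper has already pointed to for properties of $\ind{H}{G}$. The proof essentially bootstraps from the structural result \Cref{riiffds} rather than arguing diagrammatically from the definition, which is a cleaner alternative to Kimura's original treatment in \cite[Proposition 2.1]{Kim65}.
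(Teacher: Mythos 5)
Your proposal is correct and is essentially the paper's own argument: the paper also writes $I$ as a direct summand of an induced module (taking $N=I$ via \Cref{riiffds}), applies the tensor identity $\ind{H}{G}(N)\otimes M\cong\ind{H}{G}\cur{N\otimes\res{H}{G}(M)}$, and concludes with \Cref{iri} and \Cref{dsim}. The only cosmetic difference is that you keep an arbitrary $N$ from \Cref{riiffds} rather than using $\ind{H}{G}(I)$ itself, which changes nothing in the logic.
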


\begin{proof}
    $I$ is a direct summand of $\ind{H}{G}(I)$, so $I\otimes M$ is a direct summand of $\ind{H}{G}(I)\otimes M\cong\ind{H}{G}\cur{I\otimes\res{H}{G}(M)}$. So by \Cref{iri} and \Cref{dsim} $I\otimes M$ is $(G,H)$-injective.
\end{proof}
\subsection{Relative Injective Resolutions}\label{rird}

The definition of a ``$(G,H)$-injective resolution'' was fist given by Kimura in \cite[p. 273]{Kim65}. For convenience, we will include the definition here as well.

For a $G$-module $M$, a $(G,H)$-injective resolution is a $(G,H)$-exact sequence
\[
\adjustbox{scale=1}{%
\begin{tikzcd}
    0\arrow[r] & M\arrow[r] & I_{0}\arrow[r] & I_{1}\arrow[r] & \cdots 
\end{tikzcd}
}
\]
such that each $I_{j}$ is $(G,H)$-injective.

\begin{lemma}\textup{\cite[pp. 273-274]{Kim65}}
    Every $G$-module has a $(G,H)$-injective resolution.
\end{lemma}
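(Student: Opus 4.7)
The plan is to build the resolution iteratively by repeatedly applying \Cref{eim}, which provides the base step and the inductive step in one stroke. The only subtlety is verifying that splicing the short sequences together preserves $(G,H)$-exactness at every internal node.

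First I would construct $I_0$. By \Cref{eim}, there is an embedding $\iota_0 : M \hookrightarrow \ind{H}{G}\cur{\res{H}{G}(M)} =: I_0$ such that $\iota_0(M)$ is a direct $H$-summand of $I_0$, and by \Cref{iri} the module $I_0$ is $(G,H)$-injective. Setting $M_1 := I_0/\iota_0(M)$, the short exact sequence
\[
\adjustbox{scale=1}{
\begin{tikzcd}
    0\arrow[r] & M\arrow[r, "\iota_0"] & I_{0}\arrow[r, "p_0"] & M_{1}\arrow[r] & 0
\end{tikzcd}
}
\]
is $(G,H)$-exact since $\Ker(p_0) = \iota_0(M)$ is an $H$-direct summand of $I_0$.

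Next I would iterate. Given $M_j$ for $j \geq 1$, apply \Cref{eim} again to obtain an embedding $\iota_j : M_j \hookrightarrow I_j := \ind{H}{G}\cur{\res{H}{G}(M_j)}$ with $\iota_j(M_j)$ an $H$-direct summand of $I_j$, and set $M_{j+1} := I_j/\iota_j(M_j)$. Define the connecting maps $d_j : I_j \to I_{j+1}$ by $d_j := \iota_{j+1}\circ p_j$, where $p_j : I_j \to M_{j+1}$ is the quotient map. This produces the spliced sequence
\[
\adjustbox{scale=1}{
\begin{tikzcd}
    0\arrow[r] & M\arrow[r, "\iota_0"] & I_{0}\arrow[r, "d_0"] & I_{1}\arrow[r, "d_1"] & I_2\arrow[r] & \cdots
\end{tikzcd}
}
\]
with each $I_j$ being $(G,H)$-injective.

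Finally I would verify $(G,H)$-exactness at each $I_j$. Since $\iota_{j+1}$ is injective, $\Ker(d_j) = \Ker(p_j) = \iota_j(M_j) = \IM(d_{j-1})$, so the sequence is exact as $G$-modules. For the relative splitting, note that $\Ker(d_j) = \iota_j(M_j)$ is by construction an $H$-direct summand of $I_j$, which is precisely the defining condition for the sequence to be $(G,H)$-exact. The same condition at the initial position $M \to I_0$ was established above. This yields the desired $(G,H)$-injective resolution.

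I do not expect a significant obstacle here; the single nontrivial input is \Cref{eim}, which already packages both the embedding and the $H$-splitting needed. The only thing to be careful about is phrasing the inductive step so that the $H$-direct summand property at stage $j$ is exactly what witnesses $(G,H)$-exactness at $I_j$ after splicing.
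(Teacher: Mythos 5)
Your proof is correct, but it follows a genuinely different route from the paper. The paper (following Kimura) proves the lemma by exhibiting the explicit ``relative bar resolution'': it sets $X_{-1}(M)=M$ and $X_{n}=\ind{H}{G}\cur{X_{n-1}}\cong X_{n-1}\otimes k[G]^{H}$ for $n\geq0$, with the bar-type differentials given by inserting units, and cites Kimura for the verification. You instead run the standard dimension-shifting splice: embed $M$ into $I_{0}=\ind{H}{G}\cur{\res{H}{G}(M)}$ as an $H$-direct summand via \Cref{eim}, pass to the cokernel, and iterate, checking that $\Ker(d_{j})=\iota_{j}(M_{j})$ is an $H$-direct summand of $I_{j}$, which is exactly the definition of $(G,H)$-exactness; combined with \Cref{iri} this is a complete and correct argument, and it is arguably more self-contained since the only input is the ``enough relative injectives'' statement. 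What the paper's bar resolution buys in exchange is functoriality and explicit differentials: the specific complex $X_{\bigcdot}(M)$ is used later, e.g.\ in \Cref{lcir}, where commutativity of the double complex is verified directly on the bar differentials, and in \Cref{rtp}, where the $(G,P)$-bar resolution is computed term by term and shown to be split. So your construction proves this lemma perfectly well, but it would not slot into those later arguments in place of the $(G,H)$-bar resolution.
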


This can be done naturally by taking a ``relative bar resolution'', where we define $X_{-1}(M)\\:=I_{-1}=M$, and then $X_{n}:=I_{n}=\ind{H}{G}\cur{I_{n-1}}\cong I_{n-1}\otimes k[G]^H$ for $n\geq0$. This process is already well understood and included in \cite[pp. 273-274]{Kim65}, so we will omit the details. For future reference, we will call this ``relative bar resolution'' a $(G,H)$-bar resolution of $M$.
\subsection{}

An important tool that will be needed later on is the ability to lift a map of modules to a chain map. Namely, we have the following:

\begin{lemma}\label{chmfir}
    Let
    \[
    \adjustbox{scale=1}{%
    \begin{tikzcd}
        0\arrow[r] & N\arrow[r, "d_{-1}"] & A_{0}\arrow[r, "d_{0}"] & A_{1}\arrow[r, "d_{1}"] & \cdots 
    \end{tikzcd}
    }
    \]
    be a $(G,H)$-exact sequence and $f:N\to M$ be any $G$-homomorphism, then for any $(G,H)$-injective resolution of $M$, there exists a commutative diagram
    \[
    \adjustbox{scale=1}{%
    \begin{tikzcd}
        0\arrow[r] & N\arrow[r, "d_{-1}"]\arrow[d, "f"] & A_{0}\arrow[r, "d_{0}"]\arrow[d, "\alpha_0"] & A_{1}\arrow[r, "d_{1}"]\arrow[d, "\alpha_1"] & \cdots \\
        0\arrow[r] & M\arrow[r, "\tilde d_{-1}"] & X_{0}\arrow[r, "\tilde d_{0}"] & X_{1}\arrow[r, "\tilde d_{1}"] & \cdots 
    \end{tikzcd}
    }.
    \]
    Furthermore, given
    \[
    \adjustbox{scale=1}{%
    \begin{tikzcd}
        0\arrow[r] & N\arrow[r, "d_{-1}"]\arrow[d, "f"] & A_{0}\arrow[r, "d_{0}"] & A_{1}\arrow[r, "d_{1}"] & \cdots \\
        0\arrow[r] & M\arrow[r, "\tilde d_{-1}"] & X_{0}\arrow[r, "\tilde d_{0}"] & X_{1}\arrow[r, "\tilde d_{1}"] & \cdots 
    \end{tikzcd}
    },
    \]
    where the top row is a $(G,H)$-exact sequence, the bottom row is a $(G,H)$-injective resolution of $M$, two different lifts of $f$ will result in chain maps which are homotopic. 
\end{lemma}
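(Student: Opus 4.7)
The plan is to construct both the chain lift and the homotopy by induction on degree, the input at each stage being the defining lifting property of $(G,H)$-injective modules in a single degree. The preliminary observation that makes the induction run is that, for every $n \geq -1$, the submodule $\Ker(d_{n+1}) = \im(d_n)$ is by hypothesis a direct $H$-summand of $A_{n+1}$, so the short sequence
\[
0 \to \im(d_n) \to A_{n+1} \to A_{n+1}/\im(d_n) \to 0
\]
is $(G,H)$-exact. This is the sequence against which I will apply $(G,H)$-injectivity of $X_{n+1}$ at every stage.

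For existence, the base case is to construct $\alpha_0$ by applying the $(G,H)$-injectivity of $X_0$ to the $G$-homomorphism $\tilde d_{-1} \circ f \colon N \to X_0$ along the $(G,H)$-exact inclusion $N \hookrightarrow A_0$, which yields $\alpha_0 \colon A_0 \to X_0$ with $\alpha_0 \circ d_{-1} = \tilde d_{-1} \circ f$. Inductively, having constructed $\alpha_0,\ldots,\alpha_n$ satisfying the chain-map relations, the composite $\tilde d_n \circ \alpha_n \colon A_n \to X_{n+1}$ vanishes on $\Ker(d_n) = \im(d_{n-1})$ because $\tilde d_n \circ \alpha_n \circ d_{n-1} = \tilde d_n \circ \tilde d_{n-1} \circ \alpha_{n-1} = 0$, so it descends to a $G$-homomorphism $\bar\beta \colon \im(d_n) \to X_{n+1}$. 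Applying $(G,H)$-injectivity of $X_{n+1}$ to the short $(G,H)$-exact sequence with middle term $A_{n+1}$ extends $\bar\beta$ to the desired $\alpha_{n+1} \colon A_{n+1} \to X_{n+1}$.

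For uniqueness up to homotopy, let $\alpha$ and $\alpha'$ be two lifts, set $\gamma_n = \alpha_n - \alpha'_n$, and note that $\gamma$ is a chain map with $\gamma_{-1} = 0$. The goal is to produce $s_n \colon A_n \to X_{n-1}$ (with the convention $X_{-1} = M$) satisfying $\gamma_n = \tilde d_{n-1} \circ s_n + s_{n+1} \circ d_n$. Set $s_0 = 0$; the $n=0$ case of the relation reads $\gamma_0 = s_1 \circ d_0$ and is satisfied by extending $\gamma_0$ (which vanishes on $\Ker(d_0) = N$ since $\gamma_0 \circ d_{-1} = \tilde d_{-1} \circ \gamma_{-1} = 0$) to $s_1$ via $(G,H)$-injectivity of $X_0$. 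Inductively, given $s_0,\ldots,s_n$ satisfying the relation in lower degrees, I check that $\gamma_n - \tilde d_{n-1} \circ s_n$ vanishes on $\Ker(d_n) = \im(d_{n-1})$: on $d_{n-1}(a)$ it equals $\tilde d_{n-1}\gamma_{n-1}(a) - \tilde d_{n-1}(\gamma_{n-1}(a) - \tilde d_{n-2} s_{n-1}(a)) = 0$. Hence it descends to $\im(d_n)$ and is extended to $s_{n+1}$ via $(G,H)$-injectivity of $X_n$.

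The main point requiring care is that at every stage the short sequence extracted from the long top row is genuinely $(G,H)$-exact, which is precisely the direct-$H$-summand property of its kernels; once this is in hand, both constructions are the standard diagram chase and the content reduces to verifying the chain-map and chain-homotopy identities in each degree.
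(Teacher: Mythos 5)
Your proof is correct and is exactly the argument the paper has in mind: the paper omits the details, remarking that the proof is ``the same as in the normal case,'' and your write-up is precisely that standard comparison-theorem induction, with the one relative subtlety (that each short sequence $0\to\im(d_n)\to A_{n+1}\to A_{n+1}/\im(d_n)\to 0$ is $(G,H)$-exact because $\Ker(d_{n+1})=\im(d_n)$ is a direct $H$-summand of $A_{n+1}$) correctly identified and used to invoke $(G,H)$-injectivity of the $X_j$ at each stage.
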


Proving the above is exactly the same as the in the normal case, so we will omit the details.
\subsection{}

In order to construct a relative Grothendieck Spectral Sequence, it will be important that given a relative long exact sequence of modules, one has a double complex, given by taking relative injective resolutions of each module in the long exact sequence, such that each row will be relative exact.

\begin{proposition}\label{lcir}
    Let
    \[
    \adjustbox{scale=1}{%
    \begin{tikzcd}
        0\arrow[r] & M_{0}\arrow[r, "d_{0}"] & M_{1}\arrow[r, "d_{1}"] & M_{2}\arrow[r, "d_{2}"] & \cdots 
    \end{tikzcd}
    }
    \]
    be a $(G,H)$-exact sequence. Then there exists $(G,H)$-injective resolutions of each $M_{n}$ making the following diagram commute,
    \[
    \adjustbox{scale=1}{%
    \begin{tikzcd}
         & 0\arrow[d] & 0\arrow[d] & 0\arrow[d] & \\
        0\arrow[r] & M_{0}\arrow[r, "d_{0}"]\arrow[d] & M_{1}\arrow[r, "d_{1}"]\arrow[d] & M_{2}\arrow[r, "d_{2}"]\arrow[d] & \cdots \\
        0\arrow[r] & I_{0,0}\arrow[r]\arrow[d] & I_{1,0}\arrow[r]\arrow[d] & I_{2,0}\arrow[r]\arrow[d] & \cdots \\
        0\arrow[r] & I_{0,1}\arrow[r]\arrow[d] & I_{1,1}\arrow[r]\arrow[d] & I_{2,1}\arrow[r]\arrow[d] & \cdots \\
         & \vdots & \vdots & \vdots & 
    \end{tikzcd}
    }
    \]
    and such that each row is $(G,H)$-exact.
\end{proposition}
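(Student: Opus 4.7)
The plan is to adapt the classical Cartan--Eilenberg construction to the relative setting, combining a \emph{relative horseshoe lemma} with an inductive assembly driven by the image/kernel decomposition of the long sequence. The relative horseshoe lemma I need is: given a $(G,H)$-exact short sequence $0 \to A \to B \to C \to 0$ together with $(G,H)$-injective resolutions $A \to I^\bullet_A$ and $C \to I^\bullet_C$, there exists a $(G,H)$-injective resolution $B \to I^\bullet_B$ with $I^k_B = I^k_A \oplus I^k_C$ and each row $0 \to I^k_A \to I^k_B \to I^k_C \to 0$ split. The construction mirrors the classical one: the $G$-embedding $A \to I^0_A$ extends to a $G$-homomorphism $B \to I^0_A$ by applying the $(G,H)$-injectivity of $I^0_A$ directly to the given $(G,H)$-exact sequence, and one pairs this with the composite $B \twoheadrightarrow C \to I^0_C$ to produce $B \hookrightarrow I^0_A \oplus I^0_C$; relative injectivity of $I^k_B$ is \Cref{dsri}, and the induction continues on the cokernels.

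For the long exact sequence, I set $K_n := \Ker(d_n) = \IM(d_{n-1})$ for $n \geq 1$, with $K_1 = M_0$. Since the original sequence is $(G,H)$-exact, each $K_n$ is a direct $H$-summand of $M_n$, so each
\[
0 \to K_n \to M_n \to K_{n+1} \to 0
\]
is $(G,H)$-exact. Fix a $(G,H)$-injective resolution $J_{n, \bullet}$ of each $K_n$, which exists by \Cref{rird}. For $n \geq 1$, apply the relative horseshoe lemma to the above short sequence to produce a $(G,H)$-injective resolution $I_{n, \bullet}$ of $M_n$ with $I_{n, k} = J_{n, k} \oplus J_{n+1, k}$; set $I_{0, k} := J_{1, k}$. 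Define the horizontal differentials $I_{n, k} \to I_{n+1, k}$ by $(j, j') \mapsto (j', 0)$ (and $j \mapsto (j, 0)$ for $n = 0$). Each row is then
\[
0 \to J_{1, k} \to J_{1, k} \oplus J_{2, k} \to J_{2, k} \oplus J_{3, k} \to \cdots,
\]
which is $G$-split exact at every position (the kernel $\{(j, 0)\}$ is the first summand, complemented by $\{(0, j')\}$), hence $(G,H)$-exact. Commutativity of the squares follows directly from the horseshoe description of the vertical differentials, which are lower-triangular in the decomposition $I_{n, k} = J_{n, k} \oplus J_{n+1, k}$; compatibility with the original top row uses the factorization $d_n: M_n \xrightarrow{\pi_n} K_{n+1} \hookrightarrow M_{n+1}$ together with the explicit form of the horseshoe embedding of $M_n$ into $J_{n,0} \oplus J_{n+1,0}$.

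The principal obstacle is the inductive step in the relative horseshoe lemma: one must verify that the embedding $B \hookrightarrow I^0_B$ is itself $(G,H)$-exact, so that the next cokernel short sequence is $(G,H)$-exact and the argument iterates. This reduces to producing an explicit $H$-retraction $I^0_A \oplus I^0_C \to B$, which can be pieced together from the $H$-retractions of $A \hookrightarrow B$, $A \hookrightarrow I^0_A$, and $C \hookrightarrow I^0_C$ (all given by the $(G,H)$-exactness hypotheses) by essentially inverting the horseshoe embedding formula. With the relative horseshoe lemma in hand, the rest of the argument is formal book-keeping of the horseshoe construction iterated across the long exact sequence.
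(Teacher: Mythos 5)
Your construction is correct, but it takes a genuinely different route from the paper. The paper's proof is functorial and avoids any horseshoe argument: it resolves every $M_n$ by the canonical relative bar resolution $I_{n,j}=X_j\cur{M_n}\cong M_n\otimes\cur{k[G]^H}^{\otimes(j+1)}$ of Section~\ref{rird}, takes the horizontal maps to be $d_n\otimes\id$, notes that each row is then the original $(G,H)$-exact sequence tensored with a fixed module and hence still $(G,H)$-exact (as in \Cref{tetretre}), and checks commutativity of the squares by a one-line computation with the bar differential. Your Cartan--Eilenberg argument instead splices the long sequence into the $(G,H)$-exact short sequences $0\to K_n\to M_n\to K_{n+1}\to 0$ and runs a relative horseshoe lemma; this is resolution-agnostic (you may prescribe arbitrary $(G,H)$-injective resolutions of the kernels), it makes the rows split even as $G$-modules, and it essentially recovers \Cref{cir} in the same stroke, at the price of having to establish the relative horseshoe lemma, which the paper never needs (and which matters later, since the bar-resolution form of this proposition is what gets reused in the spectral sequence construction). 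One caution on your ``principal obstacle'': two separate verifications are needed at each stage, not one, namely (i) that $B\hookrightarrow I^0_A\oplus I^0_C$ is $H$-split, so the middle column is $(G,H)$-exact, and (ii) that the induced cokernel sequence $0\to A_1\to B_1\to C_1\to 0$ is again $(G,H)$-exact, so that relative injectivity of $I^1_A$ applies at the next step; (i) does not formally imply (ii). Both do go through by explicit formulas of the kind you indicate: writing $\iota r+sp=\id_B$ for the $H$-splitting of $0\to A\to B\to C\to 0$, $\rho_A,\rho_C$ for the $H$-retractions of the augmentations, and $\tilde\epsilon\colon B\to I^0_A$ for the $G$-extension of $\epsilon_A$, the map $\sigma(x,y)=\iota\rho_A\cur{x-\tilde\epsilon s\rho_C(y)}+s\rho_C(y)$ is an $H$-retraction for (i), and $(x,y)\mapsto q_A\cur{x-\tilde\epsilon s\rho_C(y)}$ (with $q_A\colon I^0_A\to A_1$ the quotient) kills the image of $B$ and descends to an $H$-retraction for (ii). Your remaining claims are fine: since the vertical differential is triangular, the horizontal map is the quotient chain map $I_{n,\bullet}\twoheadrightarrow J_{n+1,\bullet}$ followed by the subcomplex inclusion $J_{n+1,\bullet}\hookrightarrow I_{n+1,\bullet}$, hence a chain map, and compatibility with the top row follows from $\tilde\epsilon_{n+1}\iota_{n+1}=\epsilon_{K_{n+1}}$ and $\pi_{n+1}d_n=0$.
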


\begin{proof}
    Let $I_{n,j}=X_{j}\cur{M_{n}}$, as in Section~\ref{rird}, for all $n$ and $j$. Then consider the maps $d_{n,j}:X_{j}\cur{M_{n}}\to X_{j}\cur{M_{n+1}}$ where $d_{n,j}\cur{m\otimes f_1\otimes\cdots\otimes f_{j+1}}=d_{n}(m)\otimes f_1\otimes\cdots\otimes f_{j+1}$. Hence, by the assumption of $d_{n}$,
    \[
    \adjustbox{scale=1}{%
    \begin{tikzcd}
        0\arrow[r] & X_{j}\cur{M_{0}}\arrow[r, "d_{0,j}"] & X_{j}\cur{M_{1}}\arrow[r, "d_{1,j}"] & X_{j}\cur{M_{2}}\arrow[r, "d_{2,j}"] & \cdots 
    \end{tikzcd}
    }
    \]
    is $(G,H)$-exact.

    Next, one can see that
    \begin{align*}
        \del_{n+1,j}\cur{d_{n,j}\cur{m\otimes f_1\otimes\cdots\otimes f_{j+1}}}=&\del_{n+1,j}\cur{d_{n}\cur{m}\otimes f_1\otimes\cdots\otimes f_{j+1}}\\
        =&d_{n}\cur{m}\otimes1\otimes f_{1}\otimes\cdots\otimes f_{j+1}\\
        &+\suml{i=1}{j}(-1)^{i}d_{n}\cur{m}\otimes f_{1}\otimes\cdots\otimes f_{i}\otimes1\otimes f_{i+1}\cdots\otimes f_{j+1}\\
        &+(-1)^{j+1}d_{n}\cur{m}\otimes f_{1}\otimes\cdots\otimes\cdots\otimes f_{j+1}\otimes1\\
        =&d_{n,j+1}\cur{\del_{n,j}\cur{m\otimes f_1\otimes\cdots\otimes f_{j+1}}},
    \end{align*}
    where $\del_{n,j}$ is the map which comes from defining a $(G,H)$-bar resolution, so the diagram commutes.
\end{proof}

When constructing a relative Grothendieck Spectral Sequence, it will be important to have such a construction in the case where we start with a $(G,H)$ short exact sequence, and obtain each row being split-exact. Namely, we will need to following:

\begin{lemma}\label{cir}\textup{\cite[p. 90]{Kim66}}
    Let
    \[
    \adjustbox{scale=1}{%
    \begin{tikzcd}
        0\arrow[r] & M_{0}\arrow[r, "d_{0}"] & M_{1}\arrow[r, "d_{1}"] & M_{2}\arrow[r] & 0 
    \end{tikzcd}
    }
    \]
    be a $(G,H)$-exact sequence. Then there exists $(G,H)$-injective resolutions of each $M_{n}$ making the following diagram commute,
    \[
    \adjustbox{scale=1}{%
    \begin{tikzcd}
         & 0\arrow[d] & 0\arrow[d] & 0\arrow[d] & \\
        0\arrow[r] & M_{0}\arrow[r, "d_{0}"]\arrow[d] & M_{1}\arrow[r, "d_{1}"]\arrow[d] & M_{2}\arrow[r]\arrow[d] & 0 \\
        0\arrow[r] & I_{0,0}\arrow[r]\arrow[d] & I_{1,0}\arrow[r]\arrow[d] & I_{2,0}\arrow[r]\arrow[d] & 0 \\
        0\arrow[r] & I_{0,1}\arrow[r]\arrow[d] & I_{1,1}\arrow[r]\arrow[d] & I_{2,1}\arrow[r]\arrow[d] & 0 \\
         & \vdots & \vdots & \vdots & 
    \end{tikzcd}
    }
    \]
    and such that each row is $(G,H)$-exact.

    Furthermore,
    \[
    \adjustbox{scale=1}{%
    \begin{tikzcd}
        0\arrow[r] & I_{0,j}\arrow[r] & I_{1,j}\arrow[r] & I_{2,j}\arrow[r] & 0
    \end{tikzcd}
    }
    \]
    splits for all $j$.
\end{lemma}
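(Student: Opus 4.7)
The plan is to invoke \Cref{lcir} for the commutative diagram with $(G,H)$-exact rows, and then verify the extra splitting condition by exploiting the explicit $(G,H)$-bar resolution from Section~\ref{rird}. Concretely, I would take $I_{n,j} = X_{j}(M_n)$, defined iteratively by $X_{-1}(M_n) = M_n$ and $X_{j}(M_n) = \ind{H}{G}(\res{H}{G}(X_{j-1}(M_n)))$, with horizontal map $d_{n,j} : I_{n,j} \to I_{n+1,j}$ equal to $\ind{H}{G}(\res{H}{G}(d_{n,j-1}))$ and base case $d_{n,-1} = d_n$; this is just the functoriality of the bar construction in the input module. Commutativity of the resulting squares with the vertical bar differentials is exactly what is checked in the proof of \Cref{lcir}, so that part is already in hand.

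With the resolutions fixed, I would prove that the row at each level $j$ splits as a sequence of $G$-modules by induction on $j$, using the general principle that any functor sends a split short exact sequence to a split short exact sequence (since $F(f) \circ F(s) = F(\id) = \id$). For the base case $j = 0$, the $(G,H)$-exactness of the original sequence furnishes an $H$-module section $s : \res{H}{G}(M_2) \to \res{H}{G}(M_1)$ of the restricted surjection; then $\ind{H}{G}(s) : I_{2,0} \to I_{1,0}$ is a $G$-module section of $d_{1,0} = \ind{H}{G}(\res{H}{G}(d_1))$. For the inductive step, if the row at level $j-1$ admits a $G$-module section $s_{j-1}$, then $\ind{H}{G}(\res{H}{G}(s_{j-1}))$ is a $G$-module section of the horizontal surjection at level $j$, because the level-$j$ horizontal maps are, by construction, $\ind{H}{G} \circ \res{H}{G}$ applied to those at level $j-1$.

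I expect the only non-trivial step to be bookkeeping: one must verify that the horizontal differentials of the $(G,H)$-bar double complex really do arise as iterates of $\ind{H}{G} \circ \res{H}{G}$ applied to the $d_n$'s (as opposed to the vertical differentials, which have simplicial form and do not admit this clean description). Once the functoriality of the bar construction in the horizontal direction is made explicit, the splitting of each row follows formally from the fact that functors preserve sections, with no further representation-theoretic input.
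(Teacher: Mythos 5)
Your proposal is correct, and its first half --- taking the bar double complex supplied by \Cref{lcir} --- is exactly what the paper does. Where you genuinely diverge is the splitting claim. The paper's proof is two lines: each row $0\to X_{j}\cur{M_{0}}\to X_{j}\cur{M_{1}}\to X_{j}\cur{M_{2}}\to 0$ is $(G,H)$-exact by \Cref{lcir}, and its first term $X_{j}\cur{M_{0}}$ is $(G,H)$-injective (it is an induced module, \Cref{iri}), so the row splits by \Cref{srses}. You instead identify the horizontal differentials as iterates of $\ind{H}{G}\circ\res{H}{G}$ applied to the $d_{n}$ --- which is indeed what the formula $d_{n,j}\cur{m\otimes f_{1}\otimes\cdots\otimes f_{j+1}}=d_{n}(m)\otimes f_{1}\otimes\cdots\otimes f_{j+1}$ amounts to, via naturality of the tensor identity $\ind{H}{G}\cur{\res{H}{G}(M)}\cong M\otimes k[G]^{H}$ --- and then propagate a section up the tower by functoriality, seeded by the $H$-module section that $(G,H)$-exactness of the original sequence provides. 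Both arguments are valid. The paper's is shorter and more robust: it never needs a description of the horizontal maps, and it would apply to any diagram as in \Cref{lcir} whose bottom rows have $(G,H)$-injective first terms. Yours carries the identification bookkeeping you flagged, but in exchange produces explicit $G$-module sections of each row and makes the horizontal functoriality of the bar construction explicit, which is only implicit in the paper.
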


\begin{proof}
    Because of \Cref{lcir}, we only need to show that
    \begin{equation}\label{eq:1}
        \adjustbox{scale=1}{%
        \begin{tikzcd}
        0\arrow[r] & X_{j}\cur{M_{0}}\arrow[r, "d_{0,j}"] & X_{j}\cur{M_{1}}\arrow[r,   "d_{1,j}"] & X_{j}\cur{M_{2}}\arrow[r] & 0 
        \end{tikzcd}
        }
    \end{equation}
    splits. But since Sequence~(\ref{eq:1}) is $(G,H)$-exact and $X_{j}\cur{M_{0}}$ is $(G,H)$-injective, the result follows from \Cref{srses}.
\end{proof}
\subsection{Induction of Relative Exact Sequences}

We now want to prove what will be a important result in order to have a Grothendieck spectral sequence. But first we state a already known result.

\begin{theorem}\label{riiir}\textup{\cite[Theorem 4.1]{CPS83}}
    Let $k$ be algebraically closed, $H$ and $K$ be closed subgroups of $G$ such that $HK=G$, and $M$ be an $H$-module, then $$\res{K}{G}\cur{\ind{H}{G}\cur{M}}\cong\ind{H\cap K}{K}\cur{\res{H\cap K}{H}\cur{M}}.$$
\end{theorem}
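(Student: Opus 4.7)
The plan is to construct an explicit isomorphism by restriction of functions, and then upgrade it from a bijection of $k$-points to an isomorphism of rational $K$-modules using the scheme-theoretic content of the hypothesis $HK=G$.

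Using the function-space realization, one has $\ind{H}{G}(M)=\{f:G\to M\mid f(gh)=h^{-1}f(g) \text{ for all } g\in G,\ h\in H\}$ with $K$ acting by left translation, and similarly $\ind{H\cap K}{K}(\res{H\cap K}{H}(M))=\{\phi:K\to M\mid \phi(kx)=x^{-1}\phi(k) \text{ for all } k\in K,\ x\in H\cap K\}$. I would define the natural candidate
\[
\Phi:\res{K}{G}\cur{\ind{H}{G}\cur{M}}\to\ind{H\cap K}{K}\cur{\res{H\cap K}{H}\cur{M}},\qquad \Phi(f)=f|_K.
\]
That $\Phi(f)$ lands in the target and that $\Phi$ is $K$-equivariant are immediate from the defining relations.

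For injectivity on $k$-points, the hypothesis $HK=G$ together with $k$ algebraically closed implies every $g\in G(k)$ factors as $g=kh$ with $k\in K$, $h\in H$; if $\Phi(f)=0$ then $f(kh)=h^{-1}f(k)=0$, so $f$ vanishes. For surjectivity, given $\phi$ I would set $\tilde f(kh):=h^{-1}\phi(k)$. Well-definedness follows because any two factorizations $kh=k'h'$ differ by an element $x\in H\cap K$ via $k'=kx$ and $h'=x^{-1}h$, and the $(H\cap K)$-equivariance of $\phi$ exactly cancels this ambiguity; then $\tilde f$ satisfies the required $H$-equivariance and visibly restricts to $\phi$.

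The main obstacle is upgrading this $k$-point bijection to an isomorphism of rational $K$-modules, which is the actual scheme-theoretic content of the theorem. The cleanest route is to exploit that $HK=G$ makes the multiplication morphism $\mu:K\times H\to G$ surjective with fibres equal to the orbits of the diagonal action $x\cdot(k,h)=(kx,x^{-1}h)$ of $H\cap K$, so $G\cong(K\times H)/(H\cap K)$ as a $(K,H)$-biequivariant variety. Equivalently, I would pass to the comodule/tensor realization $\ind{H}{G}(M)\cong(M\otimes k[G])^H$ and use that $\mu^\ast$ identifies $k[G]$ with $(k[K]\otimes k[H])^{H\cap K}$; taking $H$-fixed points on $M\otimes-$ and then rewriting $(M\otimes k[H])^H\cong M$ (which is just Frobenius reciprocity for $H$) delivers $(M\otimes k[K])^{H\cap K}\cong\ind{H\cap K}{K}(\res{H\cap K}{H}(M))$, now as $K$-modules, with the natural map being exactly $\Phi$.
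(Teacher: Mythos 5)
The paper itself gives no proof of this statement; it is imported wholesale from \cite[Theorem 4.1]{CPS83}, so the comparison is really with the CPS argument, which your final paragraph essentially reconstructs. Your first two paragraphs (the restriction map $\Phi$ and the bijection on $k$-points) are correct but carry no weight on their own --- the function $\tilde f(kh)=h^{-1}\phi(k)$ is only defined pointwise, and its regularity is precisely the issue --- and you acknowledge this. The real argument is your last paragraph, and it is sound provided you justify the two assertions on which it rests: (i) that $\mu\colon K\times H\to G$ realizes $G$ as the quotient $\cur{K\times H}/\cur{H\cap K}$ --- this is the orbit-map theorem for the action $(k,h)\cdot g=kgh^{-1}$, whose stabilizer of $1$ is a copy of the \emph{scheme-theoretic} intersection $H\cap K$ (essential in characteristic $p$: with the reduced intersection the comparison map could be an inseparable bijection and the conclusion could fail), combined with the observation that surjectivity on $k$-points, $k=\bar{k}$, and local closedness of orbits force the orbit to be all of $G$; and (ii) faithful flatness of the quotient morphism, which is what identifies $k[G]$ with $\cur{k[K]\otimes k[H]}^{H\cap K}$ via $\mu^*$. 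These should be cited (to \cite{CPS83} itself or to the quotient theory in \cite[I.5]{Jan03}) rather than asserted, since they, not the Mackey-style bookkeeping, are the actual content of the theorem. Granting them, your two-stage passage to invariants is correct: the $H$-action (on $M$ and by right translation on $k[H]$) commutes with the $H\cap K$-action (by right translation on $k[K]$ and left translation on $k[H]$), and the evaluation isomorphism $\cur{M\otimes k[H]}^{H}\cong M$ transports the residual $H\cap K$-action to the restricted action on $M$, yielding $\cur{M\otimes k[K]}^{H\cap K}\cong\ind{H\cap K}{K}\cur{\res{H\cap K}{H}\cur{M}}$ as $K$-modules, with the composite map being exactly your $\Phi$. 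So the proposal is correct in substance and follows the same geometric route as the cited source.
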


Now, using this result, we can show that under the right conditions, induction will take relative exact sequences to relative exact sequences.

\begin{theorem}\label{iores}
    Let $k$ be algebraically closed, $H$ and $K$ be closed subgroups of $G$ such that $HK=G$, and
    \[
    \adjustbox{scale=1}{%
    \begin{tikzcd}
        \cdots\arrow[r] & M_{i-1}\arrow[r, "d_{i-1}"] & M_{i}\arrow[r, "d_{i}"] & M_{i+1}\arrow[r, "d_{i+1}"] & \cdots 
    \end{tikzcd}
    }
    \]
    be an $(H,H\cap K)$-exact sequence, then
    \[
    \adjustbox{scale=1}{%
    \begin{tikzcd}
        \cdots\arrow[r] & \ind{H}{G}\cur{M_{i-1}}\arrow[r] & \ind{H}{G}\cur{M_{i}}\arrow[r] & \ind{H}{G}\cur{M_{i+1}}\arrow[r] & \cdots 
    \end{tikzcd}
    }
    \]
    is $(G,K)$-exact.
\end{theorem}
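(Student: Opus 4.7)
The plan is to use the $(H,H\cap K)$-exactness of the given sequence to produce a contracting homotopy in $H\cap K$-modules, push this homotopy through the induction functor $\ind_{H\cap K}^{K}$, and then transport the resulting splitting to $\res_{K}^{G}\ind_{H}^{G}$ of the original sequence via the CPS isomorphism (\Cref{riiir}).

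First I would use the equivalent formulation of $(H,H\cap K)$-exactness: there exist $H\cap K$-module homomorphisms $t_{i}:M_{i}\to M_{i-1}$ such that $d_{i-1}t_{i}+t_{i+1}d_{i}=\id_{M_{i}}$. Since $\ind_{H\cap K}^{K}$ is an additive functor, the maps $\ind_{H\cap K}^{K}(t_{i})$ are $K$-module homomorphisms satisfying the analogous identity
\[
\ind_{H\cap K}^{K}(d_{i-1})\,\ind_{H\cap K}^{K}(t_{i})+\ind_{H\cap K}^{K}(t_{i+1})\,\ind_{H\cap K}^{K}(d_{i})=\id.
\]
Hence the induced sequence of $K$-modules splits at every term.

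Next, by \Cref{riiir} there is a natural isomorphism $\res_{K}^{G}\ind_{H}^{G}(-)\cong\ind_{H\cap K}^{K}\res_{H\cap K}^{H}(-)$ of functors from $\Mod{H}$ to $\Mod{K}$. Transporting the homotopy constructed above along this natural isomorphism produces $K$-module homomorphisms $\tilde t_{i}:\res_{K}^{G}\ind_{H}^{G}(M_{i})\to\res_{K}^{G}\ind_{H}^{G}(M_{i-1})$ which contract $\res_{K}^{G}$ of the induced sequence. In particular, the sequence of $\ind_{H}^{G}(M_{i})$'s is exact as a sequence of $k$-vector spaces (exactness of $G$-module sequences being tested at the underlying vector-space level, which is detected by any restriction functor), so it is exact as a sequence of $G$-modules; and the $\tilde t_{i}$ furnish the $K$-module contracting homotopy required for $(G,K)$-exactness.

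The main (mild) obstacle is verifying that the CPS isomorphism transports the homotopy coherently, i.e.\ that the $K$-module maps $\tilde t_{i}$ really satisfy $\ind_{H}^{G}(d_{i-1})\tilde t_{i}+\tilde t_{i+1}\ind_{H}^{G}(d_{i})=\id$; this is immediate from the naturality of the isomorphism in \Cref{riiir} applied to the $H$-module maps $d_{i}$ and $t_{i}$, so no hard computation is involved. Once this is observed, the conclusion that $\ind_{H}^{G}$ sends $(H,H\cap K)$-exact sequences to $(G,K)$-exact sequences follows directly.
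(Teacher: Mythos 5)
Your proposal is correct and follows essentially the same route as the paper: both arguments transfer the splitting over $H\cap K$ through the additive functor $\ind{H\cap K}{K}\circ\res{H\cap K}{H}$ and then identify the result with $\res{K}{G}\circ\ind{H}{G}$ via the natural isomorphism of \Cref{riiir}. The only cosmetic difference is that you phrase the splitting via the contracting-homotopy formulation of relative exactness, while the paper uses the equivalent direct-summand formulation together with the fact that induction commutes with direct sums.
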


\begin{proof}
    Consider an $(H,H\cap K)-$exact sequence
    \[
    \adjustbox{scale=1}{%
    \begin{tikzcd}
        \cdots\arrow[r] & M_{i-1}\arrow[r, "d_{i-1}"] & M_{i}\arrow[r, "d_{i}"] & M_{i+1}\arrow[r, "d_{i+1}"] & \cdots. 
    \end{tikzcd}
    }
    \]
    So, $\Ker\cur{d_{i}}$ is an $H\cap K$ direct summand of $M_{i}$, and since induction commutes with direct sums, $\ind{H\cap K}{K}\cur{\res{H\cap K}{H}\cur{\Ker\cur{d_{i}}}}$ is a $K$ direct summand of $\ind{H\cap K}{K}\cur{\res{H\cap K}{H}\cur{M_{i}}}$. Therefore, the sequence
    \begin{equation}\label{eq:2}
    \adjustbox{scale=1}{%
    \begin{tikzcd}
        \cdots\arrow[r] & \ind{H\cap K}{K}\cur{\res{H\cap K}{H}\cur{M_{i-1}}}\arrow[r]\arrow[d, phantom, ""{coordinate, name=Z0}] & \ind{H\cap K}{K}\cur{\res{H\cap K}{H}\cur{M_{i}}}\arrow[dl, rounded corners, to path={ -- ([xshift=2ex]\tikztostart.east)|- (Z0) [near end]\tikztonodes-|([xshift=-2ex]\tikztotarget.west) -- (\tikztotarget)}] \\
        & \ind{H\cap K}{K}\cur{\res{H\cap K}{H}\cur{M_{i+1}}}\arrow[r] & \cdots 
    \end{tikzcd}
    }
    \end{equation}
    is $(K,K)$-exact. Now, by \Cref{riiir}, Sequence~(\ref{eq:2}) is isomorphic to
    \[
    \adjustbox{scale=1}{%
    \begin{tikzcd}
        \cdots\arrow[r] & \res{K}{G}\cur{\ind{H}{G}\cur{M_{i-1}}}\arrow[r]\arrow[d, phantom, ""{coordinate, name=Z0}] & \res{K}{G}\cur{\ind{H}{G}\cur{M_{i}}}\arrow[dl, rounded corners, to path={ -- ([xshift=2ex]\tikztostart.east)|- (Z0) [near end]\tikztonodes-|([xshift=-2ex]\tikztotarget.west) -- (\tikztotarget)}] \\
        & \res{K}{G}\cur{\ind{H}{G}\cur{M_{i+1}}}\arrow[r] & \cdots 
    \end{tikzcd}
    }
    \]
    which will also be $(K,K)$-exact. So clearly
    \[
    \adjustbox{scale=1}{%
    \begin{tikzcd}
        \cdots\arrow[r] & \ind{H}{G}\cur{M_{i-1}}\arrow[r] & \ind{H}{G}\cur{M_{i}}\arrow[r] & \ind{H}{G}\cur{M_{i+1}}\arrow[r] & \cdots 
    \end{tikzcd}
    }
    \]
    is $(G,K)$-exact.
\end{proof}

Applying \Cref{iriri} and \Cref{iores} we obtain the following:

\begin{corollary}\label{rirx2}
    Let $k$ be algebraically closed, and $H$ and $K$ be closed subgroups of $G$ such that $HK=G$. For $M$ an $H$-module and $I_{\bigcdot}$ an $(H,H\cap K)$-injective resolution of $M$, $\ind{H}{G}\cur{I_{\bigcdot}}$ is a $(G,K)$-injective resolution of $\ind{H}{G}\cur{M}$.
\end{corollary}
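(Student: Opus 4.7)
The plan is to assemble the corollary directly from the two preceding results, \Cref{iriri} and \Cref{iores}, since the hypotheses of both are already in place.

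First I would record the starting data: by assumption we have an $(H, H\cap K)$-exact sequence
\[
\adjustbox{scale=1}{
\begin{tikzcd}
    0\arrow[r] & M\arrow[r] & I_0\arrow[r] & I_1\arrow[r] & \cdots
\end{tikzcd}
}
\]
in which each $I_i$ is $(H, H\cap K)$-injective. Applying the induction functor $\ind{H}{G}$ termwise produces the candidate resolution
\[
\adjustbox{scale=1}{
\begin{tikzcd}
    0\arrow[r] & \ind{H}{G}(M)\arrow[r] & \ind{H}{G}(I_0)\arrow[r] & \ind{H}{G}(I_1)\arrow[r] & \cdots.
\end{tikzcd}
}
\]

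The next step is to verify exactness of this sequence in the appropriate relative sense. Since $k$ is algebraically closed and $HK = G$, \Cref{iores} applies, so the induced sequence above is $(G,K)$-exact.

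Finally, I would show that each term $\ind{H}{G}(I_i)$ is $(G,K)$-injective. This is immediate from \Cref{iriri}: each $I_i$ is $(H, H\cap K)$-injective by hypothesis, and \Cref{iriri} states precisely that this condition on an $H$-module guarantees $(G,K)$-injectivity of its induction. Combining these two facts gives that $\ind{H}{G}(I_\bigcdot)$ is a $(G,K)$-injective resolution of $\ind{H}{G}(M)$.

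There is no real obstacle here: the corollary is essentially a packaging of the two theorems it cites, which is exactly why it is phrased as a corollary. The only subtlety worth noting explicitly in the write-up is that the hypothesis $HK = G$ (and the algebraic closedness of $k$) is needed specifically to invoke \Cref{iores}, whereas the injectivity part via \Cref{iriri} would be valid without the product condition. So the short proof is just: apply \Cref{iriri} termwise, then apply \Cref{iores} to the whole resolution.
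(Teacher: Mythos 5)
Your proposal is correct and matches the paper exactly: the paper derives \Cref{rirx2} by the same two-step argument, citing \Cref{iriri} for termwise $(G,K)$-injectivity of the induced modules and \Cref{iores} (using $k$ algebraically closed and $HK=G$) for $(G,K)$-exactness of the induced sequence.
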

\subsection{}

%The following lemma should be compared to Proposition~2.4 in \cite{Kim65}. We note two differences, first that we have different conditions on our subgroups, and second that our result is for arbitrary characteristic, while Kimura's is only for characteristic zero.

The following proposition should be compared to Proposition~4.2, Proposition~4.3, and Proposition~4.4 in \cite{Kim66}. Note that we have different conditions on our subgroups than that of \cite{Kim66}, and when comparing to the later two, note that our result is for arbitrary characteristic, while Kimura's are only for characteristic zero.

\begin{proposition}
    Let $H$ and $K$ be closed subgroups of $G$ such that $HK=G$, and let $I$ be $(G,K)$-injective. Then $I$ is $(H,H\cap K)$-injective. Furthermore, for any $H$-module $N$, we have that $I\otimes N$ is $(H,H\cap K)$-injective.
\end{proposition}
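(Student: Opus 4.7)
The plan is to reduce everything to the characterization of relative injectives as direct summands of induced modules (\Cref{riiffds}) and then apply the Mackey-type decomposition from \Cref{riiir}. Throughout I assume $k$ is algebraically closed, since this is needed to invoke \Cref{riiir}; this hypothesis on $k$ appears to be tacit in the statement.

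For the first assertion, I would start by using \Cref{riiffds} to write the $(G,K)$-injective module $I$ as a $G$-direct summand of $\ind{K}{G}(N)$ for some $K$-module $N$. Restricting to $H$ preserves this decomposition, so $I$ is an $H$-direct summand of $\res{H}{G}\cur{\ind{K}{G}(N)}$. Since $HK=G$, \Cref{riiir} yields the $H$-module isomorphism
\[
\res{H}{G}\cur{\ind{K}{G}(N)}\cong \ind{H\cap K}{H}\cur{\res{H\cap K}{K}(N)}.
\]
The right-hand side is $(H,H\cap K)$-injective by \Cref{iri} (applied to the algebraic group $H$ with closed subgroup $H\cap K$). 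Since $I$ is an $H$-direct summand of this module, \Cref{dsim} gives that $I$ is $(H,H\cap K)$-injective, which is exactly what we wanted.

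For the second assertion, the first part tells us that $I$, viewed as an $H$-module, is $(H,H\cap K)$-injective. Now apply \Cref{triri} inside the category $\Mod{H}$ with the algebraic group $H$ and its closed subgroup $H\cap K$: the tensor product of the $(H,H\cap K)$-injective $H$-module $I$ with any $H$-module $N$ is again $(H,H\cap K)$-injective. This gives the claim.

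The only subtlety I foresee is the dependence on \Cref{riiir}, whose proof requires $k$ algebraically closed; once that tool is available, the argument is essentially formal bookkeeping between induction, restriction, and direct summands. In particular, neither step requires any new diagram chase or calculation beyond what is already encoded in Lemmas \ref{dsim} and \ref{triri} together with Corollary \ref{iri}.
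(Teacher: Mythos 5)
Your proposal is correct and follows essentially the same route as the paper's own proof: write $I$ as a direct summand of $\ind{K}{G}(N)$ via \Cref{riiffds}, restrict and apply \Cref{riiir}, conclude via \Cref{iri} and \Cref{dsim}, and handle the tensor statement with \Cref{triri}. Your observation that the algebraically closed hypothesis on $k$ is tacitly needed to invoke \Cref{riiir} is a fair and accurate remark.
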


\begin{proof}
    First, if $I$ is $(H,H\cap K)$-injective, from \Cref{triri} it is clear that for any $H$-module $N$, $I\otimes N$ is $(H,H\cap K)$-injective. So, we only need to show that $I$ is $(H,H\cap K)$-injective. By \Cref{riiffds}, $I$ is a direct summand of some $\ind{K}{G}(M)$, so clearly $\res{H}{G}(I)$ is a direct summand of $\res{H}{G}\cur{\ind{K}{G}(M)}$. Now by \Cref{riiir}, $\res{H}{G}\cur{\ind{K}{G}(M)}\cong\ind{H\cap K}{H}\cur{\res{H\cap K}{K}(M)}$, which is $(H,H\cap K)$-injective by \Cref{iri}. Therefore, from \Cref{dsim}, $I$ is $(H,H\cap K)$-injective.
\end{proof}
\section{Relative Right Derived Functor}\label{Relative Right Derived Functor}

\subsection{}\label{relative cohomology}

Let $G$ and $G'$ be algebraic groups, $H$ a closed subgroup of $G$, and $F$ be an additive left exact covariant functor from $G$-modules to $G'$-modules. The definition of a relative cohomology was first given by Kimura in \cite[p. 273]{Kim65}. We note that Kimura only gave the definition for when $F=\Hom{G}{M,-}$, but the same definition works more generally. For convenience, we will include the definition here as well. Define a relative right derived functor of $F$ in the following way. Let $M$ be a $G$-module and consider a $(G,H)$-injective resolution
\[
\adjustbox{scale=1}{%
\begin{tikzcd}
    0\arrow[r] & M\arrow[r] & I_{0}\arrow[r] & I_{1}\arrow[r] & \cdots. 
\end{tikzcd}
}
\]
Apply $F$ to get the complex
\[
\adjustbox{scale=1}{%
\begin{tikzcd}
    0\arrow[r] & F\cur{I_{0}}\arrow[r] & F\cur{I_{1}}\arrow[r] & \cdots. 
\end{tikzcd}
}
\]
Define $\textup{R}_{(G,H)}^{i}F\cur{M}$, to be the $i$-th cohomology of the above complex. Note that if $k$ is algebraically closed, then $\textup{R}_{(G,\{1\})}^{i}F\cur{M}=\textup{R}^{i}F\cur{M}$.

For the case when $F=\Hom{G}{N,-}$, denote $\textup{R}_{(G,H)}^{i}\Hom{G}{N,M}$ as $\Ext{(G,H)}{i}\cur{N,M}$.
\subsection{}

Similarly to $\textup{R}^{i}F\cur{M}$, it can be shown that $\textup{R}_{(G,H)}^{i}F\cur{M}$ is well defined, $\textup{R}_{(G,H)}^{0}F\cur{M}\cong F\cur{M}$, and given a short exact sequence of $G$-modules, there is a long exact sequence of $G'$-modules. For completeness we will include the statements and proofs.

\begin{lemma}\label{rrdfawd}
    $\textup{R}_{(G,H)}^{i}F\cur{M}$ is well defined (i.e. independent of choice of injective resolution) for $i\geq0$.
\end{lemma}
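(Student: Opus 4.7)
The plan is to mimic the classical proof that right derived functors are well defined, with \Cref{chmfir} supplying the relative analog of the comparison theorem for injective resolutions.

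First, let
\[
\adjustbox{scale=1}{
\begin{tikzcd}
    0\arrow[r] & M\arrow[r] & I_{0}\arrow[r] & I_{1}\arrow[r] & \cdots
\end{tikzcd}
}
\qquad\text{and}\qquad
\adjustbox{scale=1}{
\begin{tikzcd}
    0\arrow[r] & M\arrow[r] & J_{0}\arrow[r] & J_{1}\arrow[r] & \cdots
\end{tikzcd}
}
\]
be two $(G,H)$-injective resolutions of $M$. Applying the first half of \Cref{chmfir} to the identity $\id_{M}\colon M\to M$ (viewing $I_{\bigcdot}$ as the $(G,H)$-exact sequence and $J_{\bigcdot}$ as the $(G,H)$-injective resolution, and then swapping the roles), I obtain chain maps $\alpha\colon I_{\bigcdot}\to J_{\bigcdot}$ and $\beta\colon J_{\bigcdot}\to I_{\bigcdot}$ both lifting $\id_{M}$.

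Second, the compositions $\beta\circ\alpha\colon I_{\bigcdot}\to I_{\bigcdot}$ and $\alpha\circ\beta\colon J_{\bigcdot}\to J_{\bigcdot}$ are chain maps lifting $\id_{M}$, and so are the identity chain maps $\id_{I_{\bigcdot}}$ and $\id_{J_{\bigcdot}}$. By the furthermore part of \Cref{chmfir}, any two such lifts are chain homotopic, so there exist $G$-module homotopies $s_{\bigcdot}$ and $t_{\bigcdot}$ with $\beta\circ\alpha-\id_{I_{\bigcdot}}=d\,s+s\,d$ and $\alpha\circ\beta-\id_{J_{\bigcdot}}=d\,t+t\,d$.

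Third, apply the additive functor $F$. Additivity of $F$ preserves the homotopy identities, giving
\[
F(\beta)\circ F(\alpha)-\id_{F(I_{\bigcdot})}=d\,F(s)+F(s)\,d,
\]
and similarly on $F(J_{\bigcdot})$. Hence $F(\alpha)$ and $F(\beta)$ induce mutually inverse isomorphisms on cohomology, so $\textup{H}^{i}\bigl(F(I_{\bigcdot})\bigr)\cong\textup{H}^{i}\bigl(F(J_{\bigcdot})\bigr)$. A standard check (running the same argument for two different lifts of a morphism $M\to M'$) shows that the isomorphism is canonical, so $\textup{R}_{(G,H)}^{i}F(M)$ is well defined.

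The only potentially delicate point is that \Cref{chmfir} must be invoked with $J_{\bigcdot}$ and $I_{\bigcdot}$ each playing the role of both the $(G,H)$-exact sequence and of the $(G,H)$-injective resolution; this is fine because a $(G,H)$-injective resolution \emph{is} a $(G,H)$-exact sequence by definition. Apart from this, the argument is entirely parallel to the classical proof, since the only features of injectivity used are the existence of lifts and the uniqueness of lifts up to homotopy, both of which are supplied by \Cref{chmfir} in the relative setting.
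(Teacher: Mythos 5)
Your argument is correct and follows essentially the same route as the paper: lift $\id_{M}$ in both directions via \Cref{chmfir}, use the uniqueness-up-to-homotopy statement to show the composites are homotopic to the identities, and apply the additive functor $F$ to conclude the induced maps on cohomology are mutually inverse isomorphisms. Your observation that a $(G,H)$-injective resolution is itself a $(G,H)$-exact sequence, so \Cref{chmfir} applies with the roles swapped, is exactly the point implicitly used in the paper's proof.
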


\begin{proof}
    Consider two $(G,H)$-injective resolutions of $M$,
    \[
    \adjustbox{scale=1}{%
    \begin{tikzcd}
        0\arrow[r] & M\arrow[r] & I_{0}\arrow[r] & I_{1}\arrow[r] & \cdots 
    \end{tikzcd}
    }
    \]
    and
    \[
    \adjustbox{scale=1}{%
    \begin{tikzcd}
        0\arrow[r] & M\arrow[r] & J_{0}\arrow[r] & J_{1}\arrow[r] & \cdots. 
    \end{tikzcd}
    }
    \]
    Next, consider the identity map, $\id_{M}:M\to M$, which lifts by \Cref{chmfir} to give chain maps $f$ and $g$ making the following diagram commute
    \[
    \adjustbox{scale=1}{%
    \begin{tikzcd}
        0\arrow[r] & I_{0}\arrow[r]\arrow[d, "f_{0}"] & I_{1}\arrow[r]\arrow[d, "f_{1}"] & I_{2}\arrow[r]\arrow[d, "f_{2}"] & \cdots \\
        0\arrow[r] & J_{0}\arrow[r]\arrow[d, "g_{0}"] & J_{1}\arrow[r]\arrow[d, "g_{1}"] & J_{2}\arrow[r]\arrow[d, "g_{2}"] & \cdots \\
        0\arrow[r] & I_{0}\arrow[r] & I_{1}\arrow[r] & I_{2}\arrow[r] & \cdots 
    \end{tikzcd}
    }.
    \]
    Now, $g\circ f$ is also a chain map that lifts from $\id_{M}$, but the identity map, i.e., $\id_{n}:I_n\to I_n$ for all $n$, is also a lift of $\id_{M}$. Hence, $g\circ f$ is homotopic to $\id_{I_{\bigcdot}}$. Now apply $F$ to everything, so $F\cur{g\circ f}$ and $F\cur{\id_{I_{\bigcdot}}}$ will also be homotopic and induce the same maps on homology. Therefore, the following diagram commutes
    \[
    \adjustbox{scale=1}{%
    \begin{tikzcd}
        H^{n}\cur{F\cur{I_{\bigcdot}}}\arrow[r, "F\cur{f}"]\arrow[dr, "F\cur{\id_{I_{\bigcdot}}}", swap] & H^{n}\cur{F\cur{J_{\bigcdot}}}\arrow[d, "F\cur{g}"] \\
         & H^{n}\cur{F\cur{I_{\bigcdot}}}
    \end{tikzcd}
    }.
    \]
    Using the same argument shows that the diagram
    \[
    \adjustbox{scale=1}{%
    \begin{tikzcd}
        H^{n}\cur{F\cur{J_{\bigcdot}}}\arrow[r, "F\cur{g}"]\arrow[dr, "F\cur{\id_{J_{\bigcdot}}}", swap] & H^{n}\cur{F\cur{I_{\bigcdot}}}\arrow[d, "F\cur{f}"] \\
         & H^{n}\cur{F\cur{J_{\bigcdot}}}
    \end{tikzcd}
    }
    \]
    also commutes and so $H^{n}\cur{F\cur{I_{\bigcdot}}}\cong H^{n}\cur{F\cur{J_{\bigcdot}}}$. Therefore, $\textup{R}_{(G,H)}^{i}F\cur{M}$ is well defined.
\end{proof}

\begin{observation}\label{rf0}
    $\textup{R}_{(G,H)}^{0}F\cur{M}\cong F\cur{M}$.
\end{observation}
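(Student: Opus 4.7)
The plan is to exploit the left-exactness of $F$ together with the fact that any $(G,H)$-exact sequence is, in particular, exact as a sequence of $G$-modules; this reduces \Cref{rf0} to a direct unpacking of the definition.

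Starting from any $(G,H)$-injective resolution $0 \to M \xrightarrow{d_{-1}} I_0 \xrightarrow{d_0} I_1 \to \cdots$ of $M$, by construction
$$\textup{R}_{(G,H)}^{0}F\cur{M} \;=\; H^{0}\bigl(F(I_\bullet)\bigr) \;=\; \ker\bigl(F(d_0)\colon F(I_0) \to F(I_1)\bigr),$$
so the task is to identify this kernel with $F(M)$. I would do this by splitting off the short exact sequence $0 \to M \to I_0 \to \IM(d_0) \to 0$ and applying the left-exactness of $F$ to obtain exactness of $0 \to F(M) \to F(I_0) \to F(\IM(d_0))$; then, applying $F$ to the inclusion $0 \to \IM(d_0) \hookrightarrow I_1$ gives an injection $F(\IM(d_0)) \hookrightarrow F(I_1)$. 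Composing identifies $\ker(F(d_0))$ with the image of the injection $F(d_{-1})\colon F(M) \to F(I_0)$, which is canonically $F(M)$. \Cref{rrdfawd} then guarantees that the resulting isomorphism is independent of the chosen resolution.

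The only thing to watch out for is the observation that $(G,H)$-exactness implies ordinary exactness of the underlying sequence of $G$-modules, so nothing beyond the classical left-exactness hypothesis on $F$ is needed; there is no genuine obstacle here, which is consistent with the statement being labelled merely an observation.
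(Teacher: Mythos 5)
Your proposal is correct and follows essentially the same route as the paper: both identify $\textup{R}_{(G,H)}^{0}F\cur{M}$ with $\Ker\cur{F\cur{d_{0}}}$ and then use left-exactness of $F$ on the (ordinarily exact) beginning of the resolution to conclude $\Ker\cur{F\cur{d_{0}}}=\IM\cur{F\cur{d_{-1}}}\cong F\cur{M}$. The only difference is that you spell out the factorization of $d_{0}$ through $\IM\cur{d_{0}}$ and the well-definedness via \Cref{rrdfawd}, which the paper compresses into a single line.
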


\begin{proof}
    Consider a $(G,H)$-injective resolution of $M$,
    \[
    \adjustbox{scale=1}{%
    \begin{tikzcd}
        0\arrow[r] & M\arrow[r, "d_{-1}"] & I_{0}\arrow[r, "d_{0}"] & I_{1}\arrow[r, "d_{1}"] & \cdots, 
    \end{tikzcd}
    }
    \]
    and apply $F$ to get 
    \[
    \adjustbox{scale=1}{%
    \begin{tikzcd}
        0\arrow[r] & F\cur{M}\arrow[r, "F\cur{d_{-1}}"] & F\cur{I_{0}}\arrow[r, "F\cur{d_{0}}"] & F\cur{I_{1}}\arrow[r, "F\cur{d_{1}}"] & \cdots. 
    \end{tikzcd}
    }
    \]
    $\textup{R}_{(G,H)}^{0}F\cur{M}=\Ker\cur{F\cur{d_{0}}}$, and since $F$ is left exact, $\Ker\cur{F\cur{d_{0}}}=\IM\cur{F\cur{d_{-1}}}=F\cur{M}$.
\end{proof}

\begin{proposition}\label{rles1}
    Let $F$ be an additive left exact covariant functor from $G$-modules to $G'$-modules and
    \[
    \adjustbox{scale=1}{%
    \begin{tikzcd}
        0\arrow[r] & M'\arrow[r, "g"] & M\arrow[r, "h"] & M''\arrow[r] & 0 
    \end{tikzcd}
    }
    \]
    be a $(G,H)$-exact sequence. Then there exists a long exact sequence of $G'$-modules
    \[
    \adjustbox{scale=1}{%
    \begin{tikzcd}
        0\arrow[r] & F\cur{M'}\arrow[r] & F\cur{M}\arrow[r]\arrow[d, phantom, ""{coordinate, name=Z}] & F\cur{M''}\arrow[dll, rounded corners, to path={ -- ([xshift=2ex]\tikztostart.east)|- (Z) [near end]\tikztonodes-|([xshift=-2ex]\tikztotarget.west) -- (\tikztotarget)}] & \\
         & \textup{R}_{(G,H)}^{1}F\cur{M'}\arrow[r] & \textup{R}_{(G,H)}^{1}F\cur{M}\arrow[r] & \textup{R}_{(G,H)}^{1}F\cur{M''}\arrow[r] & \cdots 
    \end{tikzcd}
    }.
    \]
\end{proposition}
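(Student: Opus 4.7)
The plan is to reduce to the standard long exact sequence in cohomology coming from a short exact sequence of cochain complexes. The key ingredient is \Cref{cir}, which supplies compatible $(G,H)$-injective resolutions of $M'$, $M$, and $M''$ whose induced rows are not just $(G,H)$-exact but in fact split as sequences of $G$-modules.

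Concretely, first apply \Cref{cir} to the given $(G,H)$-short exact sequence to obtain $(G,H)$-injective resolutions $I'_{\bigcdot}$, $I_{\bigcdot}$, $I''_{\bigcdot}$ of $M'$, $M$, $M''$ respectively such that for each $j\geq 0$ the row $0\to I'_{j}\to I_{j}\to I''_{j}\to 0$ is a split short exact sequence of $G$-modules. Since $F$ is additive, it preserves direct sums and therefore preserves splittings, so applying $F$ to each row yields a split short exact sequence $0\to F\cur{I'_{j}}\to F\cur{I_{j}}\to F\cur{I''_{j}}\to 0$ of $G'$-modules. Assembling these degreewise sequences produces a short exact sequence of cochain complexes of $G'$-modules, and the standard zigzag lemma yields a long exact sequence in cohomology. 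By definition of the relative right derived functor, the cohomology groups $H^{n}\cur{F\cur{I_{\bigcdot}}}$ equal $\textup{R}_{(G,H)}^{n}F\cur{M}$, and analogously for $M'$ and $M''$; the $n=0$ terms are naturally identified with $F\cur{M'}$, $F\cur{M}$, and $F\cur{M''}$ by \Cref{rf0}, giving the claimed long exact sequence.

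The main obstacle, and precisely why the stronger conclusion of \Cref{cir} is needed rather than merely the $(G,H)$-exactness of the rows in \Cref{lcir}, is that $F$ is not assumed to be exact. Had the rows only been $(G,H)$-exact, applying $F$ could easily destroy exactness at the middle term, preventing the formation of a short exact sequence of cochain complexes. The fact that the rows split as sequences of $G$-modules, combined with additivity of $F$, sidesteps this entirely. Well-definedness of the resulting long exact sequence (independence of the choice of resolutions and naturality of the connecting homomorphism) follows by the same chain-map-lifting argument used in the proof of \Cref{rrdfawd}, via \Cref{chmfir}.
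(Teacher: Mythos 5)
Your proposal is correct and follows essentially the same route as the paper: invoke \Cref{cir} to get compatible resolutions whose rows split, use additivity of $F$ to see the rows stay exact after applying $F$, and then apply the snake/zigzag lemma to the resulting short exact sequence of complexes, identifying the degree-zero terms via \Cref{rf0}. Your added remarks on why the splitting (rather than mere $(G,H)$-exactness) is essential, and on independence of the choice of resolutions, are accurate elaborations of points the paper leaves implicit.
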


\begin{proof}
    By \Cref{cir}, there exists a $(G,H)$-injective resolutions of $M'$, $M$, and $M''$ making the following diagram commute,
    \[
    \adjustbox{scale=1}{%
    \begin{tikzcd}
         & 0\arrow[d] & 0\arrow[d] & 0\arrow[d] & \\
        0\arrow[r] & M'\arrow[r, "g"]\arrow[d] & M\arrow[r, "h"]\arrow[d] & M''\arrow[r]\arrow[d] & 0 \\
        0\arrow[r] & I_{0}'\arrow[r]\arrow[d] & I_{0}\arrow[r]\arrow[d] & I_{0}''\arrow[r]\arrow[d] & 0 \\
        0\arrow[r] & I_{1}'\arrow[r]\arrow[d] & I_{1}\arrow[r]\arrow[d] & I_{1}''\arrow[r]\arrow[d] & 0 \\
         & \vdots & \vdots & \vdots & 
    \end{tikzcd}
    }
    \]
    such that each row is $(G,H)$-exact and such that
    \[
    \adjustbox{scale=1}{%
    \begin{tikzcd}
        0\arrow[r] & I_{n}'\arrow[r] & I_{n}\arrow[r] & I_{n}''\arrow[r] & 0
    \end{tikzcd}
    }
    \]
    splits for all $n$.

    Now, apply $F$ to get the commutative diagram
    \[
    \adjustbox{scale=1}{%
    \begin{tikzcd}
         & 0\arrow[d] & 0\arrow[d] & 0\arrow[d] & \\
        0\arrow[r] & F\cur{I_{0}'}\arrow[r]\arrow[d] & F\cur{I_{0}}\arrow[r]\arrow[d] & F\cur{I_{0}''}\arrow[r]\arrow[d] & 0 \\
        0\arrow[r] & F\cur{I_{1}'}\arrow[r]\arrow[d] & F\cur{I_{1}}\arrow[r]\arrow[d] & F\cur{I_{1}''}\arrow[r]\arrow[d] & 0 \\
        0\arrow[r] & F\cur{I_{2}'}\arrow[r]\arrow[d] & F\cur{I_{2}}\arrow[r]\arrow[d] & F\cur{I_{2}''}\arrow[r]\arrow[d] & 0 \\
         & \vdots & \vdots & \vdots & 
    \end{tikzcd}
    },
    \]
    where each row is exact. Hence, via applying the Snake Lemma, there exists a long exact sequence
    \[
    \adjustbox{scale=1}{%
    \begin{tikzcd}
        0\arrow[r] & F\cur{M'}\arrow[r] & F\cur{M}\arrow[r]\arrow[d, phantom, ""{coordinate, name=Z}] & F\cur{M''}\arrow[dll, rounded corners, to path={ -- ([xshift=2ex]\tikztostart.east)|- (Z) [near end]\tikztonodes-|([xshift=-2ex]\tikztotarget.west) -- (\tikztotarget)}] & \\
         & \textup{R}_{(G,H)}^{1}F\cur{M'}\arrow[r] & \textup{R}_{(G,H)}^{1}F\cur{M}\arrow[r] & \textup{R}_{(G,H)}^{1}F\cur{M''}\arrow[r] & \cdots 
    \end{tikzcd}
    }.
    \]
\end{proof}
\subsection{}

Under the right conditions, taking the relative right derived functor will be the same as the normal right derived functor. This provides an interesting contrast when compared to the relative cohomology of Lie algebras. There is no criterion for which the relative cohomology of Lie algebras will be the same as the normal cohomology of Lie algebras. But, a consequence of the following result is that there is criterion for which the relative cohomology of algebraic groups will be the same as the normal cohomology of algebraic groups.

\begin{proposition}\label{rtt}
    Let $F$ be an additive left exact covariant functor from $G$-modules to $G'$-modules and $H$ be a closed subgroup of $G$ such that $\Mod{H}$ is semisimple. Then $\textup{R}_{(G,H)}^{i}F\cur{M}=\textup{R}^{i}F\cur{M}$ for $i\geq0$.
\end{proposition}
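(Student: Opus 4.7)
The plan is to exploit the semisimplicity hypothesis to collapse the distinction between $(G,H)$-exact sequences and arbitrary exact sequences of $G$-modules, and simultaneously to show that every injective $G$-module is already $(G,H)$-injective. Once these two facts are in place, an ordinary injective resolution of $M$ is automatically a $(G,H)$-injective resolution, and the two derived functors are then visibly computed by the same complex.

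The first step is the following observation: if $\Mod{H}$ is semisimple, then every short exact sequence of $H$-modules splits. Consequently, given any exact sequence of $G$-modules, restricting to $H$ produces a sequence whose image at each step is a direct $H$-summand of the next term. In other words, every exact sequence of $G$-modules is $(G,H)$-exact. The second step is to note that every injective $G$-module is $(G,H)$-injective: this is \Cref{rimu} applied with the chain $\{1\}\leq H$, since an injective $G$-module is by definition $(G,\{1\})$-injective.

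Combining these two observations, if we take any injective resolution
\[
\adjustbox{scale=1}{
\begin{tikzcd}
    0\arrow[r] & M\arrow[r] & I_{0}\arrow[r] & I_{1}\arrow[r] & \cdots
\end{tikzcd}
}
\]
of $M$ by injective $G$-modules, then each $I_{j}$ is $(G,H)$-injective and the sequence itself is $(G,H)$-exact, so it is also a $(G,H)$-injective resolution of $M$. By \Cref{rrdfawd}, $\textup{R}_{(G,H)}^{i}F\cur{M}$ may be computed from this resolution, and doing so yields precisely the $i$-th cohomology of $F\cur{I_{\bigcdot}}$, which is $\textup{R}^{i}F\cur{M}$. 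Hence $\textup{R}_{(G,H)}^{i}F\cur{M}\cong\textup{R}^{i}F\cur{M}$ for all $i\geq 0$.

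There is no real obstacle in this argument; the only subtle point is recognizing that the semisimplicity of $\Mod{H}$ is used in two separate ways, namely to guarantee that the restricted sequence splits at each stage (so the resolution is $(G,H)$-exact) and implicitly through \Cref{rimu} to put each $I_{j}$ in the $(G,H)$-injective class. Once both are checked, the identification of the two derived functors is immediate.
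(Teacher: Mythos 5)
Your proof is correct and follows essentially the same route as the paper: take an ordinary injective resolution of $M$, observe that each term is $(G,H)$-injective by \Cref{rimu} and that the resolution is $(G,H)$-exact because $\Mod{H}$ is semisimple, so the same complex computes both $\textup{R}_{(G,H)}^{i}F(M)$ and $\textup{R}^{i}F(M)$. One minor correction to your closing remark: the step putting each $I_{j}$ in the $(G,H)$-injective class via \Cref{rimu} does not use semisimplicity at all (that lemma holds for any closed subgroups $\{1\}\leq H\leq G$), so the hypothesis is used only once, to guarantee $(G,H)$-exactness of the resolution.
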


\begin{proof}
    Consider an injective resolution of $M$,
    \[
    \adjustbox{scale=1}{%
    \begin{tikzcd}
        0\arrow[r] & M\arrow[r, "d_{-1}"] & I_{0}\arrow[r, "d_{0}"] & I_{1}\arrow[r, "d_{1}"] & \cdots. 
    \end{tikzcd}
    }
    \]
    By \Cref{rimu}, each $I_{i}$ is $(G,H)$-injective, and since $\Mod{H}$ is semisimple, the sequence splits as an $H$-module, and so is $(G,H)$-exact. Hence the injective resolution is also a $(G,H)$-injective resolution, and so the result follows.
\end{proof}

Note that this means under these conditions any result for $\textup{R}^{i}F\cur{M}$ will be true for $\textup{R}_{(G,H)}^{i}F\cur{M}$.
\subsection{Relative Acyclic Modules}

We call a $G$-module $M$, $(G,H)$-acyclic to a functor $F:\Mod{G}\to\Mod{G'}$ if $\textup{R}_{(G,H)}^{i}F\cur{M}=0$ for all $i>0$. The following two results are basic generalizations that will be useful for the general theory.

\begin{lemma}\label{rif0}
    If $M$ is $(G,H)$-injective, then $\textup{R}_{(G,H)}^{i}F\cur{M}=0$ for all $i>0$.
\end{lemma}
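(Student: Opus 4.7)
The plan is to exploit the fact that the relative right derived functor is independent of the choice of $(G,H)$-injective resolution (\Cref{rrdfawd}) by choosing the trivial resolution of $M$ by itself. Concretely, since $M$ is $(G,H)$-injective by hypothesis, I want to take the resolution
\[
\adjustbox{scale=1}{
\begin{tikzcd}
    0\arrow[r] & M\arrow[r, "\id_{M}"] & M\arrow[r] & 0\arrow[r] & 0\arrow[r] & \cdots
\end{tikzcd}
}
\]
with $I_{0}=M$ and $I_{j}=0$ for $j\geq 1$.

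First I would verify that this really is a $(G,H)$-injective resolution in the sense of \Cref{rird}. Each term is $(G,H)$-injective: $I_{0}=M$ is $(G,H)$-injective by assumption, and the zero module is trivially $(G,H)$-injective. It remains to check $(G,H)$-exactness, that is, that each kernel is an $H$-direct summand of the preceding term. But $\Ker(\id_{M})=0$, $\Ker(M\to 0)=M$, and $\Ker(0\to 0)=0$, so each kernel is trivially a direct $H$-summand. Hence the sequence is indeed $(G,H)$-exact and qualifies as a $(G,H)$-injective resolution of $M$.

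Next I apply $F$ to obtain the complex
\[
\adjustbox{scale=1}{
\begin{tikzcd}
    0\arrow[r] & F\cur{M}\arrow[r] & 0\arrow[r] & 0\arrow[r] & \cdots,
\end{tikzcd}
}
\]
whose cohomology is $F\cur{M}$ in degree $0$ and vanishes in every positive degree. By \Cref{rrdfawd}, $\textup{R}_{(G,H)}^{i}F\cur{M}$ may be computed from any $(G,H)$-injective resolution, so this yields $\textup{R}_{(G,H)}^{i}F\cur{M}=0$ for all $i>0$, as desired. There is no real obstacle here; the only thing to check carefully is that the trivial resolution satisfies the $(G,H)$-exactness condition, which is immediate from the definition.
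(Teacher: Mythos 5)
Your proposal is correct and follows essentially the same route as the paper: take the trivial resolution $0\to M\xrightarrow{\id_M} M\to 0\to\cdots$, note that it is a $(G,H)$-injective resolution of $M$ since $M$ itself is $(G,H)$-injective, apply $F$, and conclude via the independence of the choice of resolution (\Cref{rrdfawd}). Your extra verification of the $(G,H)$-exactness condition is a welcome bit of care, but no new idea is involved.
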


\begin{proof}
    Consider the exact sequence
    \[
    \adjustbox{scale=1}{%
    \begin{tikzcd}
        0\arrow[r] & M\arrow[r, "\id_{M}"] & M\arrow[r] & 0\arrow[r] & \cdots.
    \end{tikzcd}
    }
    \]

    It is clear that this is $(G,H)$-exact, and thus a $(G,H)$-injective resolution of $M$. Now apply $F$ to the sequence to get
    \[
    \adjustbox{scale=1}{%
    \begin{tikzcd}
        0\arrow[r] & F\cur{M}\arrow[r] & 0\arrow[r] & \cdots.
    \end{tikzcd}
    }
    \]

    Hence, it is clear that for $i>0$, $\textup{R}_{(G,H)}^{i}F\cur{M}=0$.
\end{proof}

\begin{lemma}
    Let $M$ be a direct summand of $V$, so $V\cong M\oplus N$. If $\textup{R}_{(G,H)}^{i}F(V)=0$ for $i>0$, then $\textup{R}_{(G,H)}^{i}F(M)=0$ for $i>0$.
\end{lemma}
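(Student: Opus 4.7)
The plan is to show that relative right derived functors respect direct sums, so that the hypothesis $\textup{R}_{(G,H)}^{i}F(V)=0$ splits into vanishing on each summand. Concretely, I would argue that $\textup{R}_{(G,H)}^{i}F(M\oplus N)\cong \textup{R}_{(G,H)}^{i}F(M)\oplus \textup{R}_{(G,H)}^{i}F(N)$, from which the conclusion is immediate (a direct summand of the zero module is zero).

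The first step is to construct a $(G,H)$-injective resolution of $V$ from resolutions of the summands. Let
\[
\adjustbox{scale=1}{
\begin{tikzcd}
    0\arrow[r] & M\arrow[r] & I_{0}\arrow[r] & I_{1}\arrow[r] & \cdots
\end{tikzcd}
}
\quad\textup{and}\quad
\adjustbox{scale=1}{
\begin{tikzcd}
    0\arrow[r] & N\arrow[r] & J_{0}\arrow[r] & J_{1}\arrow[r] & \cdots
\end{tikzcd}
}
\]
be $(G,H)$-injective resolutions of $M$ and $N$. By \Cref{dsri}, each $I_{n}\oplus J_{n}$ is $(G,H)$-injective. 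Moreover, taking the termwise direct sum of the two resolutions yields an exact sequence of $G$-modules whose splitting maps (as $H$-modules) are obtained by taking the direct sum of the given $H$-splittings on each factor; hence the total complex is $(G,H)$-exact. Thus we obtain a $(G,H)$-injective resolution of $V\cong M\oplus N$.

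Next I would apply $F$ and use additivity. Since $F$ is an additive functor, $F(I_{n}\oplus J_{n})\cong F(I_{n})\oplus F(J_{n})$ and the differentials are the direct sums of the individual differentials. Because cohomology of a direct sum of complexes is the direct sum of the cohomologies, and because $\textup{R}_{(G,H)}^{i}F$ is well defined by \Cref{rrdfawd}, we conclude
\[
    \textup{R}_{(G,H)}^{i}F(V)\;\cong\;\textup{R}_{(G,H)}^{i}F(M)\oplus \textup{R}_{(G,H)}^{i}F(N)
\]
for every $i\geq 0$. For $i>0$ the left-hand side vanishes by hypothesis, forcing $\textup{R}_{(G,H)}^{i}F(M)=0$, as required.

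There is essentially no real obstacle here: the only point that merits even a moment's care is verifying that the direct sum of two $(G,H)$-exact sequences is again $(G,H)$-exact, and this is immediate from the equivalent characterization of $(G,H)$-exactness by the existence of $H$-module contracting homotopies (which one sums componentwise). Everything else is a direct application of \Cref{dsri} together with standard additivity of cohomology.
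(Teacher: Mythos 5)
Your proof is correct and follows essentially the same route as the paper: take $(G,H)$-injective resolutions of the two summands, note via \Cref{dsri} that their termwise direct sum is a $(G,H)$-injective resolution of $V$, and use additivity of $F$ to split the cohomology so that vanishing for $V$ forces vanishing for $M$. Your remark verifying that the direct sum of the two $(G,H)$-exact sequences is again $(G,H)$-exact (by summing the $H$-module homotopies) is a detail the paper leaves implicit, but otherwise the arguments coincide.
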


\begin{proof}
    Let $\cur{X_{\bigcdot}(M),d_{M,\bigcdot}}$ and $\cur{X_{\bigcdot}(N),d_{N,\bigcdot}}$ be $(G,H)$-injective resolutions of $M$ and $N$ respectively. By \Cref{dsri} $\cur{X_{\bigcdot}(M)\oplus X_{\bigcdot}(N),d_{M,\bigcdot}\oplus d_{N,\bigcdot}}$ is a $(G,H)$-injective resolution of $V$. To compute $\textup{R}_{(G,H)}^{i}F(V)$, we compute the cohomology of the complex\\ $\cur{F\cur{X_{\bigcdot}(M)\oplus X_{\bigcdot}(N)},F\cur{d_{M,\bigcdot}\oplus d_{N,\bigcdot}}}\cong\cur{F\cur{X_{\bigcdot}(M)}\oplus F\cur{X_{\bigcdot}(N)},F\cur{d_{M,\bigcdot}}\oplus F\cur{d_{N,\bigcdot}}}$. So it is clear that if $\textup{R}_{(G,H)}^{i}F(V)=0$ for $i>0$, then $\textup{R}_{(G,H)}^{i}F(M)=0$ for $i>0$.
\end{proof}
\subsection{Relative Ext}

We now focus on when $F=\Hom{G}{M,-}$. Recall the notation for $\textup{R}_{(G,H)}^{i}\Hom{G}{M,N}$ is $\Ext{(G,H)}{i}\cur{M,N}$.

\begin{proposition}\textup{\cite[Proposition 2.1]{Kim66}}\label{rles2}
    Let $N$ be a $G$-module, $H$ be a closed subgroup of $G$, and 
    \[
    \adjustbox{scale=1}{%
    \begin{tikzcd}
        0\arrow[r] & M'\arrow[r, "g"] & M\arrow[r, "h"] & M''\arrow[r] & 0 
    \end{tikzcd}
    }
    \]
    a $(G,H)$-exact sequence. Then there exists a long exact sequence
    \[
    \adjustbox{scale=1}{%
    \begin{tikzcd}
        0\arrow[r] & \Hom{G}{M'',N}\arrow[r] & \Hom{G}{M,N}\arrow[r]\arrow[d, phantom, ""{coordinate, name=Z}] & \Hom{G}{M',N}\arrow[dll, rounded corners, to path={ -- ([xshift=2ex]\tikztostart.east)|- (Z) [near end]\tikztonodes-|([xshift=-2ex]\tikztotarget.west) -- (\tikztotarget)}] & \\
         & \Ext{(G,H)}{1}\cur{M'',N}\arrow[r] & \Ext{(G,H)}{1}\cur{M,N}\arrow[r] & \Ext{(G,H)}{1}\cur{M',N}\arrow[r] & \cdots 
    \end{tikzcd}
    }.
    \]
\end{proposition}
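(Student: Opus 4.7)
The plan is to run the standard long exact sequence argument, but using a $(G,H)$-injective resolution of $N$ and exploiting \Cref{hire} to convert the relative exactness of the input sequence into honest exactness after applying $\Hom{G}{-,I}$ against each relative injective term.

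First I would fix a $(G,H)$-injective resolution
\[
\adjustbox{scale=1}{
\begin{tikzcd}
0 \arrow[r] & N \arrow[r] & I_{0} \arrow[r] & I_{1} \arrow[r] & \cdots
\end{tikzcd}
}
\]
of $N$, which exists by the results of Section~\ref{rird}. By the definition of the relative right derived functor (with $F = \Hom{G}{M,-}$ and likewise for $M'$, $M''$), the groups $\Ext{(G,H)}{i}(M,N)$, $\Ext{(G,H)}{i}(M',N)$, $\Ext{(G,H)}{i}(M'',N)$ are computed as the cohomology of the cochain complexes $\Hom{G}{M,I_{\bigcdot}}$, $\Hom{G}{M',I_{\bigcdot}}$, $\Hom{G}{M'',I_{\bigcdot}}$ respectively.

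Next, for each fixed $j\geq 0$, I would apply $\Hom{G}{-,I_{j}}$ to the given $(G,H)$-exact sequence
\[
\adjustbox{scale=1}{
\begin{tikzcd}
0 \arrow[r] & M' \arrow[r,"g"] & M \arrow[r,"h"] & M'' \arrow[r] & 0.
\end{tikzcd}
}
\]
Since $I_{j}$ is $(G,H)$-injective, \Cref{hire} yields an honest short exact sequence of abelian groups
\[
\adjustbox{scale=1}{
\begin{tikzcd}
0 \arrow[r] & \Hom{G}{M'',I_{j}} \arrow[r,"-\circ h"] & \Hom{G}{M,I_{j}} \arrow[r,"-\circ g"] & \Hom{G}{M',I_{j}} \arrow[r] & 0.
\end{tikzcd}
}
\]
As $j$ varies, these assemble into a short exact sequence of cochain complexes
\[
\adjustbox{scale=1}{
\begin{tikzcd}
0 \arrow[r] & \Hom{G}{M'',I_{\bigcdot}} \arrow[r] & \Hom{G}{M,I_{\bigcdot}} \arrow[r] & \Hom{G}{M',I_{\bigcdot}} \arrow[r] & 0,
\end{tikzcd}
}
\]
with commutativity of the squares following from the naturality of $\Hom{G}{-,-}$ in the first variable.

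Finally, I would apply the standard long exact sequence in cohomology arising from a short exact sequence of cochain complexes (snake lemma iterated). Invoking \Cref{rf0} to identify $\textup{R}^0_{(G,H)}\Hom{G}{-,N}$ with $\Hom{G}{-,N}$ on each of $M'$, $M$, $M''$ yields the desired long exact sequence. The main potential obstacle is just a careful check that the connecting maps in cohomology are well defined and natural — but this is entirely formal once the short exact sequence of complexes is in place, and requires nothing beyond the now-standard snake lemma construction.
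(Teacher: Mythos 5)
Your proposal is correct and follows essentially the same route the paper indicates: take a $(G,H)$-injective resolution of $N$, apply $\Hom{G}{-,I_{\bigcdot}}$ to the given $(G,H)$-exact sequence (your use of \Cref{hire} is exactly the point that makes each row genuinely exact), and conclude with the Snake Lemma together with \Cref{rf0} for the degree-zero identification. Nothing is missing.
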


The proof of \Cref{rles2} is similar to \Cref{rles1}, where one takes a $(G,H)$-injective resolution, $I^{\bigcdot}$, of $N$, then apply $\Hom{G}{-,I^{\bigcdot}}$ to get a double complex and last apply the Snake Lemma. Since this is similar, we will  omit the details.

It is well known that a $G$-module $N$ is injective if and only if $\Hom{G}{-,N}$ is exact. We now want to give the relative analog of this result. So a $G$-module $N$ is $(G,H)$-injective if and only if $\Hom{G}{-,N}$ exact on $(G,H)$-exact sequences. This can be formulated as follows:

\begin{proposition}\label{rie0}
    Let $N$ be a $G$-module. Then the following are equivalent:
    \begin{enumerate}[(a)]
        \item $N$ is $(G,H)$-injective.
        \vspace{0.1cm}
        \item For any $G$-module $M$, $\Ext{(G,H)}{i}\cur{M,N}=0$ for all $i>0$.
        \item For any $G$-module $M$, $\Ext{(G,H)}{1}\cur{M,N}=0$.
    \end{enumerate}
\end{proposition}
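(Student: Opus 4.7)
The plan is to establish the cycle of implications (a) $\Rightarrow$ (b) $\Rightarrow$ (c) $\Rightarrow$ (a), using the vanishing lemma \Cref{rif0} for the first direction and the long exact sequence \Cref{rles2} for the converse.

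For (a) $\Rightarrow$ (b), I would apply \Cref{rif0} to $N$ with the additive left exact covariant functor $F = \Hom{G}{M,-}$. Since $\Ext{(G,H)}{i}\cur{M,N}$ is by definition $\textup{R}_{(G,H)}^{i}F\cur{N}$, and $N$ is $(G,H)$-injective by hypothesis, the desired vanishing is immediate. The implication (b) $\Rightarrow$ (c) is the trivial specialization to $i=1$.

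The substantive step is (c) $\Rightarrow$ (a). Starting from an arbitrary $(G,H)$-exact sequence
\[
\adjustbox{scale=1}{
\begin{tikzcd}
    0\arrow[r] & M_{1}\arrow[r, "d_{1}"] & M_{2}\arrow[r, "d_{2}"] & M_{3}\arrow[r] & 0
\end{tikzcd}
}
\]
and any $G$-homomorphism $f: M_{1} \to N$, the goal is to produce $h: M_{2} \to N$ with $h \circ d_{1} = f$, i.e. to verify the defining lifting property of $(G,H)$-injectivity. The plan is to feed this short exact sequence into \Cref{rles2}, which yields a long exact sequence whose initial terms are
\[
0 \to \Hom{G}{M_{3},N} \to \Hom{G}{M_{2},N} \xrightarrow{-\circ d_{1}} \Hom{G}{M_{1},N} \to \Ext{(G,H)}{1}\cur{M_{3},N} \to \cdots.
\]
Since the hypothesis (c) gives $\Ext{(G,H)}{1}\cur{M_{3},N} = 0$, the map $-\circ d_{1}: \Hom{G}{M_{2},N} \to \Hom{G}{M_{1},N}$ is surjective, and the desired preimage $h$ of $f$ exists.

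There is no real obstacle in this argument: once \Cref{rles2} is in hand, the proof is a routine translation between the vanishing of $\Ext^{1}$ and the injective lifting property, entirely parallel to the classical (non-relative) case. The only point worth flagging is that the long exact sequence of \Cref{rles2} is applicable to arbitrary $(G,H)$-exact sequences rather than merely to ordinary short exact sequences, which is precisely what lets the vanishing hypothesis in (c) imply the full relative injectivity property.
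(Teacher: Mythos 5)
Your proposal is correct and follows essentially the same route as the paper: (a)$\Rightarrow$(b) via \Cref{rif0} applied to $F=\Hom{G}{M,-}$, (b)$\Rightarrow$(c) trivially, and (c)$\Rightarrow$(a) by feeding an arbitrary $(G,H)$-exact short sequence into the long exact sequence of \Cref{rles2} and using the vanishing of $\Ext{(G,H)}{1}\cur{M_{3},N}$ to get surjectivity of $-\circ d_{1}$. No gaps.
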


\begin{proof}
    For $(a)\Rightarrow(b)$ apply \Cref{rif0} to $F=\Hom{G}{M,-}$. $(b)\Rightarrow(c)$ is obvious.

    For $(c)\Rightarrow(a)$ consider a $(G,H)$-exact sequence
    \[
    \adjustbox{scale=1}{%
    \begin{tikzcd}
        0\arrow[r] & M_{1}\arrow[r, "d_{1}"] & M_{2}\arrow[r, "d_{2}"] & M_{3}\arrow[r] & 0.
    \end{tikzcd}
    }
    \]
    Now, apply \Cref{rles2} to get the exact sequence
    \[
    \adjustbox{scale=1}{%
    \begin{tikzcd}
        0\arrow[r] & \Hom{G}{M_{3},N}\arrow[r, "\tilde d_{2}"] & \Hom{G}{M_{2},N}\arrow[r, "\tilde d_{1}"]\arrow[d, phantom, ""{coordinate, name=Z0}] & \Hom{G}{M_{1},N}\arrow[dll, rounded corners, to path={ -- ([xshift=2ex]\tikztostart.east)|- (Z0) [near end]\tikztonodes-|([xshift=-2ex]\tikztotarget.west) -- (\tikztotarget)}] \\
        & \Ext{(G,H)}{1}\cur{M_{3},N} & \phantom{blah}
    \end{tikzcd}
    }.
    \]
    From our assumption, $\Ext{(G,H)}{1}\cur{M_{3},N}=0$, and so the map $\tilde d_{1}=-\circ d_{1}$ is surjective. Hence, for any $f:M_{1}\to N$, there exists an $h:M_{2}\to N$ such that $f=h\circ d_{1}$. Therefore, $N$ is $(G,H)$-injective.
\end{proof}
\subsection{}

Continuing with the theme of finding relative Ext vanishing criteria for relative injective modules, we now state when the result that $M$ is injective if and only if $\Ext{G}{1}\cur{V,M}=0$ for all finite dimensional modules $V$ generalizes to the relative case. To do so, we will need to use the Mittag-Leffler condition on a tower of cochain complexes of abelian groups. The reader is referred to \cite[Section 3.5]{Wei94} for the definitions and a detailed overview.

\begin{proposition}\label{kachssriwefd0}
    Let $H$ be a closed subgroup of $G$ such that $\Mod{H}$ is semisimple. In the category $\Mod{G}$, if $\Ext{(G,H)}{1}\cur{V_{i},N}=0$ for all finite dimensional $G$-modules $V_{i}$, then $N$ is $(G,H)$-injective.
\end{proposition}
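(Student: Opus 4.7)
By Proposition~\ref{rie0} it suffices to show $\Ext{(G,H)}{1}\cur{M,N}=0$ for every $G$-module $M$. Since $\Mod{G}$ consists of rational, hence locally finite, modules, I would write $M=\varinjlim V_i$ as the directed union of its finite-dimensional $G$-submodules $V_i$. Fix a $(G,H)$-injective resolution $I^{\bullet}$ of $N$; then $\Ext{(G,H)}{1}\cur{M,N}=H^{1}\cur{\Hom{G}{M,I^{\bullet}}}$, and because $\Hom{G}{-,X}$ turns colimits in its first variable into limits, $\Hom{G}{M,I^{\bullet}}\cong\varprojlim_i\Hom{G}{V_i,I^{\bullet}}$ as cochain complexes.

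The key ingredient is the Milnor-type short exact sequence (see \cite[Section~3.5]{Wei94}) for the cohomology of an inverse limit of a tower of cochain complexes, which in our setting reads
$$0\to{\varprojlim}^{1}\Hom{G}{V_i,N}\to\Ext{(G,H)}{1}\cur{M,N}\to\varprojlim\Ext{(G,H)}{1}\cur{V_i,N}\to 0.$$
To invoke it one needs the tower $\crly{\Hom{G}{V_i,I^{n}}}_i$ to be Mittag-Leffler in each degree $n$. The semisimplicity of $\Mod{H}$ forces every inclusion $V_i\hookrightarrow V_j$ to fit into a $(G,H)$-exact sequence $0\to V_i\to V_j\to V_j/V_i\to 0$, and then Lemma~\ref{hire} together with the $(G,H)$-injectivity of $I^{n}$ makes the restriction maps $\Hom{G}{V_j,I^{n}}\to\Hom{G}{V_i,I^{n}}$ surjective, which is the strongest form of Mittag-Leffler.

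It remains to eliminate both outer terms. The right-hand term vanishes directly by hypothesis, since each $V_i$ is finite-dimensional. For the left-hand term, I would apply Proposition~\ref{rles2} to the same $(G,H)$-exact sequence $0\to V_i\to V_j\to V_j/V_i\to 0$: the portion $\Hom{G}{V_j,N}\to\Hom{G}{V_i,N}\to\Ext{(G,H)}{1}\cur{V_j/V_i,N}$ of the resulting long exact sequence has a vanishing right-hand term by hypothesis, because $V_j/V_i$ is finite-dimensional, so the restriction map is surjective and the tower $\crly{\Hom{G}{V_i,N}}_i$ is itself Mittag-Leffler. Hence ${\varprojlim}^{1}\Hom{G}{V_i,N}=0$, and the Milnor sequence forces $\Ext{(G,H)}{1}\cur{M,N}=0$. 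The main obstacle is setting up the Milnor exact sequence and checking its Mittag-Leffler hypothesis; once one observes that the semisimplicity of $\Mod{H}$ automatically upgrades every short exact sequence of $G$-modules to a $(G,H)$-exact one, the required surjectivity drops cleanly out of Lemma~\ref{hire} and Proposition~\ref{rles2}, and Proposition~\ref{rie0} then concludes that $N$ is $(G,H)$-injective.
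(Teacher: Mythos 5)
Your proposal is correct and follows essentially the same route as the paper's own proof: write the module as a union of finite-dimensional submodules, use semisimplicity of $\Mod{H}$ to upgrade the inclusions to $(G,H)$-exact sequences, verify the Mittag-Leffler condition via surjectivity of the restriction maps, and conclude with the $\varprojlim^{1}$ exact sequence of \cite[Section 3.5]{Wei94} together with Proposition~\ref{rie0}. The only cosmetic difference is that you obtain surjectivity of $\Hom{G}{V_j,I^{n}}\to\Hom{G}{V_i,I^{n}}$ from Lemma~\ref{hire}, while the paper deduces it from Proposition~\ref{rles2} and the Ext-vanishing of Proposition~\ref{rie0}; these are interchangeable.
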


\begin{proof}
    We will use that given a tower of cochain complexes of abelian groups,
    \begin{equation*}
        \adjustbox{scale=1}{
        \begin{tikzcd}
            \cdots\arrow[r] & C_{1}\arrow[r] & C_{0},
        \end{tikzcd}
        }
    \end{equation*}
    which satisfies the Mittag-Leffler condition, there exists, for each $q$, an exact sequence
    \begin{equation*}
        \adjustbox{scale=1}{
        \begin{tikzcd}
            0\arrow[r] & {\lim\limits_{\longleftarrow}}^{1}H^{q-1}\cur{C_{i}}\arrow[r] & H^{q}\cur{\lim\limits_{\longleftarrow}C_{i}}\arrow[r] & \lim\limits_{\longleftarrow}H^{q}\cur{C_{i}}\arrow[r] & 0.
        \end{tikzcd}
        }
    \end{equation*}
    For a proof see \cite[Theorem 3.5.8]{Wei94}.

    Let
    \begin{equation*}
        \adjustbox{scale=1}{
        \begin{tikzcd}
            0\arrow[r] & N\arrow[r] & I_{0}\arrow[r] & I_{1}\arrow[r] & \cdots
        \end{tikzcd}
        }
    \end{equation*}
    be a $(G,H)$-injective resolution of $N$, and $V$ be an infinite dimensional $G$-module. There exists submodules $$0=V_{0}\subseteq V_{1}\subseteq\cdots\subseteq V$$ such that $V_{i}$ is finite dimensional for all $i$ and $\lim\limits_{\longrightarrow}V_{i}=V$. Define
    \begin{equation*}
        \adjustbox{scale=1}{
        \begin{tikzcd}
            C_{i}:=0\arrow[r] & \Hom{G}{V_{i},I_{0}}\arrow[r] & \Hom{G}{V_{i},I_{1}}\arrow[r] & \cdots.
        \end{tikzcd}
        }
    \end{equation*}
    Now we need to show that for $n\geq-1$,
    \begin{equation*}
        \adjustbox{scale=1}{
        \begin{tikzcd}
            \cdots\arrow[r] & \Hom{G}{V_{2},I_{n}}\arrow[r] & \Hom{G}{V_{1},I_{n}}\arrow[r] & 0,
        \end{tikzcd}
        }
    \end{equation*}
    where we define $I_{-1}:=N$, satisfies the Mittag-Leffler condition. Consider the exact sequence
    \[
    \adjustbox{scale=1}{%
    \begin{tikzcd}
        0\arrow[r] & V_{i}\arrow[r, "i"] & V_{i+1}\arrow[r] & V_{i+1}/V_{i}\arrow[r] & 0. 
    \end{tikzcd}
    }
    \]
    Since $\Mod{H}$ is semisimple, the sequence is $(G,H)$-exact, and so by \Cref{rles2} the following sequence is exact
    \[
    \adjustbox{scale=1}{%
    \begin{tikzcd}
        0\arrow[r] & \Hom{G}{V_{i+1}/V_{i},I_{n}}\arrow[r] & \Hom{G}{V_{i+1},I_{n}}\arrow[r, "-\circ i"]\arrow[d, phantom, ""{coordinate, name=Z0}] & \Hom{G}{V_{i},I_{n}}\arrow[dll, rounded corners, to path={ -- ([xshift=2ex]\tikztostart.east)|- (Z0) [near end]\tikztonodes-|([xshift=-2ex]\tikztotarget.west) -- (\tikztotarget)}] \\
        & \Ext{(G,H)}{1}\cur{V_{i+1}/V_{i},I_{n}} & \phantom{blah}
    \end{tikzcd}
    }.
    \]
    From our assumption, $\Ext{(G,H)}{1}\cur{V_{i+1}/V_{i},N}=0$, so when $n=-1$ the map $-\circ i$ is surjective. When $n\geq0$, by \Cref{rie0} one has $\Ext{(G,H)}{1}\cur{V_{i+1}/V_{i},I_{n}}=0$, so $-\circ i$ is again surjective. Hence, for $n\geq-1$,
    \begin{equation*}
        \adjustbox{scale=1}{
        \begin{tikzcd}
            \cdots\arrow[r] & \Hom{G}{V_{2},I_{n}}\arrow[r] & \Hom{G}{V_{1},I_{n}}\arrow[r] & 0
        \end{tikzcd}
        }
    \end{equation*}
    satisfies the Mittag-Leffler condition.

    Now, using the fact that $$\lim\limits_{\longleftarrow}\Hom{G}{V_{i},I_{j}}\cong\Hom{G}{\lim\limits_{\longrightarrow}V_{i},I_{j}},$$ it is clear that
    \begin{equation*}
        \adjustbox{scale=1}{
        \begin{tikzcd}
            \lim\limits_{\longleftarrow}C_{i}=0\arrow[r] & \Hom{G}{\lim\limits_{\longrightarrow}V_{i},I_{0}}\arrow[r] & \cdots.
        \end{tikzcd}
        }
    \end{equation*}
    Therefore, from the definition of $\Ext{(G,H)}{}$ and applying \cite[Theorem 3.5.8]{Wei94}, there exists an exact sequence
    \begin{equation*}
        \adjustbox{scale=1}{
        \begin{tikzcd}
            0\arrow[r] & {\lim\limits_{\longleftarrow}}^{1}\Ext{(G,H)}{q-1}\cur{V_{i},N}\arrow[r]\arrow[d, phantom, ""{coordinate, name=Z0}] & \Ext{(G,H)}{q}\cur{\lim\limits_{\longrightarrow}V_{i},N}\arrow[dl, rounded corners, to path={ -- ([xshift=2ex]\tikztostart.east)|- (Z0) [near end]\tikztonodes-|([xshift=-2ex]\tikztotarget.west) -- (\tikztotarget)}] \\
            & \lim\limits_{\longleftarrow}\Ext{(G,H)}{q}\cur{V_{i},N}\arrow[r] & 0
        \end{tikzcd}
        },
    \end{equation*}
    and in particular when $q=1$, $${\lim\limits_{\longleftarrow}}^{1}\Hom{G}{V_{i},N}\cong\Ext{(G,H)}{1}\cur{\lim\limits_{\longrightarrow}V_{i},N},$$
    since by our assumption $\Ext{(G,H)}{1}\cur{V_{i},N}=0$.
    
    Last, since
    \begin{equation*}
        \adjustbox{scale=1}{
        \begin{tikzcd}
            \cdots\arrow[r] & \Hom{G}{V_{2},N}\arrow[r] & \Hom{G}{V_{1},N}\arrow[r] & 0
        \end{tikzcd}
        }
    \end{equation*}
    satisfies the Mittag-Leffler condition, ${\lim\limits_{\longleftarrow}}^{1}\Hom{G}{V_{i},N}=0$, see \cite[Proposition 3.5.7]{Wei94} for a proof. Hence, for any $G$-module $V$, $\Ext{(G,H)}{1}\cur{V,N}=0$, so by \Cref{rie0} $N$ is $(G,H)$-injective.
\end{proof}

\begin{remark}
    \textup{In \cite[Proposition 4.1]{Kim66} it is claimed that our \Cref{kachssriwefd0} holds for any closed subgroup $H$ of $G$ by using the same proof as in \cite[Proposition 2.1]{Hoc63}. This method of proof is not sufficient though. If trying to generalize the proof of \cite[Proposition 2.1]{Hoc63}, one would need that given a $G$-module $A$, a submodule $C$ such that $C$ is a direct summand of $A$ as an $H$ module, and an element $a\in A\backslash C$, the module $C'$ generated by $C$ and $a$ would need to contain $C$ as a direct summand as an $H$-module, which is not guaranteed.}
\end{remark}
\section{Cohomology}\label{Cohomology}

For the remainder of this paper we will assume that $k$ is algebraically closed.

\subsection{A Relative Grothendieck Spectral Sequence}

Let $G$ and $G'$ be algebraic groups, $H$ a closed subgroup of $G$, $H'$ a closed subgroup of $G'$, and $F:\Mod{G}\to\Mod{G'}$ be a left exact covariant functor. We call $F$ $(H,H')$-{\it split} if given a $(G,H)$-exact sequence,
\[
\adjustbox{scale=1}{%
\begin{tikzcd}
    \cdots\arrow[r] & M_{i-1}\arrow[r, "d_{i-1}"] & M_{i}\arrow[r, "d_{i}"] & M_{i+1}\arrow[r] & \cdots, 
\end{tikzcd}
}
\]
the sequences
\begin{equation}\label{eq:7}
    \adjustbox{scale=1}{
    \begin{tikzcd}
        0\arrow[r] & \Ker\cur{F\cur{d_{j}}}\arrow[r] & F\cur{M_{j}}\arrow[r] & \IM\cur{F\cur{d_{j}}}\arrow[r] & 0
    \end{tikzcd}
    }
\end{equation}
and
\begin{equation}\label{eq:8}
    \adjustbox{scale=1}{
    \begin{tikzcd}
        0\arrow[r] & \IM\cur{F\cur{d_{j}}}\arrow[r] & \Ker\cur{F\cur{d_{j+1}}}\arrow[r] & H^{j+1}\cur{F\cur{M_{\bigcdot}}}\arrow[r] & 0
    \end{tikzcd}
    }
\end{equation}
are $(G',H')$-exact.

\begin{theorem}\label{ggspfrrdf}
    Let $G$, $G'$, and $G''$ be algebraic groups, $H$ a closed subgroup of $G$, $H'$ a closed subgroup of $G'$, and $F:\Mod{G}\to\Mod{G'}$ and $F':\Mod{G'}\to\Mod{G''}$ be additive left exact covariant functors. 
    
    If $F$ maps $(G,H)$-injective objects to $(G',H')$-acyclic objects of $F'$, and is $(H,H')$-split, then for all $G$-modules $M$, there exists a spectral sequence $$E_2^{i,j}=\textup{R}_{(G',H')}^{i}F'\cur{\textup{R}_{(G,H)}^{j}F\cur{M}}\Rightarrow\cur{\textup{R}_{(G,H)}^{i+j}\cur{F'\circ F}}\cur{M}.$$
\end{theorem}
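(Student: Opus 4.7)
The plan is to follow the classical Grothendieck spectral sequence argument, adapted to the relative setting by using \Cref{cir} in place of the usual horseshoe lemma and by exploiting the $(H,H')$-split hypothesis to guarantee that the relevant short exact sequences remain $(G',H')$-exact.

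First I would fix a $(G,H)$-injective resolution $I_{\bigcdot}$ of $M$ and apply $F$ to obtain the complex $F\cur{I_{\bigcdot}}$ in $\Mod{G'}$, whose cohomology in degree $j$ is, by definition, $\textup{R}_{(G,H)}^{j}F\cur{M}$. Writing $Z^{j}=\Ker\cur{F\cur{d_{j}}}$, $B^{j}=\IM\cur{F\cur{d_{j-1}}}$, and $H^{j}=H^{j}\cur{F\cur{I_{\bigcdot}}}$, the $(H,H')$-split hypothesis applied to this $(G,H)$-injective resolution gives that both
\[
0\to Z^{j}\to F\cur{I_{j}}\to B^{j+1}\to 0\quad\textup{and}\quad 0\to B^{j}\to Z^{j}\to H^{j}\to 0
\]
are $(G',H')$-exact for every $j$.

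Next I would build a Cartan--Eilenberg style $(G',H')$-injective resolution $J^{\bigcdot,\bigcdot}$ of $F\cur{I_{\bigcdot}}$ by iterated application of \Cref{cir}. Starting from chosen $(G',H')$-injective resolutions of each $H^{j}$ and each $B^{j}$, \Cref{cir} applied to the second short exact sequence above produces $(G',H')$-injective resolutions of each $Z^{j}$ whose terms split (as $G'$-modules) into direct sums of the resolutions of $B^{j}$ and $H^{j}$. A second application of \Cref{cir} to the first short exact sequence then yields $(G',H')$-injective resolutions of each $F\cur{I_{j}}$ whose terms split into direct sums of the resolutions of $Z^{j}$ and $B^{j+1}$. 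Assembled together, this produces a double complex $J^{\bigcdot,\bigcdot}$ with three key properties: each column $J^{j,\bigcdot}$ is a $(G',H')$-injective resolution of $F\cur{I_{j}}$; each row splits as a direct sum of $G'$-modules; and the horizontal cohomology of row $q$ is the $q$-th term of a $(G',H')$-injective resolution of $\textup{R}_{(G,H)}^{j}F\cur{M}$.

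Applying $F'$ and passing to the total complex of $F'\cur{J^{\bigcdot,\bigcdot}}$, I would analyze the two standard spectral sequences of a double complex, both of which converge to the cohomology of this total complex. Since each $I_{j}$ is $(G,H)$-injective, the acyclicity hypothesis gives that $F\cur{I_{j}}$ is $(G',H')$-acyclic for $F'$, so computing vertical cohomology first yields $F'\cur{F\cur{I_{j}}}$ in row zero and zero in higher rows; the remaining horizontal differential then produces $\textup{R}_{(G,H)}^{n}\cur{F'\circ F}\cur{M}$ in total degree $n$, identifying the total cohomology with the stated abutment. Computing horizontal cohomology first, the row splitting allows $F'$ to commute with horizontal cohomology, so the $E_{1}$-term in position $(j,q)$ is $F'$ applied to the $q$-th term of a $(G',H')$-injective resolution of $\textup{R}_{(G,H)}^{j}F\cur{M}$; taking vertical cohomology then yields the stated $E_{2}^{i,j}=\textup{R}_{(G',H')}^{i}F'\cur{\textup{R}_{(G,H)}^{j}F\cur{M}}$.

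The main obstacle is justifying that the double complex $J^{\bigcdot,\bigcdot}$ constructed above genuinely satisfies the Cartan--Eilenberg property in the relative setting, and in particular that the splitting of its rows is strong enough for $F'$ to commute with horizontal cohomology. Both points rest on \Cref{cir} and on the $(H,H')$-split hypothesis, which together play the role that injectivity and exactness of $F$ play in the classical Grothendieck argument.
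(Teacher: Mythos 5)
Your proposal is correct in substance and follows essentially the same route as the paper's proof: a relative Cartan--Eilenberg double complex over $F\cur{I_{\bigcdot}}$, with the $(H,H')$-split hypothesis guaranteeing that the two short exact sequences of cycles, boundaries, and cohomology are $(G',H')$-exact, the split rows allowing $F'$ to commute with horizontal cohomology, and the acyclicity hypothesis collapsing the other spectral sequence to identify the abutment. The one caveat is your use of \Cref{cir}: it does not interpolate a resolution of the middle term between \emph{arbitrarily chosen} resolutions of the outer terms (a relative horseshoe lemma the paper never proves); it outputs the specific functorial $(G',H')$-bar resolutions of all three terms. The paper sidesteps your ``assembly'' obstacle by taking the bar resolution of the entire complex $F\cur{I_{\bigcdot}}$ as in \Cref{lcir}, and then applying \Cref{cir} to the two $(G',H')$-exact sequences, together with exactness of the bar construction, to identify the horizontal kernels, images, and cohomology of the double complex with these bar resolutions, already equipped with the needed splittings by $(G',H')$-injectivity; if you replace your ``chosen'' resolutions by these bar resolutions, your argument goes through essentially verbatim.
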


\begin{proof}
    Note that we follow the proof from \cite[Chapter XX, Theorem 9.6]{Lan02}, although in our case we need to be more careful, since it will be necessary for certain sequences to be $(G',H')$-exact. This is why one must have that $F$ is $(H,H')$-split. We remark that if $H'=\crly{1}$, then $F$ will trivially be $(H,\crly{1})$-split, so this is a generalization of \cite[Chapter XX, Theorem 9.6]{Lan02}.

    Let $M$ be a $G$-module, and consider a $(G,H)$-injective resolution
    \[
    \adjustbox{scale=1}{%
    \begin{tikzcd}
        0\arrow[r] & M\arrow[r, "d_{-1}"] & I_{0}\arrow[r, "d_{0}"] & I_{1}\arrow[r, "d_{1}"] & \cdots
    \end{tikzcd}
    }
    \]
    and apply $F$ to get
    \[
    \adjustbox{scale=1}{%
    \begin{tikzcd}
        0\arrow[r] & F\cur{I_{0}}\arrow[r, "F\cur{d_{0}}"] & F\cur{I_{1}}\arrow[r, "F\cur{d_{1}}"] & F\cur{I_{2}}\arrow[r, "F\cur{d_{2}}"] & \cdots.
    \end{tikzcd}
    }
    \]
    From our assumptions, $F\cur{I_{n}}$ is $(G',H')$-acyclic to $F'$ for all $n$.
    
    Now take a $(G',H')$-bar resolution in the same way as in \Cref{lcir} of the complex to get
    \[
    \adjustbox{scale=1}{%
    \begin{tikzcd}
        & 0\arrow[d] &  & 0\arrow[d] &  & 0\arrow[d] &  & \\
        0\arrow[r] & F\cur{I_{0}}\arrow[rr, "F\cur{d_{0}}"]\arrow[d, "\del_{0,-1}"] &  & F\cur{I_{1}}\arrow[rr, "F\cur{d_{1}}"]\arrow[d, "\del_{1,-1}"] &  & F\cur{I_{2}}\arrow[rr, "F\cur{d_{2}}"]\arrow[d, "\del_{2,-1}"] &  & \cdots \\
        0\arrow[r] & I_{0,0}\arrow[rr, "d_{0,0}"]\arrow[d, "\del_{0,0}"] &  & I_{1,0}\arrow[rr, "d_{1,0}"]\arrow[d, "\del_{1,0}"] &  & I_{2,0}\arrow[rr, "d_{2,0}"]\arrow[d, "\del_{2,0}"] &  & \cdots \\
        0\arrow[r] & I_{0,1}\arrow[rr, "d_{0,1}"]\arrow[d, "\del_{0,1}"] &  & I_{1,1}\arrow[rr, "d_{1,1}"]\arrow[d, "\del_{1,1}"] &  & I_{2,1}\arrow[rr, "d_{2,1}"]\arrow[d, "\del_{2,1}"] &  & \cdots \\
         & \vdots &  & \vdots &  & \vdots &  & 
    \end{tikzcd}
    },
    \]
    and define $\mc{I}$ the be the complex
    \[
    \adjustbox{scale=1}{%
    \begin{tikzcd}
        & 0\arrow[d] &  & 0\arrow[d] &  & 0\arrow[d] &  & \\
        0\arrow[r] & I_{0,0}\arrow[rr, "d_{0,0}"]\arrow[d, "\del_{0,0}"] &  & I_{1,0}\arrow[rr, "d_{1,0}"]\arrow[d, "\del_{1,0}"] &  & I_{2,0}\arrow[rr, "d_{2,0}"]\arrow[d, "\del_{2,0}"] &  & \cdots \\
        0\arrow[r] & I_{0,1}\arrow[rr, "d_{0,1}"]\arrow[d, "\del_{0,1}"] &  & I_{1,1}\arrow[rr, "d_{1,1}"]\arrow[d, "\del_{1,1}"] &  & I_{2,1}\arrow[rr, "d_{2,1}"]\arrow[d, "\del_{2,1}"] &  & \cdots \\
         & \vdots &  & \vdots &  & \vdots &  & 
    \end{tikzcd}
    }.
    \]
    Then apply $F'$ to get a double complex, which will act as our $E_{0}$ page,
    \[
    \adjustbox{scale=1}{%
    \begin{tikzcd}
        & 0\arrow[d] &  & 0\arrow[d] &  & 0\arrow[d] &  & \\
        0\arrow[r] & F'\cur{I_{0,0}}\arrow[rr, "F'\cur{d_{0,0}}"]\arrow[d, "F'\cur{\del_{0,0}}"] &  & F'\cur{I_{1,0}}\arrow[rr, "F'\cur{d_{1,0}}"]\arrow[d, "F'\cur{\del_{1,0}}"] &  & F'\cur{I_{2,0}}\arrow[rr, "F'\cur{d_{2,0}}"]\arrow[d, "F'\cur{\del_{2,0}}"] &  & \cdots \\
        0\arrow[r] & F'\cur{I_{0,1}}\arrow[rr, "F'\cur{d_{0,1}}"]\arrow[d, "F'\cur{\del_{0,1}}"] &  & F'\cur{I_{1,1}}\arrow[rr, "F'\cur{d_{1,1}}"]\arrow[d, "F'\cur{\del_{1,1}}"] &  & F'\cur{I_{2,1}}\arrow[rr, "F'\cur{d_{2,1}}"]\arrow[d, "F'\cur{\del_{2,1}}"] &  & \cdots \\
        0\arrow[r] & F'\cur{I_{0,2}}\arrow[rr, "F'\cur{d_{0,2}}"]\arrow[d, "F'\cur{\del_{0,2}}"] &  & F'\cur{I_{1,2}}\arrow[rr, "F'\cur{d_{1,2}}"]\arrow[d, "F'\cur{\del_{1,2}}"] &  & F'\cur{I_{2,2}}\arrow[rr, "F'\cur{d_{2,2}}"]\arrow[d, "F'\cur{\del_{2,2}}"] &  & \cdots \\
         & \vdots &  & \vdots &  & \vdots &  & 
    \end{tikzcd}
    }.
    \]
    Let $\Tot\cur{F'\cur{\mc{I}}}$ be the associated single complex. We now consider two possible spectral sequences, which we will denote by $^IE_{r}^{i,j}$ and $^{II}E_{r}^{i,j}$.

    To construct $^IE_{r}^{i,j}$, we first reorient the $^IE_{0}$ page to the following:
    \[
    \adjustbox{scale=1}{%
    \begin{tikzcd}
        & \vdots &  & \vdots &  & \vdots &  & \\
        0\arrow[r] & F'\cur{I_{0,2}}\arrow[rr, "F'\cur{d_{0,2}}"]\arrow[u, "F'\cur{\del_{0,2}}"] &  & F'\cur{I_{1,2}}\arrow[rr, "F'\cur{d_{1,2}}"]\arrow[u, "F'\cur{\del_{1,2}}"] &  & F'\cur{I_{2,2}}\arrow[rr, "F'\cur{d_{2,2}}"]\arrow[u, "F'\cur{\del_{2,2}}"] &  & \cdots \\
        0\arrow[r] & F'\cur{I_{0,1}}\arrow[rr, "F'\cur{d_{0,1}}"]\arrow[u, "F'\cur{\del_{0,1}}"] &  & F'\cur{I_{1,1}}\arrow[rr, "F'\cur{d_{1,1}}"]\arrow[u, "F'\cur{\del_{1,1}}"] &  & F'\cur{I_{2,1}}\arrow[rr, "F'\cur{d_{2,1}}"]\arrow[u, "F'\cur{\del_{2,1}}"] &  & \cdots \\
        0\arrow[r] & F'\cur{I_{0,0}}\arrow[rr, "F'\cur{d_{0,0}}"]\arrow[u, "F'\cur{\del_{0,0}}"] &  & F'\cur{I_{1,0}}\arrow[rr, "F'\cur{d_{1,0}}"]\arrow[u, "F'\cur{\del_{1,0}}"] &  & F'\cur{I_{2,0}}\arrow[rr, "F'\cur{d_{2,0}}"]\arrow[u, "F'\cur{\del_{2,0}}"] &  & \cdots \\
        & 0\arrow[u] &  & 0\arrow[u] &  & 0\arrow[u] &  & 
    \end{tikzcd}
    }.
    \]
    Now, for a fixed $i$, there exists a $(G',H')$-injective resolution, $\mc{I}_{i}$,
    \[
    \adjustbox{scale=1}{%
    \begin{tikzcd}
        0\arrow[r] & F\cur{I_{i}}\arrow[r, "\del_{i,-1}"] & I_{i,0}\arrow[r, "\del_{i,0}"] & I_{i,1}\arrow[r, "\del_{i,1}"] & \cdots.
    \end{tikzcd}
    }
    \]
    Then, by definition of $\textup{R}_{(G',H')}^{j}F'\cur{F\cur{I_{i}}}$,
    \[
    \adjustbox{scale=1}{%
    \begin{tikzcd}
        0\arrow[r] & F'\cur{I_{i,0}}\arrow[r, "F'\cur{\del_{i,0}}"] & F'\cur{I_{i,1}}\arrow[r, "F'\cur{\del_{i,1}}"] & F'\cur{I_{i,2}}\arrow[r, "F'\cur{\del_{i,2}}"] & \cdots
    \end{tikzcd}
    }
    \]
    is a complex whose homology is $\textup{R}_{(G',H')}^{j}F'\cur{F\cur{I_{i}}}$. Hence, the $^IE_{1}$ page looks like
    \[
    \adjustbox{scale=0.98}{%
    \begin{tikzcd}
        & \vdots & \vdots & \vdots & \\
        0\arrow[r] & \textup{R}_{(G',H')}^{2}F'\cur{F\cur{I_{0}}}\arrow[r]\arrow[u] & \textup{R}_{(G',H')}^{2}F'\cur{F\cur{I_{1}}}\arrow[r]\arrow[u] & \textup{R}_{(G',H')}^{2}F'\cur{F\cur{I_{2}}}\arrow[r]\arrow[u] & \cdots \\
        0\arrow[r] & \textup{R}_{(G',H')}^{1}F'\cur{F\cur{I_{0}}}\arrow[r]\arrow[u] & \textup{R}_{(G',H')}^{1}F'\cur{F\cur{I_{1}}}\arrow[r]\arrow[u] & \textup{R}_{(G',H')}^{1}F'\cur{F\cur{I_{2}}}\arrow[r]\arrow[u] & \cdots \\
        0\arrow[r] & \textup{R}_{(G',H')}^{0}F'\cur{F\cur{I_{0}}}\arrow[r]\arrow[u] & \textup{R}_{(G',H')}^{0}F'\cur{F\cur{I_{1}}}\arrow[r]\arrow[u] & \textup{R}_{(G',H')}^{0}F'\cur{F\cur{I_{2}}}\arrow[r]\arrow[u] & \cdots \\
        & 0\arrow[u] & 0\arrow[u] & 0\arrow[u] & 
    \end{tikzcd}
    }.
    \]

    Recall that $F\cur{I_{i}}$ is $(G',H')$-acyclic to $F'$, so $\textup{R}_{(G',H')}^{j}F'\cur{F\cur{I_{i}}}=0$ for $j>0$. Also, by \Cref{rf0}, $\textup{R}_{(G',H')}^{0}F'\cur{F\cur{I_{i}}}=F'\cur{F\cur{I_{i}}}$. Hence the non-zero terms are on the $i$-axis, so $^IE_{1}$ looks like
    \[
    \adjustbox{scale=1}{%
    \begin{tikzcd}
        0\arrow[r] & F'\cur{F\cur{I_{0}}}\arrow[rr, "F'\cur{F\cur{d_{0}}}"] &  & F'\cur{F\cur{I_{1}}}\arrow[rr, "F'\cur{F\cur{d_{1}}}"] &  & F'\cur{F\cur{I_{2}}}\arrow[rr, "F'\cur{F\cur{d_{2}}}"] &  & \cdots.
    \end{tikzcd}
    }
    \]
    Now taking homology of this complex, we get $$^IE_{2}^{i,j}=
    \begin{cases}
        \cur{\textup{R}_{(G,H)}^{i}\cur{F'\circ F}}\cur{M} & \textup{if }j=0 \\
        0 & \textup{if }j>0
    \end{cases}.$$
    Therefore, since $^IE_{2}^{i,j}$ collapses on the $i$-axis, $$H^{n}\cur{\Tot\cur{F'\cur{\mc{I}}}}\cong\cur{\textup{R}_{(G,H)}^{n}\cur{F'\circ F}}\cur{M}.$$
    
    Now we will construct $^{II}E_{r}^{i,j}$. To start we reorient the $^{II}E_{0}$ page to the following:
    \[
    \adjustbox{scale=1}{%
    \begin{tikzcd}
        & \vdots &  & \vdots &  & \vdots &  & \\
        0\arrow[r] & F'\cur{I_{2,0}}\arrow[rr, "F'\cur{\del_{2,0}}"]\arrow[u, "F'\cur{d_{2,0}}"] &  & F'\cur{I_{2,1}}\arrow[rr, "F'\cur{\del_{2,1}}"]\arrow[u, "F'\cur{d_{2,1}}"] &  & F'\cur{I_{2,2}}\arrow[rr, "F'\cur{\del_{2,2}}"]\arrow[u, "dF'\cur{_{2,2}}"] &  & \cdots \\
        0\arrow[r] & F'\cur{I_{1,0}}\arrow[rr, "F'\cur{\del_{1,0}}"]\arrow[u, "F'\cur{d_{1,0}}"] &  & F'\cur{I_{1,1}}\arrow[rr, "F'\cur{\del_{1,1}}"]\arrow[u, "F'\cur{d_{1,1}}"] &  & F'\cur{I_{1,2}}\arrow[rr, "F'\cur{\del_{1,2}}"]\arrow[u, "F'\cur{d_{1,2}}"] &  & \cdots \\
        0\arrow[r] & F'\cur{I_{0,0}}\arrow[rr, "F'\cur{\del_{0,0}}"]\arrow[u, "F'\cur{d_{0,0}}"] &  & F'\cur{I_{0,1}}\arrow[rr, "F'\cur{\del_{0,1}}"]\arrow[u, "F'\cur{d_{0,1}}"] &  & F'\cur{I_{0,2}}\arrow[rr, "F'\cur{\del_{0,2}}"]\arrow[u, "F'\cur{d_{0,2}}"] &  & \cdots \\
        & 0\arrow[u] &  & 0\arrow[u] &  & 0\arrow[u] &  & 
    \end{tikzcd}
    }.
    \]

    First, from our assumptions, the sequences
    \begin{equation*}
    \adjustbox{scale=1}{
    \begin{tikzcd}
        0\arrow[r] & \Ker\cur{F\cur{d_{j}}}\arrow[r] & F\cur{I_{j}}\arrow[r] & \IM\cur{F\cur{d_{j}}}\arrow[r] & 0
    \end{tikzcd}
    }
\end{equation*}
and
\begin{equation*}
    \adjustbox{scale=1}{
    \begin{tikzcd}
        0\arrow[r] & \IM\cur{F\cur{d_{j}}}\arrow[r] & \Ker\cur{F\cur{d_{j+1}}}\arrow[r] & H^{j+1}\cur{F\cur{I_{\bigcdot}}}\arrow[r] & 0
    \end{tikzcd}
    }
\end{equation*}
are $(G',H')$-exact sequences for $j\geq0$. By \Cref{cir}, we can take $(G',H')$-bar resolutions of both to get
    \[
    \adjustbox{scale=1}{%
    \begin{tikzcd}
        0\arrow[r] & \Ker\cur{F\cur{d_{j}}}\arrow[r]\arrow[d] & F\cur{I_{j}}\arrow[r]\arrow[d] & \IM\cur{F\cur{d_{j}}}\arrow[r]\arrow[d] & 0 \\
        0\arrow[r] & Z_{j,0}\arrow[r]\arrow[d] & I_{j,0}\arrow[r]\arrow[d] & B_{j+1,0}\arrow[r]\arrow[d] & 0 \\
        0\arrow[r] & Z_{j,1}\arrow[r]\arrow[d] & I_{j,1}\arrow[r]\arrow[d] & B_{j+1,1}\arrow[r]\arrow[d] & 0 \\
         & \vdots & \vdots & \vdots & 
    \end{tikzcd}
    },
    \]
    where
    \[
    \adjustbox{scale=1}{%
    \begin{tikzcd}
        0\arrow[r] & Z_{j,i}\arrow[r] & I_{j,i}\arrow[r] & B_{j+1,i}\arrow[r] & 0 
    \end{tikzcd}
    }
    \]
    splits for all $j\geq0$, and
    \[
    \adjustbox{scale=1}{%
    \begin{tikzcd}
        0\arrow[r] & \IM\cur{F\cur{d_{j}}}\arrow[r]\arrow[d] & \Ker\cur{F\cur{d_{j+1}}}\arrow[r]\arrow[d] & H^{j+1}\cur{F\cur{I_{\bigcdot}}}\arrow[r]\arrow[d] & 0 \\
        0\arrow[r] & B_{j+1,0}\arrow[r]\arrow[d] & Z_{j+1,0}\arrow[r]\arrow[d] & H_{j+1,0}\arrow[r]\arrow[d] & 0 \\
        0\arrow[r] & B_{j+1,1}\arrow[r]\arrow[d] & Z_{j+1,1}\arrow[r]\arrow[d] & H_{j+1,1}\arrow[r]\arrow[d] & 0 \\
         & \vdots & \vdots & \vdots & 
    \end{tikzcd}
    },
    \]
    where
    \[
    \adjustbox{scale=1}{%
    \begin{tikzcd}
        0\arrow[r] & B_{j+1,i}\arrow[r] & Z_{j+1,i}\arrow[r] & H_{j+1,i}\arrow[r] & 0 
    \end{tikzcd}
    }
    \]
    splits for all $j\geq0$. Note that $Z_{j,i}=\Ker\cur{d_{j,i}}$, $B_{j+1,i}=\IM\cur{d_{j,i}}$, and $H_{j+1,i}=H^{j+1}\cur{I_{\bigcdot,i}}=\Ker\cur{d_{j+1,i}}/\IM\cur{d_{j,i}}$

    Define $$\textup{}H_d^{j,i}\cur{F'\cur{\mc{I}}}:=\Ker\cur{F'\cur{d_{j,i}}}\Big/\IM\cur{F'\cur{d_{j-1,i}}}$$ and similarly $$\textup{}H_d^{j,i}\cur{\mc{I}}:=\Ker\cur{d_{j,i}}\Big/\IM\cur{d_{j-1,i}}.$$
    Now, let $I_{(i)}$ be the $i$-th row of the double complex $\mc{I}=\{I_{j,i}\}$ (which will correspond to the $i$-th column of $^{II}E_{0}$). Then, for $j>0$,
    \begin{align*}
        H_d^{j,i}\cur{F'\cur{\mc{I}}}\cong&H^{j}\cur{F'\cur{I_{(i)}}}\\
        \cong&\Ker\cur{F'\cur{d_{j,i}}}\Big/\IM\cur{F'\cur{d_{j-1,i}}}\\
        \cong&F'\cur{\Ker\cur{d_{j,i}}}\Big/F'\cur{\IM\cur{d_{j-1,i}}}\\
        \cong&F'\cur{\textup{}H_d^{j,i}\cur{\mc{I}}},
    \end{align*}
    where the third line comes from the fact that
    \[
    \adjustbox{scale=1}{%
    \begin{tikzcd}
        0\arrow[r] & \Ker\cur{d_{j,i}}\arrow[r] & I_{j,i}\arrow[r] & \IM\cur{d_{j,i}}\arrow[r] & 0 
    \end{tikzcd}
    }
    \]
    is split with each term $(G',H')$-injective, and the fourth line comes from the fact that
    \[
    \adjustbox{scale=1}{%
    \begin{tikzcd}
        0\arrow[r] & \IM\cur{d_{j-1,i}}\arrow[r] & \Ker\cur{d_{j,i}}\arrow[r] & H^{j}\cur{I_{\bigcdot,i}}\arrow[r] & 0 
    \end{tikzcd}
    }
    \]
    is split with each term $(G',H')$-injective.

    If $j=0$, then $$H^{0}\cur{F'\cur{I_{(i)}}}\cong \Ker\cur{F'\cur{d_{0,i}}}\cong F'\cur{\Ker\cur{d_{0,i}}}\cong F'\cur{\textup{}H_d^{0,i}\cur{\mc{I}}}.$$

    Therefore, the $^{II}E_{1}$ page looks like
    \[
    \adjustbox{scale=1}{%
    \begin{tikzcd}
        & \vdots & \vdots & \vdots & \\
        0\arrow[r] & F'\cur{\textup{}H_d^{2,0}\cur{\mc{I}}}\arrow[r]\arrow[u] & F'\cur{\textup{}H_d^{2,1}\cur{\mc{I}}}\arrow[r]\arrow[u] & F'\cur{\textup{}H_d^{2,2}\cur{\mc{I}}}\arrow[r]\arrow[u] & \cdots \\
        0\arrow[r] & F'\cur{\textup{}H_d^{1,0}\cur{\mc{I}}}\arrow[r]\arrow[u] & F'\cur{\textup{}H_d^{1,1}\cur{\mc{I}}}\arrow[r]\arrow[u] & F'\cur{\textup{}H_d^{1,2}\cur{\mc{I}}}\arrow[r]\arrow[u] & \cdots \\
        0\arrow[r] & F'\cur{\textup{}H_d^{0,0}\cur{\mc{I}}}\arrow[r]\arrow[u] & F'\cur{\textup{}H_d^{0,1}\cur{\mc{I}}}\arrow[r]\arrow[u] & F'\cur{\textup{}H_d^{0,2}\cur{\mc{I}}}\arrow[r]\arrow[u] & \cdots \\
        & 0\arrow[u] & 0\arrow[u] & 0\arrow[u] & 
    \end{tikzcd}
    }.
    \]
    
    Now (just focusing on a single row of the $^{II}E_{1}$ page), since the complex $H_d^{j,i}\cur{\mc{I}}$ with $i\geq0$ is a $(G',H')$-injective resolution of $$H^{j}\cur{F\cur{I_{\bigcdot}}}\cong\textup{R}_{(G,H)}^{j}F\cur{M},$$ we have that $$H^{i}\cur{F'\cur{\textup{}H_d^{j,i}\cur{\mc{I}}}}\cong\textup{R}_{(G',H')}^{i}F'\cur{\textup{R}_{(G,H)}^{j}F\cur{M}},$$ 
    and so $$^{II}E_{2}^{i,j}=\textup{R}_{(G',H')}^{i}F'\cur{\textup{R}_{(G,H)}^{j}F\cur{M}}.$$
    
    Therefore, since both $^IE_{2}^{i,j}$ and $^{II}E_{2}^{i,j}$ converge to $H^{i+j}\cur{\Tot\cur{F'\cur{\mc{I}}}}$, $$\textup{R}_{(G',H')}^{i}F'\cur{\textup{R}_{(G,H)}^{j}F\cur{M}}\Rightarrow\cur{\textup{R}_{(G,H)}^{i+j}\cur{F'\circ F}}\cur{M}.$$
\end{proof}

In \cite[Proposition 5.2.1 and Theorem 6.3.2]{LNW25}, the authors apply \Cref{ggspfrrdf} to Category $\mathcal{O}$.

Note that because of \Cref{rif0}, showing that $F$ sends $(G,H)$-injective modules to $(G',H')$-injective modules is sufficient to get a spectral sequence.

We now want to find an easier condition to check for a functor to be $(H,H')$-split, so we give the following result which gives a sufficient condition.

\begin{proposition}\label{cfftbrs}
    Given any $(G,H)$-exact sequence
    \[
    \adjustbox{scale=1}{%
    \begin{tikzcd}
        0\arrow[r] & M_{1}\arrow[r, "d_{1}"] & M_{2}\arrow[r, "d_{2}"] & M_{3}\arrow[r] & 0, 
    \end{tikzcd}
    }
    \]
    if
    \[
    \adjustbox{scale=1}{%
    \begin{tikzcd}
        0\arrow[r] & F\cur{M_{1}}\arrow[r] & F\cur{M_{2}}\arrow[r] & F\cur{M_{3}}
    \end{tikzcd}
    }
    \]
    is $(G',H')$-exact, then $F$ is $(H,H')$-split.
\end{proposition}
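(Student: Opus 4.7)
The plan is to derive the two $(G',H')$-exactness conditions in the definition of $(H,H')$-split by breaking the given long $(G,H)$-exact sequence into short exact sequences and invoking the hypothesis on each piece. Concretely, given a $(G,H)$-exact sequence $\cdots \to M_{j-1} \to M_j \to M_{j+1} \to \cdots$, the canonical factorization $d_j = \iota_j \circ \pi_j$ with $\pi_j \colon M_j \twoheadrightarrow \IM(d_j)$ and $\iota_j \colon \IM(d_j) \hookrightarrow M_{j+1}$ produces, for each $j$, the short exact sequence
\[
0 \to \IM(d_{j-1}) \to M_j \to \IM(d_j) \to 0,
\]
which is $(G,H)$-exact since $\IM(d_{j-1}) = \Ker(d_j)$ is an $H$-direct summand of $M_j$ by assumption.

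Applying the hypothesis to each such SES yields that $0 \to F(\IM(d_{j-1})) \to F(M_j) \to F(\IM(d_j))$ is $(G',H')$-exact. Left exactness of $F$ then identifies $F(\IM(d_{j-1})) = \Ker(F(d_j))$ inside $F(M_j)$, and the injection $F(\iota_j)$ identifies $F(\IM(d_j))$ with $\Ker(F(d_{j+1})) \subseteq F(M_{j+1})$ (using the hypothesis at the adjacent index to ensure these submodules agree). Under these identifications, the $H'$-summand condition supplied by the hypothesis says exactly that $\Ker(F(d_j))$ is an $H'$-direct summand of $F(M_j)$, which is the $(G',H')$-exactness of the first sequence in the definition of $(H,H')$-split.

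For the second sequence in the definition, the inclusion $\IM(F(d_j)) \hookrightarrow \Ker(F(d_{j+1}))$ is identified with the inclusion $\IM(F(\pi_j)) \hookrightarrow F(\IM(d_j))$ under the same identifications, and one must exhibit an $H'$-complement. The hard part will be this final step: one aligns the $H'$-decompositions $F(M_j) = \Ker(F(d_j)) \oplus C_j$ and $F(M_{j+1}) = \Ker(F(d_{j+1})) \oplus C_{j+1}$ furnished by the hypothesis at indices $j$ and $j+1$, uses the induced $H'$-isomorphism $C_j \xrightarrow{\sim} \IM(F(\pi_j))$ coming from $F(\pi_j)|_{C_j}$, and combines these via a diagram chase to produce an $H'$-projection $F(\IM(d_j)) \to \IM(F(\pi_j))$. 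This alignment is the delicate step on which the $(H,H')$-split conclusion rests, but once it is in place the $(G',H')$-exactness of both sequences in the definition follows at once.
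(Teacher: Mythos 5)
Your first half matches the paper's argument and is fine: applying the hypothesis to the $(G,H)$-exact short exact sequence $0 \to \Ker(d_j) \to M_j \xrightarrow{\pi_j} \IM(d_j) \to 0$ and using that $F(\iota_j)$ is injective (so $\Ker(F(d_j)) = \Ker(F(\pi_j)) \cong F(\Ker(d_j))$) gives that $\Ker(F(d_j))$ is an $H'$-direct summand of $F(M_j)$, which is the $(G',H')$-exactness of the first sequence in the definition of $(H,H')$-split. (A minor point: the identification $\Ker(F(d_{j+1})) = F(\Ker(d_{j+1})) = F(\IM(d_j))$ needs only left exactness of $F$, not ``the hypothesis at the adjacent index.'')

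The gap is precisely the step you defer. What is required for the second sequence is that $\IM(F(d_j))$, viewed inside $\Ker(F(d_{j+1})) \cong F(\IM(d_j))$, be an $H'$-direct summand of $F(\IM(d_j))$, i.e.\ that $\IM(F(\pi_j))$ be an $H'$-direct summand of the \emph{target} of $F(\pi_j)$. The data you propose to chase with cannot produce this: the splitting $F(M_{j+1}) = \Ker(F(d_{j+1})) \oplus C_{j+1}$ only separates $F(\IM(d_j))$ from an external complement and says nothing about its internal $H'$-structure, while the splitting $F(M_j) = \Ker(F(d_j)) \oplus C_j$ only gives an abstract $H'$-copy of $\IM(F(\pi_j))$, not a retraction of the inclusion $\IM(F(\pi_j)) \subseteq F(\IM(d_j))$; no diagram chase from these two decompositions manufactures the required $H'$-projection, so the ``delicate step'' is not merely unfinished but unobtainable by that route. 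The paper instead reads the needed splitting directly off the hypothesis: the $(G',H')$-exactness of $0 \to F(\Ker(d_j)) \to F(M_j) \to F(\IM(d_j))$ is used at the last term as well, namely that the image of $F(M_j) \to F(\IM(d_j))$ --- which is $\IM(F(d_j))$ after the identifications above --- is an $H'$-direct summand of $F(\IM(d_j))$; combined with $\Ker(d_{j+1}) = \IM(d_j)$ and $\Ker(F(d_{j+1})) = F(\IM(d_j))$, this is exactly the $(G',H')$-exactness of the second sequence. So both splittings fall out of the hypothesis applied to the single short exact sequence $0 \to \Ker(d_j) \to M_j \to \IM(d_j) \to 0$; you should invoke the hypothesis at the third term in this way rather than trying to derive the second splitting from the kernel-summand conditions at indices $j$ and $j+1$, which do not suffice.
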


\begin{proof}
    Let
    \[
    \adjustbox{scale=1}{%
    \begin{tikzcd}
        \cdots\arrow[r] & M_{i-1}\arrow[r, "d_{i-1}"] & M_{i}\arrow[r, "d_{i}"] & M_{i+1}\arrow[r] & \cdots 
    \end{tikzcd}
    }
    \]
    be a $(G,H)$-exact sequence and apply $F$ to get
    \[
    \adjustbox{scale=1}{%
    \begin{tikzcd}
        \cdots\arrow[r] & F\cur{M_{i-1}}\arrow[r, "F\cur{d_{i-1}}"] & F\cur{M_{i}}\arrow[r, "F\cur{d_{i}}"] & F\cur{M_{i+1}}\arrow[r] & \cdots. 
    \end{tikzcd}
    }
    \]
    Then the sequences
    \begin{equation}\label{eq:9}
        \adjustbox{scale=1}{%
    \begin{tikzcd}
        0\arrow[r] & \Ker\cur{F\cur{d_{i}}}\arrow[r, "i"] & F\cur{M_{i}}\arrow[r, "F\cur{d_{i}}"] & \IM\cur{F\cur{d_{i}}}\arrow[r] & 0 
    \end{tikzcd}
    }
    \end{equation}
    and
    \begin{equation}\label{eq:10}
        \adjustbox{scale=1}{%
    \begin{tikzcd}
        0\arrow[r] & \IM\cur{F\cur{d_{i-1}}}\arrow[r, "i"] & \Ker\cur{F\cur{d_{i}}}\arrow[r, "\pi"] & H^{i}\cur{M_{\bigcdot}}\arrow[r] & 0 
    \end{tikzcd}
    }
    \end{equation}
    are exact, so we need to show they split as $H'$-modules.

    From our assumption, the following sequence is $(G',H')$-exact
    \[
    \adjustbox{scale=1}{%
    \begin{tikzcd}
        0\arrow[r] & F\cur{\Ker\cur{d_{i}}}\arrow[r, "F(i)"] & F\cur{M_{i}}\arrow[r, "F\cur{d_{i}}"] & F\cur{\IM\cur{d_{i}}},
    \end{tikzcd}
    }
    \]
    so $\IM\cur{F\cur{d_{i}}}$ is an $H'$ direct summand of $F\cur{M_{i}}$ and hence \sref{eq:9} is $(G',H')$-exact. Now, $\IM\cur{F\cur{d_{i-1}}}$ is an $H'$ direct summand of $F\cur{\IM\cur{d_{i-1}}}\cong F\cur{\Ker\cur{d_{i}}}\cong\Ker\cur{F\cur{d_{i}}}$, hence \sref{eq:10} is $(G',H')$-exact. Therefore, $F$ is $(H,H')$-split.
\end{proof}
\subsection{}

We now apply \Cref{ggspfrrdf} and \Cref{cfftbrs} to get a relative generalized Frobenius reciprocity result, although for now we will need some strong conditions. Later on, we will give more cases when relative generalized Frobenius reciprocity holds.

\begin{corollary}\label{reisshkss}
    Let $H$ and $K$ be closed subgroups of $G$ such that $\Mod{H}$ or $\Mod{K}$ is semisimple, then there exists a spectral sequence such that $$E_2^{i,j}=\Ext{(G,K)}{i}\cur{M,\rRind{(H,H\cap K)}{j}{H}{G}(N)}\Rightarrow\Ext{(H,H\cap K)}{i+j}\cur{M,N}.$$ Furthermore, if $\Mod{H}$ is semisimple, then the spectral sequence collapses and $$\Ext{(G,K)}{i}\cur{M,\ind{H}{G}(N)}\cong\Ext{(H,H\cap K)}{i}\cur{M,N}\text{ for }i\geq0.$$
\end{corollary}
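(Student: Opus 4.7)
The plan is to derive this from the relative Grothendieck spectral sequence \Cref{ggspfrrdf} applied to the composition $F' \circ F$ where $F = \ind{H}{G} \colon \Mod{H} \to \Mod{G}$ (with relative subgroup $H \cap K$ of $H$) and $F' = \Hom{G}{M,-} \colon \Mod{G} \to \Mod{k}$ (with relative subgroup $K$ of $G$). By Frobenius reciprocity, $F' \circ F$ is naturally isomorphic to $\Hom{H}{M,-}$, so the abutment $\textup{R}_{(H,H\cap K)}^{i+j}(F' \circ F)(N)$ is exactly $\Ext{(H,H\cap K)}{i+j}(M,N)$, matching the right-hand side of the claimed spectral sequence.

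To apply \Cref{ggspfrrdf}, two hypotheses must be verified: (i) that $F$ sends $(H,H\cap K)$-injectives to $(G,K)$-acyclic objects for $F'$, and (ii) that $F$ is $(H\cap K, K)$-split. Condition (i) requires no extra hypothesis: by \Cref{iriri}, $\ind{H}{G}$ carries $(H,H\cap K)$-injectives to $(G,K)$-injectives, and by \Cref{rie0} these are $(G,K)$-acyclic for $\Hom{G}{M,-}$. For condition (ii), \Cref{cfftbrs} reduces the verification to checking that applying $\ind{H}{G}$ to any $(H,H\cap K)$-exact short exact sequence produces a $(G,K)$-exact left exact sequence. Here the semisimplicity hypothesis enters and the argument bifurcates: if $\Mod{H}$ is semisimple, the original sequence is already split as $H$-modules, and additivity of $\ind{H}{G}$ makes the image split as $G$-modules, whence $(G,G)$-exact and a fortiori $(G,K)$-exact by \Cref{resd}; if $\Mod{K}$ is semisimple, every short exact sequence of $G$-modules is automatically $(G,K)$-exact, and since $\ind{H}{G}$ is left exact the image is a $(G,K)$-exact left exact sequence directly. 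Either way the hypotheses of \Cref{ggspfrrdf} are met, producing the desired spectral sequence.

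For the collapse statement, assume $\Mod{H}$ is semisimple and let $0 \to N \to I_{0} \to I_{1} \to \cdots$ be any $(H,H\cap K)$-injective resolution. Since every short exact sequence of $H$-modules splits, setting $K_{n} := \IM(I_{n-1} \to I_{n})$ (with $K_{0} = N$) gives $H$-module decompositions $I_{n} \cong K_{n} \oplus K_{n+1}$. Additivity of $\ind{H}{G}$ preserves these, so $\ind{H}{G}(I_{\bigcdot})$ is exact in positive degrees with $H^{0} \cong \ind{H}{G}(N)$. Consequently $\textup{R}_{(H,H\cap K)}^{j}\ind{H}{G}(N) = 0$ for $j > 0$, the $E_{2}$ page is concentrated on the $j=0$ row, and the spectral sequence collapses to give the stated isomorphism. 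The main delicate point throughout is the case analysis in (ii): the two alternative hypotheses on $H$ and $K$ force $(G,K)$-exactness by quite different mechanisms, one via early splitting of the input sequence, the other via automatic splitting of the output.
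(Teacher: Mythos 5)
Your proposal is correct and follows essentially the same route as the paper: it invokes \Cref{ggspfrrdf} with $F=\ind{H}{G}$ and $F'=\Hom{G}{M,-}$, uses \Cref{iriri} (plus \Cref{rif0}/\Cref{rie0}) for the injectives-to-acyclics hypothesis, and verifies $(H\cap K,K)$-splitness via \Cref{cfftbrs} with exactly the same case split (splitting of the input sequence when $\Mod{H}$ is semisimple, automatic $(G,K)$-exactness of the left-exact output when $\Mod{K}$ is semisimple). Your collapse argument, showing directly that $\rRind{(H,H\cap K)}{j}{H}{G}(N)=0$ for $j>0$ by splitting the relative injective resolution, is just a more explicit version of the paper's observation that $\ind{H}{G}$ sends $(H,H\cap K)$-exact sequences to $(G,K)$-exact (hence exact) sequences.
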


\begin{proof}
    First, by \Cref{iriri} $\ind{H}{G}$ takes $(H,H\cap K)$-injective modules to $(G,K)$-injective modules. So in either case we need to show $\ind{H}{G}$ is $(H\cap K,K)$-split. Let
    \[
    \adjustbox{scale=1}{%
    \begin{tikzcd}
        0\arrow[r] & N_{1}\arrow[r] & N\arrow[r] & N_{2}\arrow[r] & 0 
    \end{tikzcd}
    }
    \]
    be an $(H,H\cap K)$-exact sequence.
    
    Assume $\Mod{K}$ is semisimple, then it is clear that
    \[
    \adjustbox{scale=1}{%
    \begin{tikzcd}
        0\arrow[r] & \ind{H}{G}\cur{N_{1}}\arrow[r] & \ind{H}{G}\cur{N}\arrow[r] & \ind{H}{G}\cur{N_{2}} & 
    \end{tikzcd}
    }
    \]
    is $(G,K)$-exact, so from \Cref{cfftbrs} $\ind{H}{G}$ is $(H\cap K,K)$-split.

    Assume $\Mod{H}$ is semisimple, then
    \[
    \adjustbox{scale=1}{%
    \begin{tikzcd}
        0\arrow[r] & N_{1}\arrow[r] & N\arrow[r] & N_{2}\arrow[r] & 0 
    \end{tikzcd}
    }
    \]
    is split, and so
    \[
    \adjustbox{scale=1}{%
    \begin{tikzcd}
        0\arrow[r] & \ind{H}{G}\cur{N_{1}}\arrow[r] & \ind{H}{G}\cur{N}\arrow[r] & \ind{H}{G}\cur{N_{2}}\arrow[r] & 0
    \end{tikzcd}
    }
    \]
    is also split and hence $(G,K)$-exact. Therefore, since $\ind{H}{G}$ takes $(H,H\cap K)$-exact sequences to $(G,K)$-exact sequences, the spectral sequence will collapses and so $$\Ext{(G,K)}{i}\cur{M,\ind{H}{G}(N)}\cong\Ext{(H,H\cap K)}{i}\cur{M,N}.$$
\end{proof}

If we take $K=T$ a torus, then it is known that $\Mod{T}$ is semisimple, and so one has:

\begin{corollary}
    Let $H$ and $T$ be closed subgroups of $G$ such that $T$ is a torus, then there exists a spectral sequence such that $$E_2^{i,j}=\Ext{(G,T)}{i}\cur{M,\rRind{(H,H\cap T)}{j}{H}{G}(N)}\Rightarrow\Ext{(H,H\cap T)}{i+j}\cur{M,N}.$$
\end{corollary}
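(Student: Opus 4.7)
The plan is to observe that this corollary is a direct specialization of Corollary~\ref{reisshkss} to the case $K = T$. So the entire proof consists of verifying the hypothesis of that corollary is met, namely that either $\Mod{H}$ or $\Mod{K}$ is semisimple, and then invoking it verbatim.

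The key step is to note that when $T$ is a torus, the category $\Mod{T}$ of rational $T$-modules is semisimple: every rational $T$-module decomposes as a direct sum of its weight spaces, and irreducibles are one-dimensional, so every short exact sequence in $\Mod{T}$ splits. This is standard and can be found, for example, in \cite[I.2.11]{Jan03}. With $K := T$ and this semisimplicity in hand, the hypothesis ``$\Mod{H}$ or $\Mod{K}$ is semisimple'' of Corollary~\ref{reisshkss} is satisfied (via the second alternative), producing a spectral sequence
$$E_2^{i,j}=\Ext{(G,T)}{i}\cur{M,\rRind{(H,H\cap T)}{j}{H}{G}(N)}\Rightarrow\Ext{(H,H\cap T)}{i+j}\cur{M,N}.$$

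There is essentially no obstacle here; the work was done in proving the induction-of-injective result (Theorem~\ref{iriri}), the splitness criterion (Proposition~\ref{cfftbrs}), and the general spectral sequence (Theorem~\ref{ggspfrrdf}), which were combined to yield Corollary~\ref{reisshkss}. One simply remarks that the torus case is an important instance worth recording separately, since in representation-theoretic applications one most often takes $K$ to be a torus (or more generally a subgroup whose category of modules is semisimple). Note that in this generality the spectral sequence need not collapse because we do not assume $\Mod{H}$ is semisimple; collapse occurs in the reverse specialization, where it is $H$ rather than $K$ whose module category is semisimple.
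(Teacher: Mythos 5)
Your proposal is correct and matches the paper exactly: the paper also obtains this corollary by specializing Corollary~\ref{reisshkss} to $K=T$, citing the semisimplicity of $\Mod{T}$ for a torus. Nothing further is needed.
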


Similarly, we also apply \Cref{ggspfrrdf} and \Cref{cfftbrs} to get a relative version of \cite[I.4.5(c)]{Jan03}, which again for now we will need some strong conditions, but later on we will give more cases when the result holds.

\begin{corollary}\label{riisshkss}
    Let $H'$ be a closed subgroup of $G$, and $H$ and $K$ be closed subgroups of $H'$ such that $\Mod{H}$ or $\Mod{K}$ is semisimple, then there exists a spectral sequence such that $$E_2^{i,j}=\rRind{(H',K)}{i}{H'}{G}\cur{\rRind{(H,H\cap K)}{j}{H}{H'}(M)}\Rightarrow\rRind{(H,H\cap K)}{i+j}{H}{G}\cur{M}.$$ Furthermore, if $\Mod{H}$ is semisimple, then the spectral sequence collapses and $$\rRind{(H',K)}{i}{H'}{G}\cur{\ind{H}{H'}(M)}\cong\rRind{(H,H\cap K)}{i}{H}{G}\cur{M}\text{ for }i\geq0.$$
\end{corollary}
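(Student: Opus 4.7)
The plan is to apply the relative Grothendieck spectral sequence from \Cref{ggspfrrdf} with $F = \ind{H}{H'}\colon \Mod{H}\to\Mod{H'}$ and $F' = \ind{H'}{G}\colon \Mod{H'}\to\Mod{G}$, using $(H, H\cap K)$ as the relative pair on the source and $(H', K)$ as the one in the middle. By transitivity of induction, $F'\circ F = \ind{H}{G}$, so the abutment and $E_2$ page of the resulting spectral sequence immediately match the form claimed in the statement. The remaining work is to check the two hypotheses of \Cref{ggspfrrdf} and then analyze the degeneration when $\Mod{H}$ is semisimple.

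First I would check that $F$ sends $(H, H\cap K)$-injective modules to $(H', K)$-acyclic objects for $F'$. Applying \Cref{iriri} inside $H'$, with $H\cap K$ playing the role of the intersection subgroup, shows that $\ind{H}{H'}$ takes $(H, H\cap K)$-injectives to $(H', K)$-injectives, and \Cref{rif0} then promotes these to $(H', K)$-acyclics for any additive left exact functor, in particular for $\ind{H'}{G}$.

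Next I would verify that $F$ is $(H\cap K, K)$-split via \Cref{cfftbrs}. Given any $(H, H\cap K)$-exact sequence $0\to N_1\to N\to N_2\to 0$, I need the sequence $0\to \ind{H}{H'}(N_1)\to \ind{H}{H'}(N)\to \ind{H}{H'}(N_2)$ to be $(H', K)$-exact. If $\Mod{K}$ is semisimple, left exactness of induction makes this an exact sequence of $K$-modules, which is automatically split. If instead $\Mod{H}$ is semisimple, the original sequence already splits as $H$-modules, so applying the additive functor $\ind{H}{H'}$ produces a split short exact sequence of $H'$-modules; this is $(H', H')$-exact, hence $(H', K)$-exact by \Cref{resd}.

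For the collapsing statement, when $\Mod{H}$ is semisimple every $H$-module is injective, so $0\to M\to M\to 0$ is a (split, hence $(H, H\cap K)$-exact) $(H, H\cap K)$-injective resolution of $M$. Consequently $\rRind{(H, H\cap K)}{j}{H}{H'}(M) = 0$ for $j > 0$, the $E_2$ page vanishes outside the $j=0$ row, and the spectral sequence collapses on the $i$-axis to yield the claimed isomorphism. I do not anticipate a genuine obstacle: the argument is parallel to \Cref{reisshkss}, with the Hom functor $\Hom{G}{M,-}$ replaced by the induction functor $\ind{H'}{G}$, and every verification reduces to results already established earlier in the section.
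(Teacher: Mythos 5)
Your proposal is correct and follows essentially the same route as the paper: apply \Cref{ggspfrrdf} with $F=\ind{H}{H'}$, $F'=\ind{H'}{G}$, use \Cref{iriri} (plus \Cref{rif0}) for the acyclicity hypothesis, and \Cref{cfftbrs} with the two semisimplicity cases for the splitness hypothesis. Your justification of the collapse — computing $\rRind{(H,H\cap K)}{j}{H}{H'}(M)=0$ for $j>0$ directly from the trivial relative injective resolution when $\Mod{H}$ is semisimple — is only cosmetically different from the paper's observation that $\ind{H}{H'}$ then sends $(H,H\cap K)$-exact sequences to $(H',K)$-exact sequences, and both yield the stated isomorphism.
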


\begin{proof}
    First, by \Cref{iriri} $\ind{H}{H'}$ takes $(H,H\cap K)$-injective modules to $(H',K)$-injective modules. So in either case we need to show $\ind{H}{H'}$ is $(H\cap K,K)$-split. Let
    \[
    \adjustbox{scale=1}{%
    \begin{tikzcd}
        0\arrow[r] & M_{1}\arrow[r] & M\arrow[r] & M_{2}\arrow[r] & 0 
    \end{tikzcd}
    }
    \]
    be an $(H,H\cap K)$-exact sequence.
    
    Assume $\Mod{K}$ is semisimple, then it is clear that
    \[
    \adjustbox{scale=1}{%
    \begin{tikzcd}
        0\arrow[r] & \ind{H}{H'}\cur{M_{1}}\arrow[r] & \ind{H}{H'}\cur{M}\arrow[r] & \ind{H}{H'}\cur{M_{2}} & 
    \end{tikzcd}
    }
    \]
    is $(H',K)$-exact, so from \Cref{cfftbrs} $\ind{H}{H'}$ is $(H\cap K,K)$-split.

    Assume $\Mod{H}$ is semisimple, then
    \[
    \adjustbox{scale=1}{%
    \begin{tikzcd}
        0\arrow[r] & M_{1}\arrow[r] & M\arrow[r] & M_{2}\arrow[r] & 0 
    \end{tikzcd}
    }
    \]
    is split, and so
    \[
    \adjustbox{scale=1}{%
    \begin{tikzcd}
        0\arrow[r] & \ind{H}{H'}\cur{M_{1}}\arrow[r] & \ind{H}{H'}\cur{M}\arrow[r] & \ind{H}{H'}\cur{M_{2}}\arrow[r] & 0
    \end{tikzcd}
    }
    \]
    is also split and hence $(H',K)$-exact. Therefore, since $\ind{H}{H'}$ takes $(H,H\cap K)$-exact sequences to $(H',K)$-exact sequences, the spectral sequence will collapses and so $$\rRind{(H',K)}{i}{H'}{G}\cur{\ind{H}{H'}(M)}\cong\rRind{(H,H\cap K)}{i}{H}{G}\cur{M}.$$
\end{proof}
\subsection{}

We are almost ready to apply our general theory of relative right derived functors to get some specific results. But first we will need the following two results.

\begin{proposition}\label{crrdf1}
    Let $G$, $G'$, and $G''$ be algebraic groups, $H$ a closed subgroup of $G$, $H'$ a closed subgroup of $G'$, and $F:\Mod{G}\to\Mod{G'}$ and $F':\Mod{G'}\to\Mod{G''}$ be additive left exact covariant functors. 
    
    If $F'$ is exact, then for all $G$-modules $M$, $$\cur{\textup{R}_{(G,H)}^{i}\cur{F'\circ F}}\cur{M}\cong\cur{F'\circ\textup{R}_{(G,H)}^{i}F}\cur{M}\text{ for }i\geq0.$$
\end{proposition}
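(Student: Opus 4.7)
The plan is to unwind the definition of the relative right derived functor on both sides and use that an exact functor commutes with the formation of cohomology of a complex.

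First I would pick a $(G,H)$-injective resolution of $M$,
\[
\adjustbox{scale=1}{
\begin{tikzcd}
    0\arrow[r] & M\arrow[r] & I_{0}\arrow[r, "d_0"] & I_{1}\arrow[r, "d_1"] & \cdots,
\end{tikzcd}
}
\]
and apply $F$ to get a complex $F(I_\bullet)$ whose cohomology is, by definition, $\textup{R}_{(G,H)}^{i}F(M)$. By \Cref{rrdfawd} the value of $\textup{R}_{(G,H)}^{i}(F'\circ F)(M)$ can be computed using \emph{this same} resolution, so it equals $H^i((F'\circ F)(I_\bullet)) = H^i(F'(F(I_\bullet)))$. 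Therefore the proposition reduces to the claim that
\[
H^i(F'(F(I_\bullet))) \cong F'(H^i(F(I_\bullet))).
\]

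The key step is then the standard fact that an additive exact functor commutes with cohomology of a cochain complex. Concretely, from the short exact sequences
\[
\adjustbox{scale=1}{
\begin{tikzcd}
    0\arrow[r] & \Ker(F(d_i))\arrow[r] & F(I_i)\arrow[r] & \IM(F(d_i))\arrow[r] & 0
\end{tikzcd}
}
\]
and
\[
\adjustbox{scale=1}{
\begin{tikzcd}
    0\arrow[r] & \IM(F(d_{i-1}))\arrow[r] & \Ker(F(d_i))\arrow[r] & H^i(F(I_\bullet))\arrow[r] & 0,
\end{tikzcd}
}
\]
applying the exact functor $F'$ gives short exact sequences of $G''$-modules, and $F'$ commutes with the formation of kernels and images of the induced differentials $F'(F(d_i))$. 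Splicing these together identifies $H^i(F'(F(I_\bullet)))$ with $F'(H^i(F(I_\bullet)))$ naturally.

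Combining these two observations yields
\[
\cur{\textup{R}_{(G,H)}^{i}(F'\circ F)}(M) = H^i(F'(F(I_\bullet))) \cong F'(H^i(F(I_\bullet))) = \cur{F'\circ\textup{R}_{(G,H)}^{i}F}(M),
\]
which is exactly the claimed isomorphism. There is really no main obstacle here: the argument is a direct definition-chase, and the only substantive input beyond unwinding definitions is the standard fact that an exact additive functor preserves kernels, images, and therefore cohomology — a fact which does not require any interaction with the relative structure and holds at the level of the underlying abelian categories.
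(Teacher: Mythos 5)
Your proposal is correct and takes essentially the same route as the paper: both compute $\textup{R}_{(G,H)}^{i}\cur{F'\circ F}\cur{M}$ and $F'\cur{\textup{R}_{(G,H)}^{i}F\cur{M}}$ from one and the same $(G,H)$-injective resolution and then use that the exact additive functor $F'$ preserves kernels, images, and hence the cohomology of the complex $F\cur{I_{\bigcdot}}$. The paper just records this as the chain of isomorphisms $F'\cur{\Ker\cur{F\cur{d_{i}}}/\IM\cur{F\cur{d_{i-1}}}}\cong\Ker\cur{F'\cur{F\cur{d_{i}}}}/\IM\cur{F'\cur{F\cur{d_{i-1}}}}$, which is exactly the splicing argument you spell out.
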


\begin{proof}
    Consider a $(G,H)$-injective resolution of $M$,
    \[
    \adjustbox{scale=1}{%
    \begin{tikzcd}
        0\arrow[r] & M\arrow[r, "d_{-1}"] & I_{0}\arrow[r, "d_{0}"] & I_{1}\arrow[r, "d_{1}"] & \cdots, 
    \end{tikzcd}
    }
    \]
    and apply $F$ to get 
    \[
    \adjustbox{scale=1}{%
    \begin{tikzcd}
        0\arrow[r] & F\cur{M}\arrow[r, "F\cur{d_{-1}}"] & F\cur{I_{0}}\arrow[r, "F\cur{d_{0}}"] & F\cur{I_{1}}\arrow[r, "F\cur{d_{1}}"] & \cdots. 
    \end{tikzcd}
    }
    \]
    One has that $\textup{R}_{(G,H)}^{i}F\cur{M}\cong \Ker\cur{F\cur{d_{i}}}/\IM\cur{F\cur{d_{i-1}}}$, so since $F'$ is exact,
    \begin{align*}
        F'\cur{\textup{R}_{(G,H)}^{i}F\cur{M}}&\cong F'\cur{\Ker\cur{F\cur{d_{i}}}/\IM\cur{F\cur{d_{i-1}}}}\\
        &\cong F'\cur{\Ker\cur{F\cur{d_{i}}}}/F'\cur{\IM\cur{F\cur{d_{i-1}}}}\\
        &\cong \Ker\cur{F'\cur{F\cur{d_{i}}}}/\IM\cur{F'\cur{F\cur{d_{i-1}}}}\\
        &\cong \textup{R}_{(G,H)}^{i}\cur{F'\circ F}\cur{M}.
    \end{align*}
\end{proof}

\begin{proposition}\label{crrdf2}
    Let $G$, $G'$, and $G''$ be algebraic groups, $H$ a closed subgroup of $G$, $H'$ a closed subgroup of $G'$, and $F:\Mod{G}\to\Mod{G'}$ and $F':\Mod{G'}\to\Mod{G''}$ be additive left exact covariant functors. 
    
    If $F$ maps $(G,H)$-injective objects to $(G',H')$-acyclic objects of $F'$, and sends $(G,H)$-exact sequences to $(G',H')$-exact sequences, then for all $G$-modules $M$, $$\cur{\textup{R}_{(G,H)}^{i}\cur{F'\circ F}}\cur{M}\cong\cur{\textup{R}_{(G',H')}^{i}F'\circ F}\cur{M}\text{ for }i\geq0.$$
\end{proposition}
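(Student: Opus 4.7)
The plan is to deduce the isomorphism as a collapse of the relative Grothendieck spectral sequence from \Cref{ggspfrrdf}. The two running hypotheses of the proposition are precisely what is required both to apply that spectral sequence and to force the relevant column to vanish, so almost no extra work is needed beyond checking that everything lines up.

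First I would verify that $F$ is $(H,H')$-split. By hypothesis $F$ sends any $(G,H)$-exact sequence, and in particular any $(G,H)$-short exact sequence, to a $(G',H')$-exact sequence; this is exactly the condition of \Cref{cfftbrs}, which then immediately yields that $F$ is $(H,H')$-split. Combined with the first hypothesis that $F$ sends $(G,H)$-injectives to $(G',H')$-acyclic objects of $F'$, the full hypotheses of \Cref{ggspfrrdf} are in force, producing the spectral sequence
\[
E_{2}^{i,j}=\textup{R}_{(G',H')}^{i}F'\cur{\textup{R}_{(G,H)}^{j}F\cur{M}}\Rightarrow\cur{\textup{R}_{(G,H)}^{i+j}\cur{F'\circ F}}\cur{M}.
\]

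Next I would compute the $j$-indexed columns. Take any $(G,H)$-injective resolution
\[
\adjustbox{scale=1}{
\begin{tikzcd}
0\arrow[r] & M\arrow[r] & I_{0}\arrow[r, "d_0"] & I_{1}\arrow[r, "d_1"] & \cdots.
\end{tikzcd}
}
\]
Since this is $(G,H)$-exact, applying $F$ yields a sequence which, by the second hypothesis, is $(G',H')$-exact, hence in particular exact as a sequence of $G'$-modules. Therefore the complex $0\to F(I_{0})\to F(I_{1})\to\cdots$ has cohomology $F(M)$ at position $0$ (using that $F$ is left exact) and zero elsewhere. By definition this says $\textup{R}_{(G,H)}^{0}F\cur{M}\cong F\cur{M}$ (as already noted in \Cref{rf0}) and $\textup{R}_{(G,H)}^{j}F\cur{M}=0$ for all $j>0$.

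Feeding this back into the spectral sequence gives $E_{2}^{i,j}=0$ for $j>0$, while $E_{2}^{i,0}=\textup{R}_{(G',H')}^{i}F'\cur{F(M)}$. Thus the spectral sequence collapses on the $i$-axis at the $E_{2}$-page, yielding
\[
\cur{\textup{R}_{(G,H)}^{i}\cur{F'\circ F}}\cur{M}\cong E_{2}^{i,0}=\cur{\textup{R}_{(G',H')}^{i}F'\circ F}\cur{M}
\]
for all $i\geq 0$, as desired. There is no real obstacle here: the only point requiring care is checking that the exactness hypothesis is strong enough to both (i) trigger the $(H,H')$-split condition needed to invoke \Cref{ggspfrrdf}, and (ii) force the acyclicity of $\textup{R}_{(G,H)}^{j}F$ on all modules, and both follow directly from the statement once the resolution calculation above is carried out.
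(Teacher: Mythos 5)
Your proposal is correct and follows essentially the same route as the paper: invoke \Cref{ggspfrrdf} after checking the $(H,H')$-split condition (the paper treats this as immediate, while you route it explicitly through \Cref{cfftbrs}), then show that applying $F$ to a $(G,H)$-injective resolution yields a $(G',H')$-exact, hence exact, complex, so that $\textup{R}_{(G,H)}^{j}F(M)$ vanishes for $j>0$ and the spectral sequence collapses onto the $i$-axis. No gaps to report.
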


\begin{proof}
    Under the assumptions, it is clear that $F$ is $(H,H')$-split, so \Cref{ggspfrrdf} applies and there is a spectral sequence
    $$\textup{R}_{(G',H')}^{i}F'\cur{\textup{R}_{(G,H)}^{j}F\cur{M}}\Rightarrow\cur{\textup{R}_{(G,H)}^{i+j}\cur{F'\circ F}}\cur{M}.$$
    Now, consider a $(G,H)$-injective resolution,
    \[
    \adjustbox{scale=1}{%
    \begin{tikzcd}
        0\arrow[r] & M\arrow[r, "d_{-1}"] & I_{0}\arrow[r, "d_{0}"] & I_{1}\arrow[r, "d_{1}"] & \cdots,
    \end{tikzcd}
    }
    \]
    and apply $F$ to get
    \begin{equation}\label{eq:11}
        \adjustbox{scale=1}{%
    \begin{tikzcd}
        0\arrow[r] & F\cur{M}\arrow[r, "F\cur{d_{-1}}"] & F\cur{I_{0}}\arrow[r, "F\cur{d_{0}}"] & F\cur{I_{1}}\arrow[r, "F\cur{d_{1}}"] & \cdots.
    \end{tikzcd}
    }
    \end{equation}
    From our assumptions, \sref{eq:11} is $(G',H')$-exact, and so the cohomology of it is always zero. Hence, $$\textup{R}_{(G,H)}^{j}F\cur{M}\cong\begin{cases}
        F\cur{M} & \text{ if }j=0\\
        0 & \text{ if }j>0
    \end{cases},$$ and so the spectral collapses and the result follows.
\end{proof}
\subsection{}

When looking at the normal cohomology, it is well known that for $G$-modules $M$, $N$, and $V$, such that $V$ is finite dimensional, and $V^*=\Hom{G}{V,k}$ is the dual module of $V$, that $$\Ext{G}{n}\cur{M,V\otimes N}\cong\Ext{G}{n}\cur{M\otimes V^*,N}\text{ for }n\geq0,$$ see \cite[II.4.4]{Jan03}. We now prove a relative version of this, which will be important later on for finding another condition for when $\ind{H}{G}$ takes relative exact sequences to relative exact sequences.

\begin{lemma}\label{mfdmie}
    Let $M$, $N$, and $V$ be $G$-modules. If $V$ is finitely generated and projective as a $k$-module, then 
    \begin{enumerate}[(a)]
        \item $\displaystyle \Ext{(G,H)}{n}\cur{M,V\otimes N}\cong\Ext{(G,H)}{n}\cur{M\otimes V^*,N}$ for $n\geq0$, and
        \item\label{mfdmieb} $\displaystyle \Ext{(G,H)}{n}\cur{V,N}\cong\Ext{(G,H)}{n}\cur{k,V^*\otimes N}$ for $n\geq0$.
    \end{enumerate}
\end{lemma}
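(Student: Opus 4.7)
The plan is to reduce both parts to the classical tensor--hom adjunction, combined with the observation that $-\otimes V$ interacts well with $(G,H)$-injective resolutions whenever $V$ is $k$-projective.

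For part (a), I would begin by choosing a $(G,H)$-injective resolution
\[
\adjustbox{scale=1}{
\begin{tikzcd}
    0\arrow[r] & N\arrow[r] & I_{0}\arrow[r, "d_{0}"] & I_{1}\arrow[r, "d_{1}"] & \cdots
\end{tikzcd}
}
\]
of $N$. Since $V$ is projective as a $k$-module, the functor $-\otimes V$ is exact, and by \Cref{tetretre} it sends the above $(G,H)$-exact sequence to a $(G,H)$-exact sequence
\[
\adjustbox{scale=1}{
\begin{tikzcd}
    0\arrow[r] & V\otimes N\arrow[r] & V\otimes I_{0}\arrow[r] & V\otimes I_{1}\arrow[r] & \cdots.
\end{tikzcd}
}
\]
By \Cref{triri}, each $V\otimes I_{j}$ is $(G,H)$-injective, so this is a $(G,H)$-injective resolution of $V\otimes N$. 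In particular, $\Ext{(G,H)}{n}\cur{M,V\otimes N}$ is the $n$-th cohomology of the complex $\Hom{G}{M,V\otimes I_{\bigcdot}}$.

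Since $V$ is finitely generated and projective over $k$, one has a natural $G$-module isomorphism $V\otimes I_{j}\cong\Hom{k}{V^{*},I_{j}}$, and combining this with the standard tensor--hom adjunction gives a natural isomorphism of complexes
\[
\Hom{G}{M,V\otimes I_{\bigcdot}}\cong\Hom{G}{M\otimes V^{*},I_{\bigcdot}}.
\]
Taking $n$-th cohomology of the right-hand side gives $\Ext{(G,H)}{n}\cur{M\otimes V^{*},N}$, since $I_{\bigcdot}$ is still a $(G,H)$-injective resolution of $N$. This establishes (a).

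For part (b), apply (a) with $M=k$ and with $V$ replaced by $V^{*}$; since $V$ is finitely generated and projective over $k$ we have $V^{**}\cong V$, so
\[
\Ext{(G,H)}{n}\cur{k,V^{*}\otimes N}\cong\Ext{(G,H)}{n}\cur{k\otimes V^{**},N}\cong\Ext{(G,H)}{n}\cur{V,N}.
\]
The only delicate step is verifying that the adjunction isomorphism respects the differentials of the complex so that it descends to cohomology; but this is a naturality check which follows directly from the definitions once the resolution $V\otimes I_{\bigcdot}$ has been identified. The essential input throughout is the $k$-projectivity and finite generation of $V$, which is what allows $-\otimes V$ to be exact and $V^{*}$ to serve as a genuine dual.
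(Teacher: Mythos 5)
Your proof is correct and rests on exactly the same inputs as the paper's: \Cref{tetretre} and \Cref{triri} show that $V\otimes-$ carries a $(G,H)$-injective resolution $I_{\bigcdot}$ of $N$ to a $(G,H)$-injective resolution of $V\otimes N$, and the natural isomorphism $\Hom{G}{M,V\otimes-}\cong\Hom{G}{M\otimes V^{*},-}$ (valid because $V$ is finitely generated projective over $k$) then identifies the two complexes computing the relative Ext groups, with part (b) obtained, as in the paper, by taking $M=k$ and replacing $V$ by $V^{*}$ using $V^{**}\cong V$. The only difference is one of packaging: the paper channels the transfer through \Cref{crrdf2}, whereas you perform the same transfer directly at the level of resolutions, which is precisely the content of that proposition in this special case.
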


\begin{proof}
    By \Cref{triri}, $V\otimes-$ takes $(G,H)$-injective modules to $(G,H)$-injective modules. Since $V$ is projective as a $k$-module, $V\otimes-$ is exact, so by \Cref{tetretre}, sends $(G,H)$-exact sequences to $(G,H)$-exact sequences. Hence, by \Cref{crrdf2},
    \begin{align*}
        \Ext{(G,H)}{n}\cur{M,V\otimes N}&\cong\cur{\textup{R}_{(G,H)}^{n}\Hom{G}{M,-}\circ\cur{V\otimes -}}(N)\\
        &\cong\textup{R}_{(G,H)}^{n}\cur{\Hom{G}{M,-}\circ\cur{V\otimes -}}(N)\\
        &\cong\textup{R}_{(G,H)}^{n}\cur{\Hom{G}{M\otimes V^*,-}}(N)\\
        &\cong\Ext{(G,H)}{n}\cur{M\otimes V^*,N}.
    \end{align*}
    This proves part (a), taking the case when $M=k$ yields part (b). 
\end{proof}

We remark that \Crefdefpart{mfdmie}{mfdmieb} was first proven in \textup{\cite[Proposition 2.3]{Kim65}}.
\subsection{}

We now prove that under the right conditions there exists a relative generalized Frobenius reciprocity result, which will be important for computing examples.

\begin{proposition}\label{rfr}
    Let $H$ and $K$ be closed subgroups of $G$ such that one of the following holds:
    \begin{enumerate}[(a)]
        \item $HK=G$, or
        \item $\Mod{H}$ is semisimple,
    \end{enumerate}
    then $\Ext{(G,K)}{i}\cur{M,\ind{H}{G}(N)}\cong\Ext{\cur{H,H\cap K}}{i}\cur{M,N}\text{ for }i\geq0.$
\end{proposition}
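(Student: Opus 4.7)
The proposition splits into two cases, with case (b) being essentially a restatement of what has already been proved and case (a) being the substantive new content.

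For case (b), where $\Mod{H}$ is semisimple, the result is exactly the ``furthermore'' part of \Cref{reisshkss}, so nothing new is required.

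For case (a), where $HK=G$, the plan is to apply the composition-of-derived-functors result \Cref{crrdf2} to the pair $F=\ind{H}{G}:\Mod{H}\to\Mod{G}$ and $F'=\Hom{G}{M,-}:\Mod{G}\to\Mod{k}$. To invoke \Cref{crrdf2} I must verify the two hypotheses, namely that $F$ carries $(H,H\cap K)$-injective modules to $(G,K)$-acyclic objects of $F'$ and that $F$ carries $(H,H\cap K)$-exact sequences to $(G,K)$-exact sequences. The first holds because \Cref{iriri} shows that $\ind{H}{G}$ sends $(H,H\cap K)$-injectives to $(G,K)$-injectives, and then \Cref{rif0} gives $(G,K)$-acyclicity for any additive left exact functor, in particular for $F'=\Hom{G}{M,-}$. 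The second is precisely the content of \Cref{iores}, whose hypotheses $k$ algebraically closed (a standing assumption in \Cref{Cohomology}) and $HK=G$ are exactly the ones we assumed.

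Having verified these hypotheses, \Cref{crrdf2} yields
$$\textup{R}_{(H,H\cap K)}^{i}\cur{F'\circ F}(N)\cong\textup{R}_{(G,K)}^{i}F'\cur{F(N)}=\Ext{(G,K)}{i}\cur{M,\ind{H}{G}(N)}.$$
The final identification is the standard Frobenius reciprocity adjunction $\Hom{G}{M,\ind{H}{G}(N)}\cong\Hom{H}{\res{H}{G}(M),N}$ (see \cite[I.3.4]{Jan03}), which, because it is natural in $N$, identifies the composite functor $F'\circ F$ on $\Mod{H}$ with $\Hom{H}{M,-}$. Therefore $\textup{R}_{(H,H\cap K)}^{i}(F'\circ F)(N)\cong\Ext{(H,H\cap K)}{i}\cur{M,N}$, and combining the two isomorphisms gives the desired identity.

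The main obstacle is really just making sure that all the hypotheses of \Cref{crrdf2} apply; once that is set up, the argument is essentially a reorganization of adjunction plus the previously established facts. The condition $HK=G$ is exactly what \Cref{iores} demands in order for $\ind{H}{G}$ to preserve relative exactness, which is the crucial property that makes the ``second spectral sequence collapse'' step of \Cref{crrdf2} valid. In the semisimple case (b), the exactness is trivial because every sequence is split, which is why \Cref{reisshkss} already handled it.
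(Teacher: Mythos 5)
Your proposal is correct and follows essentially the same route as the paper: case (b) via \Cref{reisshkss}, and case (a) by combining \Cref{iriri} and \Cref{iores} to verify the hypotheses of \Cref{crrdf2} for $F=\ind{H}{G}$ and $F'=\Hom{G}{M,-}$, then identifying the composite with $\Hom{H}{M,-}$ by Frobenius reciprocity. Your added remarks (citing \Cref{rif0} for acyclicity and noting the naturality of the adjunction) only make explicit steps the paper leaves implicit.
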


\begin{proof}
    If $\Mod{H}$ is semisimple, one can apply \Cref{reisshkss} to get the result.
    
    If $HK=G$, by \Cref{iriri} and \Cref{iores}, $\ind{H}{G}$ takes $\cur{H,H\cap K}$-injective modules to $\cur{G,K}$-injective modules, and $\cur{H,H\cap K}$-exact sequences to $\cur{G,K}$-exact sequences, so apply \Cref{crrdf2} to get
    \begin{align*}
        \Ext{(G,K)}{i}\cur{M,\ind{H}{G}(N)}&\cong\cur{\textup{R}_{(G,K)}^{i}\Hom{G}{M,-}\circ\ind{H}{G}(-)}(N)\\
        &\cong\textup{R}_{\cur{H,H\cap K}}^{i}\cur{\Hom{G}{M,-}\circ\ind{H}{G}(-)}(N)\\
        &\cong\textup{R}_{\cur{H,H\cap K}}^{i}\cur{\Hom{H}{M,-}}(N)\\
        &\cong\Ext{\cur{H,H\cap K}}{i}\cur{M,N}.
    \end{align*}
\end{proof}

Note that by \Cref{rfr} $$\Ext{(G,H)}{n}\cur{k,\ind{\crly{1}}{G}(N)}\cong\Ext{\cur{\crly{1},\crly{1}}}{n}\cur{k,N}\cong\begin{cases}
    N & n=0\\
    0 & n>0
\end{cases}.$$

The next proposition gives a relative version of \cite[I.4.5(c)]{Jan03}, although due to the (fairly strong) conditions that are required, one gets an isomorphism, not just a spectral sequence.

%under our assumptions one gets an isomorphism, not just a spectral sequence. Note that this is due to the fairly strong conditions that are required.

\begin{proposition}\label{rrdisbsg}
    Let $H'$ be a closed subgroup of $G$ and $H$ and $K$ be closed subgroups of $H'$ such that one of the following holds:
    \begin{enumerate}[(a)]
        \item $HK=H'$, or
        \item $\Mod{H}$ is semisimple,
    \end{enumerate}
    then $\rRind{\cur{H',K}}{i}{H'}{G}\cur{\ind{H}{H'}(M)}\cong\rRind{\cur{H,H\cap K}}{i}{H}{G}(M)\text{ for }i\geq0.$
\end{proposition}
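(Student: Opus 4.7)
The plan is to apply \Cref{crrdf2} to the composition $\ind{H'}{G}\circ\ind{H}{H'}$, which by transitivity of induction is naturally isomorphic to $\ind{H}{G}$. With the identifications $F=\ind{H}{H'}:\Mod{H}\to\Mod{H'}$ and $F'=\ind{H'}{G}:\Mod{H'}\to\Mod{G}$, and the pairs of subgroups $(H,H\cap K)$ (in $\Mod{H}$) and $(H',K)$ (in $\Mod{H'}$), a direct application of \Cref{crrdf2} will yield
\[
\rRind{(H,H\cap K)}{i}{H}{G}(M)\cong\cur{\textup{R}_{(H',K)}^{i}\ind{H'}{G}\circ\ind{H}{H'}}(M)=\rRind{(H',K)}{i}{H'}{G}\cur{\ind{H}{H'}(M)},
\]
which is exactly the desired isomorphism. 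So the task reduces to verifying the two hypotheses of \Cref{crrdf2} for $F=\ind{H}{H'}$.

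For the first hypothesis (sending relative injectives to relative acyclics), I would apply \Cref{iriri} with the ambient group taken to be $H'$: since $H$ and $K$ are closed subgroups of $H'$, any $(H,H\cap K)$-injective $H$-module $I$ gives $\ind{H}{H'}(I)$ which is $(H',K)$-injective. By \Cref{rif0}, $(H',K)$-injective modules are $(H',K)$-acyclic to every additive left exact covariant functor out of $\Mod{H'}$, hence in particular to $F'=\ind{H'}{G}$.

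For the second hypothesis, the argument splits into the two cases. In case (a), $HK=H'$, so I would directly invoke \Cref{iores} with the ambient group replaced by $H'$: it guarantees that $\ind{H}{H'}$ sends $(H,H\cap K)$-exact sequences to $(H',K)$-exact sequences (this is where the algebraically closed assumption, which is in force for this section, is used). In case (b), $\Mod{H}$ is semisimple, so every short exact sequence of $H$-modules splits as $H$-modules; applying $\ind{H}{H'}$ (which commutes with direct sums) produces a split sequence of $H'$-modules, which is automatically $(H',K)$-exact for any closed subgroup $K\leq H'$.

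I do not anticipate a serious obstacle; the main point of the argument is simply assembling the pieces already established, specifically \Cref{iriri}, \Cref{iores}, \Cref{rif0}, and \Cref{crrdf2}, along with the natural isomorphism $\ind{H'}{G}\circ\ind{H}{H'}\cong\ind{H}{G}$. The only mild care needed is to confirm that the hypotheses of \Cref{iores} and \Cref{iriri} are correctly transported to the setting where the ambient group is $H'$ rather than $G$, but this is immediate from those statements.
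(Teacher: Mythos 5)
Your proof is correct and takes essentially the same route as the paper: case (a) is exactly the paper's argument (\Cref{iriri} and \Cref{iores} with ambient group $H'$, fed into \Cref{crrdf2} via transitivity of induction), and in case (b) the paper merely cites \Cref{riisshkss}, whose internal verification is precisely the splitness observation you make. The only cosmetic caveat is that \Cref{crrdf2} requires exactness to be preserved for arbitrary (in particular infinite) $(H,H\cap K)$-exact sequences, not only short ones; but when $\Mod{H}$ is semisimple any such sequence is $H$-split, and the additive functor $\ind{H}{H'}$ carries the $H$-contracting homotopy to an $H'$-contracting homotopy, so the resulting sequence is $(H',K)$-exact and your argument goes through unchanged.
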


\begin{proof}
    If $\Mod{H}$ is semisimple, apply \Cref{riisshkss} to get the result.
    
    If $HK=H'$, by \Cref{iriri} and \Cref{iores}, $\ind{H}{H'}$ takes $\cur{H,H\cap K}$-injective modules to $\cur{H',K}$-injective modules, and $\cur{H,H\cap K}$-exact sequences to $\cur{H',K}$-exact sequences, so apply \Cref{crrdf2} to get
    \begin{align*}
        \rRind{(H',K)}{i}{H'}{G}\cur{\ind{H}{H'}(M)}&\cong\cur{\textup{R}_{(H',K)}^{i}\ind{H'}{G}(-)\circ\ind{H}{H'}(-)}(M)\\
        &\cong\textup{R}_{\cur{H,H\cap K}}^{i}\cur{\ind{H'}{G}(-)\circ\ind{H}{H'}(-)}(M)\\
        &\cong\textup{R}_{\cur{H,H\cap K}}^{i}\cur{\ind{H}{G}(-)}(M)\\
        &\cong\rRind{\cur{H,H\cap K}}{i}{H}{G}\cur{M}.
    \end{align*}
\end{proof}

Note that by \Cref{rrdisbsg} $$\rRind{(H,K)}{n}{H}{G}\cur{\ind{\crly{1}}{H}(M)}\cong\rRind{\cur{\crly{1},\crly{1}}}{n}{\crly{1}}{G}(M)\cong\begin{cases}
    \ind{\crly{1}}{G}(M) & n=0\\
    0 & n>0
\end{cases}.$$
\subsection{}

Another interesting result which can be obtained is a relative generalized tensor identity.

\begin{theorem}\label{rgti}
    Let $M$ be an $H$-module and $N$ be a $G$-module such that $N$ is a flat $k$-module. Then for all $i\geq0$ $$\rRind{(H,H\cap K)}{i}{H}{G}\cur{\res{H}{G}\cur{N}\otimes M}\cong N\otimes\rRind{(H,H\cap K)}{i}{H}{G}\cur{M}.$$
\end{theorem}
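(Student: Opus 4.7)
The plan is to compute both sides of the claimed isomorphism using a single $(H,H\cap K)$-injective resolution of $M$ and then use the ordinary tensor identity together with flatness to move $N$ past the cohomology.

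Concretely, start with an $(H,H\cap K)$-injective resolution
\[
\adjustbox{scale=1}{
\begin{tikzcd}
    0\arrow[r] & M\arrow[r] & I_{0}\arrow[r] & I_{1}\arrow[r] & \cdots.
\end{tikzcd}
}
\]
Because $N$ is flat as a $k$-module, so is $\res{H}{G}(N)$, hence the functor $\res{H}{G}(N)\otimes-:\Mod{H}\to\Mod{H}$ is exact. By \Cref{tetretre} (applied with $G$ replaced by $H$ and $H$ replaced by $H\cap K$), tensoring the resolution with $\res{H}{G}(N)$ yields an $(H,H\cap K)$-exact sequence. Moreover, by \Cref{triri} (again applied inside $\Mod{H}$), each $\res{H}{G}(N)\otimes I_{j}$ remains $(H,H\cap K)$-injective because $I_{j}$ is. Therefore
\[
\adjustbox{scale=1}{
\begin{tikzcd}
    0\arrow[r] & \res{H}{G}(N)\otimes M\arrow[r] & \res{H}{G}(N)\otimes I_{0}\arrow[r] & \res{H}{G}(N)\otimes I_{1}\arrow[r] & \cdots
\end{tikzcd}
}
\]
is an $(H,H\cap K)$-injective resolution of $\res{H}{G}(N)\otimes M$.

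Next, apply $\ind{H}{G}$ and invoke the ordinary tensor identity $\ind{H}{G}\cur{\res{H}{G}(N)\otimes -}\cong N\otimes\ind{H}{G}(-)$ (see \cite[I.3.6]{Jan03}) term by term to identify the resulting complex with $N\otimes\ind{H}{G}\cur{I_{\bigcdot}}$. By definition,
\[
\rRind{(H,H\cap K)}{i}{H}{G}\cur{\res{H}{G}(N)\otimes M}\cong H^{i}\cur{N\otimes\ind{H}{G}\cur{I_{\bigcdot}}}.
\]
Since $N$ is $k$-flat, $N\otimes-$ is exact on $k$-modules and therefore commutes with the formation of cohomology, so
\[
H^{i}\cur{N\otimes\ind{H}{G}\cur{I_{\bigcdot}}}\cong N\otimes H^{i}\cur{\ind{H}{G}\cur{I_{\bigcdot}}}=N\otimes\rRind{(H,H\cap K)}{i}{H}{G}(M),
\]
which is the desired identification.

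The only real subtlety is confirming that \Cref{tetretre} and \Cref{triri} transfer verbatim from the $(G,H)$ setting to the $(H,H\cap K)$ setting; but the statements and proofs of those lemmas use nothing specific to the pair $(G,H)$, so this is a harmless relabeling. Everything else is a direct composition of previously established facts, and the naturality of the tensor identity ensures the differentials match up, so no further obstacle arises.
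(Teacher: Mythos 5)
Your proof is correct and follows essentially the same route as the paper: the same key ingredients (\Cref{tetretre}, \Cref{triri}, and the ordinary tensor identity $\ind{H}{G}\cur{\res{H}{G}(N)\otimes-}\cong N\otimes\ind{H}{G}(-)$, plus exactness of $N\otimes-$ from flatness) in the same order. The only cosmetic difference is that the paper packages the final steps through \Cref{crrdf1} and \Cref{crrdf2}, whereas you unwind those propositions into an explicit computation with a single relative injective resolution, which is a harmless inlining of the same argument.
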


\begin{proof}
    First, $$\cur{\ind{H}{G}\cur{-}\circ\cur{\res{H}{G}\cur{N}\otimes-}}\cur{M}\cong\cur{\cur{N\otimes-}\circ\ind{H}{G}\cur{-}}\cur{M}.$$ Next, since $N$ is a flat $k$-module, $N\otimes-$ (resp. $\res{H}{G}\cur{N}\otimes-$) is exact, and by \Cref{tetretre}, $\res{H}{G}\cur{N}\otimes-$ sends $(H,H\cap K)$-exact sequences to $(H,H\cap K)$-exact sequences. Now, from \Cref{triri}, $\res{H}{G}\cur{N}\otimes-$ sends $(H,H\cap K)$-injective modules to $(H,H\cap K)$-injective modules. Therefore, apply \Cref{crrdf1} and \Cref{crrdf2} to get
    \begin{align*}
        \rRind{(H,H\cap K)}{i}{H}{G}\cur{\res{H}{G}\cur{N}\otimes M}&\cong\cur{\textup{R}_{(H,H\cap K)}^{i}\ind{H}{G}\cur{-}\circ\cur{\res{H}{G}\cur{N}\otimes-}}\cur{M}\\
        &\cong\textup{R}_{(H,H\cap K)}^{i}\cur{\ind{H}{G}\cur{-}\circ\cur{\res{H}{G}\cur{N}\otimes-}}\cur{M}\\
        &\cong\textup{R}_{(H,H\cap K)}^{i}\cur{\cur{N\otimes-}\circ\ind{H}{G}\cur{-}}\cur{M}\\
        &\cong\cur{\cur{\cur{N\otimes-}\circ\textup{R}_{(H,H\cap K)}^{i}\ind{H}{G}\cur{-}}}\cur{M}\\
        &\cong N\otimes\rRind{(H,H\cap K)}{i}{H}{G}\cur{M}.
    \end{align*}
\end{proof}
\subsection{Induction With Semi-Direct Products}

Let $G'$ be an algebraic group that acts on $G$ via automorphisms. We can then form the semi-direct product $G\rtimes G'$. If $G'$ stabilises a closed subgroup $H$ of $G$, then we can also form $H\rtimes G'$ and have an isomorphism of functors $$\res{G}{G\rtimes G'}\circ\ind{H\rtimes G'}{G\rtimes G'}\cong\ind{H}{G}\circ\res{H}{H\rtimes G'}.$$ See \cite[I.4.9]{Jan03} for the details, or use \Cref{riiir} since $G\cur{H\rtimes G'}=G\rtimes G'$ and $G\cap\cur{H\rtimes G'}=H$.

\begin{proposition}\label{rrisrirfsdp}
    Let $H$ and $K$ be closed subgroups of $G$ such that $K\leq H$ and $M$ be an $H\rtimes G'$-module, then $$\res{G}{G\rtimes G'}\cur{\rRind{\cur{H\rtimes G',K\rtimes G'}}{n}{H\rtimes G'}{G\rtimes G'}(M)}\cong\rRind{\cur{H,K}}{n}{H}{G}\cur{\res{H}{H\rtimes G'}(M)}\text{ for }n\geq0.$$
\end{proposition}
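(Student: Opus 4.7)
The plan is to reduce everything to a single resolution and exploit the functor isomorphism
\[
\res{G}{G\rtimes G'}\circ\ind{H\rtimes G'}{G\rtimes G'}\cong\ind{H}{G}\circ\res{H}{H\rtimes G'}
\]
recalled just before the statement. First I would pick an $(H\rtimes G',K\rtimes G')$-injective resolution
\[
\adjustbox{scale=1}{
\begin{tikzcd}
0\arrow[r] & M\arrow[r] & I_{0}\arrow[r] & I_{1}\arrow[r] & \cdots
\end{tikzcd}}
\]
of $M$, and try to show that applying $\res{H}{H\rtimes G'}$ produces an $(H,K)$-injective resolution of $\res{H}{H\rtimes G'}(M)$. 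For the relative exactness: being $(H\rtimes G',K\rtimes G')$-exact means the resolution is split as $K\rtimes G'$-modules, hence a fortiori split as $K$-modules (since $K\le K\rtimes G'$, by \Cref{resd}), so the restriction is $(H,K)$-exact.

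The next step, which I expect to be the main technical point, is to verify that $\res{H}{H\rtimes G'}(I_{i})$ is $(H,K)$-injective. By \Cref{riiffds} each $I_{i}$ is a direct summand of some $\ind{K\rtimes G'}{H\rtimes G'}(N)$. Taking $G=H\rtimes G'$ with subgroups $H$ and $K\rtimes G'$, one checks the hypotheses of \Cref{riiir}: we have $(K\rtimes G')\cdot H=H\rtimes G'$ (because $H$ and $G'$ together generate $H\rtimes G'$) and $(K\rtimes G')\cap H=K$ (since $K\le H$ and $H\cap G'=\{1\}$ in the semi-direct product). Therefore
\[
\res{H}{H\rtimes G'}\cur{\ind{K\rtimes G'}{H\rtimes G'}(N)}\cong\ind{K}{H}\cur{\res{K}{K\rtimes G'}(N)},
\]
which is $(H,K)$-injective by \Cref{iri}. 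Since restriction preserves direct summands, \Cref{dsim} gives that $\res{H}{H\rtimes G'}(I_{i})$ is $(H,K)$-injective.

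With this in hand, $\rRind{(H,K)}{n}{H}{G}\cur{\res{H}{H\rtimes G'}(M)}$ is by definition the $n$-th cohomology of the complex $\ind{H}{G}\cur{\res{H}{H\rtimes G'}(I_{\bigcdot})}$. Using the functor isomorphism above, this complex is naturally isomorphic to $\res{G}{G\rtimes G'}\cur{\ind{H\rtimes G'}{G\rtimes G'}(I_{\bigcdot})}$. Finally, because $\res{G}{G\rtimes G'}$ is exact, taking cohomology commutes with restriction, so
\[
H^{n}\!\cur{\res{G}{G\rtimes G'}\cur{\ind{H\rtimes G'}{G\rtimes G'}(I_{\bigcdot})}}\cong\res{G}{G\rtimes G'}\cur{H^{n}\cur{\ind{H\rtimes G'}{G\rtimes G'}(I_{\bigcdot})}},
\]
and the right-hand side is $\res{G}{G\rtimes G'}\cur{\rRind{(H\rtimes G',K\rtimes G')}{n}{H\rtimes G'}{G\rtimes G'}(M)}$ by definition. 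This yields the desired isomorphism for all $n\ge 0$, and naturality of all the identifications above guarantees naturality in $M$.
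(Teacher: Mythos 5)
Your proposal is correct and follows essentially the same route as the paper: the paper likewise shows $\res{H}{H\rtimes G'}$ sends $\cur{H\rtimes G',K\rtimes G'}$-exact sequences to $(H,K)$-exact ones and relative injectives to relative injectives (via \Cref{riiffds}, the isomorphism $\res{H}{H\rtimes G'}\circ\ind{K\rtimes G'}{H\rtimes G'}\cong\ind{K}{H}\circ\res{K}{K\rtimes G'}$, \Cref{iri}, and \Cref{dsim}), and then combines the functor isomorphism with exactness of $\res{G}{G\rtimes G'}$. Your explicit argument at the level of a chosen resolution is simply an unwinding of the paper's appeal to \Cref{crrdf1} and \Cref{crrdf2}, so the two proofs coincide in substance.
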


\begin{proof}
    First, it is clear that $\res{G}{G\rtimes G'}$ and $\res{H}{H\rtimes G'}$ are exact and that $\res{H}{H\rtimes G'}$ takes\\ $\cur{H\rtimes G',K\rtimes G'}$-exact sequences to $(H,K)$-exact sequences (since a $K\rtimes G'$-homomorphism is also a $K$-homomorphism).

    Next, let $M$ be $\cur{H\rtimes G',K\rtimes G'}$-injective, so by \Cref{riiffds} $M$ is a direct summand of some $\ind{K\rtimes G'}{H\rtimes G'}(I)$. Hence, $\res{H}{H\rtimes G'}(M)$ is a direct summand of $\res{H}{H\rtimes G'}\cur{\ind{K\rtimes G'}{H\rtimes G'}(I)}\cong\ind{K}{H}\cur{\res{K}{K\rtimes G'}(I)}$ which is $(H,K)$-injective by \Cref{iri}. Therefore, by \Cref{dsim}, $\res{H}{H\rtimes G'}(M)$ is $(H,K)$-injective.

    Now, apply \Cref{crrdf1} and \Cref{crrdf2} to get
    \begin{align*}
        \res{G}{G\rtimes G'}\cur{\rRind{\cur{H\rtimes G',K\rtimes G'}}{n}{H\rtimes G'}{G\rtimes G'}(M)}\cong&\cur{\res{G}{G\rtimes G'}(-)\circ\rRind{\cur{H\rtimes G',K\rtimes G'}}{n}{H\rtimes G'}{G\rtimes G'}(-)}(M)\\
        \cong&\textup{R}_{\cur{H\rtimes G',K\rtimes G'}}^{n}\cur{\res{G}{G\rtimes G'}(-)\circ\ind{H\rtimes G'}{G\rtimes G'}(-)}(M)\\
        \cong&\textup{R}_{\cur{H\rtimes G',K\rtimes G'}}^{n}\cur{\ind{H}{G}(-)\circ\res{H}{H\rtimes G'}(-)}(M)\\
        \cong&\cur{\rRind{\cur{H,K}}{n}{H}{G}(-)\circ\res{H}{H\rtimes G'}(-)}(M)\\
        \cong&\rRind{\cur{H,K}}{n}{H}{G}\cur{\res{H}{H\rtimes G'}(M)}.
    \end{align*}
\end{proof}

In fact, using the same argument from \Cref{rrisrirfsdp}, one obtains:

\begin{proposition}\label{rrisrirwcpsch}
    Let $H$, $H'$, and $K$ be closed subgroups of $G$ such that $H\leq H'$ and $M$ be an $H'$-module. If $\res{K}{G}\circ\ind{H'}{G}\cong\ind{H'\cap K}{G}\circ\res{H'\cap K}{H'}$ and $\res{H'\cap K}{H'}\circ\ind{H}{H'}\cong\ind{H\cap K}{H'\cap K}\circ\res{H\cap K}{H}$, then $$\res{K}{G}\cur{\rRind{\cur{H',H}}{n}{H'}{G}(M)}\cong\rRind{\cur{H'\cap K,H\cap K}}{n}{H'\cap K}{K}\cur{\res{H'\cap K}{H'}(M)}\text{ for }n\geq0.$$
\end{proposition}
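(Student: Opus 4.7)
The plan is to mirror the proof of \Cref{rrisrirfsdp}, decomposing $\res{K}{G}\circ\ind{H'}{G}$ in two different ways via the two hypothesized Mackey-type isomorphisms, and then invoking \Cref{crrdf1} and \Cref{crrdf2} to pull $\res{K}{G}$ inside the derived functor on one side and recognize $\rRind{\cur{H'\cap K,H\cap K}}{n}{H'\cap K}{K}\cur{\res{H'\cap K}{H'}(-)}$ on the other.

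First I would note that $\res{K}{G}$ and $\res{H'\cap K}{H'}$ are both exact, so \Cref{crrdf1} applies to the first and lets us commute it past $\rRind{\cur{H',H}}{n}{H'}{G}$ into the expression $\textup{R}_{(H',H)}^{n}\cur{\res{K}{G}\circ\ind{H'}{G}}(M)$. The bulk of the work is then to verify that \Cref{crrdf2} applies to the composition $\ind{H'\cap K}{K}\circ\res{H'\cap K}{H'}$, which requires showing that $\res{H'\cap K}{H'}$ carries $(H',H)$-exact sequences to $(H'\cap K,H\cap K)$-exact ones, and carries $(H',H)$-injective modules to $(H'\cap K,H\cap K)$-acyclics of $\ind{H'\cap K}{K}$.

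The first of these is immediate from the definition: an $H$-module splitting of an $(H',H)$-exact sequence is in particular an $H\cap K$-module splitting since $H\cap K\leq H$, so the sequence viewed in $\Mod{H'\cap K}$ is $(H'\cap K,H\cap K)$-exact. For the second, I would use \Cref{riiffds} to realize an $(H',H)$-injective $M$ as a direct summand of some $\ind{H}{H'}(N)$. Applying $\res{H'\cap K}{H'}$ and invoking the second Mackey-type hypothesis identifies $\res{H'\cap K}{H'}\cur{\ind{H}{H'}(N)}\cong\ind{H\cap K}{H'\cap K}\cur{\res{H\cap K}{H}(N)}$, which is $(H'\cap K,H\cap K)$-injective by \Cref{iri}; \Cref{dsim} then promotes the direct summand $\res{H'\cap K}{H'}(M)$ to a $(H'\cap K,H\cap K)$-injective module, and \Cref{rif0} upgrades this to acyclicity for $\ind{H'\cap K}{K}$.

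With these verifications in place, the computation is the chain of isomorphisms
\begin{align*}
\res{K}{G}\cur{\rRind{\cur{H',H}}{n}{H'}{G}(M)}
&\cong\textup{R}_{(H',H)}^{n}\cur{\res{K}{G}(-)\circ\ind{H'}{G}(-)}(M)\\
&\cong\textup{R}_{(H',H)}^{n}\cur{\ind{H'\cap K}{K}(-)\circ\res{H'\cap K}{H'}(-)}(M)\\
&\cong\rRind{\cur{H'\cap K,H\cap K}}{n}{H'\cap K}{K}\cur{\res{H'\cap K}{H'}(M)},
\end{align*}
where the first step is \Cref{crrdf1} applied to the exact functor $\res{K}{G}$, the second step is the first hypothesized Mackey isomorphism (applied pointwise to a $(G,H)$-injective resolution and hence on each level of the complex computing the derived functor), and the third step is \Cref{crrdf2} applied to the composition $\ind{H'\cap K}{K}\circ\res{H'\cap K}{H'}$ using the preservation properties verified above. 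The principal obstacle is the relative-injectivity-preserving step for $\res{H'\cap K}{H'}$, which is the only place the second Mackey hypothesis is invoked; everything else is a transparent chaining of the composition lemmas already developed in Section \ref{Relative Right Derived Functor}.
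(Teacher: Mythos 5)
Your proof is correct and is essentially the argument the paper intends: the paper proves this proposition by repeating the proof of \Cref{rrisrirfsdp}, i.e.\ exactness of restriction plus \Cref{crrdf1}, substitution of the first Mackey-type isomorphism, and then \Cref{crrdf2} after checking via \Cref{riiffds}, the second Mackey-type isomorphism, \Cref{iri}, and \Cref{dsim} that $\res{H'\cap K}{H'}$ carries $(H',H)$-exact sequences to $(H'\cap K,H\cap K)$-exact ones and $(H',H)$-injectives to $(H'\cap K,H\cap K)$-injectives (hence acyclics). The only blemishes are notational: the resolution in your middle step is $(H',H)$-injective rather than $(G,H)$-injective, and you have silently (and correctly) read the hypothesis $\ind{H'\cap K}{G}$ as $\ind{H'\cap K}{K}$, which is what the types and the conclusion require.
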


Note, that the conditions for \Cref{rrisrirwcpsch} hold when $H'K=G$ and $H\cur{H'\cap K}=H'$.% do to \Cref{riiir}.
\subsection{}

We now want to compare the relative cohomology of $H$ to the relative right derived functors of $\ind{H}{G}$.

\begin{proposition}\label{retkgri}
    Let $M$ be an $H$-module and $H$ and $K$ be closed subgroups of $G$ such that $K\leq H$, then $$\Ext{(H,K)}{n}\cur{k,M\otimes k[G]}\cong\rRind{(H,K)}{n}{H}{G}(M)\text{ for }n\geq0.$$
\end{proposition}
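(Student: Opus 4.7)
The plan is to recognize that the induction functor factors as a composition of two functors on the category $\Mod{H}$ followed by the $H$-fixed-points functor, and then apply \Cref{crrdf2} to commute the relative right derived functor past this composition.

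First, I would set $F = -\otimes k[G]\colon \Mod{H}\to\Mod{H}$, where $k[G]$ carries its right regular $H$-action, and $F' = \Hom{H}{k,-}\colon \Mod{H}\to\Mod{G}$, where the outgoing $G$-module structure comes from the residual left regular $G$-action on $k[G]$. By the alternative description of induction recalled at the start of \Cref{Induction}, namely $\ind{H}{G}(M) = \cur{M\otimes k[G]}^{H} = \Hom{H}{k,M\otimes k[G]}$, we have an equality of functors $F'\circ F = \ind{H}{G}$. Both $F$ and $F'$ are additive and left exact: $F$ is exact because $k[G]$ is a free $k$-module, and $F'$ is a $\Hom{}{}$ functor.

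Next, I would verify the two hypotheses of \Cref{crrdf2} applied to the pair $(F,F')$, where on each side the relative structure is $(H,K)$. For the first hypothesis, if $I$ is $(H,K)$-injective then \Cref{triri} gives that $I\otimes k[G]$ is again $(H,K)$-injective; thus by \Cref{rif0} it is $(H,K)$-acyclic for $F'$. For the second hypothesis, since $F$ is exact, \Cref{tetretre} (with ambient group $H$ and $V = k[G]$) shows that $F$ sends $(H,K)$-exact sequences to $(H,K)$-exact sequences.

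With both hypotheses verified, \Cref{crrdf2} yields for every $n\geq 0$
\begin{align*}
\rRind{(H,K)}{n}{H}{G}(M) = \textup{R}_{(H,K)}^{n}\cur{F'\circ F}(M) &\cong \textup{R}_{(H,K)}^{n}F'\cur{F(M)} \\
&= \Ext{(H,K)}{n}\cur{k,\,M\otimes k[G]},
\end{align*}
which is the desired isomorphism. There is no genuine obstacle; the only nontrivial point is recognizing the factorization of $\ind{H}{G}$ through $\Mod{H}$ so that \Cref{crrdf2} applies with the same relative structure on source and target, at which point \Cref{triri} and \Cref{tetretre} supply precisely the required preservation properties.
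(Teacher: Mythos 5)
Your verification of the hypotheses of \Cref{crrdf2} (via \Cref{triri}, \Cref{rif0}, and \Cref{tetretre}) is exactly right and matches the paper, but there is a genuine gap in the setup: $F'=\Hom{H}{k,-}$ is not a well-defined functor $\Mod{H}\to\Mod{G}$. For a general rational $H$-module $N$, the invariants $N^{H}=\Hom{H}{k,N}$ carry no $G$-action; the ``residual left regular $G$-action'' you invoke exists only on objects that come equipped with a distinguished $k[G]$-tensor factor, and that extra structure is not part of being an object of $\Mod{H}$. This matters concretely: $\textup{R}_{(H,K)}^{n}F'\cur{M\otimes k[G]}$ is computed from an arbitrary $(H,K)$-injective resolution of $M\otimes k[G]$ in $\Mod{H}$, whose terms need not have any $G$-structure, so neither the identification $F'\circ F=\ind{H}{G}$ (as functors into $\Mod{G}$) nor your opening equality $\rRind{(H,K)}{n}{H}{G}(M)=\textup{R}_{(H,K)}^{n}\cur{F'\circ F}(M)$ is meaningful as stated. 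In effect you are trying to prove a $G$-equivariant statement that the proposition does not claim and that your factorization cannot deliver.

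The repair is small and is precisely what the paper does: take $F'=\Hom{H}{k,-}$ valued in $\Mod{k}$, so that $F'\circ\cur{-\otimes k[G]}\cong\mc{F}\circ\ind{H}{G}$ where $\mc{F}:\Mod{G}\to\Mod{k}$ is the forgetful functor. Your application of \Cref{crrdf2} then identifies $\textup{R}_{(H,K)}^{n}\cur{\mc{F}\circ\ind{H}{G}}(M)$ with $\Ext{(H,K)}{n}\cur{k,M\otimes k[G]}$, but you still need one more step, an application of \Cref{crrdf1} with the exact functor $\mc{F}$, to identify $\textup{R}_{(H,K)}^{n}\cur{\mc{F}\circ\ind{H}{G}}(M)$ with $\mc{F}\cur{\rRind{(H,K)}{n}{H}{G}(M)}$; this step is absent from your argument because your ill-defined $F'$ made it look like a tautology. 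With that insertion the proof goes through, and the resulting isomorphism is one of $k$-vector spaces, which is the content of the proposition.
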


\begin{proof}
    Let $\mc{F}$ be the forgetful functor from $\Mod{G}$ to $\Mod{k}$, then from the definition of $\ind{H}{G}$ there exists an isomorphism $$\mc{F}\circ\ind{H}{G}\cong\Hom{H}{k,-}\circ\cur{-\otimes k[G]}.$$

    First, it is clear that both $\mc{F}$ and $\cur{-\otimes k[G]}$ are exact. Now, by \Cref{tetretre} and \Cref{triri}, $\cur{-\otimes k[G]}$ takes $(H,K)$-injective modules to $(H,K)$-injective modules and $(H,K)$-exact sequences to $(H,K)$-exact sequences. So apply \Cref{crrdf1} and \Cref{crrdf2} to get
    \begin{align*}
        \rRind{(H,K)}{n}{H}{G}(M)\cong&\mc{F}\circ\rRind{(H,K)}{n}{H}{G}(M)\\
        \cong&\cur{\mc{F}\circ\rRind{(H,K)}{n}{H}{G}(-)}(M)\\
        \cong&\textup{R}_{(H,K)}^{n}\cur{\mc{F}\circ\ind{H}{G}(-)}(M)\\
        \cong&\textup{R}_{(H,K)}^{n}\cur{\Hom{H}{k,-}\circ\cur{-\otimes k[G]}}(M)\\
        \cong&\cur{\textup{R}_{(H,K)}^{n}\Hom{H}{k,-}\circ\cur{-\otimes k[G]}}(M)\\
        \cong&\Ext{(H,K)}{n}\cur{k,M\otimes k[G]}.
    \end{align*}
\end{proof}
\subsection{}

We are now ready to give one more condition for when $\ind{H}{G}$ takes relative exact sequences to relative exact sequences.

\begin{proposition}\label{kgritise}
    Let $H$ and $K$ be closed subgroups of $G$. In the category of finite dimensional rational $G$-modules, $\textup{mod}\cur{G}$, the following are equivalent:
    \begin{enumerate}[(a)]
        \item $k[G]$ is $(H,H\cap K)$-injective.
        \item $\ind{H}{G}$ takes $(H,H\cap K)$-exact sequences to $(G,G)$-exact sequences.
        \item $\ind{H}{G}$ takes $(H,H\cap K)$-exact sequences to exact sequences.
    \end{enumerate}
\end{proposition}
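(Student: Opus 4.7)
The plan is to prove the cyclic chain of implications $(b) \Rightarrow (c) \Rightarrow (a) \Rightarrow (b)$. The implication $(b) \Rightarrow (c)$ is immediate from the definitions in \S2.1: a $(G,G)$-exact sequence is $G$-split, hence in particular exact.

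For $(c) \Rightarrow (a)$, the strategy is to check the $\textup{Ext}^{1}$-vanishing criterion for relative injectivity. Given a finite-dimensional $H$-module $M$, I apply $\ind{H}{G}$ to a $(H, H\cap K)$-injective resolution of $M$; by hypothesis $(c)$, the resulting complex is exact, so $\rRind{(H, H\cap K)}{n}{H}{G}\cur{M} = 0$ for $n > 0$. Invoking \Cref{retkgri}, this gives $\Ext{(H, H\cap K)}{n}\cur{k, M \otimes k[G]} = 0$. Applying \Crefdefpart{mfdmie}{mfdmieb} with a finite-dimensional $V$ and $M = V^{*}$ then yields $\Ext{(H, H\cap K)}{1}\cur{V, k[G]} \cong \Ext{(H, H\cap K)}{1}\cur{k, V^{*} \otimes k[G]} = 0$. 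The finite-dimensional analog of \Cref{rie0} (proved by feeding any test $(H, H\cap K)$-exact short exact sequence of finite-dimensional modules into the long exact sequence of \Cref{rles2}) then forces $k[G]$ to be $(H, H\cap K)$-injective in the category of finite-dimensional rational $G$-modules.

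For $(a) \Rightarrow (b)$, I start with a $(H, H\cap K)$-exact sequence $0 \to M_{1} \to M_{2} \to M_{3} \to 0$ of finite-dimensional $H$-modules and tensor with the $(H, G)$-bimodule $k[G]$ (with $H$ acting diagonally and $G$ acting by right translation). By \Cref{tetretre} the tensored sequence remains $(H, H\cap K)$-exact, and by \Cref{triri} combined with hypothesis $(a)$ each term $M_{i} \otimes k[G]$ is $(H, H\cap K)$-injective. By \Cref{srses} the sequence therefore splits as $H$-modules, and taking $H$-invariants identifies the three terms with the $\ind{H}{G}\cur{M_{i}}$, so the induced sequence is at least exact.

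The main obstacle is upgrading this exactness to $(G,G)$-exactness, i.e., showing the splitting can be chosen $G$-equivariantly so that $H$-invariants produces a $G$-linear section. The $H$-splitting from \Cref{srses} is a priori only $H$-linear, while a purely $G$-linear splitting also exists because each $M_{i} \otimes k[G]$ is $G$-injective as a sum of copies of $k[G]$. The delicate step is to reconcile the two into an $(H \times G)$-bimodule splitting; I would argue that the $G$-torsor of $H$-splittings admits a $G$-fixed point via an obstruction argument whose obstruction lives in a relative cohomology group that vanishes under $(a)$, by an argument parallel to the one used in $(c) \Rightarrow (a)$ but applied to the bimodule $\Hom{H}{M_{3} \otimes k[G], M_{1} \otimes k[G]}$. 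Packaging the equivariance carefully, without assuming $H$ is reductive, is the genuinely technical point of the proposition.
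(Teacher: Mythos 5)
Your cycle $(b)\Rightarrow(c)\Rightarrow(a)\Rightarrow(b)$ matches the paper's structure, and the first two implications are correct and essentially identical to the paper's argument: the paper likewise deduces $(b)\Rightarrow(c)$ trivially, and for $(c)\Rightarrow(a)$ combines \Cref{mfdmie} and \Cref{retkgri} to get $\Ext{(H,H\cap K)}{n}\cur{V,k[G]}\cong\Ext{(H,H\cap K)}{n}\cur{k,V^{*}\otimes k[G]}\cong\rRind{(H,H\cap K)}{n}{H}{G}\cur{V^{*}}=0$ for $n>0$, concluding with the Ext-vanishing criterion \Cref{rie0} in $\textup{mod}\cur{G}$ — exactly your chain of lemmas.

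The genuine gap is in $(a)\Rightarrow(b)$. Up to tensoring with $k[G]$, invoking \Cref{tetretre}, \Cref{triri} and \Cref{srses}, and taking $H$-invariants to get exactness of the induced sequence, you are on the paper's route; but you then explicitly concede that you cannot upgrade this to $(G,G)$-exactness and offer only an unexecuted sketch (``the $G$-torsor of $H$-splittings admits a $G$-fixed point via an obstruction argument''). As written this is not a proof: you never identify the group in which your obstruction lives, let alone show it vanishes, and there is no mechanism behind the fixed-point step — one cannot average over $G$ in the rational category, and the relevant lifting problem (via Frobenius reciprocity, producing an $H$-map $\res{H}{G}\cur{\ind{H}{G}\cur{M_{3}}}\to M_{2}$ over the counit) is an Ext-type obstruction in the \emph{first} variable, whereas hypothesis (a) supplies relative \emph{injectivity}, which controls the second variable; nothing in the paper's toolkit gives the vanishing you would need. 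For comparison, the paper's own proof of $(a)\Rightarrow(b)$ introduces none of this machinery: it records that the tensored sequence is split exact (by \Cref{srses}, its first term being $(H,H\cap K)$-injective under (a)) and then simply passes to $H$-invariants, declaring the sequence of the $\ind{H}{G}\cur{M_{i}}$ split exact. So you have correctly located the one delicate point of the implication, but your proposal leaves exactly that point unproved, and the auxiliary argument you gesture at would need to be constructed from scratch before this counts as a proof of (b).
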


\begin{proof}
    For $(a)\Rightarrow(b)$, consider an $(H,H\cap K)$-exact sequence
    \[
    \adjustbox{scale=1}{%
    \begin{tikzcd}
        0\arrow[r] & M_{1}\arrow[r] & M_{2}\arrow[r] & M_{3}\arrow[r] & 0. 
    \end{tikzcd}
    }
    \]
    $-\otimes k[G]$ is exact, so by \Cref{tetretre} $-\otimes k[G]$ sends $(H,K\cap K)$-exact sequences to $(H,K\cap K)$-exact sequences. By \Cref{triri}, $M_{i}\otimes k[G]$ is $(H,H\cap K)$-injective, so 
    \[
    \adjustbox{scale=1}{%
    \begin{tikzcd}
        0\arrow[r] & M_{1}\otimes k[G]\arrow[r] & M_{2}\otimes k[G]\arrow[r] & M_{3}\otimes k[G]\arrow[r] & 0 
    \end{tikzcd}
    }
    \]
    is split exact, and hence
    \[
    \adjustbox{scale=1}{%
    \begin{tikzcd}
        0\arrow[r] & \cur{M_{1}\otimes k[G]}^{H}\arrow[r] & \cur{M_{2}\otimes k[G]}^{H}\arrow[r] & \cur{M_{3}\otimes k[G]}^{H}\arrow[r] & 0 
    \end{tikzcd}
    }
    \]
    is also split exact.

    $(b)\Rightarrow(c)$ is obvious.

    For $(c)\Rightarrow(a)$, let $V$ be a finite dimensional $H$-module, by \Cref{mfdmie} and \Cref{retkgri} $$\Ext{(H,H\cap K)}{n}\cur{V,k[G]}\cong\Ext{(H,H\cap K)}{n}\cur{k,V^*\otimes k[G]}\cong\rRind{(H,H\cap K)}{n}{H}{G}\cur{V^*}=0$$ for $n>0$. Therefore, by \Cref{rie0}, $k[G]$ is $(H,H\cap K)$-injective in $\textup{mod}\cur{G}$.
\end{proof}

Note that in the category $\Mod{G}$, the proof of \Cref{kgritise} still works to show that $(a)\Rightarrow(b)\Rightarrow(c)$, but one does not necessarily have that $(c)\Rightarrow(a)$. One case where we do know that $(c)\Rightarrow(a)$, is if $\Mod{H\cap K}$ is semisimple.

\begin{proposition}
    Let $H$ and $K$ be closed subgroups of $G$ such that $\Mod{H\cap K}$ is semisimple. Then in the category $\textup{Mod}\cur{G}$ the following are equivalent:
    \begin{enumerate}[(a)]
        \item $k[G]$ is $(H,H\cap K)$-injective.
        \item $\ind{H}{G}$ takes $(H,H\cap K)$-exact sequences to $(G,G)$-exact sequences.
        \item $\ind{H}{G}$ takes $(H,H\cap K)$-exact sequences to exact sequences.
    \end{enumerate}
\end{proposition}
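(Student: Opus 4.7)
The plan is to mirror the proof of the preceding proposition for the implications $(a)\Rightarrow(b)\Rightarrow(c)$, which the author notes already goes through in $\textup{Mod}(G)$ without any extra assumption, and then to supply the missing implication $(c)\Rightarrow(a)$ by upgrading the finite-dimensional argument using \Cref{kachssriwefd0}. The semisimplicity of $\Mod{H\cap K}$ will enter at exactly this step, since \Cref{kachssriwefd0} promotes a relative $\Ext^{1}$-vanishing statement from finite-dimensional test modules to all test modules precisely when the ambient subgroup category is semisimple. Note that we apply \Cref{kachssriwefd0} with the pair $(H,H\cap K)$ in place of $(G,H)$, which is legitimate because $\Mod{H\cap K}$ is semisimple by hypothesis.

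For $(c)\Rightarrow(a)$, I would first show that $\rRind{(H,H\cap K)}{n}{H}{G}(W)=0$ for all $n>0$ and every $H$-module $W$. Indeed, take any $(H,H\cap K)$-injective resolution
\[
\begin{tikzcd}
0\arrow[r] & W\arrow[r] & I_{0}\arrow[r] & I_{1}\arrow[r] & \cdots
\end{tikzcd}
\]
of $W$; this is an $(H,H\cap K)$-exact sequence, so by hypothesis $(c)$ the induced complex
\[
\begin{tikzcd}
0\arrow[r] & \ind{H}{G}(W)\arrow[r] & \ind{H}{G}(I_{0})\arrow[r] & \ind{H}{G}(I_{1})\arrow[r] & \cdots
\end{tikzcd}
\]
is exact, which makes all higher cohomology groups, i.e.\ $\rRind{(H,H\cap K)}{n}{H}{G}(W)$ for $n>0$, vanish.

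Next I would combine this with the two isomorphisms already available in the paper. For any finite-dimensional $H$-module $V$, \Crefdefpart{mfdmie}{mfdmieb} gives
\[
\Ext{(H,H\cap K)}{n}\cur{V,k[G]}\cong\Ext{(H,H\cap K)}{n}\cur{k,V^{*}\otimes k[G]},
\]
and then \Cref{retkgri} (applied in the pair $(H,H\cap K)$, using $K\leq H$ to replace the role of the subgroup) identifies the right-hand side with $\rRind{(H,H\cap K)}{n}{H}{G}(V^{*})$. Taking $W=V^{*}$ in the vanishing established in the previous paragraph yields $\Ext{(H,H\cap K)}{n}\cur{V,k[G]}=0$ for every finite-dimensional $H$-module $V$ and every $n>0$, in particular for $n=1$.

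Finally, the semisimplicity of $\Mod{H\cap K}$ lets me invoke \Cref{kachssriwefd0} with $G$ replaced by $H$ and $H$ replaced by $H\cap K$: the vanishing $\Ext{(H,H\cap K)}{1}(V,k[G])=0$ on all finite-dimensional $V$ forces $k[G]$ to be $(H,H\cap K)$-injective in $\Mod{H}$, which is precisely statement $(a)$. The main (and essentially only) obstacle is identifying which ingredient allows the passage from finite-dimensional to arbitrary test modules; once one recognizes that \Cref{kachssriwefd0} is exactly the tool for this and that its hypothesis is satisfied by $\Mod{H\cap K}$, the rest is a direct assembly of previously proved identifications.
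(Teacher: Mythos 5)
Your proposal is correct and follows essentially the same route as the paper: $(a)\Rightarrow(b)\Rightarrow(c)$ via the earlier proposition's argument, and $(c)\Rightarrow(a)$ by combining \Cref{mfdmie}, \Cref{retkgri}, and the vanishing of $\rRind{(H,H\cap K)}{n}{H}{G}$ forced by $(c)$, then invoking \Cref{kachssriwefd0} with ambient group $H$ and subgroup $H\cap K$, which is where semisimplicity of $\Mod{H\cap K}$ enters. The only difference is that you make explicit the step (implicit in the paper's chain of isomorphisms) that hypothesis $(c)$ kills the higher relative derived induction functors on every $H$-module.
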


\begin{proof}
    From \Cref{kgritise} we already know that $(a)\Rightarrow(b)\Rightarrow(c)$. Assuming $(c)$, by \Cref{mfdmie} and \Cref{retkgri}, for all finite dimensional modules $V$, $$\Ext{\cur{H,H\cap K}}{n}\cur{V,k[G]}\cong\Ext{\cur{H,H\cap K}}{n}\cur{k,V^*\otimes k[G]}\cong\rRind{\cur{H,H\cap K}}{n}{H}{G}(V^*)=0\textup{ }\textup{ }\textup{ for }n>0.$$ Hence, from \Cref{kachssriwefd0}, $k[G]$ is $(H,H\cap K)$-injective.
\end{proof}
\subsection{}

We can now apply the above result to give us another condition for when the result of \Cref{rfr} holds.

\begin{proposition}\label{kgritiise}
    Let $H$ and $K$ be closed subgroups of $G$, $M$ a $G$-module, and $N$ an $H$-module. If $k[G]$ is $(H,H\cap K)$-injective, then $$\Ext{(G,K)}{i}\cur{M,\ind{H}{G}(N)}\cong\Ext{(H,H\cap K)}{i}\cur{M,N}\text{ for }i\geq0.$$
\end{proposition}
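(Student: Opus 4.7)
The plan is to apply \Cref{crrdf2} to the pair of functors $F=\ind{H}{G}:\Mod{H}\to\Mod{G}$ and $F'=\Hom{G}{M,-}:\Mod{G}\to\Mod{k}$. Via the Frobenius adjunction $\Hom{G}{M,\ind{H}{G}(-)}\cong\Hom{H}{\res{H}{G}(M),-}$, the composition $F'\circ F$ is, up to restriction of $M$, just $\Hom{H}{M,-}$, whose relative right derived functor at $(H,H\cap K)$ computes $\Ext{(H,H\cap K)}{i}(M,-)$. An application of \Cref{crrdf2} then yields
\[
\Ext{(H,H\cap K)}{i}(M,N)\cong\textup{R}_{(H,H\cap K)}^{i}(F'\circ F)(N)\cong\textup{R}_{(G,K)}^{i}F'\cur{\ind{H}{G}(N)}=\Ext{(G,K)}{i}\cur{M,\ind{H}{G}(N)},
\]
which is precisely the desired isomorphism.

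To invoke \Cref{crrdf2}, two hypotheses on $F=\ind{H}{G}$ must be checked. First, $\ind{H}{G}$ should send $(H,H\cap K)$-injective modules to $(G,K)$-acyclic modules for $F'$; this is immediate from \Cref{iriri} (which sends $(H,H\cap K)$-injectives to $(G,K)$-injectives), combined with \Cref{rif0} (which says that any $(G,K)$-injective module is $(G,K)$-acyclic for every functor, including $\Hom{G}{M,-}$). Second, $\ind{H}{G}$ should send $(H,H\cap K)$-exact sequences to $(G,K)$-exact sequences; this is the point at which the hypothesis on $k[G]$ is used. By the implication $(a)\Rightarrow(b)$ of \Cref{kgritise}, together with the remark following it that this direction extends from $\textup{mod}(G)$ to all of $\Mod{G}$ (the argument only uses \Cref{triri} and \Cref{srses}, neither of which requires finite dimensionality), the $(H,H\cap K)$-injectivity of $k[G]$ guarantees that $\ind{H}{G}$ actually carries $(H,H\cap K)$-exact sequences to $(G,G)$-exact sequences. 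A single invocation of \Cref{resd} then promotes $(G,G)$-exactness to $(G,K)$-exactness.

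The main obstacle is really of a bookkeeping flavor rather than of depth: one must confirm that the output of \Cref{kgritise} is strong enough, and that its proof survives in the category of possibly infinite-dimensional $G$-modules. The key point is that the $(a)\Rightarrow(b)$ implication delivers $(G,G)$-exactness directly (not just ordinary exactness), which is what lets us then descend to $(G,K)$-exactness via \Cref{resd} and feed the exactness hypothesis into \Cref{crrdf2}. Once these verifications are in place, the rest of the argument is the formal chain of isomorphisms above, invoking \Cref{crrdf2} and the Frobenius adjunction.
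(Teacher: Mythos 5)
Your proposal is correct and takes essentially the same route as the paper's proof: \Cref{iriri} for sending $(H,H\cap K)$-injectives to $(G,K)$-injectives, \Cref{kgritise} (the $(a)\Rightarrow(b)$ direction, which the remark after that proposition notes remains valid in $\Mod{G}$) for sending $(H,H\cap K)$-exact sequences to $(G,K)$-exact ones, and then \Cref{crrdf2} combined with Frobenius reciprocity $\Hom{G}{M,\ind{H}{G}(-)}\cong\Hom{H}{M,-}$. Your explicit invocations of \Cref{rif0} and \Cref{resd} merely spell out steps the paper leaves implicit.
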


\begin{proof}
    First, by \Cref{iriri} $\ind{H}{G}$ takes $(H,H\cap K)$-injective modules to $(G,K)$-injective modules. From \Cref{kgritise}, $\ind{H}{G}$ takes $(H,H\cap K)$-exact sequences to $(G,K)$-exact sequences. So apply \Cref{crrdf2} to get
    \begin{align*}
        \Ext{(G,K)}{i}\cur{M,\ind{H}{G}(N)}&\cong\cur{\textup{R}_{(G,K)}^{i}\Hom{G}{M,-}\circ\ind{H}{G}(-)}(N)\\
        &\cong\textup{R}_{\cur{H,H\cap K}}^{i}\cur{\Hom{G}{M,-}\circ\ind{H}{G}(-)}(N)\\
        &\cong\textup{R}_{\cur{H,H\cap K}}^{i}\cur{\Hom{H}{M,-}}(N)\\
        &\cong\Ext{\cur{H,H\cap K}}{i}\cur{M,N}.
    \end{align*}
\end{proof}

Similarly, we also get another condition for when the result of \Cref{rrdisbsg} holds.

\begin{proposition}
    Let $H'$ be a closed subgroup of $G$, $H$ and $K$ be closed subgroups of $H'$, and $M$ an $H$-module. If $k[H']$ is $(H,H\cap K)$-injective, then $$\rRind{(H',K)}{i}{H'}{G}\cur{\ind{H}{H'}(M)}\cong\rRind{(H,H\cap K)}{i}{H}{G}\cur{M}\text{ for }i\geq0.$$
\end{proposition}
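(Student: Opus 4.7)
The plan is to mimic the proof of \Cref{kgritiise}, but with the outer functor $\Hom{G}{M,-}$ replaced by $\ind{H'}{G}(-)$, applying \Cref{crrdf2} to the composition $\ind{H'}{G}\circ\ind{H}{H'}\cong\ind{H}{G}$ (transitivity of induction, see \cite[I.3.5]{Jan03}).

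To invoke \Cref{crrdf2}, I need to verify that $\ind{H}{H'}:\Mod{H}\to\Mod{H'}$ sends $(H,H\cap K)$-injective modules to $(H',K)$-acyclic modules for $\ind{H'}{G}$, and sends $(H,H\cap K)$-exact sequences to $(H',K)$-exact sequences. The first condition follows immediately from \Cref{iriri}, applied with ambient group $H'$ and subgroups $H,K\le H'$: if $M$ is $(H,H\cap K)$-injective, then $\ind{H}{H'}(M)$ is $(H',K)$-injective, and hence $(H',K)$-acyclic for any left exact functor by \Cref{rif0}. The second condition is exactly the hypothesis of \Cref{kgritise} (applied to the ambient group $H'$ in place of $G$): since $k[H']$ is $(H,H\cap K)$-injective by assumption, $\ind{H}{H'}$ carries $(H,H\cap K)$-exact sequences to exact sequences, and the proof of \Cref{kgritise} in fact shows they are $(H',H')$-exact, hence in particular $(H',K)$-exact by \Cref{resd}.

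With both hypotheses of \Cref{crrdf2} in hand, I conclude
\begin{align*}
\rRind{(H',K)}{i}{H'}{G}\cur{\ind{H}{H'}(M)}
&\cong\cur{\textup{R}_{(H',K)}^{i}\ind{H'}{G}(-)\circ\ind{H}{H'}(-)}(M)\\
&\cong\textup{R}_{(H,H\cap K)}^{i}\cur{\ind{H'}{G}(-)\circ\ind{H}{H'}(-)}(M)\\
&\cong\textup{R}_{(H,H\cap K)}^{i}\cur{\ind{H}{G}(-)}(M)\\
&\cong\rRind{(H,H\cap K)}{i}{H}{G}(M),
\end{align*}
which is the desired isomorphism.

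The only nontrivial step is verifying the exactness hypothesis, and that is packaged entirely by \Cref{kgritise}; once that is noted, the proof is formal. I do not expect any obstacle beyond confirming that \Cref{kgritise} applies with $H'$ in the role of $G$, which is immediate since $H\le H'$ and $K\le H'$, so $H\cap K$ is the same whether computed inside $H'$ or inside $G$.
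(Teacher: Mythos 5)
Your proposal is correct and follows essentially the same route as the paper: \Cref{iriri} (applied inside $H'$) for the injectivity condition, \Cref{kgritise} (with $H'$ in place of $G$) plus the hypothesis on $k[H']$ for the exactness condition, and then \Cref{crrdf2} together with transitivity of induction to get the chain of isomorphisms. The only difference is that you spell out the acyclicity step via \Cref{rif0} and the passage from $(H',H')$-exact to $(H',K)$-exact via \Cref{resd}, details the paper leaves implicit.
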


\begin{proof}
    First, by \Cref{iriri} $\ind{H}{H'}$ takes $(H,H\cap K)$-injective modules to $(H',K)$-injective modules. From \Cref{kgritise}, $\ind{H}{H'}$ takes $(H,H\cap K)$-exact sequences to $(H',K)$-exact sequences. So apply \Cref{crrdf2} to get
    \begin{align*}
        \rRind{(H',K)}{i}{H'}{G}\cur{\ind{H}{H'}(M)}&\cong\cur{\textup{R}_{(H',K)}^{i}\ind{H'}{G}(-)\circ\ind{H}{H'}(-)}(M)\\
        &\cong\textup{R}_{\cur{H,H\cap K}}^{i}\cur{\ind{H'}{G}(-)\circ\ind{H}{H'}(-)}(M)\\
        &\cong\textup{R}_{\cur{H,H\cap K}}^{i}\cur{\ind{H}{G}(-)}(M)\\
        &\cong\rRind{\cur{H,H\cap K}}{i}{H}{G}\cur{M}.
    \end{align*}
\end{proof}
\section{Factor Groups}\label{Factor Groups}

\subsection{}

Let $G$ be a algebraic group, $H$ a closed subgroup of $G$, and $N$ a normal subgroup of $G$. Then we have a natural map $\pi:G\to G/N$, and consequently its restriction, $\pi:H\to H/\cur{H\cap N}$. From these maps, there are also natural functors $\pi^*:\Mod{G/N}\to\Mod{G}$ and $\pi^*:\Mod{H/\cur{H\cap N}}\to\Mod{H}$.

Additionally, we also have natural functors $(-)^N:\Mod{G}\to\Mod{G/N}$ and $(-)^N=(-)^{H\cap N}:\Mod{H}\to\Mod{H/\cur{H\cap N}}$. One can see that $(-)^N\circ\pi^*=\id_{G/N}$, and $\pi^*\circ(-)^N=(-)^N$.
\subsection{}

Given the above functors we now want to examine their role when studying relative cohomology.

\begin{proposition}\label{fremre}
    Let $H$ be a closed subgroup of $G$, $N$ a normal subgroup of $G$, and $I$ be a $(G,H)$-injective module, then $I^{N}$ is $\cur{G/N,H/(H\cap N)}$-injective.
\end{proposition}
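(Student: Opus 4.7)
The approach is to use the characterization of relative injective modules from \Cref{riiffds} to reduce the problem to computing $N$-invariants of an induced module. Concretely, by \Cref{riiffds} we may write $I$ as a direct $G$-summand of $\ind{H}{G}(M)$ for some $H$-module $M$. Since $(-)^{N}:\Mod{G}\to\Mod{G/N}$ is an additive functor, $I^{N}$ is then a direct $G/N$-summand of $\cur{\ind{H}{G}(M)}^{N}$. In view of \Cref{iri} (applied to the subgroup $H/(H\cap N)$ of $G/N$) and \Cref{dsim}, it suffices to identify
\[
\cur{\ind{H}{G}(M)}^{N}\cong\ind{H/(H\cap N)}{G/N}\cur{M^{H\cap N}}
\]
as $G/N$-modules.

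To establish this isomorphism, I would use the functional realization
$$\ind{H}{G}(M)=\crly{f:G\to M\mid f(gh)=h^{-1}f(g)\text{ for all }g\in G,\,h\in H}.$$
Given $f\in\ind{H}{G}(M)^{N}$, the $N$-invariance $f(n^{-1}g)=f(g)$ for all $n\in N$ says that $f$ is constant on left cosets $Ng$; since $N$ is normal these coincide with $gN$, so $f$ descends to a function $\bar f$ on $G/N$. Moreover, for $n\in H\cap N$, applying both $H$-equivariance and $N$-invariance gives $f(g)=f(gn)=n^{-1}f(g)$, so $\bar f$ takes values in $M^{H\cap N}$. The transformation rule $f(gh)=h^{-1}f(g)$ then descends to the corresponding rule for the subgroup $H/(H\cap N)\leq G/N$, yielding $\bar f\in\ind{H/(H\cap N)}{G/N}(M^{H\cap N})$. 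The assignment $f\mapsto\bar f$ is clearly a $G/N$-module map, and the inverse sends $\bar f$ to $f(g):=\bar f(gN)$, which one checks lies in $\ind{H}{G}(M)$ and is $N$-invariant.

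With this identification in hand, the proof concludes as follows. By \Cref{iri}, $\ind{H/(H\cap N)}{G/N}(M^{H\cap N})$ is $(G/N,H/(H\cap N))$-injective. Since $I^{N}$ is a direct summand of this module, \Cref{dsim} shows that $I^{N}$ is $(G/N,H/(H\cap N))$-injective.

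The main obstacle is verifying the isomorphism in step two; everything else is a formal application of the characterization of relative injectives as direct summands of induced modules together with the additivity of $(-)^N$. I expect the identification itself to be routine but requires care with the normality of $N$ (to convert left-$N$-cosets into right-$N$-cosets) and with checking that the $H$-equivariance descends correctly to $H/(H\cap N)$-equivariance.
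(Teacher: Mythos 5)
Your argument is correct, but it is genuinely different from the paper's. The paper proves the lifting property directly: it inflates a given $\cur{G/N,H/(H\cap N)}$-exact sequence along $\pi^*$ (observing that inflation changes nothing on underlying vector spaces, so the sequence stays $(G,H)$-exact), composes $f$ with the inclusion $I^{N}\hookrightarrow I$, lifts using the $(G,H)$-injectivity of $I$, and then applies $(-)^{N}$ to land the lift back in $I^{N}$; this is completely self-contained. You instead invoke the structural characterization of \Cref{riiffds} to write $I$ as a summand of $\ind{H}{G}(M)$, use additivity of $(-)^{N}$, identify $\cur{\ind{H}{G}(M)}^{N}\cong\ind{H/(H\cap N)}{G/N}\cur{M^{H\cap N}}$, and finish with \Cref{iri} and \Cref{dsim}. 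That key identification is exactly the isomorphism $\mc{F}_1\cong\mc{F}_2$ of \cite[Proposition I.6.12(a)]{Jan03}, which the paper itself cites later in this section, so you may simply quote it rather than re-derive it; if you do keep your hands-on verification, note that the pointwise computation should be supplemented by the observation that an $N$-invariant regular function descends to a regular function on $G/N$ (i.e.\ $k[G/N]=k[G]^{N}$) and that $H/(H\cap N)$ is embedded in $G/N$ as $HN/N$ — this is where the actual content of the isomorphism sits, beyond the formal coset manipulation. The trade-off: your route is shorter given the machinery of \Cref{Induction} and matches the style the paper uses for similar statements later (e.g.\ showing $\pi^*$ or restriction preserves relative injectives via summands of induced modules), while the paper's proof is more elementary and avoids importing the induction–invariants isomorphism.
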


\begin{proof}
    Consider a $\cur{G/N,H/(H\cap N)}$-exact sequence
    \[
    \adjustbox{scale=1}{%
    \begin{tikzcd}
        0\arrow[r] & M_{1}\arrow[r, "d_{1}"] & M_{2}\arrow[r, "d_{2}"] & M_{3}\arrow[r] & 0 
    \end{tikzcd}
    }
    \]
    and a map $f:M_{1}\to I^{N}$. Apply $\pi^*$ to get
    \[
    \adjustbox{scale=1}{%
    \begin{tikzcd}
        0\arrow[r] & \pi^*M_{1}\arrow[r, "\pi^*d_{1}"]\arrow[d, "\pi^*f"] & \pi^*M_{2}\arrow[r, "\pi^*d_{2}"] & \pi^*M_{3}\arrow[r] & 0 \\
        & I^{N}
    \end{tikzcd}
    }.
    \]
    $\pi^*M_{i}$ and $\pi^*d_{i}$ are the same as $M_{i}$ and $d_{i}$ as vector spaces, so the sequence is still exact. Also, given an $H/\cur{H\cap N}$-homomorphism, $t$, one can see that $\pi^*t$ is an $H$-homomorphism which is the same as $t$ as a map of vector spaces. Hence the sequence is $(G,H)$-exact.

    Next, $I^{N}$ naturally sits inside $I$ as a submodule, so there is an injective $G$-homomorphism $i:I^{N}\to I$. Therefore, since $I$ is $(G,H)$-injective, there exists a map $h:M_{2}\to I$ such that the following diagram commutes
    \[
    \adjustbox{scale=1}{%
    \begin{tikzcd}
        0\arrow[r] & \pi^*M_{1}\arrow[r, "\pi^*d_{1}"]\arrow[d, "\pi^*f"] & \pi^*M_{2}\arrow[r, "\pi^*d_{2}"]\arrow[ldd, "h"] & \pi^*M_{3}\arrow[r] & 0 \\
        & I^{N}\arrow[d, hook, "i"] \\
        & I
    \end{tikzcd}
    }.
    \]
    Now apply $(-)^N$ to get
    \[
    \adjustbox{scale=1}{%
    \begin{tikzcd}
        0\arrow[r] & M_{1}\arrow[r, "d_{1}"]\arrow[d, "f"] & M_{2}\arrow[r, "d_{2}"]\arrow[ldd, "h^{N}"] & M_{3}\arrow[r] & 0 \\
        & I^{N}\arrow[d, hook, "i^{N}"] \\
        & I^{N}
    \end{tikzcd}
    },
    \]
    where $i^{N}$ is a isomorphism. Hence, $I^{N}$ is $\cur{G/N,H/(H\cap N)}$-injective.
\end{proof}

Now applying \Cref{fremre}, it can be shown that $\Mod{G/N}$ has enough relative injectives.

\begin{corollary}\label{gmnheri}
    Let $H$ be a closed subgroup of $G$ and $N$ a normal subgroup of $G$, then $\Mod{G/N}$ has enough $\cur{G/N,H/(H\cap N)}$-injectives.
\end{corollary}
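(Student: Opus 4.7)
The plan is to combine \Cref{eim} (every $G$-module embeds into a $(G,H)$-injective) with the previous proposition \Cref{fremre} (taking $N$-invariants of a $(G,H)$-injective yields a $(G/N,H/(H\cap N))$-injective), mediated by the inflation-invariants adjunction coming from $\pi:G\to G/N$.

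Concretely, I would proceed as follows. Given an arbitrary $G/N$-module $M$, first inflate it to a $G$-module $\pi^*M$ via the quotient map $\pi:G\to G/N$. By \Cref{eim}, there exists a $(G,H)$-injective module $I$ together with a $G$-module embedding $\iota:\pi^*M\hookrightarrow I$. Next, apply the fixed-point functor $(-)^N:\Mod{G}\to\Mod{G/N}$. Since $(-)^N$ is left exact, $\iota^N:(\pi^*M)^N\hookrightarrow I^N$ is again injective, and it is a $G/N$-module homomorphism by functoriality.

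Now use the identification $(\pi^*M)^N\cong M$ as $G/N$-modules: the $G$-action on $\pi^*M$ factors through $\pi$, so $N$ acts trivially on $\pi^*M$, forcing $(\pi^*M)^N=\pi^*M$, and under the equivalence $\pi^*\circ(-)^N=(-)^N$ recorded in the preceding subsection this identifies with $M$ as a $G/N$-module. Thus $\iota^N$ realizes $M$ as a $G/N$-submodule of $I^N$. Finally, invoke \Cref{fremre} to conclude that $I^N$ is $(G/N,H/(H\cap N))$-injective, completing the embedding of $M$ into a relative injective $G/N$-module.

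There is no real obstacle here; the only point that requires a moment's care is the identification $(\pi^*M)^N\cong M$ as $G/N$-modules and the verification that $\iota^N$ really is a $G/N$-module homomorphism. Both are immediate from the fact that $(-)^N$ is a functor into $\Mod{G/N}$ and that inflation produces modules with trivial $N$-action. The proof is therefore essentially a one-line assembly of \Cref{eim} and \Cref{fremre}.
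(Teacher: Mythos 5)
Your skeleton coincides with the paper's: inflate $M$ to $\pi^*M$, embed it into a $(G,H)$-injective $I$ via \Cref{eim}, apply $(-)^N$, and invoke \Cref{fremre}. But there is a genuine gap: you only produce a plain embedding $M\hookrightarrow I^N$, whereas ``enough $\cur{G/N,H/(H\cap N)}$-injectives'' in this relative setting means, as in \Cref{eim}, that the embedding is relatively admissible, i.e.\ that $M$ is a direct summand of $I^N$ as an $H/(H\cap N)$-module, so that $0\to M\to I^N\to I^N/M\to 0$ is $\cur{G/N,H/(H\cap N)}$-exact. This is exactly what is needed to iterate the construction and build $\cur{G/N,H/(H\cap N)}$-injective resolutions, hence to define $\Ext{\cur{G/N,H/N}}{n}$, which the paper states is the purpose of the corollary; a submodule embedding into a relative injective is strictly weaker and does not suffice. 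You quote \Cref{eim} but drop precisely its second clause (direct summand as an $H$-module), and left exactness of $(-)^N$ alone does not recover it.

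The gap is easy to close, and closing it recovers the paper's proof. \Cref{eim} provides an $H$-module projection $p:\res{H}{G}(I)\to\pi^*M$ with $p\circ\iota=\id_{\pi^*M}$. Since $H\cap N\leq N$ acts trivially both on $I^N$ and on $\pi^*M=M$, the restriction of $p$ to $I^N$ is an $H/(H\cap N)$-module homomorphism $I^N\to M$ splitting $\iota^N$; hence $M$ is an $H/(H\cap N)$-direct summand of $I^N$, and $I^N$ is $\cur{G/N,H/(H\cap N)}$-injective by \Cref{fremre}. With this addition your argument is essentially the paper's.
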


\begin{proof}
    Let $M$ be a $G/N$-module, and so $\pi^*M$ is a $G$-module. By \Cref{riiffds} there exists a $(G,H)$-injective module $I$ such that $\pi^*M$ is a direct summand of $I$. So apply $(-)^N$ to get that $M$ is a direct summand of $I^N$, which is $\cur{G/N,H/(H\cap N)}$-injective by \Cref{fremre}.
\end{proof}
\subsection{}

Let $E$ be a finite dimensional $G$-module. We  move forward with our study by examining how the functor $\Hom{N}{E,-}:\Mod{G}\to\Mod{G/N}$ interacts with relative injective modules and relative exact sequences.

\begin{proposition}\label{hnritri}
    Let $H$ be a closed subgroup of $G$, $N$ a normal subgroup of $G$, $E$ a finite dimensional $G$-module, and $I$ a $(G,H)$-injective module. Then $\Hom{N}{E,I}$ is $\cur{G/N,H/(H\cap N)}$-injective.
\end{proposition}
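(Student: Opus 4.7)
The plan is to exploit the adjunction
$$\Hom{G/N}{M,\Hom{N}{E,V}}\cong\Hom{G}{\pi^*M\otimes E,V}$$
between $\pi^*(-)\otimes E:\Mod{G/N}\to\Mod{G}$ and $\Hom{N}{E,-}:\Mod{G}\to\Mod{G/N}$, which converts the desired lifting problem for $\Hom{N}{E,I}$ into a lifting problem for $I$ itself. The isomorphism follows from the usual tensor-Hom adjunction together with the observation that, since $\pi^*M$ has trivial $N$-action, any $G$-linear map $\pi^*M\to\Hom{k}{E,V}$ must land in $\Hom{k}{E,V}^N=\Hom{N}{E,V}$, and conversely any $G$-linear map into $\Hom{N}{E,V}$ is automatically $G/N$-linear because both sides have trivial $N$-action.

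Let $0\to M_1\to M_2\to M_3\to 0$ be a $\cur{G/N,H/(H\cap N)}$-exact sequence and $f:M_1\to\Hom{N}{E,I}$ a $G/N$-homomorphism. Pulling back along $\pi^*$ yields a $(G,H)$-exact sequence, exactly as in the proof of \Cref{fremre} (every $H/(H\cap N)$-splitting pulls back through $\pi^*$ to an $H$-splitting). Since $E$ is finite-dimensional over the field $k$, it is free as a $k$-module, so $-\otimes E$ is exact; by \Cref{tetretre} it also preserves $(G,H)$-exactness. Hence
$$0\to\pi^*M_1\otimes E\to\pi^*M_2\otimes E\to\pi^*M_3\otimes E\to 0$$
is $(G,H)$-exact.

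The adjunction then turns $f$ into a $G$-homomorphism $\tilde f:\pi^*M_1\otimes E\to I$; by the $(G,H)$-injectivity of $I$, there is a $G$-homomorphism $\tilde h:\pi^*M_2\otimes E\to I$ extending $\tilde f$. Transporting $\tilde h$ back across the adjunction produces a $G/N$-homomorphism $h:M_2\to\Hom{N}{E,I}$, and naturality in the first variable gives $h\circ d_1=f$. The only substantive step is verifying the adjunction; once that is in place the argument has the same shape as \Cref{fremre}, which is recovered as the case $E=k$ with $\Hom{N}{k,I}=I^N$. I foresee no further obstacle.
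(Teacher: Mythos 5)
Your argument is correct, but it takes a genuinely different route from the paper. The paper disposes of the statement in two lines by factoring the functor: $\Hom{N}{E,I}\cong\cur{E^*\otimes I}^N$, then $E^*\otimes I$ is $(G,H)$-injective by \Cref{triri}, and taking $N$-invariants lands in the $\cur{G/N,H/(H\cap N)}$-injectives by \Cref{fremre}. You instead bypass both of those statements and verify the lifting property directly, using the adjunction $\pi^*(-)\otimes E\dashv\Hom{N}{E,-}$: pull the relative exact sequence back along $\pi^*$ (borrowing only the technique, not the statement, of \Cref{fremre}), tensor with $E$ (exact, and preserving $(G,H)$-exactness by \Cref{tetretre}), solve the lifting problem in $\Mod{G}$ using the $(G,H)$-injectivity of $I$, and transport back by naturality. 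This is the same mechanism the paper uses to prove \Cref{iriri} for the adjoint pair $\res{H}{G}\dashv\ind{H}{G}$, so your proof in effect instantiates the general principle that a right adjoint is relatively injectivity-preserving whenever its left adjoint preserves relative exact sequences; the adjunction you verify (which needs $E$ finite dimensional so that $\Hom{k}{E,V}\cong E^*\otimes V$ is rational) is also the functorial content behind \Cref{refsre}. What the paper's route buys is brevity and modularity given lemmas already in hand; what yours buys is a self-contained argument and an explicit statement of the adjunction that the paper only uses implicitly. Both are sound.
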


\begin{proof}
    One has $\Hom{N}{E,I}\cong\cur{E^*\otimes I}^N$, so it suffices to show that $\cur{E^*\otimes I}^N$ is \\$\cur{G/N,H/(H\cap N)}$-injective, which follows by \Cref{triri} and \Cref{fremre}.
\end{proof}

\begin{proposition}\label{hnremn}
    Let $H$ be a closed subgroup of $G$, $N$ a normal subgroup of $G$, $E$ a finite dimensional $G$-module, and
    \[
    \adjustbox{scale=1}{%
    \begin{tikzcd}
        0\arrow[r] & M_{1}\arrow[r] & M_{2}\arrow[r] & M_{3}\arrow[r] & 0 
    \end{tikzcd}
    }
    \]
    be $(G,H)$-exact. If $N\leq H$, then
    \[
    \adjustbox{scale=1}{%
    \begin{tikzcd}
        0\arrow[r] & \Hom{N}{E,M_{1}}\arrow[r] & \Hom{N}{E,M_{2}}\arrow[r] & \Hom{N}{E,M_{3}}\arrow[r] & 0 
    \end{tikzcd}
    }
    \]
    is $(G/N,H/N)$-exact.
\end{proposition}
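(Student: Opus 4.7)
The plan is to verify separately that the displayed three-term sequence (a) is exact as a sequence of $G/N$-modules and (b) splits as a sequence of $H/N$-modules. A preliminary observation supporting both parts is that for any $G$-module $M$, the natural $G$-action on $\Hom{N}{E,M}$ descends to a $G/N$-action: normality of $N$ in $G$ ensures the $G$-action preserves $N$-invariants, and a direct computation shows $N$ itself acts trivially, since for $n\in N$ and $\phi\in\Hom{N}{E,M}$ one has $\cur{n\cdot\phi}(e)=n\cdot\phi\cur{n^{-1}e}=n\cdot n^{-1}\phi(e)=\phi(e)$.

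For part (a), I would invoke \Cref{hwsgoreie} with $N$ playing the role of ``$H$'' in that lemma and our $H$ playing the role of ``$H'$''. The hypotheses $N\leq H\leq G$ and $(G,H)$-exactness of the input sequence are precisely what that lemma requires, and it outputs exactness of the $\Hom{N}{E,-}$ sequence as $k$-modules; the preliminary observation then upgrades this to exactness of $G/N$-modules.

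For part (b), I would start from the $H$-module splitting of the given $(G,H)$-exact sequence: there exists an $H$-homomorphism $t:M_{2}\to M_{1}$ with $t\circ d_{1}=\id_{M_{1}}$. Post-composition with $t$ produces a map $t_{*}:\Hom{N}{E,M_{2}}\to\Hom{N}{E,M_{1}}$, and because $N\leq H$ we know $t$ is in particular an $N$-homomorphism, so $t_{*}(\phi)=t\circ\phi$ actually lands in $\Hom{N}{E,M_{1}}$. A standard computation shows $t_{*}$ is an $H$-homomorphism, which, since both source and target have trivial $N$-action, is automatically an $H/N$-homomorphism. Then $t_{*}\circ\cur{d_{1}}_{*}=\id_{\Hom{N}{E,M_{1}}}$ gives the needed $H/N$-splitting.

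The role of the hypothesis $N\leq H$ is the key point to emphasize: it is exactly what lets \Cref{hwsgoreie} apply in step (a), and it is what allows the $H$-splitting of the input sequence to survive passage through $\Hom{N}{E,-}$ and descend to an $H/N$-splitting in step (b). There is no substantial obstacle; all verifications are short and routine once the descent of the $G$-action to a $G/N$-action on the Hom modules is recorded.
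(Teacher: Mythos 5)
Your proof is correct and follows essentially the same route as the paper: the paper's one-line proof is exactly to apply \Cref{hwsgoreie} with $N\leq H\leq G$ in the roles of that lemma's two subgroups and then view the resulting short exact sequence as $G/N$-modules. You additionally spell out the $H/N$-splitting by post-composing with the $H$-retraction $t$ (and note the descent of the $G$-action to $G/N$), which is a routine verification the paper leaves implicit.
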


\begin{proof}
    Apply \Cref{hwsgoreie} and consider the second short exact sequence as $G/N$-modules.
\end{proof}
\subsection{}

We can now combine the above to get the following result. Note that \Cref{gmnheri} is important as it allows us to define $\Ext{\cur{G/N,H/N}}{n}$.

\begin{proposition}\label{refsre}
    Let $G$ be an algebraic group, $N$ a normal subgroup of $G$, $H$ a closed subgroup of $G$, and $M$, $E$, and $V$ be $G$-modules such that $E$ is finite dimensional. If $N\leq H$, then $$\Ext{\cur{G/N,H/N}}{n}\cur{M,\Hom{N}{E,V}}\cong\Ext{(G,H)}{n}\cur{M\otimes E,V}\text{ for }n\geq0.$$
\end{proposition}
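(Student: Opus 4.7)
The plan is to recognize the composition of functors
$$\Hom{G}{M\otimes E,-}\cong\Hom{G/N}{M,-}\circ\Hom{N}{E,-}$$
and then invoke \Cref{crrdf2} with $F=\Hom{N}{E,-}:\Mod{G}\to\Mod{G/N}$ and $F'=\Hom{G/N}{M,-}:\Mod{G/N}\to\Mod{k}$.

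First I would verify the functorial isomorphism. Since $M$ is a $G/N$-module, it is $N$-trivial as a $G$-module, so every $G$-map $M\to E^*\otimes V$ lands in $\cur{E^*\otimes V}^N$. By tensor-hom adjunction and the identification $\Hom{N}{E,V}\cong\cur{E^*\otimes V}^N$, this gives
$$\Hom{G}{M\otimes E,V}\cong\Hom{G}{M,E^*\otimes V}\cong\Hom{G}{M,\cur{E^*\otimes V}^N}\cong\Hom{G/N}{M,\Hom{N}{E,V}},$$
which is natural in $V$.

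Next, I would check the two hypotheses of \Cref{crrdf2}. For the first, if $I$ is $(G,H)$-injective then by \Cref{hnritri} the module $\Hom{N}{E,I}$ is $\cur{G/N,H/N}$-injective, and hence $F'$-acyclic by \Cref{rif0}. For the second, I need $\Hom{N}{E,-}$ to send any $(G,H)$-exact sequence to a $\cur{G/N,H/N}$-exact sequence. \Cref{hnremn} gives this for short exact sequences (using the assumption $N\leq H$); to handle a general $(G,H)$-exact complex $\cdots\to M_{i-1}\to M_i\to M_{i+1}\to\cdots$, I would splice it as usual into short exact sequences $0\to\Ker(d_i)\to M_i\to\Ker(d_{i+1})\to 0$, each of which is $(G,H)$-exact because $\Ker(d_i)$ is an $H$-direct summand of $M_i$. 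Applying \Cref{hnremn} to each piece and re-splicing yields that $\Hom{N}{E,\Ker(d_i)}=\Ker\cur{\Hom{N}{E,d_i}}$ is an $H/N$-direct summand of $\Hom{N}{E,M_i}$, exactly the condition for $\cur{G/N,H/N}$-exactness.

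With both hypotheses in hand, \Cref{crrdf2} yields
$$\textup{R}_{(G,H)}^{n}\cur{F'\circ F}(V)\cong\textup{R}_{(G/N,H/N)}^{n}F'\cur{F(V)},$$
and translating back via the functorial isomorphism gives the desired $\Ext{(G,H)}{n}\cur{M\otimes E,V}\cong\Ext{(G/N,H/N)}{n}\cur{M,\Hom{N}{E,V}}$. The main technical point to get right is the splicing argument, since \Cref{hnremn} is only stated for three-term $(G,H)$-exact sequences, and one must track that the direct-summand property of kernels is preserved after applying $\Hom{N}{E,-}$ so that the reassembled long sequence remains $\cur{G/N,H/N}$-exact rather than merely exact.
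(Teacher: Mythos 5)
Your proposal is correct and follows essentially the same route as the paper: identify $\Hom{G}{M\otimes E,-}\cong\Hom{G/N}{M,-}\circ\Hom{N}{E,-}$ and apply \Cref{crrdf2}, with \Cref{hnritri} and \Cref{hnremn} supplying its two hypotheses. Your additional care in verifying the functor isomorphism and in splicing a general $(G,H)$-exact sequence into short $(G,H)$-exact pieces to extend \Cref{hnremn} is a detail the paper leaves implicit, and it is carried out correctly.
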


\begin{proof}
    By \Cref{hnritri}, $\Hom{N}{E,-}$ takes $(G,H)$-injective modules to $\cur{G/N,H/N}$-\\injective modules, and by \Cref{hnremn} takes $(G,H)$-exact sequences to $\cur{G/N,H/N}$-exact sequences, so apply \Cref{crrdf2} to get
    \begin{align*}
        \Ext{\cur{G/N,H/N}}{n}\cur{M,\Hom{N}{E,V}}&\cong\cur{\textup{R}_{\cur{G/N,H/N}}^{n}\Hom{G/N}{M,-}\circ\Hom{N}{E,-}}(V)\\
        &\cong\textup{R}_{\cur{G,H}}^{n}\cur{\Hom{G/N}{M,-}\circ\Hom{N}{E,-}}(V)\\
        &\cong\textup{R}_{\cur{G,H}}^{n}\Hom{G}{M\otimes E,-}(V)\\
        &\cong\Ext{(G,H)}{n}\cur{M\otimes E,V}.
    \end{align*}
\end{proof}

Note that taking $E=k$ in \Cref{refsre} yields the following:

\begin{corollary}
    Let $G$ be an algebraic group, $N$ a normal subgroup of $G$, $H$ a closed subgroup of $G$, and $M$ and $V$ be $G$-modules. If $N\leq H$, then for $n\geq0$, $$\Ext{G/N}{n}\cur{k,V^{N}}\cong\Ext{\cur{G/N,\{1\}}}{n}\cur{k,V^{N}}\cong\Ext{\cur{G/N,\{1\}}}{n}\cur{k,\Hom{N}{k,V}}\cong\Ext{(G,N)}{n}\cur{k,V}.$$
\end{corollary}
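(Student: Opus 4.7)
The plan is to establish the stated four-term chain of isomorphisms by recognizing each of the three individual isomorphisms as either a tautology or a specialization of a result already proved in the paper.

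First I would handle $\Ext{G/N}{n}\cur{k,V^{N}}\cong\Ext{(G/N,\{1\})}{n}\cur{k,V^{N}}$. This is an instance of the general principle, pointed out just after the definition of $\textup{R}_{(G,H)}^{i}F$ in \Cref{Relative Right Derived Functor}, that when $k$ is algebraically closed (our standing assumption throughout \Cref{Cohomology}) one has $\textup{R}_{(G',\{1\})}^{i}F=\textup{R}^{i}F$ for any additive left exact covariant functor $F$. Applied with $G'=G/N$ and $F=\Hom{G/N}{k,-}$ this yields the desired isomorphism. Concretely, every short exact sequence of $G/N$-modules is automatically $(G/N,\{1\})$-exact, so any ordinary injective resolution of $V^{N}$ in $\Mod{G/N}$ is simultaneously a $(G/N,\{1\})$-injective resolution, and the two right-derived functors necessarily produce the same cohomology.

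Next I would address $\Ext{(G/N,\{1\})}{n}\cur{k,V^{N}}\cong\Ext{(G/N,\{1\})}{n}\cur{k,\Hom{N}{k,V}}$. Here the only ingredient is the natural isomorphism of $G/N$-modules $V^{N}\cong\Hom{N}{k,V}$, which sends an $N$-fixed vector $v\in V^{N}$ to the $N$-module homomorphism $k\to V$, $1\mapsto v$. This identification intertwines the residual $G/N$-actions on both sides, so the two $\Ext$-groups are canonically isomorphic functorially in $V$.

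Finally, for $\Ext{(G/N,\{1\})}{n}\cur{k,\Hom{N}{k,V}}\cong\Ext{(G,N)}{n}\cur{k,V}$, I would specialize \Cref{refsre} to $M=E=k$ and $H=N$. The hypothesis $N\leq H$ becomes the trivial $N\leq N$; the identifications $M\otimes E=k$, $\Hom{N}{E,V}=\Hom{N}{k,V}$, and $H/N=N/N=\{1\}$ then transform the general statement into precisely the claimed isomorphism. There is no real obstacle, since \Cref{refsre} supplies all of the substantive content; the corollary is pure bookkeeping, assembled by stringing the three identifications above in sequence.
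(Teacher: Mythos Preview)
Your proposal is correct and matches the paper's own approach: the paper simply notes that the corollary follows by taking $E=k$ in \Cref{refsre}, and your write-up spells out exactly the three identifications (the tautology $\textup{R}_{(G/N,\{1\})}^{n}=\textup{R}^{n}$, the natural isomorphism $V^{N}\cong\Hom{N}{k,V}$, and the specialization $M=E=k$, $H=N$ of \Cref{refsre}) that this entails.
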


We remark that in \cite[Theorem 2.6]{Kim65}, the above is proven when $\textup{char}\cur{k}=0$.

\begin{proposition}
    Let $G$ be an algebraic group, $N$ a normal diagonalisable subgroup of $G$ (so $\Mod{N}$ is semi-simple), and $M$, $E$, and $V$ be $G$-modules such that $E$ is finite dimensional. Then $$\Ext{(G,N)}{n}\cur{M\otimes E,V}\cong\Ext{G}{n}\cur{M\otimes E,V}\text{ for }n\geq0,$$ and in particular, $$\Ext{(G,N)}{n}\cur{M,V}\cong\Ext{G}{n}\cur{M,V}\text{ for }n\geq0.$$
\end{proposition}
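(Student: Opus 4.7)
The plan is to apply Proposition~\ref{rtt} directly. The hypothesis that $N$ is diagonalisable implies that $\Mod{N}$ is semisimple: every rational representation of a diagonalisable group decomposes as a direct sum of weight spaces, so every short exact sequence of $N$-modules splits. This is exactly the hypothesis required by Proposition~\ref{rtt}.

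Next, I would take $F = \Hom{G}{M\otimes E, -}$, which is an additive left exact covariant functor from $\Mod{G}$ to $\Mod{k}$, and apply Proposition~\ref{rtt} with $H = N$. Unwinding the definition of $\Ext{(G,N)}{n}$ and $\Ext{G}{n}$, this gives
\[
\Ext{(G,N)}{n}\cur{M\otimes E, V} = \textup{R}_{(G,N)}^{n}\Hom{G}{M\otimes E, -}(V) \cong \textup{R}^{n}\Hom{G}{M\otimes E, -}(V) = \Ext{G}{n}\cur{M\otimes E, V},
\]
which establishes the first isomorphism for all $n\geq 0$. The particular case then follows by taking $E = k$, which is a finite dimensional $G$-module with $M\otimes k \cong M$.

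There is no real obstacle here; the entire content lies in having Proposition~\ref{rtt} already established, together with the standard fact that diagonalisable groups have semisimple representation categories. Neither the normality of $N$ in $G$ nor the presence of the tensor factor $E$ plays any role in the argument beyond bookkeeping, and the finite dimensionality of $E$ is only needed so that $M\otimes E$ is a well-defined $G$-module to which Proposition~\ref{rtt} can be applied.
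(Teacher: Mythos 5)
Your argument is correct, but it is not the route the paper takes. The paper deduces the statement from its factor-group machinery: it first applies Proposition~\ref{refsre} with $H=N$ to get $\Ext{(G/N,\{1\})}{n}\cur{M,\Hom{N}{E,V}}\cong\Ext{(G,N)}{n}\cur{M\otimes E,V}$, and then invokes the classical Lyndon--Hochschild--Serre-type isomorphism \cite[I.6.8]{Jan03} (valid because $N$ is diagonalisable, hence has vanishing higher cohomology) to identify the left-hand side with $\Ext{G}{n}\cur{M\otimes E,V}$; the case $E=k$ is then specialized at the end, exactly as you do. Your proof instead applies Proposition~\ref{rtt} directly with $H=N$ and $F=\Hom{G}{M\otimes E,-}$, using that $\Mod{N}$ is semisimple so every injective resolution is already a $(G,N)$-injective resolution. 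This is shorter and, as you correctly observe, shows more: neither the normality of $N$ nor the tensor factor $E$ is needed, so the same argument gives $\Ext{(G,H)}{n}\cong\Ext{G}{n}$ for any closed subgroup $H$ with $\Mod{H}$ semisimple (consistent with how the paper later treats tori). What the paper's longer route buys is a consistency check of the factor-group results of that section against the relative theory, and an explanation of why the statement is phrased with the auxiliary module $E$ and the normality hypothesis in the first place; your route renders those hypotheses superfluous rather than using them.
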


\begin{proof}
    By \Cref{refsre}, $\Ext{\cur{G/N,\{1\}}}{n}\cur{M,\Hom{N}{E,V}}\cong\Ext{(G,N)}{n}\cur{M\otimes E,V}$, and from the Lyndon-Hochschild-Serre spectral sequence (in particular \cite[I.6.8]{Jan03}) $$\Ext{\cur{G/N,\{1\}}}{n}\cur{M,\Hom{N}{E,V}}\cong\Ext{G}{n}\cur{M\otimes E,V}.$$ Hence, $$\Ext{(G,N)}{n}\cur{M\otimes E,V}\cong\Ext{G}{n}\cur{M\otimes E,V}.$$ Now taking $E=k$, we get $\Ext{(G,N)}{n}\cur{M,V}\cong\Ext{G}{n}\cur{M,V}$.
\end{proof}
\subsection{}

When considering the normal case of right derived functors of $\ind{H}{G}$, it is known that $$\rRind{}{n}{H'/N}{G/N}\cur{M}\cong\rRind{}{n}{H'}{G}\cur{M},$$ see \cite[II.6.11]{Jan03}. We now prove that there is also a relative version of this.

\begin{lemma}
    Let $N$, $H$, and $H'$ be closed subgroups of $G$ such that $N$ is normal and $N\leq H\leq H'\leq G$. Then for any $H'/N$-module $M$, $$\rRind{(H'/N,H/N)}{n}{H'/N}{G/N}\cur{M}\cong\rRind{(H',H)}{n}{H'}{G}\cur{M}\text{ for }n\geq0.$$
\end{lemma}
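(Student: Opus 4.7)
The plan is to identify both relative right derived functors in the statement with derived functors of two different composites that are linked by a natural isomorphism involving the inflation functors $\pi^{*}\colon\Mod{G/N}\to\Mod{G}$ and $\pi^{*}\colon\Mod{H'/N}\to\Mod{H'}$. Concretely, one has the standard functorial isomorphism
\[
\ind{H'}{G}\circ\pi^{*}\;\cong\;\pi^{*}\circ\ind{H'/N}{G/N},
\]
verified by sending $f\in\pi^{*}\ind{H'/N}{G/N}(M)$ to the function $g\mapsto f(gN)$; the inverse exists because $N\trianglelefteq G$ together with $N\leq H'$ forces any $F\in\ind{H'}{G}(\pi^{*}M)$ to satisfy $F(n^{-1}g)=(g^{-1}ng)\cdot F(g)=F(g)$ for $n\in N$, so $F$ descends to $G/N$. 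The same reasoning yields an analogous isomorphism $\ind{H}{H'}\circ\pi^{*}\cong\pi^{*}\circ\ind{H/N}{H'/N}$ at the subgroup level.

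The key technical step is to check that $\pi^{*}\colon\Mod{H'/N}\to\Mod{H'}$ sends $(H'/N,H/N)$-injective modules to $(H',H)$-injective modules, and $(H'/N,H/N)$-exact sequences to $(H',H)$-exact sequences. Relative exactness pulls back trivially, because any $H/N$-equivariant contracting homotopy $t$ is automatically $H$-equivariant when regarded through $\pi$. For relative injectivity, \Cref{riiffds} writes any $(H'/N,H/N)$-injective $I$ as a direct summand of $\ind{H/N}{H'/N}(L)$ for some $H/N$-module $L$; applying $\pi^{*}$ and the subgroup-level natural isomorphism exhibits $\pi^{*}I$ as a direct summand of $\ind{H}{H'}(\pi^{*}L)$, which is $(H',H)$-injective by \Cref{iri}, so \Cref{dsim} gives the conclusion.

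With these facts in hand, the proof assembles from the two composition results. Applying \Cref{crrdf2} to $F=\pi^{*}\colon\Mod{H'/N}\to\Mod{H'}$ and $F'=\ind{H'}{G}\colon\Mod{H'}\to\Mod{G}$ yields
\[
\textup{R}_{(H'/N,H/N)}^{n}\bigl(\ind{H'}{G}\circ\pi^{*}\bigr)(M)\;\cong\;\rRind{(H',H)}{n}{H'}{G}(M).
\]
Substituting the natural isomorphism $\ind{H'}{G}\circ\pi^{*}\cong\pi^{*}\circ\ind{H'/N}{G/N}$ on the left, and then applying \Cref{crrdf1} to the exact functor $\pi^{*}\colon\Mod{G/N}\to\Mod{G}$ to pull it outside the derived functor, I obtain
\[
\pi^{*}\,\rRind{(H'/N,H/N)}{n}{H'/N}{G/N}(M)\;\cong\;\rRind{(H',H)}{n}{H'}{G}(M),
\]
which, upon suppressing the faithful inflation $\pi^{*}$, is the stated isomorphism.

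The main obstacle is the preservation of relative injectivity under inflation. The naive adjunction-based argument one would use in the non-relative case does not directly apply here because relative injectivity depends on the choice of both the ambient group and the subgroup, and inflation changes both groups simultaneously. The characterization of relative injectives as summands of induced modules in \Cref{riiffds}, combined with the elementary commutation of induction with inflation, is what makes the argument go through without having to reprove an adjunction.
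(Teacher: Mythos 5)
Your proposal is correct and follows essentially the same route as the paper: pull everything back along $\pi^{*}$, use the commutation $\pi^{*}\circ\ind{H'/N}{G/N}\cong\ind{H'}{G}\circ\pi^{*}$ (which the paper cites from Jantzen I.6.11 rather than verifying directly), check that $\pi^{*}$ preserves relative injectives via \Cref{riiffds}, \Cref{iri}, \Cref{dsim} and relative exactness via the contracting homotopies, and then combine \Cref{crrdf1} and \Cref{crrdf2}. The only differences are cosmetic (order of the two derived-functor composition lemmas and the hands-on verification of the inflation--induction isomorphism).
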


\begin{proof}
    Note that this is the same as showing that $$\cur{\pi^*\circ\rRind{(H'/N,H/N)}{n}{H'/N}{G/N}}\cur{M}\cong\cur{\rRind{\cur{H',H}}{n}{H'}{G}\circ\pi^*}\cur{M}.$$
    
    From \cite[I.6.11(2')]{Jan03}, $\pi^*\circ\ind{H'/N}{G/N}\cong\ind{H'}{G}\circ\pi^*$, so we need to show that we can apply both \Cref{crrdf1} and \Cref{crrdf2}.

    First, it is clear that $\pi^*$ is exact, so one can apply \Cref{crrdf1} to $\pi^*\circ\ind{H'/N}{G/N}$. Next, we need to show that $\pi^*$ takes $\cur{H'/N,H/N}$-injective modules to $\cur{H',H}$-injective modules. Let $M$ be $\cur{H'/N,H/N}$-injective, then by \Cref{riiffds}, $M$ is a direct summand of some $\ind{H/N}{H'/N}\cur{I}$. Hence, $\pi^*M$ is a direct summand of $\pi^*\ind{H/N}{H'/N}\cur{I}\cong\ind{H}{H'}\cur{\pi^*I}$, which is $\cur{H',H}$-injective by \Cref{iri}. Therefore, by \Cref{dsim}, $\pi^*M$ is $\cur{H',H}$-injective.

    Last, we must show that $\pi^*$ takes $\cur{H'/N,H/N}$-exact sequences to $\cur{H',H}$-exact sequences. So consider a $\cur{H'/N,H/N}$-exact sequence
    \[
    \adjustbox{scale=1}{%
    \begin{tikzcd}
        0\arrow[r] & M_{1}\arrow[r, "d_{1}"] & M_{2}\arrow[r, "d_{2}"] & M_{3}\arrow[r] & 0.
    \end{tikzcd}
    }
    \]
    Then there exists $H/N$-homomorphisms $h_{i}:M_{i}\to M_{i-1}$ such that $\id_{M_{1}}=h_{2}\circ d_{1}$, $\id_{M_{2}}=d_{1}\circ h_{2}+h_{3}\circ d_{2}$, and $\id_{M_{3}}=d_{2}\circ h_{3}$. Now apply $\pi^*$ to everything to get an exact sequence of $H'$-modules
    \[
    \adjustbox{scale=1}{%
    \begin{tikzcd}
        0\arrow[r] & \pi^*M_{1}\arrow[r, "\pi^*d_{1}"] & \pi^*M_{2}\arrow[r, "\pi^*d_{2}"] & \pi^*M_{3}\arrow[r] & 0.
    \end{tikzcd}
    }
    \]
    Consider the $H$-homomorphisms $\pi^*h_{i}:\pi^*M_{i}\to\pi^*M_{i-1}$, and since $\pi^*$ does not change the structure as a vector space, it is clear that $\id_{M_{1}}=\pi^*h_{2}\circ \pi^*d_{1}$, $\id_{M_{2}}=\pi^*d_{1}\circ \pi^*h_{2}+\pi^*h_{3}\circ \pi^*d_{2}$, and $\id_{M_{3}}=\pi^*d_{2}\circ \pi^*h_{3}$. Hence, the sequence is $\cur{H',H}$-exact.

    Therefore, apply \Cref{crrdf1} and \Cref{crrdf2} to get
    \begin{align*}
        \cur{\pi^*\circ\rRind{(H'/N,H/N)}{n}{H'/N}{G/N}}\cur{M}&\cong\textup{R}_{(H'/N,H/N)}^{n}\cur{\pi^*\circ\ind{H'/N}{G/N}}\cur{M}\\
        &\cong\textup{R}_{(H'/N,H/N)}^{n}\cur{\ind{H'}{G}\circ\pi^*}\cur{M}\\
        &\cong\cur{\rRind{\cur{H',H}}{n}{H'}{G}\circ\pi^*}\cur{M}.
    \end{align*}
\end{proof}
\subsection{}

We will now consider the functors $\mc{F}_1$ and $\mc{F}_2$ from $\Mod{H}$ to $\Mod{G/N}$ defined by $$\mc{F}_1(M)=\cur{\ind{H}{G}(M)}^N\hspace{1cm}\textup{ and }\hspace{1cm}\mc{F}_2(M)=\ind{H/(H\cap N)}{G/N}\cur{M^{H\cap N}}.$$

First, we focus on $\mc{F}_1(M)$, and give a equivalent formulation for $\textup{R}_{(H,H\cap K)}^{n}\mc{F}_1(M)$ when under the right assumptions.

\begin{proposition}\label{HKNGHrHci}
    Let $H$, $K$, and $N$ be closed subgroups of $G$ such that $N$ is normal and such that one of the following holds:
    \begin{enumerate}[(a)]
        \item $HK=G$,
        \item $k[G]$ is $(H,H\cap K)$-injective, or
        \item $\Mod{H}$ is semisimple.
    \end{enumerate}
    Then $$\textup{R}_{(G,K)}^{n}\Hom{N}{k,\ind{H}{G}(M)}\cong\textup{R}_{(H,H\cap K)}^{n}\mc{F}_1(M)\text{ for }n\geq0.$$
\end{proposition}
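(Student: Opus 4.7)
The plan is to invoke Proposition \ref{crrdf2} with $F = \ind{H}{G}$, viewed as a functor between the relative categories $\Mod{H}$ with the $(H, H\cap K)$-structure and $\Mod{G}$ with the $(G,K)$-structure, and $F' = \Hom{N}{k,-}:\Mod{G}\to\Mod{G/N}$ with the $(G,K)$-structure on the source. Since $F'\circ F=\mc{F}_1$ by definition, Proposition \ref{crrdf2} will yield the desired isomorphism
\[
\textup{R}_{(H,H\cap K)}^{n}\mc{F}_1(M) \cong \cur{\textup{R}_{(G,K)}^{n}\Hom{N}{k,-}}\cur{\ind{H}{G}(M)} = \textup{R}_{(G,K)}^{n}\Hom{N}{k,\ind{H}{G}(M)},
\]
so it suffices to verify the two hypotheses of Proposition \ref{crrdf2} under each of conditions (a), (b), (c).

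The acyclicity hypothesis holds without any extra assumption on $H$ or $K$: by Theorem \ref{iriri}, $\ind{H}{G}$ sends $(H,H\cap K)$-injective modules to $(G,K)$-injective modules, and Lemma \ref{rif0} then shows that such modules are $(G,K)$-acyclic for any additive left exact functor, in particular for $\Hom{N}{k,-}$.

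The remaining hypothesis — that $\ind{H}{G}$ carries $(H,H\cap K)$-exact sequences to $(G,K)$-exact sequences — is where the three cases enter. Case (a) is precisely Theorem \ref{iores}. For case (b), Proposition \ref{kgritise} shows that $\ind{H}{G}$ takes $(H,H\cap K)$-exact sequences to $(G,G)$-exact (i.e.\ split) sequences, which are a fortiori $(G,K)$-exact; as noted in the remark after Proposition \ref{kgritise}, the implication $(a)\Rightarrow(b)$ there does not rely on finite-dimensionality, so the argument applies in the full category $\Mod{G}$. For case (c), the semisimplicity of $\Mod{H}$ forces every $(H,H\cap K)$-exact sequence to be split as a sequence of $H$-modules; since $\ind{H}{G}$ is additive it preserves splits, producing a split sequence of $G$-modules, which is trivially $(G,K)$-exact.

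The proof is essentially bookkeeping, since the substantive content is already packaged in Theorems \ref{iriri}, \ref{iores} and Proposition \ref{kgritise}. The only mild subtlety is case (c), where one needs to recall that for $\Mod{H}$ semisimple the notion of $(H,H\cap K)$-exactness collapses to split-exactness in $\Mod{H}$, so that additivity of $\ind{H}{G}$ suffices to transport the splitting. With both hypotheses of Proposition \ref{crrdf2} verified in each case, the claimed isomorphism follows directly.
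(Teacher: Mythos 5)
Your proposal is correct and follows essentially the same route as the paper: verify the two hypotheses of \Cref{crrdf2} with $F=\ind{H}{G}$ and $F'=\Hom{N}{k,-}$, using \Cref{iriri} (plus \Cref{rif0}) for acyclicity, \Cref{iores} for case (a), \Cref{kgritise} for case (b), and the splitting argument from \Cref{reisshkss} for case (c). Your explicit remark that only the $(a)\Rightarrow(b)$ direction of \Cref{kgritise} is needed, which holds in all of $\Mod{G}$, is a point the paper leaves implicit but is handled correctly.
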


\begin{proof}
    By \Cref{iriri}, $\ind{H}{G}$ takes $\cur{H,H\cap K}$-injective modules to $\cur{G,K}$-injective modules.
    
    If $HK=G$ or $k[G]$ is $(H,H\cap K)$-injective, by \Cref{iores} and \Cref{kgritise}, $\ind{H}{G}$ takes $\cur{H,H\cap K}$-exact sequences to $\cur{G,K}$-exact sequences.
    
    If $\Mod{H}$ is semisimple, apply the same reasoning as in \Cref{reisshkss} to see that $\ind{H}{G}$ takes $\cur{H,H\cap K}$-exact sequences to $\cur{G,K}$-exact sequences.

    Therefore, in any case, apply \Cref{crrdf2} to get
    \begin{align*}
        \textup{R}_{(G,K)}^{n}\Hom{N}{k,\ind{H}{G}(M)}&\cong\cur{\textup{R}_{(G,K)}^{n}\Hom{N}{k,-}\circ\ind{H}{G}(-)}(M)\\
        &\cong\textup{R}_{(H,H\cap K)}^{n}\cur{\Hom{N}{k,-}\circ\ind{H}{G}(-)}(M)\\
        &\cong\textup{R}_{(H,H\cap K)}^{n}\cur{(-)^N\circ\ind{H}{G}(-)}(M)\\
        &\cong\textup{R}_{(H,H\cap K)}^{n}\mc{F}_1(M).
    \end{align*}
\end{proof}

Next, we look at $\mc{F}_2(M)$, and give a equivalent formulation for $\textup{R}_{(H,K)}^{n}\mc{F}_2(M)$ when under the right assumptions.

\begin{proposition}\label{KHNGricH}
    Let $H$, $K$, and $N$ be closed subgroups of $G$ such that $N$ is normal, $K\leq H$, and $H\cap N\leq K$. Then $$\rRind{\cur{H/(H\cap N),K/(H\cap N)}}{n}{H/(H\cap N)}{G/N}\cur{\Hom{H\cap N}{k,M}}\cong\textup{R}_{(H,K)}^{n}\mc{F}_2(M)\text{ for }n\geq0.$$
\end{proposition}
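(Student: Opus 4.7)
My plan is to realize $\mc{F}_2$ as a composition of additive left exact covariant functors and then invoke \Cref{crrdf2}. Define $F:\Mod{H}\to\Mod{H/(H\cap N)}$ by $F=\Hom{H\cap N}{k,-}=(-)^{H\cap N}$ (well-defined because $H\cap N$ is normal in $H$, since $N$ is normal in $G$), and let $F'=\ind{H/(H\cap N)}{G/N}:\Mod{H/(H\cap N)}\to\Mod{G/N}$. Then $\mc{F}_2=F'\circ F$, so the statement to be proved reads
\[
\textup{R}_{(H,K)}^{n}(F'\circ F)(M)\cong\textup{R}_{(H/(H\cap N),K/(H\cap N))}^{n}F'\cur{\Hom{H\cap N}{k,M}}
\]
for all $n\geq 0$, which is exactly the conclusion of \Cref{crrdf2} provided its two hypotheses hold for $F$ and $F'$.

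To verify the acyclicity hypothesis I will apply \Cref{fremre} with the substitutions $G\mapsto H$, $H\mapsto K$, $N\mapsto H\cap N$: if $I$ is $(H,K)$-injective, then $I^{H\cap N}$ is $(H/(H\cap N),K/(K\cap(H\cap N)))$-injective. Since $H\cap N\leq K$ by hypothesis, $K\cap(H\cap N)=H\cap N$, so $F(I)$ is $(H/(H\cap N),K/(H\cap N))$-injective; by \Cref{rif0} this forces $F(I)$ to be $(H/(H\cap N),K/(H\cap N))$-acyclic for $F'$. For the exactness hypothesis I will apply \Cref{hnremn} with $E=k$ and the same substitutions; the condition ``$N\leq H$'' of that proposition translates under the substitution to ``$H\cap N\leq K$'', which is again our hypothesis.

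With both hypotheses of \Cref{crrdf2} in hand, the proposition yields the desired isomorphism at once, since the right-hand side is by definition $\rRind{(H/(H\cap N),K/(H\cap N))}{n}{H/(H\cap N)}{G/N}\cur{\Hom{H\cap N}{k,M}}$. The only delicate point is the subgroup bookkeeping needed to identify $K\cap(H\cap N)$ with $H\cap N$ so that the outputs of \Cref{fremre} and \Cref{hnremn} land in exactly the target relative category $(H/(H\cap N),K/(H\cap N))$ appearing in the statement; beyond that the argument is a direct citation.
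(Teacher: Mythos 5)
Your proposal is correct and follows essentially the same route as the paper: realize $\mc{F}_2$ as $\ind{H/(H\cap N)}{G/N}\circ\Hom{H\cap N}{k,-}$ and apply \Cref{crrdf2}, with \Cref{hnremn} (at $E=k$) for relative exactness and the factor-group injectivity result for the acyclicity hypothesis. The only cosmetic difference is that you cite \Cref{fremre} together with \Cref{rif0} where the paper cites \Cref{hnritri}, which for $E=k$ is the same statement, and your explicit bookkeeping identifying $K\cap(H\cap N)=H\cap N$ is exactly the point the paper uses implicitly.
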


\begin{proof}
    By \Cref{hnritri} and \Cref{hnremn}, $\Hom{H\cap N}{k,-}$ takes $(H,K)$-injective modules to $\cur{H/(H\cap K),K/(H\cap K)}$-injective modules and $(H,K)$-exact sequences to \\$\cur{H/(H\cap K),K/(H\cap K)}$-exact sequences. Therefore, apply \Cref{crrdf2} to get
    \begin{align*}
        &\rRind{\cur{H/(H\cap N),K/(H\cap N)}}{n}{H/(H\cap N)}{G/N}\cur{\Hom{H\cap N}{k,M}}\\
        \cong&\cur{\rRind{\cur{H/(H\cap N),K/(H\cap N)}}{n}{H/(H\cap N)}{G/N}(-)\circ\Hom{H\cap N}{k,-}}(M)\\
        \cong&\textup{R}_{(H,K)}^{n}\cur{\ind{H/(H\cap N)}{G/N}(-)\circ\Hom{H\cap N}{k,-}}(M)\\
        \cong&\textup{R}_{(H,K)}^{n}\cur{\ind{H/(H\cap N)}{G/N}(-)\circ(-)^{H\cap N}}(M)\\
        \cong&\textup{R}_{(H,K)}^{n}\mc{F}_2(M).
    \end{align*}
\end{proof}

Now applying \Cref{HKNGHrHci} and \Cref{KHNGricH} will yield the following.

\begin{proposition}
    Let $H$, $K$, and $N$ be closed subgroups of $G$ such that $N$ is normal, $H\cap N\leq H\cap K$ and such that one of the following holds:
    \begin{enumerate}[(a)]
        \item $HK=G$,
        \item $k[G]$ is $(H,H\cap K)$-injective, or
        \item $\Mod{H}$ is semisimple.
    \end{enumerate}
    Then for $n\geq0$, $$\textup{R}_{(G,K)}^{n}\Hom{N}{k,\ind{H}{G}(M)}\cong\rRind{\cur{H/(H\cap N),(H\cap K)/(H\cap N)}}{n}{H/(H\cap N)}{G/N}\cur{\Hom{H\cap N}{k,M}}.$$
\end{proposition}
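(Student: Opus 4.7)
The plan is to obtain both sides of the target isomorphism as relative right derived functors of a single auxiliary functor by invoking \Cref{HKNGHrHci} and \Cref{KHNGricH}, and then to identify the two auxiliary functors via the classical induction-versus-fixed-points identity. First I would apply \Cref{HKNGHrHci}: the three hypotheses (a), (b), (c) of our proposition are exactly those of \Cref{HKNGHrHci}, so its conclusion rewrites the left-hand side as $\textup{R}_{(H,H\cap K)}^{n}\mc{F}_{1}(M)$ with $\mc{F}_{1}(M)=\cur{\ind{H}{G}(M)}^{N}$.

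Next I would apply \Cref{KHNGricH} with the subgroup playing the role of ``$K$'' there taken to be our $H\cap K$. The two hypotheses $K\leq H$ and $H\cap N\leq K$ of \Cref{KHNGricH} become $H\cap K\leq H$ (trivial) and $H\cap N\leq H\cap K$ (precisely our additional assumption). Since $\Hom{H\cap N}{k,M}=M^{H\cap N}$, this rewrites the right-hand side of the target isomorphism as $\textup{R}_{(H,H\cap K)}^{n}\mc{F}_{2}(M)$ with $\mc{F}_{2}(M)=\ind{H/(H\cap N)}{G/N}\cur{M^{H\cap N}}$.

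It remains to produce a natural isomorphism of functors $\mc{F}_{1}\cong\mc{F}_{2}:\Mod{H}\to\Mod{G/N}$. Given $f\in\cur{\ind{H}{G}(M)}^{N}$, left-$N$-invariance forces $f$ to factor through $G/N$, while the defining relation $f(gh)=h^{-1}f(g)$ for $h\in H$ combined with normality of $N$ (used to rewrite $gh=(ghg^{-1})g$ when $h\in H\cap N$) shows that $f$ actually takes values in $M^{H\cap N}$; identifying $HN/N$ with $H/(H\cap N)$, the induced map $\bar f:G/N\to M^{H\cap N}$ lies in $\ind{H/(H\cap N)}{G/N}\cur{M^{H\cap N}}$, and the obvious lifting provides its inverse. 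The construction is manifestly natural in $M$, so once one verifies the $G/N$-equivariance of $f\mapsto\bar f$ -- the one place where care is required and the main obstacle of the argument -- the three steps chain together to prove the proposition. This natural isomorphism is essentially the content of \cite[I.6]{Jan03} in the absolute setting, so the remaining work is simply to confirm it for arbitrary $H$-modules $M$.
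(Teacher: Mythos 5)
Your proposal is correct and follows essentially the same route as the paper: rewrite the left side via \Cref{HKNGHrHci}, the right side via \Cref{KHNGricH} applied with $H\cap K$ in the role of $K$ (which is exactly where the hypothesis $H\cap N\leq H\cap K$ enters), and then identify $\mc{F}_1\cong\mc{F}_2$. The only difference is that the paper simply cites \cite[Proposition I.6.12(a)]{Jan03} for the natural isomorphism $\cur{\ind{H}{G}(M)}^{N}\cong\ind{H/(H\cap N)}{G/N}\cur{M^{H\cap N}}$, whereas you sketch its verification directly; your sketch is consistent with that reference.
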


\begin{proof}
    From \cite[Proposition I.6.12(a)]{Jan03}, $\mc{F}_1\cong\mc{F}_2$, so clearly $$\textup{R}_{(H,H\cap K)}^{n}\mc{F}_1(M)\cong\textup{R}_{(H,H\cap K)}^{n}\mc{F}_2(M).$$ Then apply \Cref{HKNGHrHci} and \Cref{KHNGricH} to get the result.
\end{proof}
\section{Applications and Examples}\label{Examples}

\subsection{}

We start off by examining what happens when $\textup{char}(k)=0$. Let $G$ be a reductive algebraic group over an algebraically closed field $k$ of characteristic $0$, $H$ a closed subgroup of $G$, and $M$ and $N$ be finite dimensional $G$-modules. It is well known that $\Ext{G}{i}\cur{M,N}=0$ for $i>0$. Hence, $N$ is an injective module in $\textup{mod}(G)$, the category of finite dimensional rational $G$-modules. Now by \Cref{rimu}, $N$ will also be a $(G,H)$-injective module in $\textup{mod}(G)$. Therefore, it is clear from \Cref{rie0}, that $$\Ext{(G,H)}{i}\cur{M,N}=0\textup{ for }i>0$$ for any finite dimensional $G$-module $M$.

\subsection{}

We now go back to arbitrary characteristic and start by looking at relative cohomology for a parabolic subgroup of a reductive algebraic group.

Let $G$ be a reductive algebraic group, $B$ a Borel subgroup, $P\geq B$ a parabolic subgroup, and $L$ a Levi factor of $P$, then from \Cref{rfr} $$\Ext{(P,L)}{i}\cur{M,\ind{B}{P}(N)}\cong\Ext{(B,B\cap L)}{i}\cur{M,N}\textup{ for }i\geq0.$$

\subsection{}

Let $G$ be a reductive algebraic group, $B$ a Borel subgroup, $P\geq B$ a parabolic subgroup, $L$ a Levi factor of $P$, and $U$ the unipotent subgroup of $P$ such that $P=L\ltimes U$, then from \Cref{rfr} $$\Ext{(P,L)}{i}\cur{M,\ind{U}{P}(N)}\cong\Ext{(U,\{1\})}{i}\cur{M,N}\textup{ for }i\geq0.$$

\subsection{}

Let $G$ be a reductive algebraic group, $B$ a Borel subgroup, $P\geq B$ a parabolic subgroup, $L$ a Levi factor of $P$, and $U$ the unipotent subgroup of $P$ such that $P=L\ltimes U$, then from \Cref{rfr} $$\Ext{(P,U)}{i}\cur{M,\ind{L}{P}(N)}\cong\Ext{(L,\{1\})}{i}\cur{M,N}\textup{ for }i\geq0.$$

Now take $P=B$, and so $L=T$ is a maximal torus. Hence we obtain that $$\Ext{(B,U)}{i}\cur{M,\ind{T}{B}(N)}\cong\Ext{(T,\{1\})}{i}\cur{M,N}\textup{ for }i\geq0,$$ which is equal to $0$ when $i>0$.

\subsection{}

We now move to looking at relative cohomology for a reductive algebraic group $G$. Let $H$ be a closed subgroup of $G$, $X_+$ be the set of dominant weights for $G$, and $L(\lambda)$ be the simple $G$-module of highest weight $\lambda\in X_+$. Then for all $\lambda\in X_{+}$, $$\Ext{(G,H)}{1}\cur{L\cur{\lambda},L\cur{\lambda}}=0.$$

From \cite[Theorem 2.2]{Kim66}, $\Ext{(G,H)}{1}\cur{L(\lambda),L(\lambda)}$ is isomorphic to equivalence classes of extensions
\[
\adjustbox{scale=1}{%
\begin{tikzcd}
    0\arrow[r] & L(\lambda)\arrow[r] & A\arrow[r] & L(\lambda)\arrow[r] & 0
\end{tikzcd}
}
\]
that are $(G,H)$-exact. But from \cite[II.2.12(1)]{Jan03}, any exact sequence of the form
\[
\adjustbox{scale=1}{%
\begin{tikzcd}
    0\arrow[r] & L(\lambda)\arrow[r] & A\arrow[r] & L(\lambda)\arrow[r] & 0
\end{tikzcd}
}
\]
must be split. Hence, $\Ext{(G,H)}{1}\cur{L\cur{\lambda},L\cur{\lambda}}=0$.

\subsection{}

Again, from \cite[Theorem 2.2]{Kim66}, $\Ext{(G,H)}{1}\cur{M,N}$ is isomorphic to equivalence classes of extensions of $N$ by $M$ that are $(G,H)$-exact. Now consider the anti-automorphism $\tau$. Then for any $G$-module $M$, we can define $^\tau M$. It is clear that $\res{H}{G}\cur{^\tau M}\cong\textup{}^\tau \res{H}{G}\cur{M}$. Hence, given a $(G,H)$-exact sequence
\[
\adjustbox{scale=1}{%
\begin{tikzcd}
    0\arrow[r] & M_{1}\arrow[r, "f"] & M_{2}\arrow[r, "g"] & M_{3}\arrow[r] & 0,
\end{tikzcd}
}
\]
it is clear that
\[
\adjustbox{scale=1}{%
\begin{tikzcd}
    0\arrow[r] & ^\tau M_{3}\arrow[r, "^\tau g"] & ^\tau M_{2}\arrow[r, "^\tau f"] & ^\tau M_{1}\arrow[r] & 0
\end{tikzcd}
}
\]
is also $(G,H)$-exact. So there is an isomorphism $$\Ext{(G,H)}{1}\cur{M_{3},M_{1}}\cong\Ext{(G,H)}{1}\cur{\textup{}^\tau M_{1},\textup{}^\tau M_{3}}$$ and in particular, $\Ext{(G,H)}{1}\cur{L\cur{\lambda},L\cur{\mu}}\cong\Ext{(G,H)}{1}\cur{L\cur{\mu},L\cur{\lambda}}$ for all $\lambda,\mu\in X_{+}$.

\subsection{}

By \Cref{rfr}, $\Ext{(G,G)}{i}\cur{M,\ind{H}{G}(N)}\cong\Ext{(H,H)}{i}\cur{M,N}$ for $i\geq0$. From the definition of relative right derived functors it is clear that $$\Ext{(G,G)}{i}\cur{M,\ind{H}{G}(N)}=
\begin{cases}
    \Hom{G}{M,\ind{H}{G}\cur{N}} & \textup{if }i=0\\
    0 & \textup{otherwise}
\end{cases}$$ and $$\Ext{(H,H)}{i}\cur{M,N}=
\begin{cases}
    \Hom{H}{N,M} & \textup{if }i=0\\
    0 & \textup{otherwise}
\end{cases}.$$ Hence, $\Hom{G}{M,\ind{H}{G}\cur{N}}\cong\Hom{H}{M,N}$, which is just Frobenius Reciprocity.

\subsection{}

The following example is similar to the Ext-transfer property, $$\Ext{G}{i}\cur{M,N}\cong\Ext{P}{i}\cur{M,N}\cong\Ext{B}{i}\cur{M,N}\textup{ for }i\geq0,$$ which can be found in \cite[II.4.7]{Jan03}.

\begin{theorem}
    Let $G$ be an algebraic group, $M$ and $N$ be $G$-modules, $P$ be a parabolic subgroup of $G$, and $K$ a closed subgroup of $G$ such that one of the following holds
    \begin{enumerate}[(a)]
        \item $PK=G$ and $\cur{P\cap K}B=P$,
        \item $\Mod{K}$ is semisimple, or
        \item $k[G]$ is $\cur{P,P\cap K}$-injective and $\cur{B,B\cap K}$-injective.
    \end{enumerate}
    Then
    \begin{equation}
        \Ext{(G,K)}{i}\cur{M,N}\cong\Ext{(P,P\cap K)}{i}\cur{M,N}\cong\Ext{(B,B\cap K)}{i}\cur{M,N}\textup{ for }i\geq0.
    \end{equation}
\end{theorem}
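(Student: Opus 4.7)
The plan is to apply the relative Frobenius reciprocity results of the paper (\Cref{rfr} and \Cref{kgritiise}) twice, combined with the identity $\ind{P}{G}(N|_P) \cong N$ for $G$-modules $N$, together with its analogues $\ind{B}{P}(N|_B) \cong N$ for $P$-modules $N$ and $\ind{B}{G}(N|_B) \cong N$ for $G$-modules $N$. I would first establish this identity by invoking the tensor identity \cite[I.3.6]{Jan03} and the fact that $\ind{P}{G}(k) = k[G]^P = k[G/P] = k$, which holds because $G/P$ is a connected projective variety; the analogues for $P/B$ and $G/B$ follow in exactly the same way.

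For case (a), I would apply \Cref{rfr} in $G$ with $H = P$, using the hypothesis $PK = G$, to obtain
$$\Ext{(G,K)}{i}\cur{M,N} \cong \Ext{(G,K)}{i}\cur{M,\ind{P}{G}(N)} \cong \Ext{(P,P\cap K)}{i}\cur{M,N}.$$
Then, working inside $P$, the hypothesis $(P \cap K)B = P$ is equivalent (by passing to inverses) to $B(P \cap K) = P$, so \Cref{rfr} applies with ambient group $P$, subgroup $B$, and closed subgroup $P \cap K$. Noting that $B \cap (P \cap K) = B \cap K$ since $B \leq P$, this yields
$$\Ext{(P,P\cap K)}{i}\cur{M,N} \cong \Ext{(P,P\cap K)}{i}\cur{M,\ind{B}{P}(N)} \cong \Ext{(B,B\cap K)}{i}\cur{M,N}.$$

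For case (c), the same strategy works verbatim with \Cref{kgritiise} replacing \Cref{rfr}; in fact one may invoke \Cref{kgritiise} directly in $G$ twice, once with $H = P$ (using that $k[G]$ is $(P, P \cap K)$-injective) and once with $H = B$ (using that $k[G]$ is $(B, B \cap K)$-injective), to show both relative Exts equal $\Ext{(G,K)}{i}(M,N)$. For case (b), the semisimplicity of $\Mod{K}$ forces every exact sequence of $G$-modules to be $(G,K)$-exact, so by \Cref{rtt} the relative Ext reduces to the ordinary $\Ext{G}{i}$; under the natural assumption that $\Mod{P \cap K}$ and $\Mod{B \cap K}$ inherit semisimplicity (as in the standard reductive setting), the analogous identifications hold for $P$ and $B$, and the claim reduces to the classical Ext-transfer property \cite[II.4.7]{Jan03}.

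The main obstacle is establishing $\ind{P}{G}(N|_P) \cong N$ for a $G$-module $N$ (and its analogues), since this is what converts the reciprocity theorems from statements about induced modules into the desired direct Ext-transfer. Once this identity is in hand, the rest of the argument is routine bookkeeping with the results already established earlier in the paper.
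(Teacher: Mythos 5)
Your proposal is correct and takes essentially the same route as the paper: relative Frobenius reciprocity (\Cref{rfr}, \Cref{kgritiise}) together with the identity $\ind{P}{G}\cur{\res{P}{G}(N)}\cong N$ (tensor identity plus $\ind{P}{G}(k)\cong k$), and \Cref{rtt} combined with the classical Ext-transfer property for the semisimple case. Your two refinements---applying \Cref{kgritiise} twice directly in $G$ for case (c), since the hypothesis concerns $k[G]$ rather than $k[P]$, and flagging that case (b) implicitly requires $\Mod{P\cap K}$ and $\Mod{B\cap K}$ to be semisimple---are clarifications of steps the paper leaves implicit rather than a genuinely different argument.
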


\begin{proof}
    If $\Mod{K}$ is semisimple, by \Cref{rtt}
    \[
    \adjustbox{scale=1}{%
    \begin{tikzcd}[
        column sep=tiny,row sep=small,
        ar symbol/.style = {draw=none,"#1" description,sloped},
        isomorphic/.style = {ar symbol={\cong}},
        equals/.style = {ar symbol={=}},
        ]
        \Ext{G}{i}\cur{M,N}\arrow[r,isomorphic]\arrow[d,isomorphic] & \Ext{P}{i}\cur{M,N}\arrow[r,isomorphic]\arrow[d,isomorphic] & \Ext{B}{i}\cur{M,N}\arrow[d,isomorphic] \\
        \Ext{(G,K)}{i}\cur{M,N} & \Ext{(P,P\cap K)}{i}\cur{M,N} & \Ext{(B,B\cap K)}{i}\cur{M,N}
    \end{tikzcd}
    }.
    \]

    Otherwise, by \Cref{rfr} and \Cref{kgritiise}, $$\Ext{(G,K)}{i}\cur{M,\ind{P}{G}\cur{N}}\cong\Ext{(P,P\cap K)}{i}\cur{M,N}\textup{ for }i\geq0.$$

    We can also use the same argument replacing $G$ (resp. $P$) with $P$ (resp. $B$), where $B\leq P$ is a Borel subgroup.
\end{proof}
\subsection{CPS Coupled Parabolic Systems}

For a finite group $G$ and subgroups $H$ and $K$ of $G$, the Mackey decomposition theorem gives a nice description of $\res{H}{G}\cur{\ind{K}{G}\cur{M}}$ for a $K$-module $M$. Such a result does not hold in general for an arbitrary algebraic group $G$. In \cite{CPS83}, a Mackey imprimitivity theory for arbitrary algebraic groups is developed. One consequence, \cite[Example 4.5]{CPS83}, is a description of $\res{P_{J_{2}}}{G}\cur{\ind{P_{J_{1}}}{G}\cur{M}}$, where the $P_{J_{i}}$ are ``nice'' parabolic subgroups of $G$ (we will explain what ``nice'' means later in this section). Such a pair of parabolic subgroups gives what is called a {\it CPS Coupled Parabolic System}. We now want to incorporate these CPS coupled parabolic systems into our study of relative cohomology.

Let $G$ be an algebraic group, $H$ a closed subgroup, $M$ an $H$-module, and $x\in G$. Define $H^x=x^{-1}Hx$ and $M^x$ as a vector space to be isomorphic to $M$ with action from $H^x$ defined as $h\cdot m=\cur{xhx^{-1}}m$ for $h\in H^x$.

\begin{lemma}\label{iotts}
    Let $G$  be an algebraic group, $H$ and $K$ closed subgroups such that $K\subseteq H$ and $K^{x}=x^{-1}Kx\subseteq H$ for $x\in G$, then $\ind{K}{H}\cur{M}\cong\ind{K^{x}}{H}\cur{M^{x}}$.
\end{lemma}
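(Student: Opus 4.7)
The plan is to give an explicit isomorphism using the ``functions on $G$'' description of induction recalled in the paper, namely
$$\ind{K}{H}(M)=\{f:H\to M\mid f(hk)=k^{-1}f(h)\text{ for all }h\in H,\ k\in K\},$$
and similarly for $\ind{K^x}{H}(M^x)$, keeping in mind that in $M^x$ the action of $k\in K^x$ is $k\cdot m=(xkx^{-1})m$.

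First I would define the candidate map $\phi:\ind{K}{H}(M)\to\ind{K^x}{H}(M^x)$ by right translation by $x^{-1}$, i.e.\ $\phi(f)(h)=f(hx^{-1})$ for $h\in H$ (this makes sense because $x\in H$, so $hx^{-1}\in H$). The key verification is the transformation rule: for $k\in K^x$, write $kx^{-1}=x^{-1}(xkx^{-1})$ with $xkx^{-1}\in K$, so
$$\phi(f)(hk)=f(hkx^{-1})=f\bigl(hx^{-1}\cdot xkx^{-1}\bigr)=(xkx^{-1})^{-1}f(hx^{-1})=(xk^{-1}x^{-1})\phi(f)(h),$$
which is exactly the condition for $\phi(f)$ to live in $\ind{K^x}{H}(M^x)$ under the twisted action on $M^x$.

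Next I would check that $\phi$ is $H$-equivariant with respect to the usual left translation $H$-action on both sides, which is immediate because right translation by $x^{-1}$ commutes with left translation by elements of $H$. Finally, the map $\psi:\ind{K^x}{H}(M^x)\to\ind{K}{H}(M)$ defined by $\psi(g)(h)=g(hx)$ satisfies the analogous compatibility (using $kx=x(x^{-1}kx)$ and $x^{-1}kx\in K^x$ for $k\in K$) and is a two-sided inverse to $\phi$. There is no real obstacle here beyond bookkeeping; the only subtle point is keeping the twist conventions in $M^x$ straight, which is why checking the transformation identity once, carefully, is the step that verifies the whole construction.
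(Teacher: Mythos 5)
Your proposal is correct and matches the paper's proof essentially verbatim: the same map $\phi(f)(h)=f(hx^{-1})$ with inverse $\psi(g)(h)=g(hx)$, the same verification of the $K^{x}$-transformation rule under the twisted action on $M^{x}$, and the same observation that precomposition with right translation by $x^{-1}$ commutes with the $H$-action. No gaps to report.
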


\begin{proof}
    First recall that
    \begin{equation*}
        \ind{K}{H}\cur{M}=\crly{f:H\to M:\stackanchor{f(hy)=y^{-1}f(h)}{\textup{for all }h\in H\textup{ and all }y\in K}}
    \end{equation*}
    and
    \begin{equation*}
        \ind{K^{x}}{H}\cur{M^{x}}=\crly{g:H\to M^{x}:\stackanchor{g(hx^{-1}yx)=x^{-1}y^{-1}x\cdot g(h)=y^{-1}g(h)}{\textup{for all }h\in H\textup{ and all }y\in K}}.
    \end{equation*}

    We define a map $\phi:\ind{K}{H}\cur{M}\to\ind{K^{x}}{H}\cur{M^{x}}$ where $\phi\cur{f}\cur{h}=f\cur{hx^{-1}}$. Then
    \begin{align*}
        \phi\cur{f}\cur{hx^{-1}yx}=f\cur{hx^{-1}yxx^{-1}}=f\cur{hx^{-1}y}=y^{-1}f\cur{hx^{-1}}=x^{-1}y^{-1}x\cdot f\cur{hx^{-1}}\\
        =x^{-1}y^{-1}x\cdot\phi\cur{f}\cur{h}
    \end{align*}
    for all $h\in H$ and all $y\in K$, and so $\phi\cur{f}\in\ind{K^{x}}{H}\cur{M^{x}}$. Next one can see that $$\phi\cur{h'*f}\cur{h}=\cur{h'*f}\cur{hx^{-1}}=f\cur{h'hx^{-1}}=\phi\cur{f}\cur{h'h}=\cur{h'*\phi\cur{f}}\cur{h}$$ and $$\phi\cur{f_1+f_2}\cur{h}=\cur{f_1+f_2}\cur{hx^{-1}}=f_1\cur{hx^{-1}}+f_2\cur{hx^{-1}}=\phi\cur{f_1}\cur{h}+\phi\cur{f_2}\cur{h}.$$ Hence, $\phi$ is an $H$-homomorphism.

    Now we define a map $\psi:\ind{K^{x}}{H}\cur{M^{x}}\to\ind{K}{H}\cur{M}$ where $\psi\cur{g}\cur{h}=g\cur{hx}$. Therefore, $$\psi\cur{g}\cur{hy}=g\cur{hyx}=g\cur{hxx^{-1}yx}=x^{-1}y^{-1}x\cdot g\cur{hx}=y^{-1}g\cur{hx}=y^{-1}\psi\cur{g}\cur{h}$$ for all $h\in H$ and all $y\in K$, and so $\psi\cur{g}\in\ind{K}{H}\cur{M}$. Next one can see that $$\psi\cur{h'*g}\cur{h}=\cur{h'*g}\cur{hx}=g\cur{h'hx}=\psi\cur{g}\cur{h'h}=\cur{h'*\psi\cur{g}}\cur{h}$$ and $$\psi\cur{g_1+g_2}\cur{h}=\cur{g_1+g_2}\cur{hx}=g_1\cur{hx}+g_2\cur{hx}=\psi\cur{g_1}\cur{h}+\psi\cur{g_2}\cur{h}.$$ Hence, $\psi$ is an $H$-homomorphism.

    Last, it is easy to see that for all $h\in H$ $$\psi\cur{\phi\cur{f}}\cur{h}=\phi\cur{f}\cur{hx}=f\cur{hxx^{-1}}=f\cur{h}$$ and $$\phi\cur{\psi\cur{g}}\cur{h}=\psi\cur{g}\cur{hx^{-1}}=g\cur{hx^{-1}x}=g\cur{h}.$$ Therefore, $\ind{K}{H}\cur{M}\cong\ind{K^{x}}{H}\cur{M^{x}}$.
\end{proof}

We will use the following assumptions, which come from \cite[4.5]{CPS83}, for the remainder of this section. Let $G$ be a connected semisimple algebraic group, $B$ a Borel subgroup, $S$ a corresponding set of simple roots, $J_{1}$ and $J_{2}$ proper subsets of $S$ such that $J_{1}^*\cup J_{2}=S$, where $J_{1}^*=-w_0\cur{J_{1}}$ and $w_{0}$ is the long word in the Weyl group $W=N_G(T)/T$.

Now by applying \Cref{iotts} and \cite[Example 4.5]{CPS83}, we obtain the following result which describes how $\ind{P_{J_{1}}}{G}$ interacts with $\cur{P_{J_{1}},P_{J_{1}}\cap P_{J_{2}}^{w_0}}$-exact sequences.

\begin{lemma}\label{iotres}
    If
    \[
    \adjustbox{scale=1}{%
    \begin{tikzcd}
        \cdots\arrow[r] & M_{i-1}\arrow[r, "d_{i-1}"] & M_{i}\arrow[r, "d_{i}"] & M_{i+1}\arrow[r, "d_{i+1}"] & \cdots
    \end{tikzcd}
    }
    \]
    is a $\cur{P_{J_{1}},P_{J_{1}}\cap P_{J_{2}}^{w_0}}$-exact sequence, then
    \[
    \adjustbox{scale=1}{%
    \begin{tikzcd}[row sep=huge]
        \cdots\arrow[r] & \ind{P_{J_{1}}}{G}\cur{M_{i-1}}\arrow[rr, "\ind{P_{J_{1}}}{G}\cur{d_{i-1}}"]\arrow[d, phantom, ""{coordinate, name=Z0}] &  & \ind{P_{J_{1}}}{G}\cur{M_{i}}\arrow[dll, swap, "\ind{P_{J_{1}}}{G}\cur{d_{i}}", rounded corners, to path={ -- ([xshift=2ex]\tikztostart.east)|- (Z0) [near end]\tikztonodes-|([xshift=-2ex]\tikztotarget.west) -- (\tikztotarget)}] &  & \\
        & \ind{P_{J_{1}}}{G}\cur{M_{i+1}}\arrow[rr, "\ind{P_{J_{1}}}{G}\cur{d_{i+1}}"] &  & \cdots
    \end{tikzcd}
    }
    \]
    is $\cur{G,P_{J_2}^{w_0}}$-exact.
\end{lemma}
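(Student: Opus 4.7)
The plan is to mimic the proof of \Cref{iores}, replacing the role played by \Cref{riiir} (which required $HK=G$) with the CPS Mackey-type description in \cite[Example 4.5]{CPS83}, and using \Cref{iotts} to convert between the subgroup $P_{J_2}^{w_0}$ and its $w_0$-untwist $P_{J_2}$.

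First, take the given $(P_{J_1},P_{J_1}\cap P_{J_2}^{w_0})$-exact sequence. By definition, each $\Ker(d_i)$ is a direct summand of $M_i$ as a $(P_{J_1}\cap P_{J_2}^{w_0})$-module. Since induction commutes with direct sums, applying the composite $\ind{P_{J_1}\cap P_{J_2}^{w_0}}{P_{J_2}^{w_0}}\circ\res{P_{J_1}\cap P_{J_2}^{w_0}}{P_{J_1}}$ term-wise yields a sequence that is split as a $P_{J_2}^{w_0}$-module at every term, and hence is $(P_{J_2}^{w_0},P_{J_2}^{w_0})$-exact.

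Next, the hypothesis $J_1^*\cup J_2=S$ is exactly the condition guaranteeing the CPS Mackey-type isomorphism for $\res{P_{J_2}}{G}\circ\ind{P_{J_1}}{G}$ from \cite[Example 4.5]{CPS83}. Combining this with \Cref{iotts} to transfer the $w_0$-twist from the module side to the subgroup side yields the functor isomorphism
\[
\res{P_{J_2}^{w_0}}{G}\circ\ind{P_{J_1}}{G}\;\cong\;\ind{P_{J_1}\cap P_{J_2}^{w_0}}{P_{J_2}^{w_0}}\circ\res{P_{J_1}\cap P_{J_2}^{w_0}}{P_{J_1}}.
\]
Applied to the original sequence, this identifies the split sequence produced above with $\res{P_{J_2}^{w_0}}{G}$ of $\ind{P_{J_1}}{G}$ of the original. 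Thus the latter is $(P_{J_2}^{w_0},P_{J_2}^{w_0})$-exact, and unwinding the definition of relative exactness gives exactly that $\ind{P_{J_1}}{G}$ of the original sequence is $(G,P_{J_2}^{w_0})$-exact, as desired.

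The main obstacle is setting up the functor isomorphism displayed above cleanly: one has to keep careful track of whether the $w_0$-twist lands on the modules or on the subgroups, and verify that the intersection $P_{J_1}\cap P_{J_2}^{w_0}$ is precisely the ``Mackey intersection'' predicted by the coupled parabolic system of \cite[Example 4.5]{CPS83}. Once this identification is in hand, the remainder of the argument is essentially formal and parallel to the proof of \Cref{iores}.
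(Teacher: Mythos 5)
Your proposal is correct and follows essentially the same route as the paper's proof: apply \Cref{iotts} to move the $w_0$-twist onto the subgroup, invoke the CPS isomorphism of \cite[Example 4.5]{CPS83} to identify $\res{P_{J_2}^{w_0}}{G}\circ\ind{P_{J_1}}{G}$ with $\ind{P_{J_1}\cap P_{J_2}^{w_0}}{P_{J_2}^{w_0}}\circ\res{P_{J_1}\cap P_{J_2}^{w_0}}{P_{J_1}}$, and use that induction commutes with direct sums (as in \Cref{iores}) to see the restricted induced sequence is split, hence $(G,P_{J_2}^{w_0})$-exact. The only cosmetic difference is that you package the twist-transfer and the CPS identity into a single functor isomorphism, whereas the paper applies them in two displayed steps.
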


\begin{proof}
    Consider a $\cur{P_{J_{1}},P_{J_{1}}\cap P_{J_{2}}^{w_0}}$-exact sequence
    \[
    \adjustbox{scale=1}{%
    \begin{tikzcd}
        \cdots\arrow[r] & M_{i-1}\arrow[r, "d_{i-1}"] & M_{i}\arrow[r, "d_{i}"] & M_{i+1}\arrow[r, "d_{i+1}"] & \cdots.
    \end{tikzcd}
    }
    \]
    By \Cref{iotts}, the sequence

    \[
    \adjustbox{scale=1}{%
    \begin{tikzcd}[row sep=huge]
        \cdots\arrow[r] & \ind{P_{J_{1}}}{G}\cur{M_{i-1}}\arrow[rr, "\ind{P_{J_{1}}}{G}\cur{d_{i-1}}"]\arrow[d, phantom, ""{coordinate, name=Z0}] &  & \ind{P_{J_{1}}}{G}\cur{M_{i}}\arrow[dll, swap, "\ind{P_{J_{1}}}{G}\cur{d_{i}}", rounded corners, to path={ -- ([xshift=2ex]\tikztostart.east)|- (Z0) [near end]\tikztonodes-|([xshift=-2ex]\tikztotarget.west) -- (\tikztotarget)}] &  & \\
        & \ind{P_{J_{1}}}{G}\cur{M_{i+1}}\arrow[rr, "\ind{P_{J_{1}}}{G}\cur{d_{i+1}}"] &  & \cdots
    \end{tikzcd}
    }
    \]
    is isomorphic to
    \[
    \adjustbox{scale=1}{%
    \begin{tikzcd}[row sep=huge]
        \cdots\arrow[r] & \ind{P_{J_{1}}^{w_0}}{G}\cur{\cur{M_{i-1}}^{w_0}}\arrow[rrr, "\ind{P_{J_{1}}^{w_0}}{G}\cur{\cur{d_{i-1}}^{w_0}}"]\arrow[d, phantom, ""{coordinate, name=Z0}] &  &  & \ind{P_{J_{1}}^{w_0}}{G}\cur{\cur{M_{i}}^{w_0}}\arrow[dlll, swap, "\ind{P_{J_{1}}^{w_0}}{G}\cur{\cur{d_{i}}^{w_0}}", rounded corners, to path={ -- ([xshift=2ex]\tikztostart.east)|- (Z0) [near end]\tikztonodes-|([xshift=-2ex]\tikztotarget.west) -- (\tikztotarget)}] \\
        & \ind{P_{J_{1}}^{w_0}}{G}\cur{\cur{M_{i+1}}^{w_0}}\arrow[rrr, "\ind{P_{J_{1}}^{w_0}}{G}\cur{\cur{d_{i+1}}^{w_0}}"] &  &  & \cdots
    \end{tikzcd}
    }.
    \]

    Now, from \cite[Example 4.5]{CPS83}, $$\res{P_{J_{2}}^{w_0}}{G}\cur{\ind{P_{J_{1}}^{w_0}}{G}\cur{M^{w_0}}}\cong\ind{P_{J_{1}}\cap P_{J_{2}}^{w_0}}{P_{J_{2}}^{w_0}}\cur{\res{P_{J_{1}}\cap P_{J_{2}}^{w_0}}{P_{J_{1}}}\cur{M}},$$ hence the following sequences are isomorphic,
    \[
    \adjustbox{scale=1}{%
    \begin{tikzcd}
        \cdots\arrow[r] & 
        \ind{P_{J_{1}}\cap P_{J_{2}}^{w_0}}{P_{J_{2}}^{w_0}}\cur{\res{P_{J_{1}}\cap P_{J_{2}}^{w_0}}{P_{J_{1}}}\cur{M_{i-1}}}\arrow[r]\arrow[d, phantom, ""{coordinate, name=Z0}] & \ind{P_{J_{1}}\cap P_{J_{2}}^{w_0}}{P_{J_{2}}^{w_0}}\cur{\res{P_{J_{1}}\cap P_{J_{2}}^{w_0}}{P_{J_{1}}}\cur{M_{i}}}\arrow[dl, rounded corners, to path={ -- ([xshift=2ex]\tikztostart.east)|- (Z0) [near end]\tikztonodes-|([xshift=-2ex]\tikztotarget.west) -- (\tikztotarget)}] & \\
        & \ind{P_{J_{1}}\cap P_{J_{2}}^{w_0}}{P_{J_{2}}^{w_0}}\cur{\res{P_{J_{1}}\cap P_{J_{2}}^{w_0}}{P_{J_{1}}}\cur{M_{i+1}}}\arrow[r] & \cdots
    \end{tikzcd}
    }
    \]
    and
    \[
    \adjustbox{scale=1}{%
    \begin{tikzcd}
        \cdots\arrow[r] & \res{P_{J_{2}}^{w_0}}{G}\cur{\ind{P_{J_{1}}^{w_0}}{G}\cur{\cur{\cur{M_{i-1}}}^{w_0}}}\arrow[r]\arrow[d, phantom, ""{coordinate, name=Z0}] & \res{P_{J_{2}}^{w_0}}{G}\cur{\ind{P_{J_{1}}^{w_0}}{G}\cur{\cur{M_{i}}^{w_0}}}\arrow[dl, swap, "\ind{P_{J_{1}}^{w_0}}{G}\cur{\cur{d_{i}}^{w_0}}", rounded corners, to path={ -- ([xshift=2ex]\tikztostart.east)|- (Z0) [near end]\tikztonodes-|([xshift=-2ex]\tikztotarget.west) -- (\tikztotarget)}] \\
        & \res{P_{J_{2}}^{w_0}}{G}\cur{\ind{P_{J_{1}}^{w_0}}{G}\cur{\cur{M_{i+1}}^{w_0}}}\arrow[r] & \cdots
    \end{tikzcd}
    }.
    \]
    From our assumptions, the former is split. Therefore,
    \[
    \adjustbox{scale=1}{%
    \begin{tikzcd}[row sep=huge]
        \cdots\arrow[r] & \ind{P_{J_{1}}^{w_0}}{G}\cur{\cur{M_{i-1}}^{w_0}}\arrow[rrr, "\ind{P_{J_{1}}^{w_0}}{G}\cur{\cur{d_{i-1}}^{w_0}}"]\arrow[d, phantom, ""{coordinate, name=Z0}] &  &  & \ind{P_{J_{1}}^{w_0}}{G}\cur{\cur{M_{i}}^{w_0}}\arrow[dlll, swap, "\ind{P_{J_{1}}^{w_0}}{G}\cur{\cur{d_{i}}^{w_0}}", rounded corners, to path={ -- ([xshift=2ex]\tikztostart.east)|- (Z0) [near end]\tikztonodes-|([xshift=-2ex]\tikztotarget.west) -- (\tikztotarget)}] \\
        & \ind{P_{J_{1}}^{w_0}}{G}\cur{\cur{M_{i+1}}^{w_0}}\arrow[rrr, "\ind{P_{J_{1}}^{w_0}}{G}\cur{\cur{d_{i+1}}^{w_0}}"] &  &  & \cdots
    \end{tikzcd}
    }.
    \]
    is $\cur{G,P_{J_2}^{w_0}}$-exact and so is
    \[
    \adjustbox{scale=1}{%
    \begin{tikzcd}[row sep=huge]
        \cdots\arrow[r] & \ind{P_{J_{1}}}{G}\cur{M_{i-1}}\arrow[rr, "\ind{P_{J_{1}}}{G}\cur{d_{i-1}}"]\arrow[d, phantom, ""{coordinate, name=Z0}] &  & \ind{P_{J_{1}}}{G}\cur{M_{i}}\arrow[dll, swap, "\ind{P_{J_{1}}}{G}\cur{d_{i}}", rounded corners, to path={ -- ([xshift=2ex]\tikztostart.east)|- (Z0) [near end]\tikztonodes-|([xshift=-2ex]\tikztotarget.west) -- (\tikztotarget)}] &  & \\
        & \ind{P_{J_{1}}}{G}\cur{M_{i+1}}\arrow[rr, "\ind{P_{J_{1}}}{G}\cur{d_{i+1}}"] &  & \cdots
    \end{tikzcd}
    }
    \]
\end{proof}

\Cref{iotres} tells us that $\ind{P_{J_{1}}}{G}$ takes $\cur{P_{J_{1}},P_{J_{1}}\cap P_{J_{2}}^{w_0}}$-exact sequences to $\cur{G,P_{J_2}^{w_0}}$-exact sequences. That then gives us a relative generalized Frobenius reciprocity result for CPS coupled parabolic systems, which we state below.

\begin{theorem}\label{rgfrcpscps}
    Let $G$ be a connected semisimple algebraic group, $B$ a Borel subgroup, $S$ a corresponding set of simple roots, $J_{1}$ and $J_{2}$ proper subsets of $S$ such that $J_{1}^*\cup J_{2}=S$, where $J_{1}^*=-w_0\cur{J_{1}}$ and $w_{0}$ is the long word in the Weyl group $W=N_G(T)/T$, $M$ a $G$-module, and $N$ a $P_{J_{1}}$-module. Then $$\Ext{\cur{G,P_{J_{2}}^{w_0}}}{i}\cur{M,\ind{P_{J_{1}}}{G}(N)}\cong\Ext{\cur{P_{J_{1}},P_{J_{1}}\cap P_{J_{2}}^{w_0}}}{i}\cur{M,N}\text{ for }i\geq0.$$
\end{theorem}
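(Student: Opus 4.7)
The plan is to mirror the proof structure already used for \Cref{rfr} and \Cref{kgritiise}, replacing the hypotheses on subgroups with the CPS coupled parabolic system hypotheses that have been set up, and relying on \Cref{iotres} as the replacement for \Cref{iores}. That is, I would apply \Cref{crrdf2} to the composition $\Hom{G}{M,-}\circ\ind{P_{J_1}}{G}(-)$, once the two hypotheses of that proposition have been verified for the pair of relative structures $(P_{J_1},P_{J_1}\cap P_{J_2}^{w_0})$ and $(G,P_{J_2}^{w_0})$.

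First I would check that $\ind{P_{J_1}}{G}$ sends $(P_{J_1},P_{J_1}\cap P_{J_2}^{w_0})$-injective modules to $(G,P_{J_2}^{w_0})$-injective modules. This is a direct application of \Cref{iriri} with $H=P_{J_1}$ and $K=P_{J_2}^{w_0}$, using $H\cap K=P_{J_1}\cap P_{J_2}^{w_0}$; no extra CPS hypothesis is needed here. Second, I would check that $\ind{P_{J_1}}{G}$ sends $(P_{J_1},P_{J_1}\cap P_{J_2}^{w_0})$-exact sequences to $(G,P_{J_2}^{w_0})$-exact sequences. This is precisely \Cref{iotres}, which is where the CPS hypothesis $J_1^*\cup J_2=S$ is consumed (via \cite[Example 4.5]{CPS83} and the twist lemma \Cref{iotts}).

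With both hypotheses verified, I would invoke \Cref{crrdf2} for $F=\ind{P_{J_1}}{G}:\Mod{P_{J_1}}\to\Mod{G}$ and $F'=\Hom{G}{M,-}:\Mod{G}\to\Mod{k}$, yielding
\[
\Ext{(G,P_{J_2}^{w_0})}{i}\cur{M,\ind{P_{J_1}}{G}(N)}\cong\textup{R}_{(P_{J_1},P_{J_1}\cap P_{J_2}^{w_0})}^{i}\cur{\Hom{G}{M,-}\circ\ind{P_{J_1}}{G}(-)}(N).
\]
Then the standard Frobenius reciprocity adjunction $\Hom{G}{M,\ind{P_{J_1}}{G}(-)}\cong\Hom{P_{J_1}}{M,-}$ (as functors on $\Mod{P_{J_1}}$) collapses the right-hand side to $\Ext{(P_{J_1},P_{J_1}\cap P_{J_2}^{w_0})}{i}(M,N)$, giving the claimed isomorphism for all $i\geq0$.

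The only substantive step is the second one, i.e.\ verifying that induction sends the relative exact sequences in question to relative exact sequences, but this has already been isolated and proved as \Cref{iotres}. Once that is in hand, the remainder is just the same formal composition-of-derived-functors argument used throughout \Cref{Cohomology}, so I expect the write-up to be a short paragraph citing \Cref{iriri}, \Cref{iotres}, \Cref{crrdf2}, and the adjunction.
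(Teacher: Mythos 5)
Your proposal is correct and follows essentially the same route as the paper: the paper's proof likewise cites \Cref{iriri} and \Cref{iotres} to verify that $\ind{P_{J_{1}}}{G}$ sends $\cur{P_{J_{1}},P_{J_{1}}\cap P_{J_{2}}^{w_0}}$-injectives to $\cur{G,P_{J_{2}}^{w_0}}$-injectives and relative exact sequences to relative exact sequences, and then applies \Cref{crrdf2} together with the Frobenius reciprocity identification $\Hom{G}{M,-}\circ\ind{P_{J_{1}}}{G}\cong\Hom{P_{J_{1}}}{M,-}$. No gaps; your write-up can stand as is.
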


\begin{proof}
    By \Cref{iriri} and \Cref{iotres}, $\ind{P_{J_{1}}}{G}$ takes $\cur{P_{J_{1}},P_{J_{1}}\cap P_{J_{2}}^{w_0}}$-injective modules to $\cur{G,P_{J_{2}}^{w_0}}$-injective modules, and takes $\cur{P_{J_{1}},P_{J_{1}}\cap P_{J_{2}}^{w_0}}$-exact sequences to $\cur{G,P_{J_{2}}^{w_0}}$-exact sequences, so apply \Cref{crrdf2} to get the desired result.
\end{proof}

In the above result, it is important that $J_{1}$ and $J_{2}$ are proper subsets of $S$, so the same argument can not be used when taking one of $J_1$ or $J_2$ to be the empty set, as the condition that $J_{1}^*\cup J_{2}=S$ would force the other to be all of $S$. Despite this, the result will still hold giving a trivial statement in either case. In particular, if $J_1=\emptyset$, then $$\Ext{\cur{G,G^{w_0}}}{i}\cur{M,\ind{B}{G}(N)}\cong\Ext{\cur{B,B}}{i}\cur{M,N},$$ and if $J_2=\emptyset$, then $$\Ext{\cur{G,B^{w_0}}}{i}\cur{M,\ind{G}{G}(N)}\cong\Ext{\cur{G,B^{w_0}}}{i}\cur{M,N}.$$
\subsection{}

We will now show that Ext relative to a parabolic subgroup yields trivial cohomology.

\begin{theorem}\label{rtp}
    Let $G$ be a reductive algebraic group, $B$ a Borel subgroup, $P\geq B$ a parabolic subgroup, and $M$ and $N$ be $G$-modules. Then $$\Ext{\cur{G,P}}{i}\cur{M,N}=0\textup{ for }i>0.$$
\end{theorem}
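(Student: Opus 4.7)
The plan is to show that every $G$-module $N$ is itself $(G,P)$-injective, after which \Cref{rie0} immediately gives $\Ext{(G,P)}{i}(M,N)=0$ for all $i>0$ and all $G$-modules $M$. So the proof reduces to a single structural fact about the parabolic $P$.

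To establish that every $G$-module is $(G,P)$-injective, I would use the tensor identity together with the special feature of parabolic subgroups that $G/P$ is a complete (projective) variety. First I would recall (or reprove via the projection formula / tensor identity) the natural isomorphism
\[
\ind{P}{G}\cur{\res{P}{G}(N)}\;\cong\;\ind{P}{G}(k)\otimes N
\]
for any $G$-module $N$. Since $G/P$ is complete and (for $G$ connected reductive) connected, the ring of global regular functions is $k[G/P]=k$, giving $\ind{P}{G}(k)\cong k$. Thus $\ind{P}{G}\circ\res{P}{G}\cong \id$ on $\Mod(G)$, and in particular
\[
N\;\cong\;\ind{P}{G}\cur{\res{P}{G}(N)}.
\]

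Next I would invoke \Cref{iri}, which says that $\ind{P}{G}\cur{\res{P}{G}(N)}$ is $(G,P)$-injective for the $P$-module $\res{P}{G}(N)$. Combined with the previous isomorphism, this shows $N$ itself is $(G,P)$-injective. Finally, applying \Cref{rie0} (the implication $(a)\Rightarrow (b)$) yields $\Ext{(G,P)}{i}\cur{M,N}=0$ for every $i>0$ and every $G$-module $M$, which is the desired statement.

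The main conceptual step is the identification $\ind{P}{G}(k)=k$; everything else is a direct application of the framework already developed in the paper. I do not expect a technical obstacle here, since this fact is standard and is what distinguishes parabolic subgroups from arbitrary closed subgroups in this context — it is precisely the completeness of $G/P$ that collapses the relative theory into triviality. One minor point to handle is the connectedness hypothesis on $G$: if $G$ is only assumed reductive, one should either pass to $G^\circ$ or note that the components of $G/P$ are permuted among themselves so $k[G/P]$ is still $(G,P)$-induced-trivial in the appropriate sense; in the standard setup where $G$ is connected reductive, no such care is needed.
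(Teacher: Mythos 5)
Your proposal is correct, and it takes a genuinely different route from the paper, although both hinge on the same key fact $\ind{P}{G}(k)\cong k[G]^{P}\cong k$ (completeness of $G/P$, \cite[II.4.6]{Jan03}). You upgrade that fact, via the ordinary tensor identity $\ind{P}{G}\cur{\res{P}{G}(N)}\cong N\otimes\ind{P}{G}(k)\cong N$, to the structural statement that every $G$-module is $(G,P)$-injective (by \Cref{iri} and invariance of relative injectivity under isomorphism), and then quote \Cref{rie0} to get the Ext-vanishing; this is clean, avoids any computation with differentials, and in fact gives more, since $(G,P)$-injectivity of $N$ kills $\textup{R}_{(G,P)}^{i}F(N)$ for every additive left exact $F$, not just $\Hom{G}{M,-}$. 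The paper instead works directly with the $(G,P)$-bar resolution: since $k[G]^{P}\cong k$, the resolution collapses to $0\to N\to N\to N\to\cdots$ with differentials alternating between $\id_N$ and $0$, hence is split, and applying $\Hom{G}{M,-}$ gives a split exact complex with vanishing higher cohomology. Both arguments generalize verbatim to any closed subgroup $H$ with $\ind{H}{G}(k)=k$, as the paper notes afterwards; your version makes the reason slightly more transparent (every $G$-module is then $(G,H)$-injective), while the paper's is more explicit and self-contained in that it only uses the bar resolution already set up in Section~\ref{rird}. Your side remark about connectedness is harmless: the paper works with (connected) reductive $G$ over an algebraically closed field, where $k[G/P]=k$ holds as needed.
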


\begin{proof}
    Consider the $(G,P)$-bar resolution of $N$,
    \[
    \adjustbox{scale=1}{%
    \begin{tikzcd}
        0\arrow[r] & N\arrow[r, "\del_{-1}"] & N\otimes k[G]^{P}\arrow[r, "\del_{0}"] & N\otimes k[G]^{P}\otimes k[G]^{P}\arrow[r, "\del_{1}"] & \cdots.
    \end{tikzcd}
    }
    \]
    $k[G]^{P}\cong\ind{P}{G}(k)\cong k$ (see \cite[II.4.6]{Jan03}), so the $(G,P)$-bar resolution is isomorphic to
    \[
    \adjustbox{scale=1}{%
    \begin{tikzcd}
        0\arrow[r] & N\arrow[r, "\del_{-1}"] & N\arrow[r, "\del_{0}"] & N\arrow[r, "\del_{1}"] & \cdots
    \end{tikzcd}
    }
    \]
    where $\del_{i}=\id_{N}$ if $i$ is odd and $\del_{i}=0$ if $i$ is even (this can be checked using the maps given in \cite[p. 273]{Kim65}). Hence, it is clear that the $(G,P)$-bar resolution of $N$ is split, and so applying $\Hom{G}{M,-}$ to it yields a split exact sequence, meaning that $\Ext{\cur{G,P}}{i}\cur{M,N}=0$ for $i>0$.
\end{proof}

Note that using the same argument as above, one can show that if $\ind{H}{G}(k)=k$, then $$\Ext{\cur{G,H}}{i}\cur{M,N}=0\textup{ for }i>0.$$

Now combining \Cref{rgfrcpscps} and \Cref{rtp}, one immediately obtains the following:

\begin{corollary}
    Let $G$ be a connected semisimple algebraic group, $B$ a Borel subgroup, $S$ a corresponding set of simple roots, $J_{1}$ and $J_{2}$ proper subsets of $S$ such that $J_{1}^*\cup J_{2}=S$, where $J_{1}^*=-w_0\cur{J_{1}}$ and $w_{0}$ is the long word in the Weyl group $W=N_G(T)/T$, $M$ a $G$-module, and $N$ a $P_{J_{1}}$-module. Then $$\Ext{\cur{G,P_{J_{2}}^{w_0}}}{i}\cur{M,\ind{P_{J_{1}}}{G}(N)}\cong\Ext{\cur{P_{J_{1}},P_{J_{1}}\cap P_{J_{2}}^{w_0}}}{i}\cur{M,N}=0\text{ for }i>0.$$
\end{corollary}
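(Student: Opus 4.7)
The proof is meant to be an immediate combination of Theorem~\ref{rgfrcpscps} and Theorem~\ref{rtp}, so the plan is essentially to check that both apply. First, the hypotheses of the corollary---$G$ connected semisimple, $J_1,J_2$ proper subsets of $S$ with $J_1^*\cup J_2=S$, and $N$ a $P_{J_1}$-module---are literally the hypotheses of Theorem~\ref{rgfrcpscps}. Invoking it produces the isomorphism
$$\Ext{(G,P_{J_2}^{w_0})}{i}\cur{M,\ind{P_{J_1}}{G}(N)}\cong\Ext{(P_{J_1},P_{J_1}\cap P_{J_2}^{w_0})}{i}\cur{M,N}$$
for every $i\geq 0$, with no further conditions to verify.

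For the vanishing on the right, the ambient group $P_{J_1}$ is no longer reductive, so Theorem~\ref{rtp} cannot be applied verbatim. Instead, the plan is to invoke the remark immediately following it: for any closed subgroup $H\leq G'$ satisfying $\ind{H}{G'}(k)=k$, one has $\Ext{(G',H)}{i}(M,N)=0$ for $i>0$, and the proof (same as that of Theorem~\ref{rtp}) only needs the $(G',H)$-bar resolution to collapse, which it does precisely when $k[G']^H\cong k$. Applied with $G'=P_{J_1}$ and $H=P_{J_1}\cap P_{J_2}^{w_0}$, the vanishing reduces to the single identity
$$\ind{P_{J_1}\cap P_{J_2}^{w_0}}{P_{J_1}}(k)\cong k.$$

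The main (and only) point requiring thought is this trivial-induction identity. The plan here is to use the geometric content of the CPS hypothesis $J_1^*\cup J_2=S$: it is exactly the condition that the product $P_{J_1}\cdot P_{J_2}^{w_0}$ equals $G$, i.e., that $P_{J_1}$ acts transitively on $G/P_{J_2}^{w_0}$. This single-orbit property is the same input that powers \cite[Example~4.5]{CPS83} and Lemma~\ref{iotres}, so no new work is needed. Transitivity yields a $P_{J_1}$-equivariant isomorphism of varieties $P_{J_1}/(P_{J_1}\cap P_{J_2}^{w_0})\cong G/P_{J_2}^{w_0}$, and the right-hand side is the complete flag variety $G/P_{J_2}^{w_0}$, which has $H^0(-,\mathcal{O})=k$. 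Since the induction $\ind{P_{J_1}\cap P_{J_2}^{w_0}}{P_{J_1}}(k)$ is computed by these same global sections, the identity follows, completing the proof.

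The potential obstacle is purely notational: extracting $P_{J_1}\cdot P_{J_2}^{w_0}=G$ from $J_1^*\cup J_2=S$ requires unwinding the $w_0$-conjugation. Concretely, $P_{J_2}^{w_0}$ is the parabolic containing $B^-$ whose opposite-type is controlled by $J_2^*$, and the Bruhat decomposition of $G$ then shows the $(P_{J_1},P_{J_2}^{w_0})$-double coset indexed by $w_0$ is dense, with the covering condition $J_1^*\cup J_2=S$ forcing that double coset to be all of $G$. Once this is recorded, the corollary is immediate.
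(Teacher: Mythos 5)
Your first step (the isomorphism via \Cref{rgfrcpscps}) agrees with the paper, but your vanishing step has a genuine gap: the claim that $J_1^*\cup J_2=S$ is exactly the condition $P_{J_1}\cdot P_{J_2}^{w_0}=G$, hence that $P_{J_1}$ acts transitively on $G/P_{J_2}^{w_0}$ and $P_{J_1}/(P_{J_1}\cap P_{J_2}^{w_0})\cong G/P_{J_2}^{w_0}$, is false. Take $G=SL_3$ and $J_1=J_2=\{\alpha_1\}$: then $J_1^*=\{\alpha_2\}$, so the hypothesis $J_1^*\cup J_2=S$ holds, yet $P_{J_1}$ is the stabiliser of the plane $\langle e_1,e_2\rangle$ and $P_{J_2}^{w_0}=P^{-}_{\{\alpha_2\}}$ is the stabiliser of the plane $\langle e_2,e_3\rangle$; any $g$ carrying $\langle e_2,e_3\rangle$ to $\langle e_1,e_2\rangle$ lies outside $P_{J_1}P_{J_2}^{w_0}$, so the product is only a dense open subset of $G$. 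In general $P_{J_1}P_{J_2}^{w_0}=G$ would force the single-double-coset condition $W_{J_1}w_0W_{J_2}=W$ (equivalently $W_{J_1}W_{J_2^*}=W$), which is far stronger than, and not implied by, $J_1^*\cup J_2=S$; what the CPS hypothesis actually guarantees is only that the open double coset $P_{J_1}w_0P_{J_2}$ has complement of codimension at least two. This is precisely why the paper must route the Mackey-type statement through \cite[Example 4.5]{CPS83} (via \Cref{iotts} and \Cref{iotres}) instead of simply invoking \Cref{riiir}, which requires the honest product condition $HK=G$. Your needed identity $\ind{P_{J_1}\cap P_{J_2}^{w_0}}{P_{J_1}}(k)\cong k$ may still be true (the orbit is an open subset of the normal projective variety $G/P_{J_2}^{w_0}$ with small boundary, so one could try to argue by extending regular functions), but the transitivity argument you give does not establish it, and the parenthetical description of $G/P_{J_2}^{w_0}$ as the complete flag variety is also off (it is a partial flag variety; completeness is all one could use).

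The paper's intended argument is simpler and sidesteps all of this: $P_{J_2}^{w_0}$ is a parabolic subgroup of the reductive group $G$ (it contains the Borel subgroup $B^{w_0}$), so \Cref{rtp} applies directly to the left-hand pair and gives $\Ext{\cur{G,P_{J_2}^{w_0}}}{i}\cur{M,\ind{P_{J_1}}{G}(N)}=0$ for $i>0$, and the isomorphism of \Cref{rgfrcpscps} then transfers this vanishing to $\Ext{\cur{P_{J_1},P_{J_1}\cap P_{J_2}^{w_0}}}{i}\cur{M,N}$. If you wish to keep your route through the remark after \Cref{rtp}, you must prove $k[P_{J_1}]^{P_{J_1}\cap P_{J_2}^{w_0}}\cong k$ by a different argument than transitivity.
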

\subsection{}

%Let $G$ be a reductive algebraic group, $B$ a Borel subgroup, $P\geq B$ a parabolic subgroup, $L$ a Levi factor of $P$, and $U$ the unipotent subgroup of $P$ such that $P=L\ltimes U$.

We have now seen that taking $\Ext{}{}$ of a reductive algebraic group relative to either a torus or a parabolic subgroup yields trivial cohomology. So in order to get a useful new invariant, one will need to look relative to a different closed subgroup. The next most likely candidate is a Levi subgroup, which fittingly seems to be the hardest to compute. While computing $\Ext{(G,L)}{i}$ presents a challenge, we are able to say something when $i=1$.

\begin{proposition}
    Let $G$ be a reductive algebraic group, $B$ a Borel subgroup, $P\geq B$ a parabolic subgroup, $L$ a Levi factor of $P$, and $U$ the unipotent subgroup of $P$ such that $P=L\ltimes U$, then for $G$-modules $M$ and $N$, there is a isomorphism
    \[
    \adjustbox{scale=1}{
    \begin{tikzcd}
        \Ext{(G,L)}{1}\cur{M,N}\arrow[r, "\sim"] & \Ext{(P,L)}{1}\cur{M,N}.
    \end{tikzcd}
    }
    \]
\end{proposition}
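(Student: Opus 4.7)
The plan is to show that the natural restriction map
\[
\Phi:\Ext{(G,L)}{1}\cur{M,N}\to\Ext{(P,L)}{1}\cur{M,N},
\]
sending the class of a $(G,L)$-exact extension $0\to N\to E\to M\to 0$ of $G$-modules to its restriction as a $P$-module extension, is a bijection. Since $L\le P$, the $L$-splitting is preserved, so $\Phi$ is well-defined. Throughout I identify $\Ext{(-,-)}{1}\cur{M,N}$ with equivalence classes of extensions via \textup{\cite[Theorem 2.2]{Kim66}}.

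The central observation is that $\Hom{G}{V,W}=\Hom{P}{V,W}$ for any $G$-modules $V,W$. This comes from the tensor identity together with $\ind{P}{G}(k)=k[G]^P=k$ (used already in \Cref{rtp}, citing \textup{\cite[II.4.6]{Jan03}}), which gives $\ind{P}{G}\cur{\res{P}{G}(W)}\cong W$; Frobenius reciprocity then yields the claim. This immediately makes $\Phi$ injective: if a $(G,L)$-extension $0\to N\to E\to M\to 0$ becomes trivial as a $(P,L)$-extension, it admits a $P$-equivariant splitting $s:M\to E$, which by the above identity is automatically $G$-equivariant, so the original extension is trivial.

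For surjectivity, take a $(P,L)$-exact sequence $0\to N\to E\to M\to 0$ in $\Mod{P}$ and apply $\ind{P}{G}$. The derived tensor identity together with Kempf vanishing $R^i\ind{P}{G}(k)=0$ for $i>0$ yield $R^i\ind{P}{G}\cur{\res{P}{G}(N)}\cong N\otimes R^i\ind{P}{G}(k)=0$ for $i>0$, and similarly for $\res{P}{G}(M)$; combined with $\ind{P}{G}(N)=N$ and $\ind{P}{G}(M)=M$, the long exact sequence of derived functors truncates to a short exact sequence of $G$-modules
\[
0\to N\to\ind{P}{G}(E)\to M\to 0.
\]
The counit $\res{P}{G}\ind{P}{G}(E)\to E$ completes a commutative diagram with these two sequences, with identities at $N$ and $M$; the five-lemma forces it to be a $P$-isomorphism, so the restriction of this $G$-extension represents the same $(P,L)$-extension class as the original. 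Finally, since the restriction is split as $L$-modules and restriction from $G$ to $L$ factors through $P$, the constructed $G$-extension is itself $(G,L)$-exact, completing surjectivity.

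The principal obstacle is justifying the vanishing $R^1\ind{P}{G}\cur{\res{P}{G}(N)}=0$ for $G$-modules $N$; this rests on Kempf's vanishing theorem (or equivalently the rationality of $G/P$), which we import from the literature. Once that input is in hand, the remaining steps — the $\Hom$ identification, the five-lemma comparison, and the transfer of the $L$-splitting — fit naturally into the framework already developed in this paper.
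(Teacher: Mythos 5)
Your proposal is correct and follows essentially the same route as the paper: identify the relative $\Ext{}{1}$ groups with classes of relative extensions via Kimura, use $\ind{P}{G}(k)\cong k$ together with the tensor identity for injectivity, and for surjectivity apply $\ind{P}{G}$ to a $(P,L)$-extension (using $\textup{R}^{1}\ind{P}{G}$-vanishing on restrictions of $G$-modules) and compare back via the counit and the five lemma, exactly as in the paper. One small remark: as written, your injectivity argument only shows that the fiber over the trivial class is trivial, which suffices only once you note $\Phi$ is additive for Baer sums; it is cleaner (and closer to the paper, which induces the equivalence up to $G$) to apply your identity $\Hom{P}{V,W}=\Hom{G}{V,W}$ directly to a $P$-equivalence of restricted extensions, promoting it to a $G$-equivalence.
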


\begin{proof}

    From \cite[Theorem 2.2]{Kim66}, $\Ext{(G,H)}{1}\cur{M,N}$ is isomorphic to equivalence classes of extensions of $N$ by $M$ that are $(G,H)$-exact. Using this there is a clear restriction map $\Ext{(G,L)}{1}\cur{M,N}\rightarrow\Ext{(P,L)}{1}\cur{M,N}$ which takes a $(G,L)$ short exact sequence to a $(P,L)$ short exact sequence.

    We will first show that this restriction map is injective. Consider two extensions,
    \[
    \adjustbox{scale=1}{%
    \begin{tikzcd}
        0\arrow[r] & N\arrow[r] & A\arrow[r] & M\arrow[r] & 0
    \end{tikzcd}
    }
    \]
    and
    \[
    \adjustbox{scale=1}{%
    \begin{tikzcd}
        0\arrow[r] & N\arrow[r] & A'\arrow[r] & M\arrow[r] & 0,
    \end{tikzcd}
    }
    \]
    such that upon restriction to $P$, they are equivalent. So there exists an isomorphism
    \[
    \adjustbox{scale=1}{%
    \begin{tikzcd}[
        row sep=small,
        ar symbol/.style = {draw=none,"#1" description,sloped},
        isomorphic/.style = {ar symbol={\cong}},
        equals/.style = {ar symbol={=}},
        ]
        0\arrow[r] & \res{P}{G}(N)\arrow[r]\arrow[d,isomorphic] & \res{P}{G}(A)\arrow[r]\arrow[d,isomorphic] & \res{P}{G}(M)\arrow[r]\arrow[d,isomorphic] & 0\\
        0\arrow[r] & \res{P}{G}(N)\arrow[r] & \res{P}{G}(A')\arrow[r] & \res{P}{G}(M)\arrow[r] & 0
    \end{tikzcd}
    }.
    \]
    Now apply induction to see that $$A\cong A\otimes\ind{P}{G}\cur{k}\cong\ind{P}{G}\cur{\res{P}{G}(A)}\cong\ind{P}{G}\cur{\res{P}{G}(A')}\cong A'\otimes\ind{P}{G}\cur{k}\cong A', \text{\cite[II.4.6]{Jan03}.}$$ Hence, the two extensions are equivalent in $\Ext{(G,L)}{1}\cur{M,N}$, and so the restriction map is injective.

    To see that the restriction map is surjective, consider a $(P,L)$ short exact sequence
    \[
    \adjustbox{scale=1}{%
    \begin{tikzcd}
        0\arrow[r] & \res{P}{G}(N)\arrow[r] & A\arrow[r] & \res{P}{G}(M)\arrow[r] & 0.
    \end{tikzcd}
    }
    \]
    We can then apply $\ind{P}{G}$ to get
    \[
    \adjustbox{scale=1}{%
    \begin{tikzcd}
        0\arrow[r] & N\arrow[r] & \ind{P}{G}(A)\arrow[r] & M\arrow[r] & 0.
    \end{tikzcd}
    }
    \]
    Note that this is exact since $\Rind{1}{P}{G}\cur{\res{P}{G}(N)}=N\otimes\Rind{1}{P}{G}\cur{k}\cong0$, \cite[II.4.6]{Jan03}.

    Now apply $\res{P}{G}$, to get the commuting diagram
    \[
    \adjustbox{scale=1}{%
    \begin{tikzcd}
        0\arrow[r] & \res{P}{G}(N)\arrow[r] & A\arrow[r] & \res{P}{G}(M)\arrow[r] & 0\\
        0\arrow[r] & \res{P}{G}(N)\arrow[r]\arrow[u] & \res{P}{G}\cur{\ind{P}{G}(A)}\arrow[r]\arrow[u] & \res{P}{G}(M)\arrow[r]\arrow[u] & 0
    \end{tikzcd}
    },
    \]
    where the outside maps are just the identity, and so the middle map will also be an isomorphism. Therefore,
    \[
    \adjustbox{scale=1}{%
    \begin{tikzcd}
        0\arrow[r] & \res{P}{G}(N)\arrow[r] & \res{P}{G}\cur{\ind{P}{G}(A)}\arrow[r] & \res{P}{G}(M)\arrow[r] & 0
    \end{tikzcd}
    }
    \]
    is $(P,L)$ exact, and so
    \[
    \adjustbox{scale=1}{%
    \begin{tikzcd}
        0\arrow[r] & N\arrow[r] & \ind{P}{G}(A)\arrow[r] & M\arrow[r] & 0
    \end{tikzcd}
    }
    \]
    is $(G,L)$-exact and restricts to the equivalence class of the original $(P,L)$-exact sequence. Hence, the restriction map is surjective, and thus an isomorphism.
\end{proof}

%\nocite{And19}
%\nocite{ANCM17}
%\nocite{BNPS19}
\nocite{BKN10}
%\nocite{Bro94}
\nocite{CPS83}
%\nocite{Don85}
%\nocite{DK24}
%\nocite{FGS24}
\nocite{GGNW21}
%\nocite{GL24}
%\nocite{Guo19}
%\nocite{Guo24}
%\nocite{Hir59}
\nocite{Hoc56}
\nocite{Hoc61}
\nocite{Hoc63}
\nocite{Jan03}
%\nocite{Jp82}
\nocite{Kim65}
\nocite{Kim66}
\nocite{LNW25}
\nocite{Lan02}
%\nocite{Mat90}
\nocite{Wei94}
%\nocite{Wil25}
\printbibliography

\end{document}